\numberwithin{equation}{section}
\newtheoremstyle{mystyle}
{}
{}
{\normalfont}
{ }
{\bfseries}
{}
{10pt}
{ }
\theoremstyle{mystyle}
\newtheorem{theorem}{Theorem}
\newtheorem{lemma}{Lemma}
\newtheorem{remark}{Remark}
\def\tr{\mathop{\rm tr}\nolimits}
\def\vec{\mathop{\rm vec}\nolimits}
\def\vech{\mathop{\rm vech}\nolimits}
\def\det{\mathop{\rm det}\nolimits}
\def\rank{\mathop{\rm rank}\nolimits}
\newcommand{\dd}{\mathrm d}
\newcommand{\E}{\mathbb E}
\newcommand{\PP}{\mathbb P}
\title[SEM for diffusion processes based on high-frequency data]{Structural equation modeling with latent variables for diffusion processes based on high-frequency data}
\author[S. Kusano]{Shogo Kusano $^{1}$}
\author[M. Uchida]{Masayuki Uchida $^{1,2}$}
\address{$^{1}$Graduate School of Engineering Science, Osaka University}
\address{$^{2}$Center for Mathematical Modeling and Data Science (MMDS), Osaka University and JST CREST}
\begin{document}
\begin{abstract}
\fontsize{8pt}{10pt}\selectfont
We consider structural equation modeling (SEM) with latent variables for diffusion processes based on high-frequency data. We derive the quasi-likelihood estimators for parameters in the SEM. The goodness-of-fit test based on the quasi-likelihood ratio is proposed. Furthermore, the asymptotic properties of our proposed estimators are examined.
\end{abstract}
\keywords{Structural equation modeling; Asymptotic theory; High-frequency data; Stochastic differential equation; Quasi-maximum likelihood estimation.}
\maketitle
\section{Introduction}
\fontsize{10pt}{16pt}\selectfont
We consider structural equation modeling  (SEM) with latent variables for diffusion processes. 
The stochastic process $X_{1,t}$ is defined as the following factor model:
\begin{align}
	X_{1,t}=\Lambda_{x_1,m}\xi_{m,t}+\delta_{m,t}\label{X},
\end{align}
where $m \in \mathbb{N}$ is a model number, 
$\{X_{1,t}\}_{t\geq 0}$ is a $p_1$-dimensional observable vector process, $\{\xi_{m,t}\}_{t\geq 0}$ is a $k_{1,m}$-dimensional latent common factor vector process, $\{\delta_{m,t}\}_{t\geq 0}$ is a $p_{1}$-dimensional latent unique factor vector process, $\Lambda_{x_1,m}\in\mathbb{R}^{p_1\times k_{1,m}}$ is a constant loading matrix, $p_1$ is not zero, $p_1$ and $k_{1,m}$ are fixed, and $k_{1,m}\leq p_1$. The stochastic process $X_{2,t}$ is defined by the factor model as follows:
\begin{align}
	X_{2,t}=\Lambda_{x_2,m}\eta_{m,t}+\varepsilon_{m,t}\label{Y},
\end{align}
where $\{X_{2,t}\}_{t\geq 0}$ is a $p_2$-dimensional observable vector process, $\{\eta_{m,t}\}_{t\geq 0}$ is a $k_{2,m}$-dimensional latent common factor vector process, $\{\varepsilon_{m,t}\}_{t\geq 0}$ is a $p_{2}$-dimensional latent unique factor vector process, $\Lambda_{x_2,m}\in\mathbb{R}^{p_2\times k_{2,m}}$ is a constant loading matrix, $p_2$ is not zero, $p_2$ and $k_{2,m}$ are fixed, and $k_{2,m}\leq p_2$.
Furthermore, the relationship between $\eta_{m,t}$ and  $\xi_{m,t}$ is expressed as follows:
\begin{align}
	\eta_{m,t}=B_{0,m}\eta_{m,t}+\Gamma_{m}\xi_{m,t}+\zeta_{m,t}\label{eta},
\end{align}
where $\{\zeta_{m,t}\}_{t\geq 0}$ is a $k_{2,m}$-dimensional latent unique factor vector process, $B_{0,m}\in\mathbb{R}^{k_{2,m}\times k_{2,m}}$ is a constant loading matrix, whose diagonal elements are zero, and $\Gamma_{m}\in\mathbb{R}^{k_{2,m}\times k_{1,m}}$ is a constant loading matrix. Assume that $\{\xi_{m,t}\}_{t\geq 0}$ satisfies the following stochastic differential equation:
\begin{align*}
	\begin{cases}
	\dd \xi_{m,t}=B_{1,m}(\xi_{m,t})\dd t+S_{1,m}\dd W_{1,t}\quad(t\in [0,T]),\\
	\xi_{m,0}=c_{1,m},
	\end{cases}
\end{align*}
where $B_{1,m}:\mathbb{R}^{k_{1,m}}\rightarrow\mathbb{R}^{k_{1,m}}$, $S_{1,m}\in\mathbb{R}^{k_{1,m}\times r_1}$, $c_{1,m}\in\mathbb{R}^{k_{1,m}}$ and $W_{1,t}$ is an $r_{1}$-dimensional standard Wiener process, $\{\delta_{m,t}\}_{t\geq 0}$ 
is defined as the following stochastic differential equation:
\begin{align*}
	\begin{cases}
	\dd \delta_{m,t}=B_{2,m}(\delta_{m,t})\dd t+S_{2,m}\dd W_{2,t}\quad (t\in [0,T]),\\
	\delta_{m,0}=c_{2,m},
	\end{cases}
\end{align*}
where $B_{2,m}:\mathbb{R}^{p_1}\rightarrow\mathbb{R}^{p_1}$, $S_{2,m}\in\mathbb{R}^{p_1\times r_2}$, $c_{2,m}\in\mathbb{R}^{p_1}$ and $W_{2,t}$ is an $r_{2}$-dimensional standard Wiener process, $\{\varepsilon_{m,t}\}_{t\geq 0}$ 
satisfies   the following stochastic differential equation:
\begin{align*}
	\begin{cases}
	\dd \varepsilon_{m,t}=B_{3,m}(\varepsilon_{m,t})\dd t+S_{3,m}\dd W_{3,t}\quad (t\in [0,T]),\\
	\varepsilon_{m,0}=c_{3,m},
	\end{cases}
\end{align*}
where $B_{3,m}:\mathbb{R}^{p_2}\rightarrow\mathbb{R}^{p_2}$, $S_{3,m}\in\mathbb{R}^{p_2\times r_3}$, $c_{3,m}\in\mathbb{R}^{p_2}$ and $W_{3,t}$ is an $r_{3}$-dimensional standard Wiener process, and $\{\zeta_{m,t}\}_{t\geq 0}$ 
is defined by the stochastic differential equation as follows:
\begin{align*}
	\begin{cases}
	\dd \zeta_{m,t}=B_{4,m}(\zeta_{m,t})\dd t+S_{4,m}\dd W_{4,t}\quad (t\in [0,T]),\\
	\zeta_{m,0}=c_{4,m},
	\end{cases}
\end{align*}
where $B_{4,m}:\mathbb{R}^{k_{2,m}}\rightarrow\mathbb{R}^{k_{2,m}}$, $S_{4,m}\in\mathbb{R}^{k_{2,m}\times r_4}$, $c_{4,m}\in\mathbb{R}^{k_{2,m}}$ and $W_{4,t}$ is an $r_{4}$-dimensional standard Wiener process. We assume that $W_{1,t}$, $W_{2,t}$, $W_{3,t}$ and $W_{4,t}$ are independent. Set $X_t=(X_{1,t}^{\top},X_{2,t}^{\top})^{\top}$. $\{X_{t_{i}^n}\}_{i=1}^n$ are discrete observations, where $t_{i}^n= ih_n$ and $T=nh_n$, and $p_1$, $p_2$, $k_{1,m}$ and $k_{2,m}$ are independent of $n$.

SEM is a method that describes the relationships between latent variables that cannot
be observed. SEM has been used in various fields, e.g., behavioral science, economics, engineering, and medical science. For example, in psychology, SEM is used to investigate the relationships between intelligence and motivation. Note that intelligence and motivation are latent variables. J{\"o}reskog \cite{Joreskog(1970)} proposed this method by combining path analysis and confirmatory factor analysis. For path analysis and confirmatory factor analysis, see, e.g., Mueller \cite{Mueller(1999)}. Several models have been proposed to formulate SEM. In this paper, we consider the model defined by
(\ref{X}), (\ref{Y}) and (\ref{eta}), which is called the LInear Structural RELations (LISREL) model 
( J{\"o}reskog \cite{Joreskog(1972)}). 
The LISREL model is one of the most well-known models in SEM and can be expressed complex relationships between latent variables. For more information on the LISREL model, see, e.g., Everitt \cite{Everitt(1984)}. 
Note that SEM is a confirmatory analysis method rather than an exploratory analysis method. 
SEM is used to specify the model from a theoretical viewpoint of each research field before conducting the analysis. This is the difference between confirmatory analysis methods and exploratory analysis methods such as exploratory factor analysis. 
In behavioral science, factor analysis for time series data has been  actively studied; see, e.g., Molenaar \cite{Molenaar(1985)} and Pena and box \cite{Pena(1987)}. 
Moreover, Czi{\'a}ky \cite{Cziraky(2004)} proposed
SEM for time series data called dynamic structural equation model with latent variables (DSEM). Asparouhov et.al. \cite{Asparouhov} studied the more general DSEM model.

Recently, we can easily obtain high-frequency data such as stock price data and life-log data (blood pressure and EEG, etc.) thanks to the development of measuring devices,
and statistical inference for stochastic differential equations based on high-frequency data
has been developed. For parametric estimation of diffusion processes based on high-frequency data,  
see for example, 
Yoshida \cite{Yoshida(1992)}, 
Genon-Catalot and Jacod \cite{Genon(1993)},
Kessler \cite{kessler(1997)}, 
Uchida and Yoshida \cite{Uchi-Yoshi(2012)} 
and references therein.
In financial econometrics, the factor model for high-frequency data 
has been extensively researched. In this field, parameters and the number of factors are estimated by using principal component analysis for high-frequency data (A{\"i}t-Sahalia and Xiu \cite{Ait(2019)}) when the factor is latent; see, e.g., A{\"i}t-Sahalia and Xiu \cite{Ait(2017)}. However, these studies are based on high dimensionality. 
For a low-dimensional model, the estimator does not have consistency; see Bai \cite{Bai(2003)}. On the other hand, Kusano and Uchida \cite{Kusano(2022)} proposed classical factor analysis for diffusion processes. Their method works well for a low-dimensional model. 
However, to the best of our knowledge, 
there have been few studies of SEM for high-frequency data.
Oud and Jansen \cite{Oud(2000)} and Driver et.al. \cite{Driver(2017)} considered SEM
for stochastic differential equations. 
Note that their model differs from the model in this paper.
In the field of causal inference, Hansen and Sokol \cite{Hansen(2014)} studied SEM for stochastic differential equations. However, their model is the path analysis model, so that their method cannot describe the relationships between latent variables. Note that these studies do not assume that the data is sampled at high-frequency. 
On the other hand, we propose SEM for diffusion processes based on high-frequency data.

In this paper, we assume that the volatilities for diffusion processes and loading matrices are not time-variant but constant to simplify the discussion.
We leave  for future work the discussion on the model 
where the volatilities for diffusion processes and loading matrices are time-varying.
Furthermore, we do not discuss a high-dimensional case. Bai \cite{bai(2012)} studied the asymptotic properties of factor analysis based on the maximum likelihood estimation for a high-dimension model. We expect that our quasi-likelihood method will also work well for a high-dimension model. The investigation is future work.

The paper is organized as follows. In Section 2, notation and assumptions are introduced. 
In Section 3, we study SEM for diffusion processes in the ergodic and non-ergodic cases. 
First, the asymptotic properties of the realized covariance are examined. 
Next, we derive the quasi-likelihood estimators for parameters in the SEM.
It is shown that the estimators have good asymptotic properties.
Furthermore, we propose the goodness-of-fit test based on the quasi-likelihood ratio and investigate the asymptotic properties. 
In Section 4, we give examples and simulation studies 
to investigate the asymptotic performance of the results described in Section 3.
Section 5 is devoted to the proofs of theorems given in Section 3.

\section{Notation and assumptions}
For any vector $v$, $|v|=\sqrt{\tr{vv^\top}}$ and $v^{(i)}$ is the $i$-th element of $v$,
where $\top$ denotes the transpose.
For any matrix A, $\|A\|=\sqrt{\tr{AA^\top}}$, and $A_{ij}$ is the $(i,j)$-th element of $A$. $\mathbb{I}_{p}$ denotes the identity matrix of size $p$. Define $O_{p\times q}$ as the $p\times q$ zero matrix. For any symmetric matrix $A\in\mathbb{R}^{p\times p}$, $\vec A$,  $\vech A$ and $\mathbb{D}_{p}$ 
denote the vectorization of $A$, the half-vectorization of $A$ and the $p^2\times\bar{p}$ duplication matrix,
respectively. Here, $\vec{A}=\mathbb{D}_{p}\vech{A}$ and $\bar{p}=p(p+1)/2$; see, e.g., Harville \cite{Harville(1998)}.  
For any matrix $A$, the Moore-Penrose inverse of $A$ is denoted by $A^{+}$. 
If $A$ is a positive definite matrix, we write $A>0$.
For any positive sequence $u_{n}$, 
$R:[0,\infty)\times \mathbb{R}^d\rightarrow \mathbb{R}$
is defined as $|R({u_{n}},x)|\leq u_{n}C(1+|x|)^C$ for some $C>0$. Let $\mathscr{F}^{n}_{i}=\sigma(W_{1,s},W_{2,s},W_{3,s},W_{4,s},s\leq t_{i}^n)$ for $i=1,\cdots n$. Let $C^{k}_{\uparrow}(\mathbb R^d)$ be 
the space of all functions $f$ satisfying the following conditions:
\begin{itemize}
    \item[(i)] $f$ is continuously differentiable with respect to $x\in \mathbb{R}^d$ up to order $k$. 
    \item[(ii)] $f$ and all its derivatives are of polynomial growth in $x\in \mathbb{R}^d$, i.e., 
    $g$ is
of polynomial growth in $x\in \mathbb{R}^d$ if $\displaystyle g(x)=R(1,x)$. 
\end{itemize}
$N_{p}(\mu,\Sigma)$ represents the $p$-dimensional normal random variable with mean $\mu\in\mathbb{R}^p$ and covariance matrix $\Sigma\in\mathbb{R}^{p\times p}$. 
Let $\chi^2_{r}$ be  the random variable which has 
the chi-squared distribution with $r$ degrees of freedom. $\chi^2_{r}(\alpha)$ denotes an upper $\alpha$ point of the chi-squared distribution with $r$ degrees of freedom, where $0\leq\alpha\leq 1$. The symbols $\stackrel{P}{\longrightarrow}$ and $\stackrel{d}{\longrightarrow}$ express convergence in probability and convergence in distribution, respectively.
Let 
    $\Sigma_{\xi\xi,m}=S_{1,m}S_{1,m}^{\top}$,
    $\Sigma_{\delta\delta,m}=S_{2,m}S_{2,m}^{\top}$,
    $\Sigma_{\varepsilon\varepsilon,m}=S_{3,m}S_{3,m}^{\top}$,
    $\Sigma_{\zeta\zeta,m}=S_{4,m}S_{4,m}^{\top}$ and $\Psi_{m}=\mathbb{I}_{k_{2,m}}-B_{0,m}$.
Furthermore, we make the following assumptions.
\begin{enumerate}
	\renewcommand{\labelenumi}{{\textbf{[A1]}}}
	\item 
	\begin{enumerate}
	\item
	There exists a constant $C>0$ such that for any $x,y\in\mathbb R^{k_{1,m}}$, 
	\begin{align*}
	|B_{1,m}(x)-B_{1,m}(y)|\leq C|x-y|.
	\end{align*}
	\item For all $\ell\geq 0$, 
	$\displaystyle\sup_t\E\bigl[|\xi_{m,t}|^{\ell}\bigr]<\infty$.
	\item $B_{1,m}\in C^{4}_{\uparrow}(\mathbb R^{k_{1,m}})$.
	\end{enumerate}
\end{enumerate}
\begin{enumerate}
	\renewcommand{\labelenumi}{{\textbf{[A2]}}}
	\item The diffusion process  $\xi_{m,t}$ is ergodic with its invariant measure $\pi_{\xi_m}$: For any $\pi_{\xi_m}$-integrable function $g$, it holds that
	\begin{align*}
	\frac{1}{T}\int_{0}^{T}{g(\xi_{m,t})dt}\overset{P}{\longrightarrow}\int g(x)\pi_{\xi_{m}}(dx)
	\end{align*}
	as $T\longrightarrow\infty$. 
\end{enumerate}
\begin{enumerate}
	\renewcommand{\labelenumi}{{\textbf{[B1]}}}
	\item 
	\begin{enumerate}
	\item There exists a constant $C>0$ such that for any $x,y\in\mathbb R^{p_1}$, 
	\begin{align*}
	|B_{2,m}(x)-B_{2,m}(y)|\leq C|x-y|.
	\end{align*}
	\item For all $\ell\geq 0$, 
	$\displaystyle\sup_t\E\bigl[|\delta_{m,t}|^{\ell}\bigr]<\infty$.
	\item $B_{2,m}\in C^{4}_{\uparrow}(\mathbb R^{p_1})$.
	\end{enumerate}
\end{enumerate}
\begin{enumerate}
	\renewcommand{\labelenumi}{{\textbf{[B2]}}}
	\item $\Sigma_{\delta\delta,m}>0$.
\end{enumerate}
\begin{enumerate}
	\renewcommand{\labelenumi}{{\textbf{[B3]}}}
	\item The diffusion process  $\delta_{m,t}$ is ergodic with its invariant measure $\pi_{\delta_m}$: For any $\pi_{\delta_m}$-integrable function $g$, it holds that
	\begin{align*}
		\frac{1}{T}\int_{0}^{T}{g(\delta_{m,t})dt}\overset{P}{\longrightarrow}\int g(x)\pi_{\delta_{m}}(dx)
	\end{align*}
	as $T\longrightarrow\infty$. 
\end{enumerate}
\begin{enumerate}
	\renewcommand{\labelenumi}{{\textbf{[C1]}}}
	\item \begin{enumerate}
	\item There exists a constant $C>0$ such that for any $x,y\in\mathbb R^{p_2}$, 
	\begin{align*}
	|B_{3,m}(x)-B_{3,m}(y)|\leq C|x-y|.
	\end{align*}
	\item For all $\ell\geq 0$, 
	$\displaystyle\sup_t\E\bigl[|\varepsilon_{m,t}|^{\ell}\bigr]<\infty$.
	\item $B_{3,m}\in C^{4}_{\uparrow}(\mathbb R^{p_2})$.
	\end{enumerate}
\end{enumerate}
\begin{enumerate}
	\renewcommand{\labelenumi}{{\textbf{[C2]}}}
	\item $\Sigma_{\varepsilon\varepsilon,m}>0$.
\end{enumerate}
\begin{enumerate}
	\renewcommand{\labelenumi}{{\textbf{[C3]}}}
	\item The diffusion process  $\varepsilon_{m,t}$ is ergodic with its invariant measure $\pi_{\varepsilon_{m}}$: For any $\pi_{\varepsilon_{m}}$-integrable function $g$, it holds that
	\begin{align*}
		\frac{1}{T}\int_{0}^{T}{g(\varepsilon_{m,t})dt}\overset{P}{\longrightarrow}\int g(x)\pi_{\varepsilon_{m}}(dx)
	\end{align*}
	as $T\longrightarrow\infty$. 
\end{enumerate}
\begin{enumerate}
	\renewcommand{\labelenumi}{{\textbf{[D1]}}}
	\item 
	\begin{enumerate}
	\item There exists a constant $C>0$ such that for any $x,y\in\mathbb R^{k_{2,m}}$, 
	\begin{align*}
	|B_{4,m}(x)-B_{4,m}(y)|\leq C|x-y|.
	\end{align*}
	\item For all $\ell\geq 0$, $\displaystyle\sup_t\E\bigl[|\zeta_{m,t}|^{\ell}\bigr]<\infty$.
	\item $B_{4,m}\in C^{4}_{\uparrow}(\mathbb R^{k_{2,m}})$.
\end{enumerate}
\end{enumerate}
\begin{enumerate}
	\renewcommand{\labelenumi}{{\textbf{[D2]}}}
	\item The diffusion process  $\zeta_{m,t}$ is ergodic with its invariant measure $\pi_{\zeta_m}$: For any $\pi_{\zeta_m}$-integrable function $g$, it holds that
	\begin{align*}
	\frac{1}{T}\int_{0}^{T}{g(\zeta_{m,t})dt}\overset{P}{\longrightarrow}\int g(x)\pi_{\zeta_m}(dx)
	\end{align*}
	as $T\longrightarrow\infty$. 
\end{enumerate}
\begin{enumerate}
	\renewcommand{\labelenumi}{{\textbf{[E]}}}
	\item $\Psi_{m}$ is non-singular.
\end{enumerate}
\begin{enumerate}
	\renewcommand{\labelenumi}{{\textbf{[F]}}}
	\item $\rank{\Lambda_{x_1,m}}=k_{1,m}$.
\end{enumerate}
\begin{remark}
$\bf{[A1]}$, $\bf{[B1]}$, $\bf{[C1]}$ and $\bf{[D1]}$ are the standard assumptions for ergodic diffusion processes. For example, see Kessler \cite{kessler(1997)}. $\bf{[B2]}$, $\bf{[C2]}$, $\bf{[E]}$ and $\bf{[F]}$ imply that $\Sigma_{m}(\theta_{m})$ is non-singular. For details, see Lemma \ref{Sigmaposlemma}.
\end{remark}
\section{Main theorems}
\subsection{Ergodic case}
In the LISREL model, we will estimate $\Lambda_{x_1,m}$, $\Lambda_{x_2,m}$, $\Gamma_{m}$, $\Psi_{m}$, $\Sigma_{\xi\xi,m}$, $\Sigma_{\delta\delta,m}$, $\Sigma_{\varepsilon\varepsilon,m}$ and $\Sigma_{\zeta\zeta,m}$. Note that some of these elements are assumed to be known in order to satisfy an identifiability condition for parameter estimation.
See Remark \ref{identification} for constraints on the parameter and the identifiability condition.
Set the parameter as $\theta_{m}\in\Theta_{m}$, where $\Theta_{m}\subset\mathbb{R}^{q_{m}}$ is a convex compact space. $\theta_{m}$ includes only unknown and non-duplicated elements of $\Lambda_{x_1,m}$, $\Lambda_{x_2,m}$, $\Gamma_{m}$, $\Psi_{m}$, $\Sigma_{\xi\xi,m}$, $\Sigma_{\delta\delta,m}$, $\Sigma_{\varepsilon\varepsilon,m}$ and $\Sigma_{\zeta\zeta,m}$. Define the covariance structure as 
\begin{align}
	\Sigma_{m}(\theta_{m})=
	\begin{pmatrix}
		\Sigma_{X_1X_1,m}(\theta_{m}) & \Sigma_{X_1X_2,m}(\theta_{m})\\
		\Sigma_{X_1X_2,m}(\theta_{m})^{\top} & \Sigma_{X_2X_2,m}(\theta_{m})
	\end{pmatrix},\label{sigmam}
\end{align}
where 
\begin{align*}
    \qquad\qquad\qquad\Sigma_{X_1X_1,m}(\theta_{m})&=\Lambda_{x_1,m}\Sigma_{\xi\xi,m}\Lambda_{x_1,m}^{\top}+\Sigma_{\delta\delta,m},\\
    \Sigma_{X_1X_2,m}(\theta_{m})&=\Lambda_{x_1,m}\Sigma_{\xi\xi,m}\Gamma_{m}^{\top}\Psi^{-1\top}_{m}\Lambda_{x_2,m}^{\top},\\
    \Sigma_{X_2X_2,m}(\theta_{m})&=\Lambda_{x_2,m}\Psi^{-1}_{m}(\Gamma_m\Sigma_{\xi\xi,m}\Gamma^{\top}_m+\Sigma_{\zeta\zeta,m})\Psi^{-1\top}_{m}\Lambda_{x_2,m}^{\top}+\Sigma_{\varepsilon\varepsilon,m}.
\end{align*}
To estimate (\ref{sigmam}), we use the realized covariance as follows:
\begin{align*}
	Q_{XX}=\frac{1}{T}\sum_{i=1}^{n}(X_{t_{i}^n}-X_{t_{i-1}^n})(X_{t_{i}^n}-X_{t_{i-1}^n})^{\top}.
\end{align*}
Let
\begin{align*}
	W_{m}(\theta_{m})=2\mathbb{D}_{p}^{+\top}(\Sigma_{m}(\theta_{m})\otimes\Sigma_{m}(\theta_{m}))\mathbb{D}_{p}^{+}.
\end{align*}
For the realized covariance, the following theorem holds.
\begin{theorem}\label{Qztheorem}
Under 
$\bf{[A1]}$-$\bf{[A2]}$, $\bf{[B1]}$-$\bf{[B3]}$, $\bf{[C1]}$-$\bf{[C3]}$, $\bf{[D1]}$-$\bf{[D2]}$, $\bf{[E]}$ and $\bf{[F]}$, 
as $h_n\longrightarrow0$ and $nh_n\longrightarrow\infty$, 
\begin{align*}
	Q_{XX}\stackrel{P_{\theta_{m}\ } }{\longrightarrow} \Sigma_{m}(\theta_{m}).
\end{align*}
In addition, as $nh_n^2\longrightarrow0$, 
\begin{align*}
\sqrt{n}(\vech{Q_{XX}}-\vech{\Sigma_{m}(\theta_{m})})\stackrel{d}{\longrightarrow} N_{\bar{p}}(0,W_{m}(\theta_{m})).
\end{align*}
\begin{remark}
This result is similar to the asymptotic result of the sample variance matrix for the i.i.d model, 
see, e.g., Browne \cite{brown(1974)}.
\end{remark}
\end{theorem}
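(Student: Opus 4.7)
The plan is to reduce the statement to the standard high-frequency limit theory for the realized covariance of a diffusion with constant diffusion coefficient. By [E], $\eta_{m,t} = \Psi_m^{-1}(\Gamma_m\xi_{m,t}+\zeta_{m,t})$, and substituting this into (\ref{X})--(\ref{Y}) shows that $X_t$ satisfies an It\^{o} SDE of the form
\[
dX_t = \mu_m(\xi_{m,t},\delta_{m,t},\varepsilon_{m,t},\zeta_{m,t})\,dt + C_m\,dW_t,
\]
where $W_t = (W_{1,t}^\top,W_{2,t}^\top,W_{3,t}^\top,W_{4,t}^\top)^\top$ and $C_m$ is a constant matrix built from $\Lambda_{x_1,m},\Lambda_{x_2,m},\Gamma_m,\Psi_m^{-1}$ and the $S_{j,m}$'s. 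Because $W_{1,t},\dots,W_{4,t}$ are independent, $C_mC_m^\top$ splits block-wise, and a direct computation reproduces the three blocks of (\ref{sigmam}); the constant infinitesimal quadratic covariation of $X_t$ is thus exactly $\Sigma_m(\theta_m)$.

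For consistency, decomposing $\Delta_i X := X_{t_i^n}-X_{t_{i-1}^n}$ into its drift and martingale parts gives
\[
Q_{XX} = \frac{1}{T}\sum_{i=1}^n (C_m\Delta_i W)(C_m\Delta_i W)^\top + o_P(1),
\]
where the drift$\times$drift and drift$\times$martingale cross remainders are $o_P(1)$ by the polynomial growth of each $B_{j,m}$ together with the uniform-in-$t$ moment bounds in [A1](b)--[D1](b). The leading sum is a martingale-difference array whose conditional mean per term is $\Sigma_m(\theta_m)h_n$ and whose conditional second moments are $O(h_n^2)$; a standard $L^2$ argument combined with $h_n\to 0$ and $nh_n\to\infty$ then yields $Q_{XX}\stackrel{P}{\longrightarrow}\Sigma_m(\theta_m)$.

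For the asymptotic normality, I would write
\[
\sqrt{n}\bigl(\vech Q_{XX}-\vech\Sigma_m(\theta_m)\bigr) = \frac{1}{\sqrt{n}\,h_n}\sum_{i=1}^n \vech\bigl((C_m\Delta_i W)(C_m\Delta_i W)^\top - \Sigma_m(\theta_m)h_n\bigr) + R_n,
\]
where $R_n$ collects all drift-related contributions. Under the assumed $nh_n^2\to 0$, the drift$\times$drift piece of $R_n$ is $O_P(\sqrt{nh_n^2})=o_P(1)$, while the drift$\times$martingale cross pieces have variance of order $h_n$ and so are $o_P(1)$ automatically; hence $R_n=o_P(1)$. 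A martingale CLT (as in Kessler \cite{kessler(1997)}) applies to the main term: by Isserlis's identity for $C_m\Delta_i W\sim N_p(0,\Sigma_m(\theta_m)h_n)$, the conditional variance of each $\vech((C_m\Delta_i W)(C_m\Delta_i W)^\top)$ is of the form $h_n^2\,W_m(\theta_m)$, so after the $1/(\sqrt{n}h_n)$ scaling the limiting variance is exactly $W_m(\theta_m)$; the Lyapunov condition follows from the Gaussian fourth moments.

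The main obstacle is the uniform-in-$i$ control of $R_n$: although each drift contribution is individually small, one has to verify through an It\^{o}--Taylor expansion, together with the polynomial growth of the $B_{j,m}$ and the uniform-in-$t$ moment bounds on the latent processes, that the aggregate error is $o_P(1)$ under $nh_n^2\to 0$. Once the structural reduction to a constant-coefficient diffusion is in place, the theorem is essentially a consequence of well-known high-frequency CLTs, e.g., Genon-Catalot and Jacod \cite{Genon(1993)} and Kessler \cite{kessler(1997)}.
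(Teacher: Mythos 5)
Your proposal is correct and follows essentially the same route as the paper: a martingale CLT (Hall--Heyde/Kessler) for the triangular array of normalized squared increments, with conditional-moment expansions supplying the limiting covariance $W_{m}(\theta_{m})$ and the condition $nh_n^2\longrightarrow 0$ absorbing the drift-induced bias; your reduction of $X_t$ to a constant-coefficient It\^o SDE and the use of Isserlis's identity merely repackage the paper's block computations with $A_{i,m,n},\dots,E_{i,m,n}$ and its Lemmas on conditional moments of the latent increments. One small imprecision: the conditional mean of the drift$\times$martingale cross term is itself $O(h_n^2)$ per increment (via It\^o--Taylor), so it also relies on $nh_n^2\longrightarrow 0$ rather than being negligible ``automatically'' from a variance bound; since you assume $nh_n^2\longrightarrow 0$ throughout, this does not affect the conclusion.
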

Next, we consider the parameter estimation. Set the following quasi-log-likelihood function:
\begin{align}
	\ell_{m,n}(\theta_{m})=-\frac{pn}{2}\log(2\pi)-\frac{pn}{2}\log h_n-\frac{n}{2}\log\det{\Sigma_{m}(\theta_{m})}-\frac{n}{2}\tr{\Bigl\{\Sigma_{m}(\theta_{m})^{-1}Q_{XX}\Bigr\}} \label{quasilog}
\end{align}
as $Q_{XX}>0$.
Let
\begin{align*}
	\ell_{n}(\Sigma)=-\frac{pn}{2}\log(2\pi)-\frac{pn}{2}\log h_n-\frac{n}{2}\log\det\Sigma-\frac{n}{2}\tr\bigl\{\Sigma^{-1}Q_{XX}\bigr\},
\end{align*}
where $\Sigma\in\mathbb{R}^{p\times p}$ is a positive definite matrix. Note that $\ell_{n}(\Sigma)$ has a maximum value
\begin{align*}
	-\frac{pn}{2}\log(2\pi)-\frac{pn}{2}\log h_n-\frac{n}{2}\log\det Q_{XX}-\frac{np}{2}
\end{align*}
at $\Sigma=Q_{XX}$ as $Q_{XX}>0$. Define the following function:
\begin{align}  
	\begin{split}
	F(Q_{XX},\Sigma_{m}(\theta_{m}))&=-\frac{2}{n}\ell_{m,n}(\theta_{m})+\frac{2}{n}\left\{-\frac{pn}{2}\log(2\pi)-\frac{pn}{2}\log h_n-\frac{n}{2}\log\det Q_{XX}-\frac{np}{2}\right\}\\
	&=\log\det\Sigma_{m}(\theta_{m})-\log\det Q_{XX}+\tr{\Bigl\{\Sigma_{m}(\theta_{m})^{-1}Q_{XX}\Bigr\}}-p.\label{F1}
	\end{split}
\end{align}
From Theorem 1 in Shapiro \cite{Shapiro(1985)}, (\ref{F1}) is rewritten as
\begin{align*}
	F(Q_{XX},\Sigma_{m}(\theta_{m}))=(\vech{Q_{XX}}-\vech{\Sigma_{m}(\theta_{m})})^{\top}V(Q_{XX},\Sigma_{m}(\theta_{m}))(\vech{Q_{XX}}-\vech{\Sigma_{m}(\theta_{m}}))
\end{align*}
as $Q_{XX}>0$, where 
\begin{align*}
	V(Q_{XX},\Sigma_{m}(\theta_{m}))
	&=\mathbb{D}_{p}^{+\top}\int_{0}^{1}\int_{0}^{1}\lambda_{2}(\Sigma_{m}(\theta_{m})+\lambda_{1}\lambda_{2}(Q_{XX}-\Sigma_{m}(\theta_m)))^{-1}\\
	&\qquad\qquad\qquad\qquad\otimes(\Sigma_{m}(\theta_{m})+\lambda_{1}\lambda_{2}(Q_{XX}-\Sigma_{m}(\theta_m)))^{-1}d\lambda_{1}d\lambda_{2}\mathbb{D}_{p}^{+}
\end{align*}
as $Q_{XX}>0$. Furthermore, set the following function:
\begin{align*}
	\tilde{F}(Q_{XX},\Sigma_{m}(\theta_{m}))=(\vech{Q_{XX}}-\vech{\Sigma_{m}(\theta_{m})})^{\top}\tilde{V}(Q_{XX},\Sigma_{m}(\theta_{m}))(\vech{Q_{XX}}-\vech{\Sigma_{m}(\theta_{m}})),
\end{align*}
where 
\begin{align*}
	\tilde{V}(Q_{XX},\Sigma_{m}(\theta_{m}))=\left\{
	\begin{array}{ll}
	V(Q_{XX},\Sigma_{m}(\theta_{m})),
	& (Q_{XX} \mbox{ is non-singular}), \\
	\mathbb{I}_{\bar{p}},  & (Q_{XX}\mbox{ is singular}).
	\end{array}\right. 
\end{align*}
The contrast function is given by
\begin{align*}
	\mathbb{F}_{m,n}(\theta_{m})=\tilde{F}(Q_{XX},\Sigma_{m}(\theta_{m})).
\end{align*}
The minimum contrast estimator $\hat{\theta}_{m,n}$ is defined as
\begin{align*}
	\mathbb{F}_{m,n}(\hat{\theta}_{m,n})=\inf_{\theta_{m}\in\Theta_{m}}\mathbb{F}_{m,n}(\theta_{m}).
\end{align*}
\begin{remark}
We derive the quasi-log-likelihood function (\ref{quasilog}). 
Let $\Xi_{m,t}$ be the Euler-Maruyama approximation of $\xi_{m,t}$.
One has 
\begin{align*}
	\Xi_{m,t_{i}^n}-\Xi_{m,t_{i-1}^n}&=B_{1,m}(\Xi_{m,t_{i-1}^n})h_n+S_{1,m}(W_{1,t_{i}^n}-W_{1,t_{i-1}^n}).
\end{align*}
In the same way, set $\Delta_{m,t}$, $E_{m,t}$ and $Z_{m,t}$ as the Euler-Maruyama approximation of $\delta_{m,t}$, $\varepsilon_{m,t}$ and $\zeta_{m,t}$, respectively. We get
\begin{align*}
	\Delta_{m,t_{i}^n}-\Delta_{m,t_{i-1}^n}&=B_{2,m}(\Delta_{m,t_{i-1}^n})h_n+S_{2,m}(W_{2,t_{i}^n}-W_{2,t_{i-1}^n}),\\
	E_{m,t_{i}^n}-E_{m,t_{i-1}^n}&=B_{3,m}(E_{m,t_{i-1}^n})h_n+S_{3,m}(W_{3,t_{i}^n}-W_{3,t_{i-1}^n}),\\
	Z_{m,t_{i}^n}-Z_{m,t_{i-1}^n}&=B_{4,m}(Z_{m,t_{i-1}^n})h_n+S_{4,m}(W_{4,t_{i}^n}-W_{4,t_{i-1}^n}).
\end{align*}
Note that it holds that
\begin{align*}
	\begin{split}
	&\Lambda_{x_1,m}(\Xi_{m,t_{i}^n}-\Xi_{m,t_{i-1}^n})+\Delta_{m,t_{i}^n}-\Delta_{m,t_{i-1}^n}\\
	&\qquad=\Lambda_{x_1,m}S_{1,m}(W_{1,t_{i}^n}-W_{1,t_{i-1}^n})+S_{2,m}(W_{2,t_{i}^n}-W_{2,t_{i-1}^n})+R(\Xi_{m,t_{i-1}^n},h_n)+R(\Delta_{m,t_{i-1}^n},h_n)
	\end{split}
\end{align*}
from $\bf{[A1]}$(c) and $\bf{[B1]}$(c). If we set $\bar{X}_{1,t}$ as an approximation to $X_{1,t}$, we obtain
\begin{align}
	\bar{X}_{1,t_{i}^n}-\bar{X}_{1,t_{i-1}^n}=\Lambda_{x_1,m}S_{1,m}(W_{1,t_{i}^n}-W_{1,t_{i-1}^n})+S_{2,m}(W_{2,t_{i}^n}-W_{2,t_{i-1}^n}) \label{X1app}
\end{align}
from (\ref{X}).
In the same way, since it follows from \textbf{[A1]} (c), \textbf{[C1]} (c) and \textbf{[D1]} (c) that
\begin{align*}
	\begin{split}
	&\Lambda_{x_2,m}\Psi_{m}^{-1}\Gamma_{m}(\Xi_{m,t_{i}^n}-\Xi_{m,t_{i-1}^n})+\Lambda_{x_2,m}\Psi_{m}^{-1}(Z_{m,t_{i}^n}-Z_{m,t_{i-1}^n})+E_{m,t_{i}^n}-E_{m,t_{i-1}^n}\\
	&\qquad\qquad\qquad=\Lambda_{x_2,m}\Psi_{m}^{-1}\Gamma_{m}S_{1,m}(W_{1,t_{i}^n}-W_{1,t_{i-1}^n})+\Lambda_{x_2,m}\Psi_{m}^{-1}S_{4,m}(W_{4,t_{i}^n}-W_{4,t_{i-1}^n})\\
	&\qquad\qquad\qquad\qquad\quad+S_{3,m}(W_{3,t_{i}^n}-W_{3,t_{i-1}^n})+R(\Xi_{m,t_{i-1}^n},h_n)+R(Z_{m,t_{i-1}^n},h_n)+R(E_{m,t_{i-1}^n},h_n), \\
	\end{split}
\end{align*}
(\ref{Y}) and (\ref{eta}) imply that
\begin{align}
	\begin{split}
	\bar{X}_{2,t_{i}^n}-\bar{X}_{2,t_{i-1}^n}&=\Lambda_{x_2,m}\Psi_{m}^{-1}\Gamma_{m}S_{1,m}(W_{1,t_{i}^n}-W_{1,t_{i-1}^n})\\
	&\qquad\quad+\Lambda_{x_{2},m}\Psi_{m}^{-1}S_{4,m}(W_{4,t_{i}^n}-W_{4,t_{i-1}^n})+S_{3,m}(W_{3,t_{i}^n}-W_{3,t_{i-1}^n}),\label{X2app}
	\end{split}
\end{align}
where $\bar{X}_{2,t}$ denotes an approximation to $X_{2,t}$. 
Set $\bar{X}_{t}=(\bar{X}_{1,t}^{\top},\bar{X}_{2,t}^{\top})^{\top}$. 
We see from (\ref{X1app}) and (\ref{X2app}) that
\begin{align*}
	\begin{split}
	\bar{X}_{t_{i}^n}-\bar{X}_{t_{i-1}^n}
	&=\begin{pmatrix}
		\Lambda_{x_1,m}S_{1,m} & O_{p_1\times r_1}\\
		O_{p_2\times r_1} &\Lambda_{x_2,m}\Psi_{m}^{-1}\Gamma_{m}S_{1,m}
	\end{pmatrix}
	\begin{pmatrix}
	    W_{1,t_{i}^n}-W_{1,t_{i-1}^n}\\
		W_{1,t_{i}^n}-W_{1,t_{i-1}^n}
	\end{pmatrix}+\begin{pmatrix}
		S_{2,m} \\
		O_{p_2\times r_2} 
	\end{pmatrix}(
	W_{2,t_{i}^n}-W_{2,t_{i-1}^n})\\
	&\qquad\quad+\begin{pmatrix}
	O_{p_1\times r_3} \\
	S_{3,m}
	\end{pmatrix}
	(W_{3,t_{i}^n}-W_{3,t_{i-1}^n})
	+\begin{pmatrix}
	O_{p_1\times r_4}\\
    \Lambda_{x_2,m}\Psi_{m}^{-1}S_{4,m}
	\end{pmatrix}(
	W_{4,t_{i}^n}-W_{4,t_{i-1}^n})
	\end{split}
\end{align*}
as an approximation to $X_{t}$. The property of the Brownian motion implies that
\begin{align}
	\begin{pmatrix}
	W_{1,t_{i}^n}-W_{1,t_{i-1}^n}\\
	W_{1,t_{i}^n}-W_{1,t_{i-1}^n}
	\end{pmatrix}\sim N_{2r_1}\left(0,h_n\begin{pmatrix}
	\mathbb{I}_{r_{1}} & \mathbb{I}_{r_{1}}\\
	\mathbb{I}_{r_{1}} & \mathbb{I}_{r_{1}}
    \end{pmatrix}\right).\label{W1-1}
\end{align}
A standard computation implies that
\begin{align*}
	&\begin{pmatrix}
	\Lambda_{x_1,m}S_{1,m} & O_{p_1\times r_1}\\
	O_{p_2\times r_1} &\Lambda_{x_2,m}\Psi_{m}^{-1}\Gamma_{m}S_{1,m}
	\end{pmatrix}
	\begin{pmatrix}
	\mathbb{I}_{r_{1}} & \mathbb{I}_{r_{1}}\\
	\mathbb{I}_{r_{1}} & \mathbb{I}_{r_{1}}
	\end{pmatrix}
	\begin{pmatrix}
	\Lambda_{x_1,m}S_{1,m} & O_{p_1\times r_1}\\
	O_{p_2\times r_1} &\Lambda_{x_2,m}\Psi_{m}^{-1}\Gamma_{m}S_{1,m}
	\end{pmatrix}^{\top}\\
	&\qquad\qquad\qquad\qquad\qquad=\begin{pmatrix}
	\Lambda_{x_1,m}\Sigma_{\xi\xi,m}\Lambda_{x_1,m}^{\top} & \Lambda_{x_1,m}\Sigma_{\xi\xi,m}\Gamma_{m}^{\top}\Psi_{m}^{-1\top}\Lambda_{x_2,m}^{\top}\\
	\Lambda_{x_2,m}\Psi_{m}^{-1}\Gamma_{m}\Sigma_{\xi\xi,m}\Lambda_{x_1,m}^{\top}
	& \Lambda_{x_2,m}\Psi_{m}^{-1}\Gamma_{m}\Sigma_{\xi\xi,m}\Gamma_{m}^{\top}\Psi_{m}^{-1\top}\Lambda_{x_2,m}^{\top}
    \end{pmatrix},
\end{align*}
so that one gets
\begin{align*}
	\begin{split}
	&\begin{pmatrix}
	\Lambda_{x_1,m}S_{1,m} & O_{p_1\times r_1}\\
	O_{p_2\times r_1} &\Lambda_{x_2,m}\Psi_{m}^{-1}\Gamma_{m}S_{1,m}
	\end{pmatrix}
	\begin{pmatrix}
	W_{1,t_{i}^n}-W_{1,t_{i-1}^n}\\
	W_{1,t_{i}^n}-W_{1,t_{i-1}^n}
	\end{pmatrix}\\
	&\qquad\qquad\qquad\sim N_{p}\left(0,h_n\begin{pmatrix}
	\Lambda_{x_1,m}\Sigma_{\xi\xi,m}\Lambda_{x_1,m}^{\top}
	& \Lambda_{x_1,m}\Sigma_{\xi\xi,m}\Gamma_{m}^{\top}\Psi_{m}^{-1\top}\Lambda_{x_2,m}^{\top}\\
	\Lambda_{x_2,m}\Psi_{m}^{-1}\Gamma_{m}\Sigma_{\xi\xi,m}\Lambda_{x_1,m}^{\top}
	&\Lambda_{x_2,m}\Psi_{m}^{-1}\Gamma_{m}\Sigma_{\xi\xi,m}\Gamma_{m}^{\top}\Psi_{m}^{-1\top}\Lambda_{x_2,m}^{\top}
	\end{pmatrix}\right)
	\end{split}
\end{align*}
from (\ref{W1-1}). By an analogous manner, we have
\begin{align*}
	\begin{pmatrix}
	S_{2,m} \\
	O_{p_2\times r_2} 
	\end{pmatrix}(
	W_{2,t_{i}^n}-W_{2,t_{i-1}^n})&\sim N_{p}\left(0,h_n\begin{pmatrix}
	\Sigma_{\delta\delta,m} & O_{p_1\times p_2}\\
	O_{p_2\times p_1} & O_{p_2\times p_2}
	\end{pmatrix}\right),\\
	\begin{pmatrix}
	O_{p_1\times r_3} \\
	S_{3,m}
	\end{pmatrix}
	(W_{3,t_{i}^n}-W_{3,t_{i-1}^n})
	&\sim N_{p}\left(0,h_n\begin{pmatrix}
	O_{p_1\times p_1} & O_{p_1\times p_2}\\
	O_{p_2\times p_1} & \Sigma_{\varepsilon\varepsilon,m}
	\end{pmatrix}\right),\\
	\begin{split}
	\qquad\qquad\begin{pmatrix}
	O_{p_1\times r_4} \\
	\Lambda_{x_2,m}\Psi_{m}^{-1}S_{4,m}
	\end{pmatrix}(
	W_{4,t_{i}^n}-W_{4,t_{i-1}^n})&\sim N_{p}\left(0,h_n\begin{pmatrix}
	O_{p_1\times p_1} & O_{p_1\times p_2}\\
	O_{p_2\times p_1}&\Lambda_{x_2,m}\Psi_{m}^{-1}\Sigma_{\zeta\zeta,m}\Psi_{m}^{-1\top}\Lambda_{x_2,m}^{\top} 
	\end{pmatrix}\right).
	\end{split}
\end{align*}
Therefore, since $W_{1,t}$, $W_{2,t}$, $W_{3,t}$ and $W_{4,t}$ are independent, it follows that 
\begin{align*}
	\bar{X}_{t_{i}^n}-\bar{X}_{t_{i-1}^n}\sim N_{p}\bigl(0,h_n\Sigma_{m}(\theta_{m})\bigr).
\end{align*}
Hence, one has the following joint probability density function of $(\bar{X}_{t_{i}^n})_{0\leq i\leq n}$: 
\begin{align*}
	\prod_{i=1}^{n}\frac{1}{(2\pi)^{\frac{p}{2}}\det{\bigl\{h_n\Sigma_{m}(\theta_{m})}\bigr\}^{\frac{1}{2}}}\exp{\left\{-\frac{1}{2h_n}(\bar{x}_{t_{i}^n}-\bar{x}_{t_{i-1}^n})^{\top}\Sigma_{m}(\theta_{m})^{-1}(\bar{x}_{t_{i}^n}-\bar{x}_{t_{i-1}^n})\right\}}.
\end{align*}
Set the quasi-likelihood as follows:
\begin{align*}
	L_{m,n}(\theta_{m})=\prod_{i=1}^{n}\frac{1}{(2\pi )^{\frac{p}{2}}\det{\bigl\{h_n\Sigma_{m}(\theta_{m})}\bigr\}^{\frac{1}{2}}}\exp{\left\{-\frac{1}{2h_n}(X_{t_{i}^n}-X_{t_{i-1}^n})^{\top}\Sigma_{m}(\theta_{m})^{-1}(X_{t_{i}^n}-X_{t_{i-1}^n})\right\}}.
\end{align*}
Since
\begin{align*}
	&\quad\log L_{m,n}(\theta_{m})\\
	&=\sum_{i=1}^n\left\{-\frac{p}{2}\log(2\pi)-\frac{p}{2}\log h_n
	-\frac{1}{2}\log\det{\Sigma_{m}(\theta_{m})}-\frac{1}{2h_n}(X_{t_{i}^n}-X_{t_{i-1}^n})^{\top}\Sigma_{m}(\theta_{m})^{-1}(X_{t_{i}^n}-X_{t_{i-1}^n})\right\}\\
	&=-\frac{pn}{2}\log(2\pi)-\frac{pn}{2}\log h_n-\frac{n}{2}\log\det{\Sigma_{m}(\theta_{m})}-\frac{1}{2h_n}\sum_{i=1}^n\tr{\left\{\Sigma_{m}(\theta_{m})^{-1}(X_{t_{i}^n}-X_{t_{i-1}^n})(X_{t_{i}^n}-X_{t_{i-1}^n})^{\top}\right\}}\\
	&=-\frac{pn}{2}\log(2\pi)-\frac{pn}{2}\log h_n-\frac{n}{2}\log\det{\Sigma_{m}(\theta_{m})}-\frac{n}{2}\tr{\Bigl\{\Sigma_{m}(\theta_{m})^{-1}Q_{XX}\Bigr\}},
\end{align*}
we obtain the quasi-log likelihood function (\ref{quasilog}).
	\end{remark}
Let $\theta_{m,0}$ be the true parameter and
\begin{align*}
    \Delta_{m}=\frac{\partial^2}{\partial\theta_{m}^2}\vech{\Sigma_{m}(\theta_{m,0})}.
\end{align*}
Furthermore, we make the following assumptions.
\begin{enumerate}
	\renewcommand{\labelenumi}{{\textbf{[G]}}}
		\item $\Sigma_{m}(\theta_{m})=\Sigma_{m}(\tilde{\theta}_{m})\Longrightarrow \theta_{m}=\tilde{\theta}_{m}$.
\end{enumerate}
\begin{enumerate}
	\renewcommand{\labelenumi}{{\textbf{[H]}}}\item $\rank{\Delta_{m}}=q_m$.
\end{enumerate}
\begin{remark}\label{identification}
Assumption $\bf{[G]}$ is an identifiability condition for parameter estimation 
and implies the consistency of the minimum contrast estimator $\hat{\theta}_{m,n}$. 
Like the factor model, 
the LISREL model does not have the identifiability condition for parameter estimation
when the parameters  are unconstrained.
To satisfy $\bf{[G]}$, some parameters may be fixed to 0 or 1, 
or some parameters are assumed to be the same value as other parameters. 
These constraints are determined from the theoretical viewpoint of each research field,
see Section 4 for an example of a model that satisfies $\bf{[G]}$. Unfortunately, in the LISREL model, 
simple sufficient conditions for $\bf{[G]}$ are not known. For the identification problem, e.g., see Everitt \cite{Everitt(1984)}. Assumption $\bf{[H]}$ implies that $\Delta_{m}^{\top}W_{m}(\theta_{m,0})^{-1}\Delta_{m}$ is non-singular, see Lemma \ref{Sigmaposlemma}.
\end{remark}

For the minimum contrast estimator, we obtain the following theorem.
\begin{theorem}\label{thetatheorem}
Under 
$\bf{[A1]}$-$\bf{[A2]}$, $\bf{[B1]}$-$\bf{[B3]}$, $\bf{[C1]}$-$\bf{[C3]}$, $\bf{[D1]}$-$\bf{[D2]}$, $\bf{[E]}$, $\bf{[F]}$, 
$\bf{[G]}$ and $\bf{[H]}$,
as $h_n\longrightarrow0$ and $nh_n\longrightarrow\infty$, 
\begin{align*}
	\hat{\theta}_{m,n}\stackrel{P_{\theta_{m,0}\ } }{\longrightarrow}\theta_{m,0}.
\end{align*}
In addition, as $nh_n^2\longrightarrow0$,
\begin{align*}
	\sqrt{n}(\hat{\theta}_{m,n}-\theta_{m,0})\stackrel{d}{\longrightarrow}N_{q_m}\bigl(0,(\Delta_{m}^{\top}W_{m}(\theta_{m,0})^{-1}\Delta_{m})^{-1}\bigr).
\end{align*}
\end{theorem}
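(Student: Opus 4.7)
The plan is to treat $\hat\theta_{m,n}$ as a classical minimum-contrast estimator: consistency will come from uniform convergence of $\mathbb{F}_{m,n}$ to a deterministic limit with a unique zero at $\theta_{m,0}$, and asymptotic normality from a mean-value expansion of the first-order condition together with Theorem \ref{Qztheorem}. For consistency, on the event $\{Q_{XX}>0\}$, whose probability tends to one by the first part of Theorem \ref{Qztheorem}, one has $\mathbb{F}_{m,n}(\theta_m)=F(Q_{XX},\Sigma_m(\theta_m))$ with $F$ as in (\ref{F1}). Joint continuity of $F$ in $(Q,\Sigma)$, continuity of $\theta_m\mapsto\Sigma_m(\theta_m)$, and compactness of $\Theta_m$, combined with $Q_{XX}\stackrel{P}{\longrightarrow}\Sigma_m(\theta_{m,0})$, give
\begin{align*}
\sup_{\theta_m\in\Theta_m}\bigl|\mathbb{F}_{m,n}(\theta_m)-F_\infty(\theta_m)\bigr|\stackrel{P}{\longrightarrow}0,\qquad F_\infty(\theta_m):=F(\Sigma_m(\theta_{m,0}),\Sigma_m(\theta_m)).
\end{align*}
Since $F_\infty(\theta_m)$ is the Kullback-Leibler divergence between the centered Gaussian laws with covariances $\Sigma_m(\theta_{m,0})$ and $\Sigma_m(\theta_m)$, one has $F_\infty\ge 0$ with equality iff $\Sigma_m(\theta_m)=\Sigma_m(\theta_{m,0})$, which by $\bf{[G]}$ forces $\theta_m=\theta_{m,0}$. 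A standard argmin argument then delivers $\hat\theta_{m,n}\stackrel{P}{\longrightarrow}\theta_{m,0}$.

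For asymptotic normality, consistency and $\theta_{m,0}\in\Int\Theta_m$ ensure $\partial_{\theta_m}\mathbb{F}_{m,n}(\hat\theta_{m,n})=0$ eventually, and a mean-value expansion yields
\begin{align*}
\sqrt{n}(\hat\theta_{m,n}-\theta_{m,0})=-\bigl[\partial^2_{\theta_m\theta_m^\top}\mathbb{F}_{m,n}(\bar\theta)\bigr]^{-1}\sqrt{n}\,\partial_{\theta_m}\mathbb{F}_{m,n}(\theta_{m,0})
\end{align*}
for some $\bar\theta$ between $\hat\theta_{m,n}$ and $\theta_{m,0}$. Differentiating the log-likelihood form of $F$ via $\partial\log\det\Sigma=\tr(\Sigma^{-1}\partial\Sigma)$ and $\partial\Sigma^{-1}=-\Sigma^{-1}(\partial\Sigma)\Sigma^{-1}$, then translating to $\vech$ using $\vec(A)=\mathbb{D}_p\vech(A)$ for symmetric $A$, I obtain
\begin{align*}
\partial_{\theta_m}\mathbb{F}_{m,n}(\theta_{m,0})=-2\Delta_m^\top W_m(\theta_{m,0})^{-1}\vech(Q_{XX}-\Sigma_m(\theta_{m,0})),
\end{align*}
where the appearance of $W_m(\theta_{m,0})^{-1}$ relies on the Magnus-Neudecker identity that $\tfrac12\mathbb{D}_p^\top(\Sigma^{-1}\otimes\Sigma^{-1})\mathbb{D}_p$ coincides with the inverse of $W_m(\theta_{m,0})$. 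Theorem \ref{Qztheorem} then gives $\sqrt{n}\,\partial_{\theta_m}\mathbb{F}_{m,n}(\theta_{m,0})\stackrel{d}{\longrightarrow}N_{q_m}\bigl(0,4\Delta_m^\top W_m(\theta_{m,0})^{-1}\Delta_m\bigr)$. An analogous second-derivative calculation, combined with $\bar\theta\stackrel{P}{\longrightarrow}\theta_{m,0}$ and $Q_{XX}\stackrel{P}{\longrightarrow}\Sigma_m(\theta_{m,0})$ (which kill the residual terms proportional to $Q_{XX}-\Sigma_m(\theta_{m,0})$), produces $\partial^2_{\theta_m\theta_m^\top}\mathbb{F}_{m,n}(\bar\theta)\stackrel{P}{\longrightarrow}2\Delta_m^\top W_m(\theta_{m,0})^{-1}\Delta_m$, non-singular by $\bf{[H]}$ and Lemma \ref{Sigmaposlemma}. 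Slutsky's theorem finally collapses the sandwich variance to the advertised $(\Delta_m^\top W_m(\theta_{m,0})^{-1}\Delta_m)^{-1}$.

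The main obstacle is matrix-calculus bookkeeping. The covariance $\Sigma_m(\theta_m)$ is a highly nonlinear function of the eight parameter blocks $\Lambda_{x_1,m},\Lambda_{x_2,m},\Gamma_m,\Psi_m,\Sigma_{\xi\xi,m},\Sigma_{\delta\delta,m},\Sigma_{\varepsilon\varepsilon,m},\Sigma_{\zeta\zeta,m}$, and differentiation has to pass cleanly through $\Psi_m^{-1}$ via $\partial\Psi_m^{-1}=-\Psi_m^{-1}(\partial\Psi_m)\Psi_m^{-1}$, so one must be careful in identifying $\Delta_m$ explicitly and establishing uniform control of the second derivatives of $\mathbb{F}_{m,n}$ on a shrinking neighborhood of $\theta_{m,0}$, so that the mean-value step is justified. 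Once this bookkeeping and the duplication-matrix identity are in hand, the remainder of the argument is a direct application of the continuous mapping theorem, Slutsky's theorem, and Theorem \ref{Qztheorem}.
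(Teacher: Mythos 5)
Your proposal is correct and follows the same overall skeleton as the paper's proof: uniform convergence of the contrast function plus the identification condition $\bf{[G]}$ for consistency (the paper's Lemma \ref{Fproblemma} is exactly your uniform-convergence step), and a Taylor expansion of the first-order condition combined with Theorem \ref{Qztheorem} and Slutsky's theorem for asymptotic normality. The one place you genuinely diverge is in the computation of the score and the identification step: the paper works throughout with Shapiro's quadratic-form representation $F=(\vech Q_{XX}-\vech\Sigma_m(\theta_m))^{\top}V(Q_{XX},\Sigma_m(\theta_m))(\vech Q_{XX}-\vech\Sigma_m(\theta_m))$, deducing identification from positive definiteness of $V$ and obtaining the score as $2\Delta_m^{\top}\tilde V_{m,n}(\theta_{m,0})\sqrt{n}(\vech Q_{XX}-\vech\Sigma_m(\theta_{m,0}))$ plus an extra term involving $\partial_{\theta_m}\tilde V_{m,n}$ that must be shown to be $o_p(1)$; you instead use the Kullback--Leibler nonnegativity of $F$ for identification and differentiate the log-det/trace form directly, which yields the exact expression $-2\Delta_m^{\top}W_m(\theta_{m,0})^{-1}\vech(Q_{XX}-\Sigma_m(\theta_{m,0}))$ via the duplication-matrix identity, with no residual term to dispose of. Both routes are valid and of comparable length; yours is slightly cleaner at the score, the paper's has the advantage that the same $V$-representation is reused in the proofs of Theorems 3 and 4. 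Two small points of care: the mean-value form you quote with a single intermediate point $\bar\theta$ is not literally valid for a vector-valued gradient, so you should use the integral form $\int_0^1\partial^2_{\theta_m}\mathbb{F}_{m,n}(\theta_{m,0}+\lambda(\hat\theta_{m,n}-\theta_{m,0}))\,d\lambda$ as the paper does (or argue componentwise); and the nonsingularity of the limiting Hessian is exactly the paper's Lemma \ref{det}, which needs $\bf{[H]}$ together with Lemma \ref{Sigmaposlemma}, as you indicate.
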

Next, we consider the goodness-of-fit test. 
The statistical hypothesis test is as follows:
\begin{align*}
	\left\{
	\begin{array}{ll}
	H_0: \Sigma_{m}(\theta_{m})=\Sigma_{m^{*}}(\theta_{m^*}),\\
	H_1: \Sigma_{m}(\theta_{m})\neq\Sigma_{m^{*}}(\theta_{m^*}),
	\end{array}
	\right.
\end{align*}
where $m^{*}\in\mathbb{N}$ is a model number. Set
\begin{align*}
	L_{n}(\Sigma)=\prod_{i=1}^{n}\frac{1}{(2\pi)^{\frac{p}{2}}\det{(h_n\Sigma)}^{\frac{1}{2}}}\exp{\left\{-\frac{1}{2h_n}(X_{t_{i}^n}-X_{t_{i-1}^n})^{\top}\Sigma^{-1}(X_{t_{i}^n}-X_{t_{i-1}^n})\right\}},
\end{align*}
where $\Sigma\in\mathbb{R}^{p\times p}$ is a positive definite matrix. 
Since
\begin{align*}
    L_{n}(\Sigma_{m^*}(\theta_{m^{*}}))=L_{m^*,n}(\theta_{m^{*}}),
\end{align*}
the quasi-likelihood ratio $\lambda_{m^{*},n}$ is defined as
\begin{align*}
	\lambda_{m^{*},n}=\frac{\max_{\theta_{m^{*}}\in\Theta_{m^{*}}}L_{m^*,n}(\theta_{m^{*}})}{\max_{\Sigma>0}L_{n}(\Sigma)},
\end{align*}
where $\Sigma\in\mathbb{R}^{p\times p}$ is a positive definite matrix. It follows that
\begin{align*}
	-2\log\lambda_{m^{*},n}&=-2\max_{\theta_{m^{*}}\in\Theta_{m^{*}}}\ell_{m^*,n}(\theta_{m^{*}})+2\max_{\Sigma>0}\ell_{n}(\Sigma)\\
	&=-2\left\{-\frac{pn}{2}\log(2\pi)-\frac{pn}{2}\log h_n-\frac{n}{2}\log\det{\Sigma_{m^{*}}(\hat{\theta}_{m^{*},n})}
	-\frac{n}{2}\tr{\Bigl\{\Sigma_{m^{*}}(\hat{\theta}_{m^{*},n})^{-1}Q_{XX}\Bigr\}}\right\}\\
	&\quad +2\left\{-\frac{pn}{2}\log(2\pi)-\frac{pn}{2}\log h_n-\frac{n}{2}\log\det{Q_{XX}}-\frac{np}{2}\right\}\\
	&=n\left\{\log\det{\Sigma_{m^{*}}(\hat{\theta}_{m^{*},n})}-\log\det{Q_{XX}+\tr{\Bigl\{\Sigma_{m^{*}}(\hat{\theta}_{m^{*},n})^{-1}Q_{XX}\Bigr\}}-p}\right\}\\
	&=n\mathbb{F}_{m^*,n}(\hat{\theta}_{m^{*},n})
\end{align*}
as $Q_{XX}>0$. The quasi-likelihood ratio test statistic is given by
\begin{align*}
	\mathbb{T}_{m^*,n}=n\mathbb{F}_{m^*,n}(\hat{\theta}_{m^{*},n}).
\end{align*}
The asymptotic result of the test statistic $\mathbb{T}_{m^*,n}$ is as follows.
\begin{theorem}
Under 
$\bf{[A1]}$-$\bf{[A2]}$, $\bf{[B1]}$-$\bf{[B3]}$, $\bf{[C1]}$-$\bf{[C3]}$, $\bf{[D1]}$-$\bf{[D2]}$, 
$\bf{[E]}$, $\bf{[F]}$, $\bf{[G]}$ and $\bf{[H]}$,
as $h_n\longrightarrow0$ , $nh_n\longrightarrow\infty$ and $nh_n^2\longrightarrow 0$,
\begin{align*}
	\mathbb{T}_{m^*,n}\stackrel{d}{\longrightarrow}\chi^2_{\bar{p}-q_{m^*}}
\end{align*}
under $H_0$.
\end{theorem}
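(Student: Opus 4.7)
Throughout I work under $H_0$ with true parameter $\theta_{m^*,0}\in\Theta_{m^*}$, so that Theorems 1 and 2 apply with $\Sigma_0:=\Sigma_{m^*}(\theta_{m^*,0})$ and $W_0:=W_{m^*}(\theta_{m^*,0})$. Since $Q_{XX}\stackrel{P}{\longrightarrow}\Sigma_0>0$ by Theorem 1 (using $\bf{[B2]}$, $\bf{[C2]}$, $\bf{[E]}$, $\bf{[F]}$) and $\hat\theta_{m^*,n}\stackrel{P}{\longrightarrow}\theta_{m^*,0}$ by Theorem 2, we have $\PP(Q_{XX}>0)\to 1$, and on this event Shapiro's quadratic representation yields
\begin{align*}
    \mathbb{T}_{m^*,n}=n\bigl(\vech Q_{XX}-\vech\Sigma_{m^*}(\hat\theta_{m^*,n})\bigr)^\top V\bigl(Q_{XX},\Sigma_{m^*}(\hat\theta_{m^*,n})\bigr)\bigl(\vech Q_{XX}-\vech\Sigma_{m^*}(\hat\theta_{m^*,n})\bigr).
\end{align*}
By continuity, $V(Q_{XX},\Sigma_{m^*}(\hat\theta_{m^*,n}))\stackrel{P}{\longrightarrow}V^*$, where $V^*$ is the double integral evaluated at $Q_{XX}=\Sigma_0$, and a classical duplication-matrix calculation will supply the key identity $V^*W_0=\mathbb{I}_{\bar p}$.

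Second, I linearise $\hat\theta_{m^*,n}$ via the first-order condition $\partial_\theta\mathbb{F}_{m^*,n}(\hat\theta_{m^*,n})=0$. Set $g_n(\theta):=\vech Q_{XX}-\vech\Sigma_{m^*}(\theta)$ and $u_n:=\sqrt n\,g_n(\theta_{m^*,0})$. A direct computation, discarding lower-order terms that are $o_P(1)$ because $g_n(\theta_{m^*,0})=O_P(n^{-1/2})$, shows $\partial^2_\theta\mathbb{F}_{m^*,n}(\theta_{m^*,0})\stackrel{P}{\longrightarrow}2\Delta_{m^*}^\top V^*\Delta_{m^*}$, which is invertible by $\bf{[H]}$ and Lemma \ref{Sigmaposlemma}. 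Taylor expansion about $\theta_{m^*,0}$ then yields
\begin{align*}
    \sqrt n(\hat\theta_{m^*,n}-\theta_{m^*,0})=(\Delta_{m^*}^\top V^*\Delta_{m^*})^{-1}\Delta_{m^*}^\top V^* u_n+o_P(1),
\end{align*}
and the delta method applied to $\vech\Sigma_{m^*}$ gives $\sqrt n\,g_n(\hat\theta_{m^*,n})=(\mathbb{I}_{\bar p}-P)u_n+o_P(1)$ with $P:=\Delta_{m^*}(\Delta_{m^*}^\top V^*\Delta_{m^*})^{-1}\Delta_{m^*}^\top V^*$; note $P^2=P$ and $V^*P=P^\top V^*$, so that $(\mathbb{I}_{\bar p}-P)^\top V^*(\mathbb{I}_{\bar p}-P)=V^*(\mathbb{I}_{\bar p}-P)$.

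Consequently $\mathbb{T}_{m^*,n}=u_n^\top V^*(\mathbb{I}_{\bar p}-P)u_n+o_P(1)$. Theorem 1 supplies $u_n\stackrel{d}{\longrightarrow}Z\sim N_{\bar p}(0,W_0)$, and the continuous mapping theorem gives $\mathbb{T}_{m^*,n}\stackrel{d}{\longrightarrow}Z^\top V^*(\mathbb{I}_{\bar p}-P)Z$. Since a quadratic form $Z^\top A Z$ with $Z\sim N_{\bar p}(0,W_0)$ has the $\chi^2_r$ distribution iff $AW_0$ is idempotent of rank $r$, it suffices to verify that
\begin{align*}
    V^*(\mathbb{I}_{\bar p}-P)W_0=\mathbb{I}_{\bar p}-V^*\Delta_{m^*}(\Delta_{m^*}^\top V^*\Delta_{m^*})^{-1}\Delta_{m^*}^\top
\end{align*}
is idempotent with trace $\bar p-q_{m^*}$; both properties follow at once from $V^*W_0=\mathbb{I}_{\bar p}$ combined with $\rank{\Delta_{m^*}}=q_{m^*}$ in $\bf{[H]}$. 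The main obstacle is the stochastic expansion of $\hat\theta_{m^*,n}$: controlling the $o_P(1)$ remainder requires uniform convergence of $\partial^2_\theta\mathbb{F}_{m^*,n}(\theta)$ on a neighbourhood of $\theta_{m^*,0}$, which rests on the moment bounds in $\bf{[A1]}$--$\bf{[D1]}$, the ergodicity hypotheses, and the CLT for $Q_{XX}$ from Theorem 1. The remaining linear-algebra identities, in particular $V^*W_0=\mathbb{I}_{\bar p}$, are routine consequences of duplication-matrix calculus via $\mathbb{D}_p^\top N_p=\mathbb{D}_p^\top$ and the commutation of $N_p$ with $A\otimes A$ for symmetric $A$.
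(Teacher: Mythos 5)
Your proof is correct and follows essentially the same route as the paper's: both reduce $\mathbb{T}_{m^*,n}$ to the quadratic form $u_n^\top W_0^{-1}(\mathbb{I}_{\bar p}-P)u_n+o_P(1)$ via the linearization $\sqrt n(\hat\theta_{m^*,n}-\theta_{m^*,0})=(\Delta_{m^*}^\top W_0^{-1}\Delta_{m^*})^{-1}\Delta_{m^*}^\top W_0^{-1}u_n+o_P(1)$ together with the convergence of the weight matrix to $W_0^{-1}$, and then identify the limit as $\chi^2_{\bar p-q_{m^*}}$ through an idempotent matrix of rank $\bar p-q_{m^*}$ guaranteed by $\bf{[H]}$. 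The only cosmetic differences are that the paper arrives at the quadratic form by a second-order Taylor expansion of $\mathbb{F}_{m^*,n}$ itself rather than by linearizing the residual inside Shapiro's representation, and that it symmetrizes with $W_0^{\pm 1/2}$ before invoking the chi-square result for the projected Gaussian, whereas you apply the (equally valid, since your $V^*(\mathbb{I}_{\bar p}-P)$ is symmetric) criterion that $AW_0$ be idempotent of the stated rank.
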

From Theorem 3, we can construct the test of asymptotic significance level $\alpha\in(0,1)$. Set the rejection region as
\begin{align*}
	\bigl\{t_{m^*,n}>\chi^2_{\bar{p}-q_{m^*}}(\alpha)\bigr\},
\end{align*}
where $t_{m^*,n}$ is the observed value of the test statistic $\mathbb{T}_{m^*,n}$. 

Finally, we investigate the consistency of the test.
Let
\begin{align*}
	\mathbb{U}_{m^*}(\theta_{m^*})=F(\Sigma_{m}(\theta_{m,0}),\Sigma_{m^*}(\theta_{m^*})).
\end{align*}
$\bar{\theta}_{m^*}$ is defined as
\begin{align*}
    \mathbb{U}_{m^*}(\bar{\theta}_{m^*})=\inf_{\theta_{m^*}\in\Theta_{m^*}} \mathbb{U}_{m^*}(\theta_{m^*}).
\end{align*}
In addition, we make the following assumption:
\begin{enumerate}
	\renewcommand{\labelenumi}{{\textbf{[I]}}}
	\item $\mathbb{U}_{m^*}(\theta_{m^*})=\mathbb{U}_{m^*}(\tilde{\theta}_{m^*})\Longrightarrow \theta_{m^*}=\tilde{\theta}_{m^*}$.
\end{enumerate}
\begin{remark}
Assumption $\textbf{[I]}$ implies that $\hat{\theta}_{m^*,n}\stackrel{P}{\longrightarrow}\bar{\theta}_{m^*}$ under $H_1$, see Lemma \ref{starconslemma}. 
\end{remark}

We have the following theorem.
\begin{theorem}
Under 
$\bf{[A1]}$-$\bf{[A2]}$, $\bf{[B1]}$-$\bf{[B3]}$, $\bf{[C1]}$-$\bf{[C3]}$, $\bf{[D1]}$-$\bf{[D2]}$, 
$\bf{[E]}$, $\bf{[F]}$ and $\bf{[I]}$,
as $h_n\longrightarrow0$ and $nh_n\longrightarrow\infty$, 
\begin{align*}
	\PP\left(\mathbb{T}_{m^*,n}>\chi^2_{\bar{p}-q_{m^*}}(\alpha)\right)\stackrel{}{\longrightarrow}1
\end{align*}
under $H_1$.
\end{theorem}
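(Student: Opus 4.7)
The plan is to show that $\mathbb{T}_{m^*,n}/n=\mathbb{F}_{m^*,n}(\hat{\theta}_{m^*,n})$ converges in probability to a strictly positive constant under $H_1$, which forces $\mathbb{T}_{m^*,n}$ itself to diverge to infinity and hence the probability of exceeding any fixed critical value to tend to one.

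First, invoking the remark preceding the statement, assumption $\textbf{[I]}$ together with Lemma \ref{starconslemma} yields $\hat{\theta}_{m^*,n}\stackrel{P}{\longrightarrow}\bar{\theta}_{m^*}$ under $H_1$. Combined with Theorem \ref{Qztheorem}, which gives $Q_{XX}\stackrel{P}{\longrightarrow}\Sigma_{m}(\theta_{m,0})$, together with the fact that $\Sigma_{m}(\theta_{m,0})$ is positive definite (so eventually $Q_{XX}>0$ and $\tilde{F}$ coincides with $F$ with probability tending to one), the continuity of $\theta_{m^*}\mapsto\Sigma_{m^*}(\theta_{m^*})$ and of $F$ on pairs of positive definite matrices combined with the continuous mapping theorem yield
\begin{equation*}
\mathbb{F}_{m^*,n}(\hat{\theta}_{m^*,n})=\tilde{F}(Q_{XX},\Sigma_{m^*}(\hat{\theta}_{m^*,n}))\stackrel{P}{\longrightarrow}F(\Sigma_{m}(\theta_{m,0}),\Sigma_{m^*}(\bar{\theta}_{m^*}))=\mathbb{U}_{m^*}(\bar{\theta}_{m^*}).
\end{equation*}

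The crucial step is to verify $\mathbb{U}_{m^*}(\bar{\theta}_{m^*})>0$. From the explicit formula $F(A,B)=\log\det B-\log\det A+\tr(B^{-1}A)-p$, an eigenvalue computation gives $F(A,B)=\sum_{i=1}^{p}(\lambda_{i}-\log\lambda_{i}-1)$, where $\lambda_{1},\dots,\lambda_{p}$ are the eigenvalues of $B^{-1}A$; since $x-\log x-1\geq 0$ with equality iff $x=1$, one has $F(A,B)\geq 0$ with equality iff $A=B$. Under $H_1$, $\Sigma_{m}(\theta_{m,0})\neq\Sigma_{m^*}(\theta_{m^*})$ for every $\theta_{m^*}\in\Theta_{m^*}$, so in particular $F(\Sigma_{m}(\theta_{m,0}),\Sigma_{m^*}(\bar{\theta}_{m^*}))>0$, i.e., $\mathbb{U}_{m^*}(\bar{\theta}_{m^*})>0$.

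Consequently $\mathbb{T}_{m^*,n}=n\,\mathbb{F}_{m^*,n}(\hat{\theta}_{m^*,n})\stackrel{P}{\longrightarrow}\infty$, and because $\chi^{2}_{\bar{p}-q_{m^*}}(\alpha)$ is a fixed finite constant, $\PP(\mathbb{T}_{m^*,n}>\chi^{2}_{\bar{p}-q_{m^*}}(\alpha))\longrightarrow 1$. The main obstacle is to establish the strict positivity $\mathbb{U}_{m^*}(\bar{\theta}_{m^*})>0$ rigorously: pointwise positivity follows from the Stein-type discrepancy argument above, but one needs to combine it with compactness of $\Theta_{m^*}$ to ensure that the infimum defining $\bar{\theta}_{m^*}$ is attained and remains strictly positive; the convergence of the misspecified estimator is already packaged in Lemma \ref{starconslemma}, and the rest of the argument is a direct application of Slutsky-type reasoning.
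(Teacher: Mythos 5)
Your proposal is correct and follows the same skeleton as the paper's proof: consistency $\hat{\theta}_{m^*,n}\stackrel{P}{\longrightarrow}\bar{\theta}_{m^*}$ via Lemma \ref{starconslemma}, convergence $\tfrac{1}{n}\mathbb{T}_{m^*,n}\stackrel{P}{\longrightarrow}\mathbb{U}_{m^*}(\bar{\theta}_{m^*})$, strict positivity of the limit, and divergence of the statistic past the fixed critical value (the paper packages this last step as Lemma \ref{kitagawa}). You differ in two sub-steps. For the convergence of $\tfrac{1}{n}\mathbb{T}_{m^*,n}$ you use joint continuity of $F$ and the continuous mapping theorem applied to the pair $(Q_{XX},\hat{\theta}_{m^*,n})$, whereas the paper splits off the uniform convergence $\sup_{\theta_{m^*}}|\tilde{F}(Q_{XX},\Sigma_{m^*}(\theta_{m^*}))-F(\Sigma_m(\theta_{m,0}),\Sigma_{m^*}(\theta_{m^*}))|\stackrel{P}{\longrightarrow}0$ from (\ref{Fprob}); both are valid, and the uniform version is what the paper has already established anyway. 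For positivity you use the eigenvalue identity $F(A,B)=\sum_i(\lambda_i-\log\lambda_i-1)\geq 0$ with equality iff $A=B$, which is a clean, elementary alternative to the paper's route via Shapiro's quadratic-form representation and the positive definiteness of $V$ (Lemmas \ref{Sigmaposlemma}--\ref{Vposlemma}); your version avoids the vectorization machinery but needs $\Sigma_{m^*}(\bar{\theta}_{m^*})>0$, i.e., Lemma \ref{Sigmaposlemma}, just the same. One small correction: the ``main obstacle'' you flag at the end is not actually one --- no uniform lower bound of $\mathbb{U}_{m^*}$ over $\Theta_{m^*}$ is required, only positivity at the single point $\bar{\theta}_{m^*}$, which follows directly from $H_1$ excluding $\Sigma_{m^*}(\theta_{m^*})=\Sigma_m(\theta_{m,0})$ for every $\theta_{m^*}\in\Theta_{m^*}$.
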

\begin{remark}
The goodness-of-fit test has several problems. For example, if the tests with the significance level $\alpha$ are used repeatedly, the overall significance level is not $\alpha$. See, e.g., Bentler and Bonett \cite{Bentler(1980)} for problems with 
the goodness-of-fit test. However, the goodness-of-fit test is one of 
the most popular methods for model evaluation in SEM; 
see, e.g., Mcdonald \cite{McDonald(2002)}. Thus, we consider only the goodness-of-fit test as a model evaluation method in this paper and leave the other methods for future work.
\end{remark}
\subsection{Non-ergodic case}
We investigate the non-ergodic case, where $\bf{[A2]}$, $\bf{[B3]}$, $\bf{[C3]}$ and $\bf{[D2]}$ are not assumed and $T$ is fix. In the non-ergodic case, the following results similar to the ergodic case hold.
\begin{theorem}\label{Qtheoremnon}
Under 
$\bf{[A1]}$, $\bf{[B1]}$-$\bf{[B2]}$, $\bf{[C1]}$-$\bf{[C2]}$, $\bf{[D1]}$, $\bf{[E]}$ and $\bf{[F]}$,
as $h_n\longrightarrow0$,
\begin{align*}
	Q_{XX}\stackrel{P_{\theta_{m}}\ }{\longrightarrow} \Sigma_{m}(\theta_{m})
\end{align*}
and
\begin{align*}
    \sqrt{n}(\vech{Q_{XX}}-\vech{\Sigma_{m}(\theta_{m})})\stackrel{d}{\longrightarrow} 
    N_{\bar{p}}(0,W_{m}(\theta_{m})). 
\end{align*}
\end{theorem}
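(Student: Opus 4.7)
The plan is to mirror the proof of Theorem~\ref{Qztheorem}, making crucial use of the fact that the volatilities $S_{1,m},\ldots,S_{4,m}$ and all loading matrices are constant, so that the martingale part of each increment of $X$ is exactly Gaussian and i.i.d.; this allows me to bypass the ergodic assumptions \textbf{[A2]}, \textbf{[B3]}, \textbf{[C3]}, \textbf{[D2]} entirely. Stacking the four latent processes into $Z_{m,t}=(\xi_{m,t}^{\top},\delta_{m,t}^{\top},\varepsilon_{m,t}^{\top},\zeta_{m,t}^{\top})^{\top}$, the relations (\ref{X})--(\ref{eta}) give $X_{t}=KZ_{m,t}$ for the constant block matrix
\[
K=\begin{pmatrix}\Lambda_{x_{1},m}&\mathbb{I}_{p_{1}}&O_{p_{1}\times p_{2}}&O_{p_{1}\times k_{2,m}}\\ \Lambda_{x_{2},m}\Psi_{m}^{-1}\Gamma_{m}&O_{p_{2}\times p_{1}}&\mathbb{I}_{p_{2}}&\Lambda_{x_{2},m}\Psi_{m}^{-1}\end{pmatrix},
\]
so that $X_t$ satisfies $dX_{t}=KB(Z_{m,t})\,dt+KS\,dW_{t}$ with $S=\diag(S_{1,m},S_{2,m},S_{3,m},S_{4,m})$ and $W=(W_{1,t}^{\top},W_{2,t}^{\top},W_{3,t}^{\top},W_{4,t}^{\top})^{\top}$. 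The same block calculation as the one leading to (\ref{sigmam}) gives $KSS^{\top}K^{\top}=\Sigma_{m}(\theta_{m})$. I would then decompose each increment as $X_{t_{i}^{n}}-X_{t_{i-1}^{n}}=M_{i}+R_{i}$, where $M_{i}=KS(W_{t_{i}^{n}}-W_{t_{i-1}^{n}})$ and $R_{i}=\int_{t_{i-1}^{n}}^{t_{i}^{n}}KB(Z_{m,s})\,ds$, and the decisive observation is that $\{M_{i}\}_{i=1}^{n}$ is an i.i.d.~sequence of $N_{p}(0,h_{n}\Sigma_{m}(\theta_{m}))$ vectors.

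Inserting this split into $Q_{XX}$ produces the three pieces
\[
Q_{XX}=\frac{1}{T}\sum_{i=1}^{n}M_{i}M_{i}^{\top}+\frac{1}{T}\sum_{i=1}^{n}\bigl(M_{i}R_{i}^{\top}+R_{i}M_{i}^{\top}\bigr)+\frac{1}{T}\sum_{i=1}^{n}R_{i}R_{i}^{\top}.
\]
For the main piece, the rescaling $T^{-1}\sum M_{i}M_{i}^{\top}=n^{-1}\sum(h_{n}^{-1/2}M_{i})(h_{n}^{-1/2}M_{i})^{\top}$ exhibits it as a sample second moment of i.i.d.~$N_{p}(0,\Sigma_{m}(\theta_{m}))$ vectors, so the classical multivariate law of large numbers gives $T^{-1}\sum M_{i}M_{i}^{\top}\stackrel{P}{\to}\Sigma_{m}(\theta_{m})$, and the multivariate central limit theorem combined with the standard Wishart-type identity $\mathrm{Var}(\vech{(h_{n}^{-1}M_{i}M_{i}^{\top})})=W_{m}(\theta_{m})$ yields
\[
\sqrt{n}\bigl(\vech{(T^{-1}\textstyle\sum M_{i}M_{i}^{\top})}-\vech{\Sigma_{m}(\theta_{m})}\bigr)\stackrel{d}{\longrightarrow}N_{\bar{p}}(0,W_{m}(\theta_{m})).
\]
For the drift--drift piece, the moment bounds \textbf{[A1]}(b)--\textbf{[D1]}(b) together with the polynomial-growth assumptions (c) give $\E|R_{i}|^{\ell}\leq C_{\ell}h_{n}^{\ell}$ uniformly in $i$, so $\|T^{-1}\sum R_{i}R_{i}^{\top}\|=O_{p}(h_{n})$, and multiplication by $\sqrt{n}$ turns this into $O_{p}(\sqrt{Th_{n}})=o_{p}(1)$ since $T$ is fixed and $h_{n}\to 0$.

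The only remaining task is to show that the cross term is also $o_{p}(n^{-1/2})$. I would further split $R_{i}=KB(Z_{m,t_{i-1}^{n}})h_{n}+R_{i}'$ with $R_{i}'=\int_{t_{i-1}^{n}}^{t_{i}^{n}}K(B(Z_{m,s})-B(Z_{m,t_{i-1}^{n}}))\,ds$. The principal cross-term $\sum M_{i}B(Z_{m,t_{i-1}^{n}})^{\top}K^{\top}h_{n}$ is an exact martingale transform with respect to $\{\mathscr{F}^{n}_{i}\}$ because $B(Z_{m,t_{i-1}^{n}})$ is $\mathscr{F}^{n}_{i-1}$-measurable and $\E[M_{i}\mid\mathscr{F}^{n}_{i-1}]=0$; the orthogonality of martingale differences then bounds its $L^{2}$ norm by the square root of the predictable quadratic variation, which is of order $Th_{n}^{2}$, giving $O_{p}(h_{n}\sqrt{T})$. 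For $\sum M_{i}R_{i}'^{\top}$, the Lipschitz conditions \textbf{[A1]}(a)--\textbf{[D1]}(a) combined with the $L^{2}$-modulus $\E|Z_{m,s}-Z_{m,t_{i-1}^{n}}|^{2}=O(h_{n})$ give $\E|R_{i}'|^{2}=O(h_{n}^{3})$, so Cauchy--Schwarz furnishes the same order. Dividing by $T$ and multiplying by $\sqrt{n}$ then yields $O_{p}(\sqrt{h_{n}})\to 0$ for each piece. The main obstacle is precisely this cross-term bookkeeping, which requires simultaneously tracking the near-martingale structure and the Itô-type remainder; however, because $T$ is now fixed, no ergodic theorem and no uniform-in-$T$ control are needed, and the argument reduces to standard short-time expansions for diffusion increments of the kind developed in Kessler~\cite{kessler(1997)}.
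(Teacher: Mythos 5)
Your argument is correct, but it takes a genuinely different route from the paper: for this non-ergodic theorem the paper gives no self-contained proof at all, simply invoking Lemma 9 of Genon-Catalot and Jacod \cite{Genon(1993)} for the consistency and the stable central limit theorem of Jacod \cite{Jacod(1997)} for the asymptotic normality, i.e.\ it delegates to the general theory of realized covariance for It\^o semimartingales over a fixed time horizon. You instead exploit the specific structure of the model --- constant $S_{1,m},\dots,S_{4,m}$ and constant loadings --- to write $X_{t}=KZ_{m,t}$ and isolate an exactly i.i.d.\ Gaussian martingale part $M_{i}=KS(W_{t_{i}^{n}}-W_{t_{i-1}^{n}})\sim N_{p}(0,h_{n}KSS^{\top}K^{\top})$ with $KSS^{\top}K^{\top}=\Sigma_{m}(\theta_{m})$, so that the leading term of $Q_{XX}$ is a sample second moment of i.i.d.\ $N_{p}(0,\Sigma_{m}(\theta_{m}))$ vectors and the classical law of large numbers and central limit theorem (with the Wishart second-moment identity producing $W_{m}(\theta_{m})$) apply directly; the drift corrections are then killed by the bounds $\E|R_{i}|^{2}=O(h_{n}^{2})$, $\E|R_{i}'|^{2}=O(h_{n}^{3})$ and the martingale-difference orthogonality for the cross term, all of which survive multiplication by $\sqrt{n}$ because $\sqrt{n}h_{n}=\sqrt{Th_{n}}\to 0$ with $T$ fixed. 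Your order computations check out (the normalized cross terms are $O_{p}(\sqrt{h_{n}})$ and the drift--drift term is $O_{p}(\sqrt{Th_{n}})$), and your route buys a transparent, self-contained proof that makes visible exactly why no ergodicity and no rate condition beyond $h_{n}\to0$ are needed here; what it gives up is the generality of the cited results, which would continue to work (with an integrated-quarticity limit and stable convergence) if the volatilities were time-varying, whereas your reduction to an i.i.d.\ Gaussian array is tied to the constancy assumption.
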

\begin{theorem}\label{thetatheoremnon}
Under 
$\bf{[A1]}$, $\bf{[B1]}$-$\bf{[B2]}$, $\bf{[C1]}$-$\bf{[C2]}$, $\bf{[D1]}$, $\bf{[E]}$, $\bf{[F]}$, $\bf{[G]}$ and $\bf{[H]}$,
as $h_n\longrightarrow0$,
\begin{align*}
	\hat{\theta}_{m,n}\stackrel{P_{\theta_{m,0}}\ }{\longrightarrow} \theta_{m,0}
\end{align*}
and
\begin{align*}
	\sqrt{n}(\hat{\theta}_{m,n}-\theta_{m,0})\stackrel{d}{\longrightarrow}
	N_{q_m}\bigl(0,(\Delta_{m}^{\top}W_{m}(\theta_{m,0})^{-1}\Delta_{m})^{-1}\bigr).
\end{align*}
\end{theorem}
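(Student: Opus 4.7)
The plan is to run the standard minimum-contrast argument already used to establish Theorem \ref{thetatheorem}, feeding it with Theorem \ref{Qtheoremnon} in place of Theorem \ref{Qztheorem}. The crucial observation is that the construction of $\hat\theta_{m,n}$ through the contrast $\mathbb{F}_{m,n}(\theta_m)=\tilde F(Q_{XX},\Sigma_m(\theta_m))$ is purely algebraic once the two probabilistic inputs $Q_{XX}\stackrel{P}{\to}\Sigma_m(\theta_{m,0})$ and $\sqrt n(\vech Q_{XX}-\vech\Sigma_m(\theta_{m,0}))\stackrel{d}{\to}N_{\bar p}(0,W_m(\theta_{m,0}))$ are available. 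Theorem \ref{Qtheoremnon} provides exactly these inputs (without ergodicity, only under $h_n\to 0$), so the non-ergodic proof differs from the ergodic one only in which previous theorem is quoted.

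For consistency, I would first note that by Theorem \ref{Qtheoremnon} we have $Q_{XX}>0$ with probability tending to $1$, so eventually $\tilde V=V$. Continuity of $V(\cdot,\cdot)$ on a neighbourhood of $\{(\Sigma_m(\theta_{m,0}),\Sigma_m(\theta_m)):\theta_m\in\Theta_m\}$, together with the quadratic representation of $F$ from Shapiro \cite{Shapiro(1985)}, implies the uniform convergence
\begin{align*}
\sup_{\theta_m\in\Theta_m}\bigl|\mathbb{F}_{m,n}(\theta_m)-F(\Sigma_m(\theta_{m,0}),\Sigma_m(\theta_m))\bigr|\stackrel{P}{\longrightarrow}0.
\end{align*}
The limit is nonnegative and vanishes iff $\Sigma_m(\theta_m)=\Sigma_m(\theta_{m,0})$, which by \textbf{[G]} forces $\theta_m=\theta_{m,0}$. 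Compactness of $\Theta_m$ and the standard argmin continuous mapping argument then give $\hat\theta_{m,n}\stackrel{P}{\to}\theta_{m,0}$.

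For asymptotic normality, I would use the first-order condition $\partial_{\theta}\mathbb{F}_{m,n}(\hat\theta_{m,n})=0$ (valid with probability tending to one, since $\theta_{m,0}$ lies in the interior of $\Theta_m$) and Taylor expand around $\theta_{m,0}$ to obtain $\sqrt n(\hat\theta_{m,n}-\theta_{m,0})=-[\partial_\theta^2\mathbb{F}_{m,n}(\tilde\theta_{m,n})]^{-1}\sqrt n\,\partial_\theta\mathbb{F}_{m,n}(\theta_{m,0})$ for an intermediate point $\tilde\theta_{m,n}$. Differentiating the explicit quadratic representation of $F$ and using $Q_{XX}\stackrel{P}{\to}\Sigma_m(\theta_{m,0})$ (hence $V(Q_{XX},\Sigma_m(\tilde\theta_{m,n}))\stackrel{P}{\to}V(\Sigma_m(\theta_{m,0}),\Sigma_m(\theta_{m,0}))$) yields $\tfrac12\partial_\theta^2\mathbb{F}_{m,n}(\tilde\theta_{m,n})\stackrel{P}{\to}\Delta_m^\top V(\Sigma_m(\theta_{m,0}),\Sigma_m(\theta_{m,0}))\Delta_m$ and $\tfrac12\sqrt n\,\partial_\theta\mathbb{F}_{m,n}(\theta_{m,0})=-\Delta_m^\top V(\Sigma_m(\theta_{m,0}),\Sigma_m(\theta_{m,0}))\sqrt n(\vech Q_{XX}-\vech\Sigma_m(\theta_{m,0}))+o_P(1)$. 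Combining via Slutsky with the CLT from Theorem \ref{Qtheoremnon}, together with the duplication-matrix identity $2V(\Sigma,\Sigma)=W_m(\theta_{m,0})^{-1}$, gives the stated normal limit. Invertibility of $\Delta_m^\top W_m(\theta_{m,0})^{-1}\Delta_m$ is assured by \textbf{[H]} via Lemma \ref{Sigmaposlemma}.

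The main technical obstacle is the bookkeeping for the derivatives of $V(Q_{XX},\Sigma_m(\theta_m))$: one must argue that the contribution from differentiating $V$ in its second argument is negligible, which follows because at $\theta_{m,0}$ the quadratic factor $\vech Q_{XX}-\vech\Sigma_m(\theta_{m,0})$ is $O_P(n^{-1/2})$, and that the duplication-matrix manipulations giving $2V(\Sigma,\Sigma)=W_m(\theta_{m,0})^{-1}$ are handled by standard identities (see Harville \cite{Harville(1998)}). All of this content is purely deterministic linear algebra; the probabilistic input beyond Theorem \ref{Qtheoremnon} is nil, so the argument is strictly parallel to that of Theorem \ref{thetatheorem}.
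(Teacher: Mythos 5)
Your proposal is correct and follows essentially the same route as the paper, which proves this theorem by observing that $T$ is fixed so $nh_n^2=h_nT\to 0$ automatically, and then repeating the argument of Theorem \ref{thetatheorem} verbatim with Theorem \ref{Qtheoremnon} supplying the two probabilistic inputs in place of Theorem \ref{Qztheorem}. (Your normalization $2V(\Sigma,\Sigma)=W_m(\theta_{m,0})^{-1}$ differs by a constant from the paper's Lemma \ref{Vproblemma}, which asserts $V(\Sigma,\Sigma)=W_m(\theta_{m,0})^{-1}$, but this constant cancels in the sandwich form of the asymptotic covariance, so the conclusion is unaffected.)
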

\begin{theorem}\label{chitheoremnon}
Under 
$\bf{[A1]}$, $\bf{[B1]}$-$\bf{[B2]}$, $\bf{[C1]}$-$\bf{[C2]}$, $\bf{[D1]}$, $\bf{[E]}$, $\bf{[F]}$, $\bf{[G]}$ and $\bf{[H]}$,
as $h_n\longrightarrow0$,
\begin{align*}
	\mathbb{T}_{m^*,n}\stackrel{d}{\longrightarrow}\chi^2_{\bar{p}-q_{m^*}}
\end{align*}
under $H_0$.
\end{theorem}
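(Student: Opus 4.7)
The plan is to mirror the proof of the ergodic analogue (Theorem 3), substituting the non-ergodic convergence results from Theorems \ref{Qtheoremnon} and \ref{thetatheoremnon} for the ergodic ones. Write $\theta_0 = \theta_{m^*,0}$, $\Sigma_* = \Sigma_{m^*}(\theta_0)$, $W_* = W_{m^*}(\theta_0)$, and let $\Delta_*$ denote the Jacobian $\partial\vech\Sigma_{m^*}(\theta_0)/\partial\theta^\top$. Under $H_0$, Theorem \ref{Qtheoremnon} gives
\begin{align*}
Z_n := \sqrt{n}\bigl(\vech Q_{XX} - \vech \Sigma_*\bigr) \stackrel{d}{\longrightarrow} Z \sim N_{\bar p}(0, W_*),
\end{align*}
while Theorem \ref{thetatheoremnon} supplies $\hat\theta_{m^*,n} \stackrel{P}{\longrightarrow} \theta_0$ at the $\sqrt n$ rate. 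Since $\Sigma_* > 0$, $Q_{XX}$ is positive definite with probability tending to one, so eventually $\tilde F = F$ and the Shapiro quadratic-form representation of $\mathbb{F}_{m^*,n}$ applies.

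The central step is to linearize. Set $V_* := V(\Sigma_*, \Sigma_*) = \tfrac12\mathbb{D}_p^{+\top}(\Sigma_*^{-1}\otimes\Sigma_*^{-1})\mathbb{D}_p^+$; by continuity of $V(\cdot,\cdot)$ and the consistency results, $V(Q_{XX}, \Sigma_{m^*}(\hat\theta_{m^*,n})) \stackrel{P}{\longrightarrow} V_*$. A first-order Taylor expansion of $\vech\Sigma_{m^*}(\cdot)$ at $\theta_0$ yields
\begin{align*}
\sqrt{n}\bigl(\vech\Sigma_{m^*}(\hat\theta_{m^*,n}) - \vech\Sigma_*\bigr) = \Delta_*\sqrt{n}(\hat\theta_{m^*,n} - \theta_0) + o_p(1),
\end{align*}
and, combined with the first-order optimality condition for the minimizer $\hat\theta_{m^*,n}$, the asymptotic linearization
\begin{align*}
\sqrt{n}(\hat\theta_{m^*,n} - \theta_0) = (\Delta_*^\top V_* \Delta_*)^{-1}\Delta_*^\top V_* Z_n + o_p(1),
\end{align*}
where invertibility of $\Delta_*^\top V_* \Delta_*$ follows from \textbf{[H]} via Lemma \ref{Sigmaposlemma}. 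Substituting back,
\begin{align*}
\sqrt{n}\bigl(\vech Q_{XX} - \vech\Sigma_{m^*}(\hat\theta_{m^*,n})\bigr) = P Z_n + o_p(1), \qquad P = \mathbb{I}_{\bar p} - \Delta_*(\Delta_*^\top V_* \Delta_*)^{-1}\Delta_*^\top V_*,
\end{align*}
and consequently $\mathbb{T}_{m^*,n} \stackrel{d}{\longrightarrow} Z^\top M Z$ with $M = P^\top V_* P$.

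It remains to identify the limit as $\chi^2_{\bar p - q_{m^*}}$. The identity $V_* W_* V_* = V_*$ (classical for the GLS weight in covariance structure analysis; see Browne \cite{brown(1974)} and Shapiro \cite{Shapiro(1985)}) gives $M W_* M = M$, so $M W_*$ is an idempotent operator, and
\begin{align*}
\tr(M W_*) = \tr(V_* W_*) - \tr\bigl((\Delta_*^\top V_* \Delta_*)^{-1}\Delta_*^\top V_* W_* V_* \Delta_*\bigr) = \bar p - q_{m^*},
\end{align*}
using \textbf{[H]} and $V_* W_* V_* = V_*$. Hence $Z^\top M Z \sim \chi^2_{\bar p - q_{m^*}}$, as required. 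The main obstacle is the linearization of $\hat\theta_{m^*,n}$ together with the projector-algebra identity $MW_*M = M$; the Taylor-remainder control is standard, and since $T$ is fixed no ergodic averaging is needed, so the polynomial-growth bounds underlying Theorems \ref{Qtheoremnon} and \ref{thetatheoremnon} suffice.
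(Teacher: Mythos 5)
Your proposal is correct and follows essentially the same route as the paper: the paper proves Theorem \ref{chitheoremnon} by observing that, since $T$ is fixed and $nh_n^2=h_nT\to 0$, the argument of the ergodic case (Theorem 3) goes through verbatim with Theorems \ref{Qtheoremnon} and \ref{thetatheoremnon} in place of Theorems 1 and 2 — i.e., linearize $\sqrt{n}(\hat{\theta}_{m^*,n}-\theta_{m^*,0})$ via the first-order condition, substitute into the Shapiro quadratic form, and identify the limit as a Gaussian quadratic form with matrix $W_*^{-1}-W_*^{-1}\Delta_*(\Delta_*^{\top}W_*^{-1}\Delta_*)^{-1}\Delta_*^{\top}W_*^{-1}$. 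The only cosmetic differences are that you Taylor-expand $\vech\Sigma_{m^*}(\cdot)$ and use continuity of $V(\cdot,\cdot)$ rather than expanding $\mathbb{F}_{m^*,n}$ to second order in $\theta$, and you carry out explicitly the idempotency/trace computation (using $V_*=W_*^{-1}$, so $V_*W_*V_*=V_*$) that the paper delegates to Kusano and Uchida \cite{Kusano(2022)}.
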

\begin{theorem}\label{testtheoremnon}
Under 
$\bf{[A1]}$, $\bf{[B1]}$-$\bf{[B2]}$, $\bf{[C1]}$-$\bf{[C2]}$, $\bf{[D1]}$, $\bf{[E]}$, $\bf{[F]}$ and $\bf{[I]}$,
as $h_n\longrightarrow0$, 
\begin{align*}
	\PP\left(\mathbb{T}_{m^*,n}>\chi^2_{\bar{p}-q_{m^*}}(\alpha)\right)\stackrel{}{\longrightarrow}1
\end{align*}
under $H_1$.
\end{theorem}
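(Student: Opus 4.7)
The plan is to show that $\mathbb{T}_{m^*,n} = n\,\mathbb{F}_{m^*,n}(\hat\theta_{m^*,n}) \stackrel{P}{\longrightarrow} \infty$ under $H_1$; since the critical value $\chi^2_{\bar p - q_{m^*}}(\alpha)$ is a fixed constant, this immediately yields the conclusion $\PP(\mathbb{T}_{m^*,n}>\chi^2_{\bar p - q_{m^*}}(\alpha))\to 1$.

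First, I would invoke Lemma \ref{starconslemma} (flagged in the remark following $\textbf{[I]}$) to obtain $\hat\theta_{m^*,n} \stackrel{P}{\longrightarrow} \bar\theta_{m^*}$ under $H_1$. The two ingredients that lemma needs — the consistency $Q_{XX}\stackrel{P}{\longrightarrow} \Sigma_m(\theta_{m,0})$ in the non-ergodic setting, and the identifiability-type condition $\textbf{[I]}$ — are supplied respectively by Theorem \ref{Qtheoremnon} and by hypothesis.

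Second, I would establish strict positivity of $\mathbb{U}_{m^*}(\bar\theta_{m^*})$. Writing $F(A,B)=\log\det B-\log\det A+\tr(B^{-1}A)-p$ and letting $\lambda_1,\ldots,\lambda_p>0$ denote the eigenvalues of $B^{-1}A$, one has $F(A,B)=\sum_{i=1}^p(\lambda_i-1-\log\lambda_i)\geq 0$, with equality iff every $\lambda_i=1$, i.e.\ $A=B$. Under $H_1$ the true covariance $\Sigma_m(\theta_{m,0})$ lies outside the parametric family $\{\Sigma_{m^*}(\theta_{m^*}):\theta_{m^*}\in\Theta_{m^*}\}$, so in particular $\Sigma_m(\theta_{m,0})\neq\Sigma_{m^*}(\bar\theta_{m^*})$, and therefore $\mathbb{U}_{m^*}(\bar\theta_{m^*})>0$.

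Third, combining the two previous steps via the continuous mapping theorem — and noting that $Q_{XX}$ is positive definite with probability tending to one, so $\tilde F=F$ on the relevant event — I would conclude
\[
\mathbb{F}_{m^*,n}(\hat\theta_{m^*,n}) \stackrel{P}{\longrightarrow} F\bigl(\Sigma_m(\theta_{m,0}),\Sigma_{m^*}(\bar\theta_{m^*})\bigr)=\mathbb{U}_{m^*}(\bar\theta_{m^*})>0.
\]
Since $T=nh_n$ is fixed and $h_n\to 0$ forces $n\to\infty$, this gives $\mathbb{T}_{m^*,n}\stackrel{P}{\longrightarrow}\infty$, whence the claim. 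The main obstacle is the strict positivity $\mathbb{U}_{m^*}(\bar\theta_{m^*})>0$; this is a standard property of Stein's loss once the eigenvalue representation is observed, but it relies on the correct reading of $H_1$ as misspecification of the whole parametric family $\{\Sigma_{m^*}(\theta_{m^*})\}$, not merely inequality at a single $\theta_{m^*}$.
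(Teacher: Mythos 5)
Your proposal follows essentially the same route as the paper: consistency $\hat\theta_{m^*,n}\stackrel{P}{\longrightarrow}\bar\theta_{m^*}$ via Lemma \ref{starconslemma} (valid in the non-ergodic case since Theorem \ref{Qtheoremnon} supplies $Q_{XX}\stackrel{P}{\longrightarrow}\Sigma_m(\theta_{m,0})$ and $T=nh_n$ fixed forces $n\to\infty$), then $\tfrac1n\mathbb{T}_{m^*,n}\stackrel{P}{\longrightarrow}\mathbb{U}_{m^*}(\bar\theta_{m^*})>0$, then divergence of $\mathbb{T}_{m^*,n}$ past any fixed critical value. The only substantive difference is in how strict positivity of $\mathbb{U}_{m^*}(\bar\theta_{m^*})$ is obtained: you use the Stein-loss eigenvalue identity $F(A,B)=\sum_i(\lambda_i-1-\log\lambda_i)$, which is self-contained and elementary, whereas the paper uses Shapiro's quadratic representation $F=(\vech A-\vech B)^{\top}V(A,B)(\vech A-\vech B)$ together with positive definiteness of $V$ (Lemmas \ref{Sigmaposlemma}--\ref{Vposlemma}); both give $F(A,B)>0$ for $A\neq B$, and both require the same reading of $H_1$, namely that $\Sigma_m(\theta_{m,0})\neq\Sigma_{m^*}(\bar\theta_{m^*})$. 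Your final step ($\mathbb{T}_{m^*,n}\stackrel{P}{\longrightarrow}\infty$ implies the probability tends to one) is exactly what the paper packages as Lemma \ref{kitagawa}; the argument is sound, provided you justify the convergence $\mathbb{F}_{m^*,n}(\hat\theta_{m^*,n})\stackrel{P}{\longrightarrow}\mathbb{U}_{m^*}(\bar\theta_{m^*})$ carefully — the paper does this by splitting off the uniform-in-$\theta_{m^*}$ error from Lemma \ref{Fproblemma} rather than by a bare appeal to the continuous mapping theorem, since $\mathbb{F}_{m^*,n}$ depends on both the random argument $Q_{XX}$ and the random point $\hat\theta_{m^*,n}$.
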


\section{Examples and simulation results}
\subsection{True model}
Set $k_{1,m}=2$ and $k_{2,m}=1$. The stochastic process $X_{1,t}$ is defined as
the following factor model:
\begin{align*}
	X_{1,t}=\Lambda_{x_1,m}\xi_{m,t}+\delta_{m,t},
\end{align*}
where $\{X_{1,t}\}_{t\geq 0}$ is a four-dimensional observable vector process, 
$\{\xi_{m,t}\}_{t\geq 0}$ is a two-dimensional latent common factor vector process, 
$\{\delta_{m,t}\}_{t\geq 0}$ is a four-dimensional latent unique
factor vector process and
\begin{align*}
	\Lambda_{x_1,m}=
	\begin{pmatrix}
	1 & (\Lambda_{x_1,m})_{21} & 0 & 0\\
	0 & 0 & 1 &(\Lambda_{x_1,m})_{42}
	\end{pmatrix}^{\top}\in\mathbb{R}^{4\times 2},
\end{align*}
where $(\Lambda_{x_1,m})_{21}$ and $(\Lambda_{x_1,m})_{42}$ are not zero. 
The stochastic process $X_{2,t}$ is defined by the factor model as follows:
\begin{align*}
	X_{2,t}=\Lambda_{x_2,m}\eta_{m,t}+\varepsilon_{m,t},
\end{align*}
where $\{X_{2,t}\}_{t\geq 0}$ is a two-dimensional observable vector process, $\{\eta_{m,t}\}_{t\geq 0}$ is a one-dimensional latent common factor vector process, $\{\varepsilon_{m,t}\}_{t\geq 0}$ is a two-dimensional latent unique factor vector process and 
\begin{align*}
	\Lambda_{x_2,m}=\bigl(
		1,\ (\Lambda_{x_{2},m})_{21}\bigr)
	^{\top}\in\mathbb{R}^{2\times 1},
\end{align*}
where $(\Lambda_{x_2,m})_{21}$ is not zero.
Furthermore, the relationship between $\eta_{m,t}$ and $\xi_{m,t}$ is expressed as follows:
\begin{align*}
	\eta_{m,t}=\Gamma_{m}\xi_{m,t}+\zeta_{m,t}
\end{align*}
where $\{\zeta_{m,t}\}_{t\geq 0}$ is a one-dimensional latent unique factor vector process and $\Gamma_{m}\in\mathbb{R}^{1\times2}$ is not a zero matrix. $\{\xi_{m,t}\}_{t\geq 0}$ satisfies the following two-dimensional OU process:
\begin{align*}
	\begin{cases}
	\dd \xi_{m,t}=-(A_{1,m}\xi_{m,t}-\mu_{1,m})\dd t+S_{1,m}
	\dd W_{1,t}\quad (t\in [0,T]),\\
	\xi_{m,0}=c_{1,m},
	\end{cases}
\end{align*}
where $A_{1,m}\in\mathbb{R}^{2\times 2}$, $\mu_{1,m}\in\mathbb{R}^{2}$, $S_{1,m}\in\mathbb{R}^{2\times 2}$ s.t. $\Sigma_{\xi\xi,m}=S_{1,m}S_{1,m}^{\top}$ is a positive definite matrix, $c_{1,m}\in\mathbb{R}^{2}$ and $W_{1,t}$ is a two-dimensional standard Wiener process. $\{\delta_{m,t}\}_{t\geq 0}$ 
is defined as the following four-dimensional-OU process:
\begin{align*}
	\begin{cases}
	\dd \delta_{m,t}=-(A_{2,m}\delta_{m,t}-\mu_{2,m})\dd t+S_{2,m}
	\dd W_{2,t}\quad (t\in [0,T]),\\
	\delta_{m,0}=c_{2,m},
	\end{cases}
\end{align*}
where $A_{2,m}\in\mathbb{R}^{4\times 4}$, $\mu_{2,m}\in\mathbb{R}^{4}$,  $S_{2,m}\in\mathbb{R}^{4\times 4}$ s.t. 
$\Sigma_{\delta\delta,m}=S_{2,m}S_{2,m}^{\top}$ is a positive definite diagonal matrix, 
$c_{2,m}\in\mathbb{R}^{4}$ and $W_{2,t}$ is a four-dimensional standard Wiener process. 
$\{\varepsilon_{m,t}\}_{t\geq 0}$ is defined by the two-dimensional OU process as follows:
\begin{align*}
	\begin{cases}
	\dd \varepsilon_{m,t}=-(A_{3,m}\varepsilon_{m,t}-\mu_{3,m})\dd t+S_{3,m}\dd W_{3,t}\quad (t\in [0,T]),\\
	\varepsilon_{m,0}=c_{3,m},
	\end{cases}
\end{align*}
where $A_{3,m}\in\mathbb{R}^{2\times 2}$, $\mu_{3,m}\in\mathbb{R}^{2}$, $S_{3,m}\in\mathbb{R}^{2\times 2}$ s.t. $\Sigma_{\varepsilon\varepsilon,m}=S_{3,m}S_{3,m}^{\top}$ is a positive definite diagonal matrix, $c_{3,m}\in\mathbb{R}^{2}$ and $W_{3,t}$ is a two-dimensional standard Wiener process. 
$\{\zeta_{m,t}\}_{t\geq 0}$ satisfies the following one-dimensional OU process:
\begin{align*}
	\begin{cases}
	\dd \zeta_{m,t}=-(A_{4,m}\zeta_{m,t}-\mu_{4,m})\dd t+S_{4,m}
	\dd W_{4,t}\quad (t\in [0,T]),\\
	\zeta_{m,0}=c_{4,m},
	\end{cases}
\end{align*}
where $A_{4,m}\in\mathbb{R}$, $\mu_{4,m}\in\mathbb{R}$, $S_{4,m}>0$, $c_{4,m}\in\mathbb{R}$ and $W_{4,t}$ is the one-dimensional standard Wiener process. We assume that $W_{1,t}$, $W_{2,t}$, $W_{3,t}$ and $W_{4,t}$ are independent. The parameter is expressed as 
\begin{align*}
	\theta_{m}&=\bigl((\Lambda_{x_1,m})_{21},(\Lambda_{x_1,m})_{42},(\Lambda_{x_2,m})_{21},(\Gamma_{m})_{11},(\Gamma_{m})_{12},(\Sigma_{\xi\xi,m})_{11},(\Sigma_{\xi\xi,m})_{12},
	(\Sigma_{\xi\xi,m})_{22},\\
	&\qquad\qquad(\Sigma_{\delta\delta,m})_{11},(\Sigma_{\delta\delta,m})_{22},(\Sigma_{\delta\delta,m})_{33},(\Sigma_{\delta\delta,m})_{44},(\Sigma_{\varepsilon\varepsilon,m})_{11},(\Sigma_{\varepsilon\varepsilon,m})_{22},\Sigma_{\zeta\zeta,m}\bigr)^{\top}\in\Theta_{m},
\end{align*}
where $\Theta_m=\bigl\{[-100,-0.1]\cup[0.1,100]\bigr\}^5\times[0.1,100]\times\bigl\{[-100,-0.1]\cup[0.1,100]\bigr\}\times[0.1,100]^8$. The covariance structure is defined as
\begin{align*}
	\Sigma_{m}(\theta_{m})=
	\begin{pmatrix}
		\Sigma_{X_1X_1,m}(\theta_{m}) & \Sigma_{X_1X_2,m}(\theta_{m})\\
		\Sigma_{X_1X_2,m}(\theta_{m})^{\top} & \Sigma_{X_2X_2,m}(\theta_{m})
	\end{pmatrix},
\end{align*}
where 
\begin{align*}
    \qquad\qquad\quad\Sigma_{X_1X_1,m}(\theta_{m})&=\Lambda_{x_1,m}\Sigma_{\xi\xi,m}\Lambda_{x_1,m}^{\top}+\Sigma_{\delta\delta,m},\\
    \Sigma_{X_1X_2,m}(\theta_{m})&=\Lambda_{x_1,m}\Sigma_{\xi\xi,m}\Gamma_{m}^{\top}\Lambda_{x_2,m}^{\top},\\
    \Sigma_{X_2X_2,m}(\theta_{m})&=\Lambda_{x_2,m}(\Gamma_m\Sigma_{\xi\xi,m}\Gamma^{\top}_m+\Sigma_{\zeta\zeta,m})\Lambda_{x_2,m}^{\top}+\Sigma_{\varepsilon\varepsilon,m}.
\end{align*}
The path diagram of the true model is shown in Figure \ref{M0path}. Furthermore, we set $(\Lambda_{x_1,m,0})_{21}=2$,\ $(\Lambda_{x_1,m,0})_{42}=3$,\ $(\Lambda_{x_2,m,0})_{21}=3$,\ $(\Gamma_{m})_{11}=1$ and $(\Gamma_{m})_{12}=2$,
where $\Lambda_{x_1,m,0}$, $\Lambda_{x_2,m,0}$ and $\Gamma_{m,0}$ are the true values of $\Lambda_{x_1,m}$, $\Lambda_{x_2,m}$ and $\Gamma_{m}$. Let
\begin{align*}
A_{1,m,0}&=\begin{pmatrix}
	0.5 & 0.3\\
	0.2 & 0.4
\end{pmatrix},\ \
\mu_{1,m,0}=\begin{pmatrix}
	2\\
	4
\end{pmatrix},\ \
S_{1,m,0}=\begin{pmatrix}
	1 & 1\\
	0 & 2
\end{pmatrix},\ \
c_{1,m}=\begin{pmatrix}
	3\\
	5
\end{pmatrix},
\end{align*}
where $A_{1,m,0}$, $\mu_{1,m,0}$ and $S_{1,m,0}$ are the true values of $A_{1,m}$, $\mu_{1,m}$ and $S_{1,m}$. Define
\begin{align*}
A_{2,m,0}&=\begin{pmatrix}
	3 & 0 & 0 & 0\\
	0 & 2 & 0 & 0\\
	0 & 0 & 3 & 0\\
	0 & 0 & 0 & 2
\end{pmatrix},\ \
\mu_{2,m,0}=
\begin{pmatrix}
	0 \\
	0 \\
	0 \\
	0
\end{pmatrix},\ \
S_{2,m,0}=\begin{pmatrix}
	1 & 0 & 0 & 0\\
	0 & 2 & 0 & 0\\
	0 & 0 & 2 & 0\\
	0 & 0 & 0 & 1
\end{pmatrix},\ \
c_{2,m}=\begin{pmatrix}
	0\\
	0\\
	0\\
	0
\end{pmatrix},
\end{align*}
where $A_{2,m,0}$, $\mu_{2,m,0}$ and $S_{2,m,0}$ are the true values of $A_{2,m}$, $\mu_{2,m}$ and $S_{2,m}$. Set
\begin{align*}
A_{3,m,0}&=\begin{pmatrix}
	2 & 0 \\
	0 & 3
\end{pmatrix},\ \
\mu_{3,m,0}=\begin{pmatrix}
	0\\
	0
\end{pmatrix},\ \
S_{3,m,0}=\begin{pmatrix}
	1 & 0 \\
	0 & 3
\end{pmatrix},\ \
c_{3,m}=\begin{pmatrix}
	0\\
	0
\end{pmatrix},
\end{align*}
where $A_{3,m,0}$, $\mu_{3,m,0}$ and $S_{3,m,0}$ are the true values of $A_{3,m}$, $\mu_{3,m}$ and $S_{3,m}$. Denote $A_{4,m,0}=1$,\ $\mu_{4,m,0}=0$,\ $S_{4,m,0}=2$ and $c_{4,m}=0$, where $A_{4,m,0}$, $\mu_{4,m,0}$ and $S_{4,m,0}$ are the true values of $A_{4,m}$, $\mu_{4,m}$ and $S_{4,m}$. Thus, the true parameter is expressed as 
\begin{align*}
	\theta_{m,0}=\bigl(
	2, 3, 3, 1, 2, 2, 2, 4, 1, 4, 4, 1, 1, 9, 4
	\bigr)^{\top}\in\Theta_{m}
\end{align*}
and we have 
\begin{align*}
	\Sigma_{m}(\theta_{m,0})=\begin{pmatrix}
		3 & 4 & 2 & 6 & 6 & 18\\
		4 & 12 & 4 & 12 & 12 & 36\\
		2 & 4 & 8 & 12 & 10 & 30\\
		6 & 12 & 12 & 37 & 30 & 90\\
		6 & 12 & 10 & 30 &31 & 90\\
		18 & 36 & 30 & 90 & 90 & 279
	\end{pmatrix}.
\end{align*}
\begin{figure}[h]
	\includegraphics[width=1\columnwidth]{./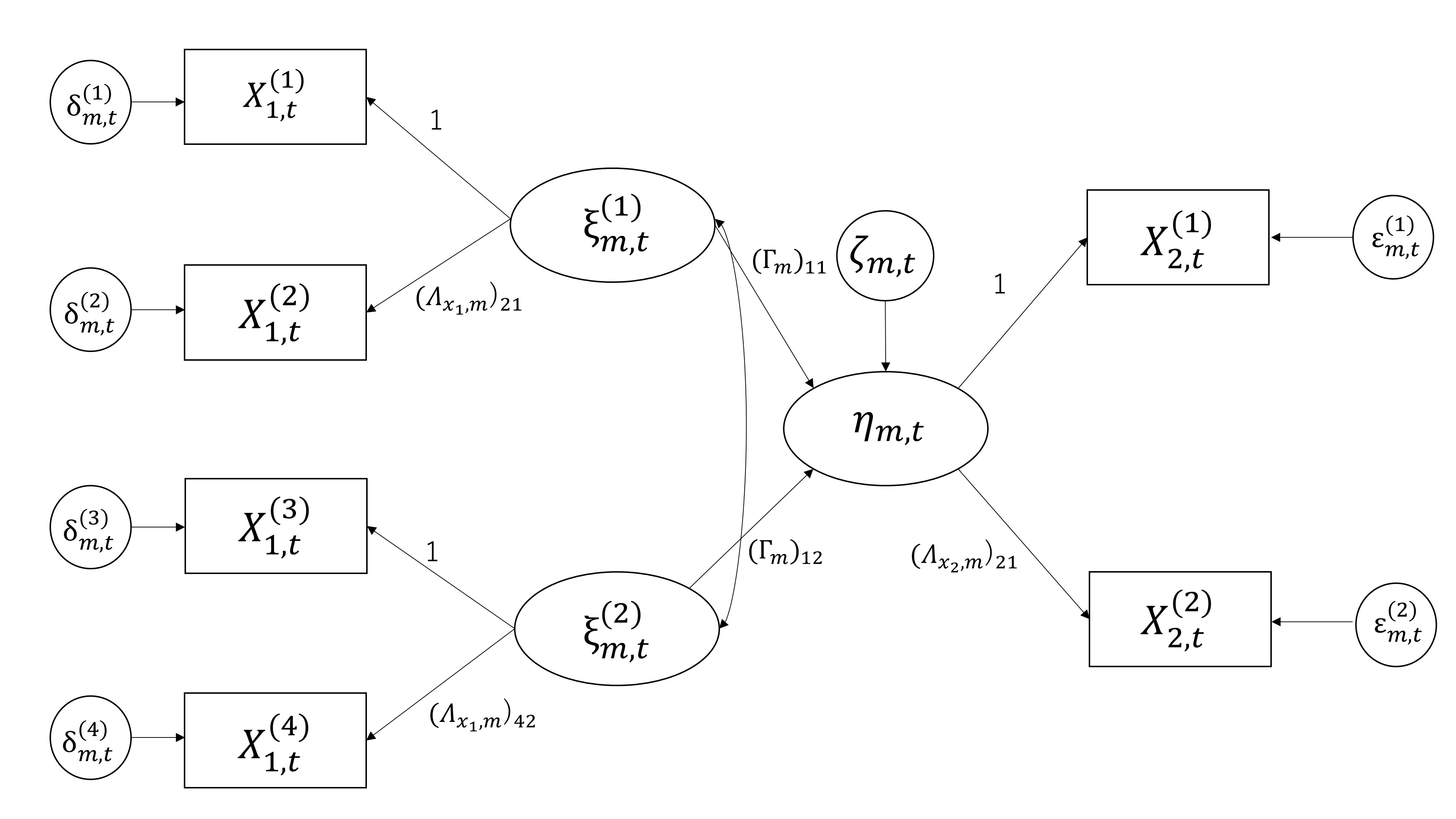}
	\caption{Path diagram of the true model.\qquad\qquad\qquad}
	\label{M0path}
\end{figure}
\begin{remark}
We check that the true model satisfies $\bf{[G]}$:
\begin{align*}
	\Sigma_{m}(\theta_{m})=\Sigma_{m}(\tilde{\theta}_{m})\Longrightarrow\theta_{m}=\tilde{\theta}_{m}.
\end{align*}
Assume that
\begin{align}
	\Sigma_{m}(\theta_{m})=\Sigma_{m}(\tilde{\theta}_{m}).\label{eq}
\end{align}
From the (1,3)-th element of $(\ref{eq})$, we obtain
\begin{align}
	(\Sigma_{\xi\xi,m})_{12}=(\tilde{\Sigma}_{\xi\xi,m})_{12}.\label{sigmaxi12}
\end{align}
Since it holds from the (2,3)-th  and (1,4)-th elements of $(\ref{eq})$ that
\begin{align*}
    (\Lambda_{x_1,m})_{21}(\Sigma_{\xi\xi,m})_{12}=(\tilde{\Lambda}_{x_1,m})_{21}(\tilde{\Sigma}_{\xi\xi,m})_{12},\ (\Lambda_{x_1,m})_{42}(\Sigma_{\xi\xi,m})_{12}=(\tilde{\Lambda}_{x_1,m})_{42}(\tilde{\Sigma}_{\xi\xi,m})_{12}
\end{align*}
and $(\Sigma_{\xi\xi,m})_{12}$ is not zero, 
we see from  (\ref{sigmaxi12}) that
\begin{align}
	(\Lambda_{x_1,m})_{21}=(\tilde{\Lambda}_{x_1,m})_{21},\  (\Lambda_{x_1,m})_{42}=(\tilde{\Lambda}_{x_1,m})_{42}.\label{lambdax}
\end{align}
As it follows from the (1,2)-th and (3,4)-th elements of (\ref{eq}) that
\begin{align*}
    (\Lambda_{x_1,m})_{21}(\Sigma_{\xi\xi,m})_{11}=(\tilde{\Lambda}_{x_1,m})_{21}(\tilde{\Sigma}_{\xi\xi,m})_{11},\  (\Lambda_{x_1,m})_{42}(\Sigma_{\xi\xi,m})_{22}=(\tilde{\Lambda}_{x_1,m})_{42}(\tilde{\Sigma}_{\xi\xi,m})_{22}
\end{align*}
and $(\Lambda_{x_1,m})_{21}$ and $(\Lambda_{x_1,m})_{42}$ are not zero, 
we obtain 
\begin{align}
	(\Sigma_{\xi\xi,m})_{11}=
	(\tilde{\Sigma}_{\xi\xi,m})_{11},\ (\Sigma_{\xi\xi,m})_{22}=(\tilde{\Sigma}_{\xi\xi,m})_{22}
	\label{sigmaxi}
\end{align}
from (\ref{lambdax}).
Since we get
\begin{align*}
    &\bigl(\Sigma_{X_1X_1,m}(\theta_{m})=\bigr)\ \Lambda_{x_1,m}\Sigma_{\xi\xi,m}\Lambda_{x_1,m}^{\top}+\Sigma_{\delta\delta,m}\\
    &\qquad\qquad\qquad\qquad\qquad\qquad\qquad=\tilde{\Lambda}_{x_1,m}\tilde{\Sigma}_{\xi\xi,m}\tilde{\Lambda}_{x_1,m}^{\top}+\tilde{\Sigma}_{\delta\delta,m}\ \bigl(=\Sigma_{X_1X_1,m}(\tilde{\theta}_{m})\bigr),
\end{align*}
from (\ref{eq}), we see
\begin{align}
    \Sigma_{\delta\delta,m}=\tilde{\Sigma}_{\delta\delta,m}\label{delta2}
\end{align}
from (\ref{sigmaxi12}), (\ref{lambdax}) and (\ref{sigmaxi}). 
Furthermore, it holds from the (1,5) and (3,5)-th elements of (\ref{eq}) that
\begin{align*}
    \Sigma_{\xi\xi,m}\Gamma_{m}^{\top}=\tilde{\Sigma}_{\xi\xi,m}\tilde{\Gamma}_{m}^{\top}
\end{align*}
and $\Sigma_{\xi\xi,m}$ is a positive definite matrix, which yields
\begin{align}
	\Gamma_{m}=\tilde{\Gamma}_{m} \label{gamma}
\end{align}
from (\ref{sigmaxi12}) and (\ref{sigmaxi}). 
Note that
\begin{align*}
    \Sigma_{\xi\xi,m}\Gamma_{m}^{\top}\neq 0
\end{align*}
since $\Gamma_{m}^{\top}$ is not a zero vector and $\Sigma_{\xi\xi,m}$ is a positive definite matrix. Recalling that
\begin{align*}
    &(\Lambda_{x_2,m})_{21}(\Sigma_{\xi\xi,m})_{11}(\Gamma_{m})_{11}+(\Lambda_{x_2,m})_{21}(\Sigma_{\xi\xi,m})_{12}(\Gamma_{m})_{12}\\
    &\qquad\qquad\qquad\qquad\qquad=(\tilde{\Lambda}_{x_2,m})_{21}(\tilde{\Sigma}_{\xi\xi,m})_{11}(\tilde{\Gamma}_{m})_{11}+(\tilde{\Lambda}_{x_2,m})_{21}(\tilde{\Sigma}_{\xi\xi,m})_{12}(\tilde{\Gamma}_{m})_{12}
\end{align*}
from the (1,6)-th element of (\ref{eq}), we have
\begin{align}
	\Lambda_{x_2,m}=\tilde{\Lambda}_{x_2,m}.\label{lambday}
\end{align}
from (\ref{sigmaxi12}), (\ref{sigmaxi}) and (\ref{gamma}). 
Since it holds from the (5,6)-th element of (\ref{eq}) that
\begin{align*}
	&(\Lambda_{x_2,m})_{21}(\Gamma_{m})_{11}^{2}(\Sigma_{\xi\xi,m})_{11}+2(\Lambda_{x_2,m})_{21}(\Gamma_{m})_{11}(\Gamma_{m})_{12}(\Sigma_{\xi\xi,m})_{12}\\
	&\qquad\qquad\qquad+(\Lambda_{x_2,m})_{21}(\Gamma_{m})_{12}^2(\Sigma_{\xi\xi,m})_{22}+(\Lambda_{x_2,m})_{21}\Sigma_{\zeta\zeta,m}\\
	&\qquad\qquad\qquad\qquad\qquad=(\tilde{\Lambda}_{x_2,m})_{21}(\tilde{\Gamma}_{m})_{11}^{2}(\tilde{\Sigma}_{\xi\xi,m})_{11}+2(\tilde{\Lambda}_{x_2,m})_{21}(\tilde{\Gamma}_{m})_{11}(\tilde{\Gamma}_{m})_{12}(\tilde{\Sigma}_{\xi\xi,m})_{12}\\
	&\qquad\qquad\qquad\qquad\qquad\qquad\qquad\qquad\qquad\qquad+(\tilde{\Lambda}_{x_2,m})_{21}(\tilde{\Gamma}_{m})_{12}^{2}(\tilde{\Sigma}_{\xi\xi,m})_{22}+(\tilde{\Lambda}_{x_2,m})_{21}\tilde{\Sigma}_{\zeta\zeta,m}
\end{align*}
and $(\Lambda_{x_2,m})_{21}$ is not zero, one has
\begin{align}
	\Sigma_{\zeta\zeta,m}=\tilde{\Sigma}_{\zeta\zeta,m} \label{sigmazeta}
\end{align}
from (\ref{sigmaxi12}), (\ref{lambdax}), (\ref{sigmaxi}), (\ref{gamma}) and (\ref{lambday}). 
Furthermore, we see from (\ref{eq}) that
\begin{align*}
    \bigl(\Sigma_{X_2X_2,m}(\theta_{m})=\bigr)\ &\Lambda_{x_2,m}(\Gamma_{m}\Sigma_{\xi\xi,m}\Gamma_{m}^{\top}+\Sigma_{\zeta\zeta,m})\Lambda_{x_2,m}^{\top}+\Sigma_{\varepsilon\varepsilon,m}\\
    &\qquad\qquad=\tilde{\Lambda}_{x_2,m}(\tilde{\Gamma}_{m}\tilde{\Sigma}_{\xi\xi,m}\tilde{\Gamma}_{m}^{\top}+\tilde{\Sigma}_{\zeta\zeta,m})\tilde{\Lambda}_{x_2,m}^{\top}+\tilde{\Sigma}_{\varepsilon\varepsilon,m}\ \bigl(=\tilde{\Sigma}_{X_2X_2,m}(\theta_{m})\bigr)
\end{align*}
and it follows from (\ref{sigmaxi12}), (\ref{lambdax}), (\ref{sigmaxi}), (\ref{gamma}), 
(\ref{lambday}) and (\ref{sigmazeta}) that
\begin{align}
    \Sigma_{\varepsilon\varepsilon,m}=\tilde{\Sigma}_{\varepsilon\varepsilon,m}\label{sigmavarepsilon}.
\end{align}
Therefore, from $(\ref{sigmaxi12})$-$(\ref{sigmavarepsilon})$, 
we obtain  $\theta_{m}=\tilde{\theta}_{m}$, which implies that the true model satisfies $\bf{[G]}$.
\end{remark}
\subsection{Correctly specified parametric model}
Let $k_{1,M_0}=2$ and $k_{2,M_0}=1$. Define
\begin{align*}
	\Lambda_{x_1,M_0}=
	\begin{pmatrix}
		1 & (\Lambda_{x_1,M_0})_{21} & 0 & 0\\
		0 & 0 & 1 &(\Lambda_{x_1,M_0})_{42}
	\end{pmatrix}^{\top}\in\mathbb{R}^{4\times 2},
\end{align*}
where $(\Lambda_{x_1,M_0})_{21}$ and $(\Lambda_{x_1,M_0})_{42}$ are not zero. Set
\begin{align*}
	\Lambda_{x_{2},M_0}=\bigl(
	1,\ (\Lambda_{x_{2},M_0})_{21}
	\bigr)^{\top}\in\mathbb{R}^{2\times 1}, 
\end{align*}
where $(\Lambda_{x_{2},M_0})_{21}$ is not zero. 
Let $\Gamma_{M_0}\in\mathbb{R}^{1\times 2}$, 
where $\Gamma_{M_0}$ is not a zero matrix. 
Furthermore, we assume that $\Sigma_{\xi\xi,m}\in\mathbb{R}^{2\times 2}$ is a positive definite matrix, $\Sigma_{\delta\delta,M_0}\in\mathbb{R}^{4\times 4}$ is a positive definite diagonal matrix, $\Sigma_{\varepsilon\varepsilon,M_0}\in\mathbb{R}^{2\times 2}$ is  a positive definite diagonal matrix, and 
$\Sigma_{\zeta\zeta,M_0}>0$. 
The parameter is expressed as
\begin{align*}
	\theta_{M_0}&=\bigl((\Lambda_{x_1,M_0})_{21},(\Lambda_{x_1,M_0})_{42},(\Lambda_{x_2,M_0})_{21},(\Gamma_{M_0})_{11},(\Gamma_{M_0})_{12},(\Sigma_{\xi\xi,M_0})_{11},(\Sigma_{\xi\xi,M_0})_{12},
	(\Sigma_{\xi\xi,M_0})_{22},\\
	&\qquad\quad(\Sigma_{\delta\delta,M_0})_{11},(\Sigma_{\delta\delta,M_0})_{22},(\Sigma_{\delta\delta,M_0})_{33},(\Sigma_{\delta\delta,M_0})_{44},(\Sigma_{\varepsilon\varepsilon,M_0})_{11},(\Sigma_{\varepsilon\varepsilon,M_0})_{22},\Sigma_{\zeta\zeta,M_0}\bigr)^{\top}\in\Theta_{M_0},
\end{align*}
where $\Theta_{M_0}=\bigl\{[-100,-0.1]\cup[0.1,100]\bigr\}^5\times[0.1,100]\times\bigl\{[-100,-0.1]\cup[0.1,100]\bigr\}\times[0.1,100]^8$. Therefore, we define the covariance structure as 
\begin{align*}
	\Sigma_{M_0}(\theta_{M_0})=
	\begin{pmatrix}
	\Sigma_{X_1X_1,M_0}(\theta_{M_0}) & \Sigma_{X_1X_2,M_0}(\theta_{M_0})\\
	\Sigma_{X_1X_2,M_0}(\theta_{M_0})^{\top} & \Sigma_{X_2X_2,M_0}(\theta_{M_0})
	\end{pmatrix},
\end{align*}
where 
\begin{align*}
	\qquad\qquad\quad\Sigma_{X_1X_1,M_0}(\theta_{M_0})&=\Lambda_{x_1,M_0}\Sigma_{\xi\xi,M_0}\Lambda_{x_1,M_0}^{\top}+\Sigma_{\delta\delta,M_0},\\
	\Sigma_{X_1X_2,M_0}(\theta_{M_0})&=\Lambda_{x_1,M_0}\Sigma_{\xi\xi,M_0}\Gamma_{M_0}^{\top}\Lambda_{x_2,M_0}^{\top},\\
	\Sigma_{X_2X_2,M_0}(\theta_{M_0})&=\Lambda_{x_2,M_0}(\Gamma_{M_0}\Sigma_{\xi\xi,M_0}\Gamma_{M_0}^{\top}+\Sigma_{\zeta\zeta,M_0})\Lambda_{x_2,M_0}^{\top}+\Sigma_{\varepsilon\varepsilon,M_0}.
\end{align*}
\subsection{Missspecified parametric model}
\subsubsection{Model $M_1$}
Set $k_{1,M_1}=1$ and $k_{2,M_1}=1$. Let
\begin{align*}
    \Lambda_{x_1,M_1}=\bigl(1,\ (\Lambda_{x_1,M_1})_{21},\ (\Lambda_{x_1,M_1})_{31},\ (\Lambda_{x_1,M_1})_{41}\bigr)^{\top}\in\mathbb{R}^{4\times 1},
\end{align*}
where $(\Lambda_{x_1,M_1})_{21}$, $(\Lambda_{x_1,M_1})_{31}$ and $(\Lambda_{x_1,M_1})_{41}$ are not zero. Set
\begin{align*}
	\Lambda_{x_2,M_1}=\bigl(
		1,\  (\Lambda_{x_2,M_{1}})_{21}
	\bigr)^{\top}\in\mathbb{R}^{2\times 1},
\end{align*}
where $(\Lambda_{x_2,M_{1}})_{21}$ is not zero. 
Let $\Gamma_{M_1}\in\mathbb{R}$, where $\Gamma_{M_1}$ is not zero. 
We assume that $\Sigma_{\xi\xi,M_1}>0$, $\Sigma_{\zeta\zeta,M_1}>0$,
$\Sigma_{\delta\delta,M_1}\in\mathbb{R}^{4\times 4}$ 
and $\Sigma_{\varepsilon\varepsilon,M_1}\in\mathbb{R}^{2\times 2}$ are  positive definite diagonal matrices.
The parameter is expressed as follows:
\begin{align*}
	\theta_{M_1}&=\bigl((\Lambda_{x_1,M_1})_{21}, (\Lambda_{x_1,M_1})_{31}, (\Lambda_{x_1,M_1})_{41}, (\Lambda_{x_2,M_{1}})_{21}, \Gamma_{M_1}, \Sigma_{\xi\xi,M_1},(\Sigma_{\delta\delta,M_1})_{11},\\
	&\qquad\qquad(\Sigma_{\delta\delta,M_1})_{22},(\Sigma_{\delta\delta,M_1})_{33},(\Sigma_{\delta\delta,M_1})_{44},(\Sigma_{\varepsilon\varepsilon,M_1})_{11},(\Sigma_{\varepsilon\varepsilon,M_1})_{22},\Sigma_{\zeta\zeta,M_1}\bigr)^{\top}\in\Theta_{M_1},
\end{align*}
where $\Theta_{M_1}=\{[-100,-0.1]\cup[0.1,100]\}^5\times[0.1,100]^8$. Therefore, we set the covariance structure as
\begin{align*}
    \Sigma_{M_1}(\theta_{M_1})=
	\begin{pmatrix}
		\Sigma_{X_1X_1,M_1}(\theta_{M_1}) & \Sigma_{X_1X_2,M_1}(\theta_{M_1})\\
		\Sigma_{X_1X_2,M_1}(\theta_{M_1})^{\top} & \Sigma_{X_2X_2,M_1}(\theta_{M_1})
	\end{pmatrix},
\end{align*}
where 
\begin{align*}
    \qquad\qquad\Sigma_{X_1X_1,M_1}(\theta_{M_1})&=\Lambda_{x_1,M_1}\Sigma_{\xi\xi,M_1}\Lambda_{x_1,M_1}^{\top}+\Sigma_{\delta\delta,M_1},\\
    \Sigma_{X_1X_2,M_1}(\theta_{M_1})&=\Lambda_{x_1,M_1}\Sigma_{\xi\xi,M_1}\Gamma_{M_1}\Lambda_{x_2,M_1}^{\top},\\
    \Sigma_{X_2X_2,M_1}(\theta_{M_1})&=\Lambda_{x_2,M_1}(\Gamma_{M_1}^2\Sigma_{\xi\xi,M_1}+\Sigma_{\zeta\zeta,M_1})\Lambda_{x_2,M_1}^{\top}+\Sigma_{\varepsilon\varepsilon,M_1}.
\end{align*}
Figure \ref{M1path} shows the path diagram of Model $M_1$.
\begin{figure}[h]
	\includegraphics[width=1\columnwidth]{./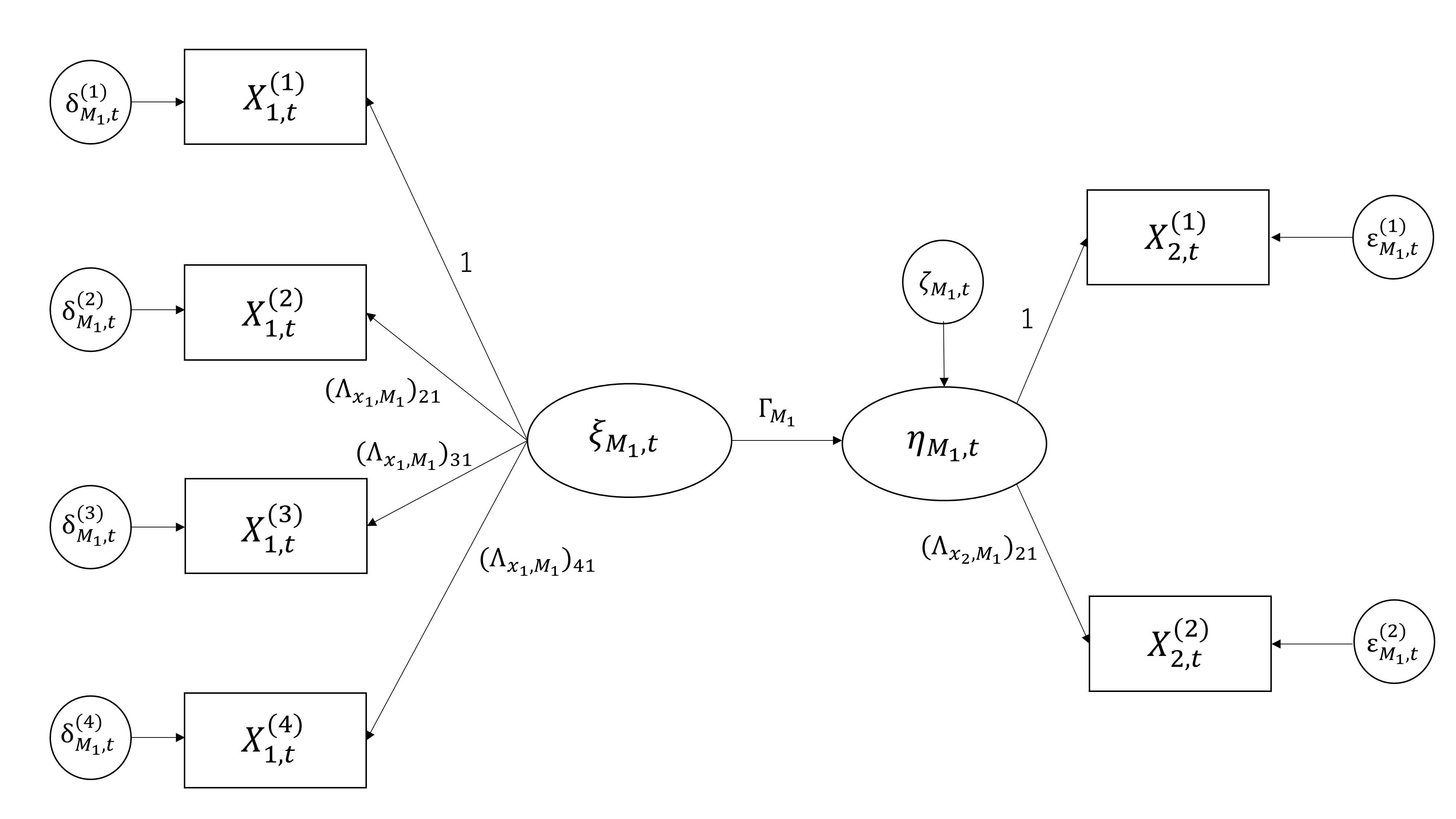}
	\caption{Path diagram of Model $M_1$.\qquad\qquad\qquad}
	\label{M1path}
\end{figure}
\subsubsection{Model $M_2$}
Let $k_{1,M_2}=2$ and $k_{2,M_2}=1$. Set
\begin{align*}
	\Lambda_{x_1,M_2}=
	\begin{pmatrix}
		1 & (\Lambda_{x_1,M_2})_{21} & 0 & 0\\
		0 & 0 & 1 &(\Lambda_{x_1,M_2})_{42}
	\end{pmatrix}^{\top}\in\mathbb{R}^{4\times 2},
\end{align*}
where $(\Lambda_{x_{1},M_2})_{21}$ and $(\Lambda_{x_1,M_2})_{42}$ are not zero. 
Let $\Gamma_{M_2}\in\mathbb{R}^{1\times 2}$, 
where $\Gamma_{M_2}$ is not a zero matrix. 
Assume that $\Sigma_{\xi\xi,m}\in\mathbb{R}^{2\times 2}$,  
$\Sigma_{\delta\delta,M_2}\in\mathbb{R}^{4\times 4}$
and
$\Sigma_{\varepsilon\varepsilon,M_2}\in\mathbb{R}^{2\times 2}$ 
are positive definite diagonal matrices, 
and $\Sigma_{\zeta\zeta,M_2}>0$. The parameter is expressed as 
\begin{align*}
	\theta_{M_2}&=\bigl((\Lambda_{x_1,M_2})_{21},(\Lambda_{x_1,M_2})_{42},(\Lambda_{x_2,M_2})_{21},(\Gamma_{M_2})_{11},(\Gamma_{M_2})_{12},(\Sigma_{\xi\xi,M_2})_{11},(\Sigma_{\xi\xi,M_2})_{22},\\
	&\qquad\quad(\Sigma_{\delta\delta,M_2})_{11},(\Sigma_{\delta\delta,M_2})_{22},(\Sigma_{\delta\delta,M_2})_{33},(\Sigma_{\delta\delta,M_2})_{44},(\Sigma_{\varepsilon\varepsilon,M_2})_{11},(\Sigma_{\varepsilon\varepsilon,M_2})_{22},\Sigma_{\zeta\zeta,M_2}\bigr)^{\top}\in\Theta_{M_2},
\end{align*}
where $\Theta_{M_{2}}=\bigl\{[-100,-0.1]\cup[0.1,100]\bigr\}^5\times[0.1,100]^9$. Therefore, we define 
\begin{align*}
	\Sigma_{M_2}(\theta_{M_2})=
	\begin{pmatrix}
	\Sigma_{X_1X_1,M_2}(\theta_{M_2}) & \Sigma_{X_1X_2,M_2}(\theta_{M_2})\\
	\Sigma_{X_1X_2,M_2}(\theta_{M_2})^{\top} & \Sigma_{X_2X_2,M_2}(\theta_{M_2})
	\end{pmatrix},
\end{align*}
where 
\begin{align*}
	\qquad\qquad\quad\Sigma_{X_1X_1,M_2}(\theta_{M_2})&=\Lambda_{x_1,M_2}\Sigma_{\xi\xi,M_2}\Lambda_{x_1,M_2}^{\top}+\Sigma_{\delta\delta,M_2},\\
	\Sigma_{X_1X_2,M_2}(\theta_{M_2})&=\Lambda_{x_1,M_2}\Sigma_{\xi\xi,M_2}\Gamma_{M_2}^{\top}\Lambda_{x_2,M_2}^{\top},\\
	\Sigma_{X_2X_2,M_2}(\theta_{M_2})&=\Lambda_{x_2,M_2}(\Gamma_{M_2}\Sigma_{\xi\xi,M_2}\Gamma_{M_2}^{\top}+\Sigma_{\zeta\zeta,M_2})\Lambda_{x_2,M_2}^{\top}+\Sigma_{\varepsilon\varepsilon,M_2}.
\end{align*}
The path diagram of Model $M_2$ is shown in Figure \ref{M2path}.
\begin{figure}[h]
	\includegraphics[width=1\columnwidth]{./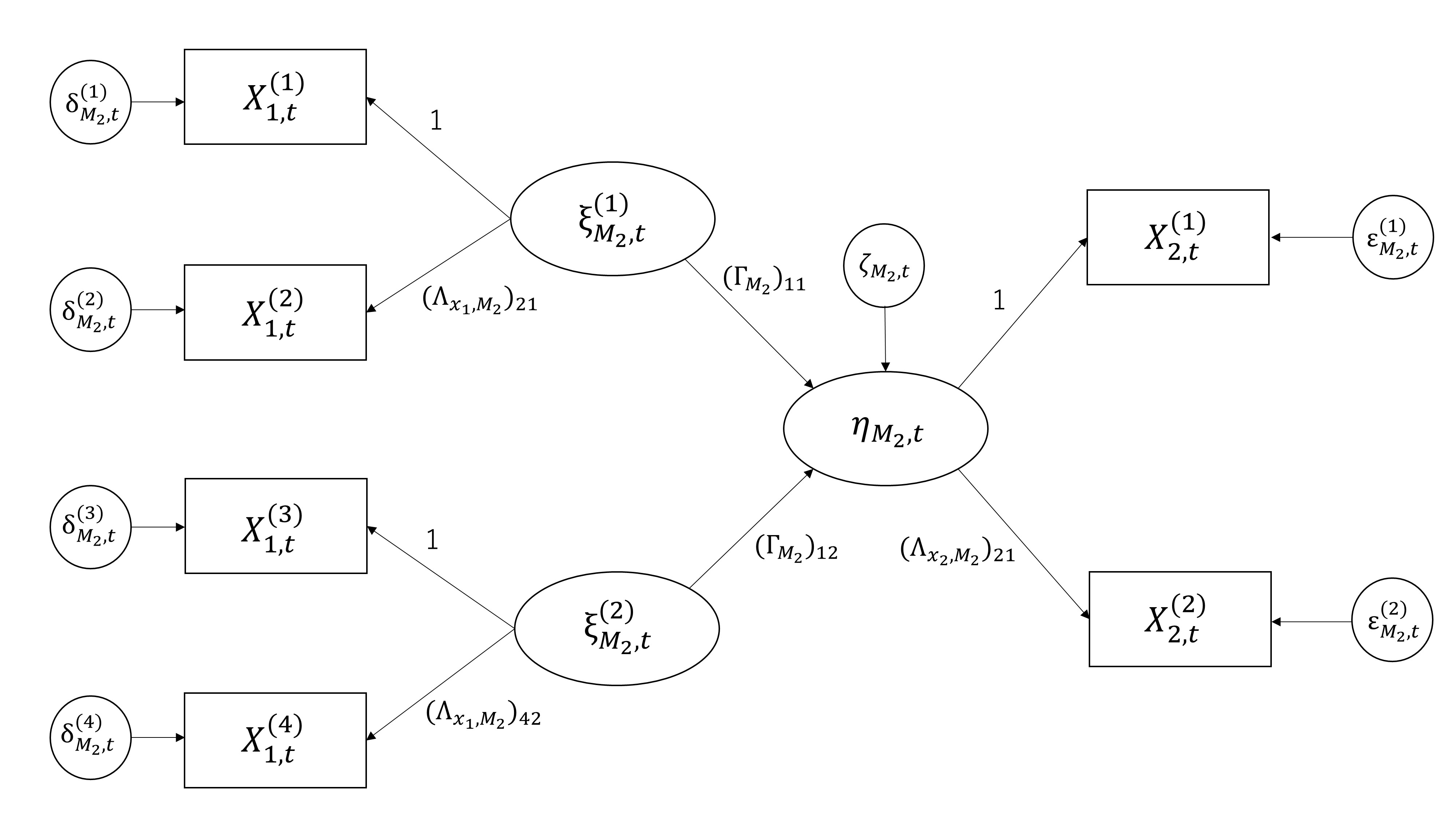}
	\caption{Path diagram of Model $M_2$.\qquad\qquad\qquad}
	\label{M2path}
\end{figure}
\subsection{Simulation results}
We set $(n,h_n,T)=(10^4,10^{-3},10^{1})$ and generated 10,000 independent sample paths from the true model. 
	
\subsubsection{Parameter estimation}
First, we check the asymptotic performance of $Q_{XX}$. Table \ref{QQtable} shows the sample mean and the sample standard deviation (SD) of $Q_{XX}$. 
From this table, we deduce that $Q_{XX}$ has consistency. Figure \ref{QQfigure} shows the histogram, the Q-Q plot and the empirical distribution of $\sqrt{n}((Q_{XX})_{11}-(\Sigma_{m}(\theta_{m,0}))_{11})$, which implies that $Q_{XX}$ has asymptotic normality. Thus, we see that Theorem \ref{Qtheoremnon} holds for this example. 
Next, we investigate the asymptotic performance of $\hat{\theta}_{m,n}$. 
To optimize $\mathbb{F}_{m,n}(\theta_{m})$, we use optim() with the BFGS method in R language. 
Set the initial value of the optimization as $\theta_{m}=\theta_{m,0}$. 
Table \ref{thetatable} shows the sample mean and the sample SD of $\hat{\theta}_{m,n}$
and we deduce that $\hat{\theta}_{m,n}$ has consistency. Figure \ref{thetafigure} shows the histogram, the Q-Q plot and the empirical distribution of $\sqrt{n}(\hat{\theta}_{m,n}^{(1)}-\theta_{m,0}^{(1)})$. This figure implies that $\hat{\theta}_{m,n}$ has asymptotic normality. Therefore, these results yield that Theorem \ref{thetatheoremnon} is correct for this example. See Appendix \ref{simulation} for details of simulation results. 
\subsection{Goodness-of-fit-test}
First, consider the following statistical hypothesis test:
\begin{align*}
	\left\{
	\begin{array}{ll}
	H_0: \Sigma_{m}(\theta_{m})=\Sigma_{M_0}(\theta_{M_0}),\\
	H_1: \Sigma_{m}(\theta_{m})\neq\Sigma_{M_0}(\theta_{M_0}).
    \end{array}
	\right.
\end{align*}
Note that the null hypothesis is true since Model $M_0$ is a correctly specified parametric model. Recall that the test statistic $\mathbb{T}_{M_0,n}=n\mathbb{F}_{M_0,n}(\hat{\theta}_{M_0,n})$, and the rejection region is 
\begin{align*}
    \Bigl\{t_{M_0,n}>\chi^2_{6}(0.05)=12.59\Bigr\}.
\end{align*}
Table \ref{testtable1} shows the sample mean 
and the sample SD of the test statistic $\mathbb{T}_{M_0,n}$. Figure \ref{testfigure1} shows the histogram, the Q-Q plot and the empirical distribution of the test statistic $\mathbb{T}_{M_0,n}$. Table \ref{testtable1} and  Figure \ref{testfigure1} imply that the test statistic $\mathbb{T}_{M_0,n}$ converges in distribution to a chi-squared distribution with $6$ degree of freedom under the null hypothesis. These simulation results support Theorem \ref{chitheoremnon}.

Next, we study the following statistical hypothesis test:
\begin{align}
		\left\{
		\begin{array}{ll}
			H_0: \Sigma_{m}(\theta_{m})=\Sigma_{M_1}(\theta_{M_1}),\\
			H_1: \Sigma_{m}(\theta_{m})\neq\Sigma_{M_1}(\theta_{M_1}).
		\end{array}
		\right.\label{testM1}
\end{align}
Note that the test statistic $\mathbb{T}_{M_1,n}=\mathbb{F}_{M_1,n}(\hat{\theta}_{M_1,n})$, and the rejection region is
\begin{align*}
    \Bigl\{t_{M_1,n}>\chi^2_{8}(0.05)=15.51\Bigr\}.
\end{align*}
Furthermore, 
we consider the following statistical hypothesis test:
	\begin{align}
		\left\{
		\begin{array}{ll}
			H_0: \Sigma_{m}(\theta_{m})=\Sigma_{M_2}(\theta_{M_2}),\\
			H_1: \Sigma_{m}(\theta_{m})\neq\Sigma_{M_2}(\theta_{M_2}).
		\end{array}
		\right.\label{testM2}
	\end{align}
The test statistic $\mathbb{T}_{M_2,n}=\mathbb{F}_{M_2,n}(\hat{\theta}_{M_2,n})$, and the rejection region is 
\begin{align*}
    \Bigl\{t_{M_2,n}>\chi^2_{7}(0.05)=14.07\Bigr\}.
\end{align*}
Note that the alternative hypothesis is true in both (\ref{testM1}) and (\ref{testM2}) tests since Model $M_1$ and $M_2$ are missspecified parametric models. 
To optimize $\mathbb{F}_{M_1,n}(\theta_{M_1})$ and $\mathbb{F}_{M_2,n}(\theta_{M_2})$, 
we perform the following procedure.
\begin{description}
\item[Step1] $\theta_{M_1,u}^{Initial}$ and $\theta_{M_2,u}^{Initial}$ for $u=1,\cdots,50$ are generated from the continuous uniform random numbers on the interval $[-100,100]^5\times[0.1,100]^8$ and $[-100,100]^5\times[0.1,100]^9$ respectively.
\item[Step2]
Let  $\theta_{M_1,u}^{Initial}$ be the initial value of the optimization,
and we obtain  $\tilde{\theta}_{M_{1},n,u}$ defined as
\begin{align}
	\mathbb{F}_{M_1,n}(\tilde{\theta}_{M_{1},n,u})=\inf_{\theta_{M_{1}}\in\Theta_{M_1}}\mathbb{F}_{M_1,n}(\theta_{M_1})\label{thetaM1}
\end{align}
for $u=1,\cdots 50$. Similarly, set the initial value of the optimization as 
$\theta_{M_2,u}^{Initial}$, 
and we get $\tilde{\theta}_{M_{2},n,u}$ given by
\begin{align}
	\mathbb{F}_{M_2,n}(\tilde{\theta}_{M_{2},n,u})=\inf_{\theta_{M_{2}}\in\Theta_{M_2}}\mathbb{F}_{M_2,n}(\theta_{M_2})\label{thetaM2}
\end{align}
for $u=1,\cdots 50$. Note that optim() is used with the L-BFGS-B method in R language to compute (\ref{thetaM1}) and (\ref{thetaM2}).
\item[Step3]
Let 
\begin{align*}
    \mathbb{F}_{M_1,n}(\hat{\theta}_{M_1,n})=\min{\Bigl\{\mathbb{F}_{M_1,n}(\tilde{\theta}_{M_{1},n,1}),\mathbb{F}_{M_1,n}(\tilde{\theta}_{M_{1},n,2}),\cdots,\mathbb{F}_{M_1,n}(\tilde{\theta}_{M_{1},n,50})\Bigr\}}
\end{align*}
and
\begin{align*}
    \mathbb{F}_{M_2,n}(\hat{\theta}_{M_2,n})=\min{\Bigl\{\mathbb{F}_{M_2,n}(\tilde{\theta}_{M_{2},n,1}),\mathbb{F}_{M_2,n}(\tilde{\theta}_{M_{2},n,2}),\cdots,\mathbb{F}_{M_2,n}(\tilde{\theta}_{M_{2},n,50})\Bigr\}}.
\end{align*}
\item[Step4] Set $\mathbb{T}_{M_1,n}=n\mathbb{F}_{M_1,n}(\hat{\theta}_{M_1,n})$ and $\mathbb{T}_{M_2,n}=n\mathbb{F}_{M_2,n}(\hat{\theta}_{M_2,n})$.
\end{description}
This procedure is repeated 10,000 times. Table \ref{testtable2} shows 
the quartiles of  the test statistics $\mathbb{T}_{M_1,n}$ 
and $\mathbb{T}_{M_2,n}$, which implies that the null hypothesis is rejected in both (\ref{testM1}) and (\ref{testM2}) tests all 10000 times. Figure \ref{testfigure2} shows the box plots of the test statistics $\mathbb{T}_{M_1,n}$ and $\mathbb{T}_{M_2,n}$. 
From this figure, we deduce that Model $M_2$ is closer to the true model than Model $M_1$.
\clearpage
\ \\
\ \\
\fontsize{11pt}{16pt}\selectfont
\begin{table}[h]
	\centering
	\begin{tabular}{ccccc}
		& $(Q_{XX})_{11}$ & $(Q_{XX})_{12}$ & $(Q_{XX})_{13}$ & $(Q_{XX})_{14}$ \\\hline
		Mean (True value) & 3.002 (3.000) & 4.002 (4.000) & 2.000 (2.000)& 6.001 (6.000) \\
		SD (Theoretical value) & 0.042 (0.042) & 0.072 (0.072)& 0.052 (0.053) & 0.121 (0.121)\\
		& $(Q_{XX})_{15}$ & $(Q_{XX})_{16}$ & $(Q_{XX})_{22}$ & $(Q_{XX})_{23}$\\\hline
		Mean (True value) & 6.001 (6.000) & 18.003 (18.000) & 12.007 (12.000) & 4.001 (4.000)\\
		SD (Theoretical value) & 0.113 (0.114) & 0.337 (0.341)& 0.170 (0.170) & 0.106 (0.106)  \\
		& $(Q_{XX})_{24}$ & $(Q_{XX})_{25}$ & $(Q_{XX})_{26}$ & $(Q_{XX})_{33}$\\\hline
		Mean (True value) & 12.002 (12.000) & 12.002 (12.000) & 36.007 (36.000) & 8.006 (8.000)\\
		SD (Theoretical value) & 0.245 (0.242) & 0.229 (0.227) & 0.686 (0.681) & 0.112 (0.113) \\
		& $(Q_{XX})_{34}$ & $(Q_{XX})_{35}$ & $(Q_{XX})_{36}$ & $(Q_{XX})_{44}$\\\hline
		Mean (True value) & 12.003 (12.000) & 10.003 (10.000) & 30.009 (30.000) & 37.010 (37.000)\\
		SD (Theoretical value) & 0.210 (0.210) & 0.187 (0.187)& 0.561 (0.560) & 0.530 (0.523) \\
		& $(Q_{XX})_{45}$ & $(Q_{XX})_{46}$ & $(Q_{XX})_{55}$ & $(Q_{XX})_{56}$\\\hline
		Mean (True value) & 30.006 (30.000) & 90.021 (90.000) & 31.008 (31.000) & 90.023 (90.000)\\
		SD (Theoretical value) & 0.457 (0.452) & 1.369 (1.357) & 0.441 (0.438) & 1.302 (1.294) \\
		& $(Q_{XX})_{66}$ &&& \\\hline
		Mean (True value) & 279.081 (279.000) & & &\\
		SD (Theoretical value) & 3.961 (3.946) & &  \\ \\
\end{tabular}
\caption{Sample mean and sample standard deviation (SD) of $Q_{XX}$.\qquad\qquad\qquad\qquad}
\label{QQtable}
\end{table}
\ \\
\begin{figure}[h]
	\centering
	\includegraphics[width=0.32\columnwidth]{./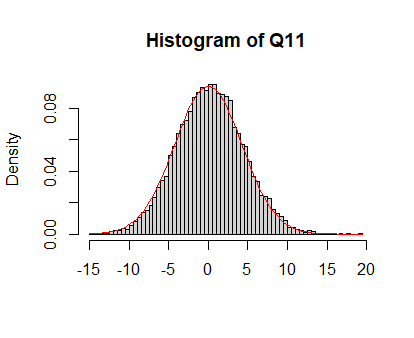}
	\includegraphics[width=0.32\columnwidth]{./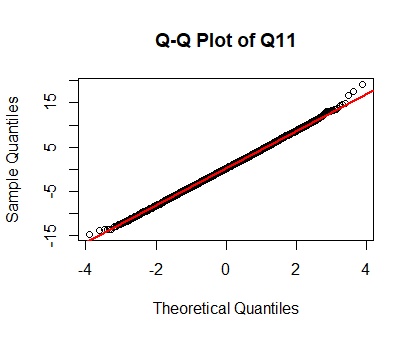}
	\includegraphics[width=0.32\columnwidth]{./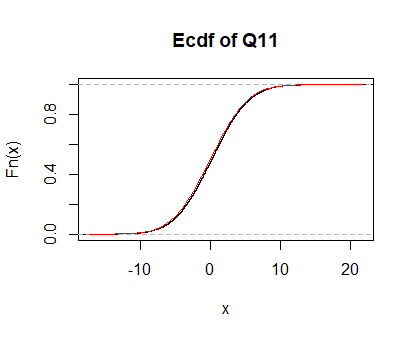}
	\caption{Histogram (left), Q-Q plot (middle) and empirical distribution (right) of $\sqrt{n}((Q_{XX})_{11}$ - $(\Sigma_{m}(\theta_{m,0}))_{11})$.}
	\label{QQfigure}
\end{figure}
\clearpage
\ \\
\ \\
\ \\
\ \\
\fontsize{12pt}{16pt}\selectfont
\begin{table}[h]
\centering
\begin{tabular}{ccccc}
	& $\hat{\theta}_{m,n}^{(1)}$ & $\hat{\theta}_{m,n}^{(2)}$ & $\hat{\theta}_{m,n}^{(3)}$ & $\hat{\theta}_{m,n}^{(4)}$ \\\hline
	Mean (True value) & 2.000 (2.000) & 3.000 (3.000) & 3.000 (3.000)& 0.999 (1.000) \\
	SD (Theoretical value) & 0.026 (0.026) & 0.336 (0.336) & 0.009 (0.008) & 0.036 (0.036) \\
	& $\hat{\theta}_{m,n}^{(5)}$ & $\hat{\theta}_{m,n}^{(6)}$ & $\hat{\theta}_{m,n}^{(7)}$ & $\hat{\theta}_{m,n}^{(8)}$\\\hline
	Mean (True value) & 2.001 (2.000) & 2.001 (2.000) & 2.000 (2.000) & 4.002 (4.000)\\
	SD (Theoretical value) & 0.030 (0.030) & 0.044 (0.044)& 0.045 (0.046) & 0.100 (0.100) \\
	& $\hat{\theta}_{m,n}^{(9)}$ & $\hat{\theta}_{m,n}^{(10)}$ & $\hat{\theta}_{m,n}^{(11)}$ & $\hat{\theta}_{m,n}^{(12)}$\\\hline
	Mean (True value) & 1.001 (1.000)  &  4.003 (4.000) & 4.004 (4.000)& 1.004 (1.000)\\
	SD (Theoretical value) & 0.024 (0.024) & 0.096 (0.096) & 0.059 (0.060) & 0.183 (0.182) \\
	& $\hat{\theta}_{m,n}^{(13)}$ & $\hat{\theta}_{m,n}^{(14)}$ & $\hat{\theta}_{m,n}^{(15)}$ & \\\hline
	Mean (True value) & 1.001 (1.000) & 9.007 (9.000) & 3.999 (4.000)&\\
	SD (Theoretical value) & 0.038 (0.038) & 0.341 (0.343) & 0.110 (0.109)\\ \\
	\end{tabular}
\caption{Sample mean and sample standard deviation (SD) of $\hat{\theta}_{m,n}$.\qquad\qquad\qquad\qquad}
\label{thetatable}
\end{table}
\ \\
\fontsize{14pt}{22pt}\selectfont
\begin{figure}[h]
	\centering
	\includegraphics[width=0.32\columnwidth]{./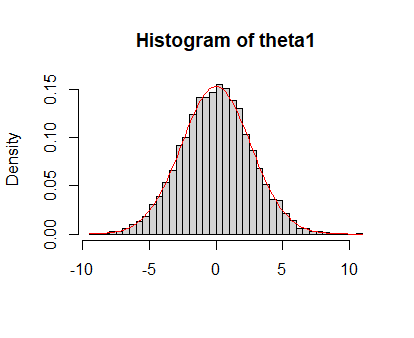}
	\includegraphics[width=0.32\columnwidth]{./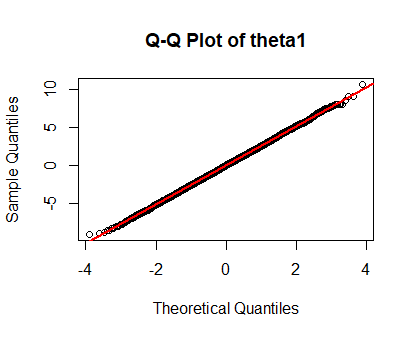}
	\includegraphics[width=0.32\columnwidth]{./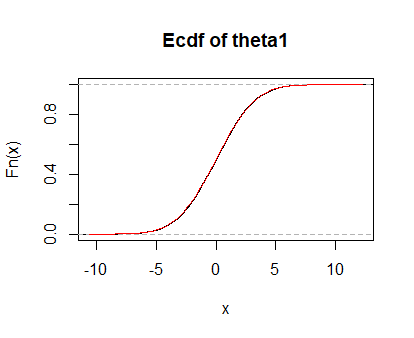}
	\caption{Histogram (left), Q-Q plot (middle) and empirical distribution (right) of $\sqrt{n}(\hat{\theta}_{m,n}^{(1)}-\theta_{m,0}^{(1)})$.}
	\label{thetafigure}
\end{figure}
\clearpage
\ \\
\ \\
\ \\
\begin{table}[h]
	\centering
	\begin{tabular}{cc}	
	& $\mathbb{T}_{M_{0},n}$ \\
	\hline
	Mean\ \  (True value) &\quad 5.980\ \ (6.000)\\
	SD\ \ (Theoretical value) &\quad 3.400\ \ (3.464)\\ \\
	\end{tabular}
\caption{Sample mean and sample standard deviation (SD) of the test statistic $\mathbb{T}_{M_{0},n}$.\qquad\qquad\qquad\qquad}
\label{testtable1}
\end{table}
\ \\
\ \\
\begin{figure}[h]
	\centering
	\includegraphics[width=0.32\columnwidth]{./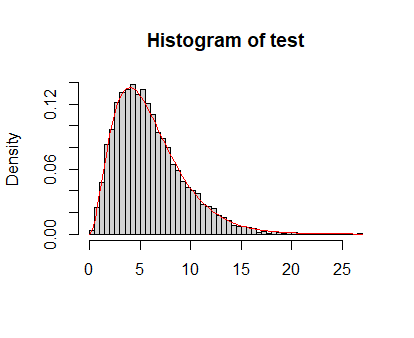}
	\includegraphics[width=0.32\columnwidth]{./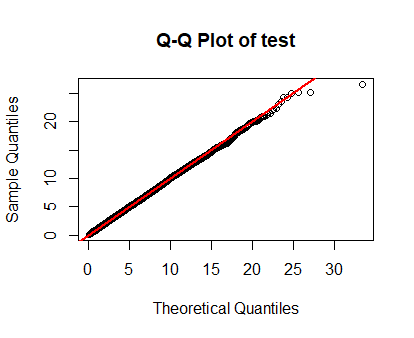}
	\includegraphics[width=0.32\columnwidth]{./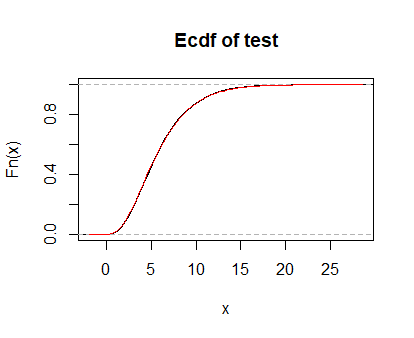}
\caption{Histogram (left), Q-Q plot (middle) and empirical distribution (right) of the test statistic $\mathbb{T}_{M_0,n}$.}
\label{testfigure1}
\end{figure}
\ \\
\ \\
\begin{table}[h]
	\centering
	\begin{tabular}{cccccc}
	&\quad Min &\  $Q1$ &\  Median &\  $Q3$ &\  Max\\\hline
	Model $M_1$ &\quad 5376 &\ 5829 &\ 5930 &\ 6035 &\ 6495  \\
	Model $M_2$ &\quad 4472 &\ 4851 &\ 4937 &\ 5021 &\ 5433 \\ \\
    \end{tabular}
\caption{Quartile of the test statistic $\mathbb{T}_{M_1,n}$ and $\mathbb{T}_{M_2,n}$.\qquad\qquad\qquad\qquad\qquad}
\label{testtable2}
\end{table}
\clearpage
\ \\
\ \\
\ \\
\begin{figure}[h]
	\includegraphics[width=0.9\columnwidth]{./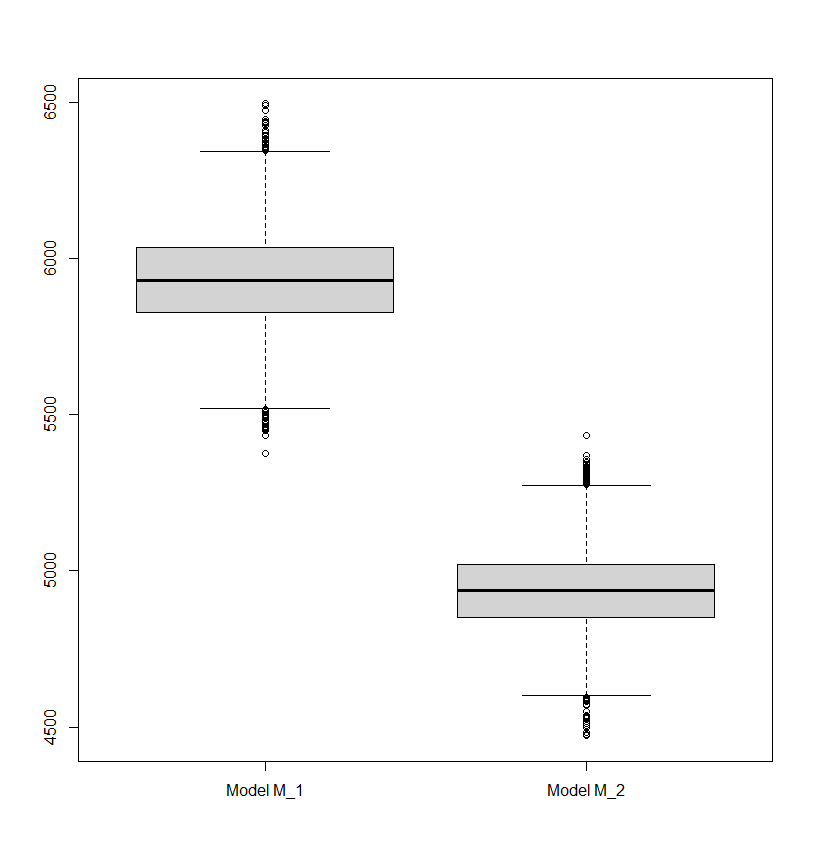}
	\centering
	\caption{Box plots of the test statistic $\mathbb{T}_{M_1,n}$ and $\mathbb{T}_{M_2,n}$.\qquad\qquad\qquad\qquad}
	\label{testfigure2}
\end{figure}
\clearpage

\fontsize{10pt}{16pt}\selectfont
\section{Proofs}
For the proof, we define the following notation. Let $\partial_{\theta_{m}}=\partial/\partial\theta_{m}$ and $\partial_{\theta_{m}}^2=\partial_{\theta_{m}}\partial_{\theta_{m}}^{\top}$. Set
\begin{align*}
	Q_{\xi\xi,m}&=\frac{1}{T}\sum_{i=1}^n(\Delta\xi_{m,i})(\Delta\xi_{m,i})^\top,\ \ Q_{\delta\delta,m}=\frac{1}{T}\sum_{i=1}^n(\Delta\delta_{m,i})(\Delta\delta_{m,i})^\top,\ \  Q_{\varepsilon\varepsilon,m}=\frac{1}{T}\sum_{i=1}^n(\Delta\varepsilon_{m,i})(\Delta\varepsilon_{m,i})^\top,\\
	Q_{\zeta\zeta,m}&=\frac{1}{T}\sum_{i=1}^n(\Delta\zeta_{m,i})(\Delta\zeta_{m,i})^\top,\ \ 
	Q_{\xi\delta,m}=\frac{1}{T}\sum_{i=1}^n(\Delta\xi_{m,i})(\Delta\delta_{m,i})^\top,\ \ Q_{\xi\varepsilon,m}=\frac{1}{T}\sum_{i=1}^n(\Delta\xi_{m,i})(\Delta\varepsilon_{m,i})^\top,\\
	Q_{\xi\zeta,m}&=\frac{1}{T}\sum_{i=1}^n(\Delta\xi_{m,i})(\Delta\zeta_{m,i})^\top,\ \ Q_{\delta\varepsilon,m}=\frac{1}{T}\sum_{i=1}^n(\Delta\delta_{m,i})(\Delta\varepsilon_{m,i})^\top,\ \ Q_{\delta\zeta,m}=\frac{1}{T}\sum_{i=1}^n(\Delta\delta_{m,i})(\Delta\zeta_{m,i})^\top,\\
	Q_{\varepsilon\zeta,m}&=\frac{1}{T}\sum_{i=1}^n(\Delta\varepsilon_{m,i})(\Delta\zeta_{m,i})^{\top},
\end{align*}
where  
\begin{align*}
    \Delta\xi_{m,i}=\xi_{m,t_{i}^n}-\xi_{m,t_{i-1}^n},\ 
    \Delta\delta_{m,i}=\delta_{m,t_{i}^n}-\delta_{m,t_{i-1}^n},\ 
    \Delta\varepsilon_{m,i}=\varepsilon_{m,t_{i}^n}-\varepsilon_{m,t_{i-1}^n},\ 
    \Delta\zeta_{m,i}=\zeta_{m,t_{i}^n}-\zeta_{m,t_{i-1}^n}.
\end{align*}
Let
\begin{align*}
		Q_{X_1X_1}&=\frac{1}{T}\sum_{i=1}^n(X_{1,t_{i}^n}-X_{1,t_{i-1}^n})(X_{1,t_{i}^n}-X_{1,t_{i-1}^n})^\top,\\
		Q_{X_1X_2}&=\frac{1}{T}\sum_{i=1}^n(X_{1,t_{i}^n}-X_{1,t_{i-1}^n})(X_{2,t_{i}^n}-X_{2,t_{i-1}^n})^\top,\\
		Q_{X_2X_2}&=\frac{1}{T}\sum_{i=1}^n(X_{2,t_{i}^n}-X_{2,t_{i-1}^n})(X_{2,t_{i}^n}-X_{2,t_{i-1}^n})^\top.
\end{align*}
Decompose $X_{1,t_{i}^n}-X_{1,t_{i-1}^n}$ as
\begin{align*}
	X_{1,t_{i}^n}-X_{1,t_{i-1}^n}=A_{i,m,n}+B_{i,m,n},
\end{align*}
where $A_{i,m,n}=\Lambda_{x_1,m}\Delta\xi_{m,i}$ and $B_{i,m,n}=\Delta\delta_{m,i}$. Noting that
\begin{align*}
    X_{2,t}=\Lambda_{x_2,m}\Psi_{m}^{-1}\Gamma_{m}\xi_{m,t}+\Lambda_{x_2,m}\Psi_{m}^{-1}\zeta_{m,t}+\varepsilon_{m,t},
\end{align*}
we decompose $X_{2,t_{i}^n}-X_{2,t_{i-1}^n}$ as
\begin{align*}
	X_{2,t_{i}^n}-X_{2,t_{i-1}^n}=C_{i,m,n}+D_{i,m,n}+E_{i,m,n},
\end{align*}
where $C_{i,m,n}=\Lambda_{x_2,m}\Psi_{m}^{-1}\Gamma_{m}\Delta\xi_{m,i}$, $D_{i,m,n}=\Lambda_{x_2,m}\Psi_{m}^{-1}\Delta\zeta_{m,i}$ and $E_{i,m,n}=\Delta\varepsilon_{m,i}$. Let
\begin{align*}
    \tilde{V}_{m,n}(\theta_{m,0})=\tilde{V}(Q_{XX},\Sigma_{m}(\theta_{m,0})).
\end{align*}
\begin{lemma}\label{Qlemma}
Under 
$\bf{[A1]}$-$\bf{[A2]}$, $\bf{[B1]}$-$\bf{[B3]}$, $\bf{[C1]}$-$\bf{[C3]}$ and $\bf{[D1]}$-$\bf{[D2]}$,
as $h_n\longrightarrow 0$ and $nh_n\longrightarrow \infty$,
\begin{align*}
	Q_{\xi\xi,m}\stackrel{P_{\theta_{m}\ } }{\longrightarrow}\Sigma_{\xi\xi,m}, \  
	Q_{\delta\delta,m}\stackrel{P_{\theta_{m}}}{\longrightarrow} \Sigma_{\delta\delta,m}, \ 
	Q_{\varepsilon\varepsilon,m}\stackrel{P_{\theta_{m}\ } }{\longrightarrow}
	\Sigma_{\varepsilon\varepsilon,m},\
	Q_{\zeta\zeta,m}\stackrel{P_{\theta_{m}\ } }{\longrightarrow} \Sigma_{\zeta\zeta,m},\qquad\quad\\
	Q_{\xi\delta,m}\stackrel{P_{\theta_{m}\ } }{\longrightarrow} 0,\
	Q_{\xi\varepsilon,m}\stackrel{P_{\theta_{m}\ } }{\longrightarrow} 0,\
	Q_{\xi\zeta,m}\stackrel{P_{\theta_{m}\ } }{\longrightarrow}
	0,\  Q_{\delta\varepsilon}\stackrel{P_{\theta_{m}\ } }{\longrightarrow} 0,\ Q_{\delta\zeta,m} \stackrel{P_{\theta_{m}\ } }{\longrightarrow}0, \  Q_{\varepsilon\zeta,m}\stackrel{P_{\theta_{m}\ } }{\longrightarrow} 0.
\end{align*}
\end{lemma}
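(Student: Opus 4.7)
The plan is to decompose each increment $\Delta\xi_{m,i}$ using the SDE as
\begin{align*}
    \Delta\xi_{m,i} = \int_{t_{i-1}^n}^{t_i^n} B_{1,m}(\xi_{m,s})\,\dd s + S_{1,m}\bigl(W_{1,t_i^n}-W_{1,t_{i-1}^n}\bigr),
\end{align*}
and similarly for $\Delta\delta_{m,i}, \Delta\varepsilon_{m,i}, \Delta\zeta_{m,i}$. Expanding each diagonal product $\Delta\xi_{m,i}\Delta\xi_{m,i}^\top$ and each cross product such as $\Delta\xi_{m,i}\Delta\delta_{m,i}^\top$ yields four pieces (drift$\times$drift, two drift$\times$martingale mixed pieces, and martingale$\times$martingale), which I would handle separately.

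For the drift contributions, $\textbf{[A1]}$(a),(c) combined with $\textbf{[A1]}$(b) give $\sup_t\E[|B_{1,m}(\xi_{m,t})|^q]<\infty$ for every $q$, so $\E\bigl[\bigl|\int_{t_{i-1}^n}^{t_i^n}B_{1,m}(\xi_{m,s})\,\dd s\bigr|^q\bigr]=O(h_n^q)$. Hence the drift$\times$drift contribution to $Q_{\xi\xi,m}$ is $O_p(nh_n^2/T)=O_p(h_n)$, and by the Cauchy--Schwarz inequality each drift$\times$martingale mixed piece is $O_p(\sqrt{h_n})$. The analogous estimates for $\delta_m,\varepsilon_m,\zeta_m$ follow from $\textbf{[B1]}$, $\textbf{[C1]}$, $\textbf{[D1]}$, so every drift-involving piece vanishes in probability.

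For the diagonal martingale$\times$martingale piece, the random matrices $(W_{1,t_i^n}-W_{1,t_{i-1}^n})(W_{1,t_i^n}-W_{1,t_{i-1}^n})^\top/h_n$ are i.i.d.\ with mean $\mathbb{I}_{r_1}$ and finite variance, so the law of large numbers gives
\begin{align*}
    \frac{1}{T}\sum_{i=1}^n S_{1,m}(W_{1,t_i^n}-W_{1,t_{i-1}^n})(W_{1,t_i^n}-W_{1,t_{i-1}^n})^\top S_{1,m}^\top \stackrel{P}{\longrightarrow} S_{1,m}\mathbb{I}_{r_1}S_{1,m}^\top = \Sigma_{\xi\xi,m},
\end{align*}
which handles $Q_{\xi\xi,m}$; the proofs for $Q_{\delta\delta,m}, Q_{\varepsilon\varepsilon,m}, Q_{\zeta\zeta,m}$ are identical. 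For an off-diagonal term such as $Q_{\xi\delta,m}$, the only surviving piece after the drift analysis is $\frac{1}{T}\sum_i S_{1,m}(W_{1,t_i^n}-W_{1,t_{i-1}^n})(W_{2,t_i^n}-W_{2,t_{i-1}^n})^\top S_{2,m}^\top$. By the independence $W_1\perp W_2$, each summand has zero mean, and independence across $i$ yields total variance of order $nh_n^2/T^2 = 1/n\to 0$, so the term tends to zero in $L^2$. The remaining cross terms $Q_{\xi\varepsilon,m}, Q_{\xi\zeta,m}, Q_{\delta\varepsilon,m}, Q_{\delta\zeta,m}, Q_{\varepsilon\zeta,m}$ are handled identically using the mutual independence of $W_1,W_2,W_3,W_4$.

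The main obstacle is the book-keeping for the drift$\times$martingale cross pieces: one has to combine the polynomial growth supplied by $B_{j,m}\in C^4_\uparrow$ with the uniform $L^q$ moment bounds on $\xi_m,\delta_m,\varepsilon_m,\zeta_m$ to obtain an $O_p(\sqrt{h_n})$ estimate uniformly in $i$. Once that is secured, the rest of the proof reduces to a law of large numbers for i.i.d.\ Brownian increments (for the diagonal terms) and a direct variance computation (for the off-diagonal ones).
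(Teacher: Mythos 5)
Your proof is correct and follows what is essentially the standard route: the paper itself gives no details here, deferring to Lemma 1 of Kusano and Uchida (2022), whose argument rests on the same drift/martingale decomposition of the increments, the moment and polynomial-growth bounds from $\bf{[A1]}$--$\bf{[D1]}$ to kill the drift$\times$drift and drift$\times$martingale pieces, a law of large numbers for the i.i.d.\ Brownian quadratic terms, and a variance computation exploiting the independence of $W_1,\dots,W_4$ for the cross terms. Your observation that ergodicity is not actually used is consistent with the paper, since the same convergences are reasserted in the non-ergodic case (Theorem \ref{Qtheoremnon}).
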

\begin{proof}
The results can be shown in a similar way to Lemma 1 in Kusano and Uchida \cite{Kusano(2022)}.
\end{proof} 
\begin{lemma}\label{Xlemma}
Under 
$\bf{[A1]}$-$\bf{[A2]}$, $\bf{[B1]}$-$\bf{[B3]}$, $\bf{[C1]}$-$\bf{[C3]}$, $\bf{[D1]}$-$\bf{[D2]}$, 
$\bf{[E]}$ and $\bf{[F]}$,
as $h_n\longrightarrow 0$ and $nh_n\longrightarrow \infty$, 
\begin{align}
	\begin{split}
	\qquad\sum_{i=1}^n\E_{\theta_{m}}\left[\left\{\frac{1}{\sqrt{n}h_n}(X_{t_{i}^n}^{(j_1)}-X_{t_{i-1}^n}^{(j_1)})(X_{t_{i}^n}^{(j_2)}-X_{t_{i-1}^n}^{(j_2)})-\frac{1}{\sqrt{n}}(\Sigma_{m}(\theta_{m}))_{j_1j_2}\right\}|\mathscr{F}^{n}_{i-1}\right]\stackrel{P_{\theta_{m}\ } }{\longrightarrow}0\qquad\ \ \label{EXX}
	\end{split}
\end{align}
for $j_1,j_2=1,\cdots,p$,
\begin{align}
	\begin{split}
	&\sum_{i=1}^n\E_{\theta_{m}}\left[\frac{1}{\sqrt{n}h_n}(X_{t_{i}^n}^{(j_1)}-X_{t_{i-1}^n}^{(j_1)})(X_{t_{i}^n}^{(j_2)}-X_{t_{i-1}^n}^{(j_2)})-\frac{1}{\sqrt{n}}(\Sigma_{m}(\theta_{m}))_{j_1j_2}|\mathscr{F}^{n}_{i-1}\right]\\
	&\qquad\quad\times\E_{\theta_{m}}\left[\frac{1}{\sqrt{n}h_n}(X_{t_{i}^n}^{(j_3)}-X_{t_{i-1}^n}^{(j_3)})(X_{t_{i}^n}^{(j_4)}-X_{t_{i-1}^n}^{(j_4)})-\frac{1}{\sqrt{n}}(\Sigma_{m}(\theta_{m}))_{j_3j_4}|\mathscr{F}^{n}_{i-1}\right]\stackrel{P_{\theta_{m}\ } }{\longrightarrow} 0\label{EXXEXX}
	\end{split}
\end{align}
for $j_1,j_2,j_3,j_4=1,\cdots,p$,
\begin{align}
	\begin{split}
	&\sum_{i=1}^n\E_{\theta_{m}}\left[\left\{\frac{1}{\sqrt{n}h_n}(X_{t_{i}^n}^{(j_1)}-X_{t_{i-1}^n}^{(j_1)})(X_{t_{i}^n}^{(j_2)}-X_{t_{i-1}^n}^{(j_2)})-\frac{1}{\sqrt{n}}(\Sigma_{m}(\theta_{m}))_{j_1j_2}\right\}\right.\\
	&\qquad\qquad\qquad\quad\times\left.\left\{\frac{1}{\sqrt{n}h_n}(X_{t_{i}^n}^{(j_3)}-X_{t_{i-1}^n}^{(j_3)})(X_{t_{i}^n}^{(j_4)}-X_{t_{i-1}^n}^{(j_4)})-\frac{1}{\sqrt{n}}(\Sigma_{m}(\theta_{m}))_{j_3j_4}\right\}|\mathscr{F}^{n}_{i-1}\right]\\
	&\stackrel{P_{\theta_{m}\ } }{\longrightarrow} (\Sigma_{m}(\theta_{m}))_{j_1j_3}(\Sigma_{m}(\theta_{m}))_{j_2j_4}+(\Sigma_{m}(\theta_{m}))_{j_1j_4}(\Sigma_{m}(\theta_{m}))_{j_2j_3}\label{EXXXX}
\end{split}
\end{align}
for $j_1,j_2,j_3,j_4=1,\cdots,p$, and
\begin{align}
	\begin{split}
	&\sum_{i=1}^n\E_{\theta_{m}}\left[\left|\frac{1}{\sqrt{n}h_n}(X^{(j_1)}_{t_{i}^n}-X^{(j_1)}_{t_{i-1}^n})(X^{(j_2)}_{t_{i}^n}-X^{(j_2)}_{t_{i-1}^n})-\frac{1}{\sqrt{n}}(\Sigma_{m}(\theta_{m}))_{j_1j_2}\right|^4|\mathscr{F}^{n}_{i-1}\right]\stackrel{P_{\theta_{m}\ } }{\longrightarrow}0\label{EXX4}
	\end{split}
\end{align}
for $j_1,j_2=1,\cdots p$.
\end{lemma}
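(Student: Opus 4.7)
\ The approach is to reduce every conditional moment of products of coordinates of $X_{t_i^n}-X_{t_{i-1}^n}$ to conditional moments of products of the four independent diffusion increments $\Delta\xi_{m,i},\Delta\delta_{m,i},\Delta\varepsilon_{m,i},\Delta\zeta_{m,i}$, via the linear decompositions $X_{1,t_i^n}-X_{1,t_{i-1}^n}=A_{i,m,n}+B_{i,m,n}$ and $X_{2,t_i^n}-X_{2,t_{i-1}^n}=C_{i,m,n}+D_{i,m,n}+E_{i,m,n}$ set up at the opening of Section~5. Because $W_1,W_2,W_3,W_4$ are independent, any conditional moment that mixes two distinct diffusion types factors through the marginals, and only contributions in which each diffusion type appears an even number of times survive at the leading order; this matches precisely the block structure of $\Sigma_m(\theta_m)$.

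The key analytic input is a Kessler-type It\^o--Taylor expansion applied to each individual diffusion. For any $Y\in\{\xi_m,\delta_m,\varepsilon_m,\zeta_m\}$ with $\mathrm{d}Y_t=b(Y_t)\,\mathrm{d}t+S\,\mathrm{d}W_t$ and $\Sigma_Y=SS^\top$, iterating It\^o's formula and invoking the polynomial-growth hypotheses $\bf{[A1]}(c)$--$\bf{[D1]}(c)$ together with the sup-moment bounds (b), one obtains
\begin{align*}
\E\bigl[h_n^{-1}\Delta Y_i^{(k_1)}\Delta Y_i^{(k_2)}\bigm|\mathscr{F}^n_{i-1}\bigr]&=(\Sigma_Y)_{k_1k_2}+R(h_n,Y_{t^n_{i-1}}),\\
\E\bigl[h_n^{-2}\Delta Y_i^{(k_1)}\Delta Y_i^{(k_2)}\Delta Y_i^{(k_3)}\Delta Y_i^{(k_4)}\bigm|\mathscr{F}^n_{i-1}\bigr]&=(\Sigma_Y)_{k_1k_2}(\Sigma_Y)_{k_3k_4}+(\Sigma_Y)_{k_1k_3}(\Sigma_Y)_{k_2k_4}\\
&\quad+(\Sigma_Y)_{k_1k_4}(\Sigma_Y)_{k_2k_3}+R(h_n,Y_{t^n_{i-1}}),
\end{align*}
together with $\E\bigl[|\Delta Y_i^{(k)}|^8\bigm|\mathscr{F}^n_{i-1}\bigr]\leq R(h_n^4,Y_{t^n_{i-1}})$. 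Cross-diffusion conditional moments such as $\E[\Delta\xi_{m,i}(\Delta\delta_{m,i})^\top/h_n\mid\mathscr{F}^n_{i-1}]$ are analogously $R(h_n,\cdot)$ remainders, supported only by drift--drift interactions.

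Given these ingredients, (\ref{EXX}) follows by plugging the decomposition into $\Delta X_i^{(j_1)}\Delta X_i^{(j_2)}/h_n$, matching the leading term against the explicit blocks $\Sigma_{X_1X_1,m},\Sigma_{X_1X_2,m},\Sigma_{X_2X_2,m}$ spelled out after (\ref{sigmam}), and observing that $\frac{1}{\sqrt n}\sum_i R(h_n,\cdot)$ vanishes under $nh_n^2\to 0$ via the ergodic average from Lemma \ref{Qlemma}. Claim (\ref{EXXEXX}) is immediate, since each factor is of size $R(h_n,\cdot)/\sqrt n$, so the $n$-term sum of products is of order $h_n^2$. Claim (\ref{EXX4}) follows from the eighth-moment bound: each conditional fourth moment is of size $n^{-2}$, summing to $n^{-1}$. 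The substantive work is (\ref{EXXXX}): expand the quadruple product via the (at most) five-summand decomposition, identify the tensor monomials contributing at order $h_n^2$ (only those even in every Brownian increment they touch), apply the 4th-moment expansion above, and verify that the $(\Sigma_m)_{j_1j_2}(\Sigma_m)_{j_3j_4}$ terms cancel exactly with the subtracted cross products of pairwise conditional expectations, leaving the symmetric pairings $(\Sigma_m)_{j_1j_3}(\Sigma_m)_{j_2j_4}+(\Sigma_m)_{j_1j_4}(\Sigma_m)_{j_2j_3}$, after which the $R(h_n,\cdot)$ remainders vanish by ergodicity.

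The main obstacle is the combinatorial bookkeeping in (\ref{EXXXX}): tracking how the five summands in each $\Delta X_i^{(j)}$ recombine, after applying the second- and fourth-moment expansions above, into the tensor products of blocks of $\Sigma_m(\theta_m)$, and checking that the independence-induced vanishing of odd-in-a-single-diffusion monomials leaves exactly the required pairings. Once these pairings and the matrix identities defining $\Sigma_{X_1X_1,m}$, $\Sigma_{X_1X_2,m}$, $\Sigma_{X_2X_2,m}$ are set up, the remainder of the computation is mechanical and follows the pattern of Lemma~1 in Kusano and Uchida \cite{Kusano(2022)}.
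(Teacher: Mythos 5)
Your proposal is correct and follows essentially the same route as the paper: decompose the increments of $X$ into the summands $A_{i,m,n},B_{i,m,n}$ and $C_{i,m,n},D_{i,m,n},E_{i,m,n}$, apply Kessler-type It\^o--Taylor conditional moment expansions to each independent diffusion (the paper's Lemmas \ref{Alemma}--\ref{AClemma} and \ref{klemma}), factor cross-diffusion terms by independence, match the surviving even-order pairings with the blocks $\Sigma_{X_1X_1,m},\Sigma_{X_1X_2,m},\Sigma_{X_2X_2,m}$, and absorb the $R$-remainders via the ergodic averages; the cancellation you describe in (\ref{EXXXX}) and the Cauchy--Schwarz/eighth-moment argument for (\ref{EXX4}) are exactly the paper's steps. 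No substantive difference.
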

\begin{proof}
See Appendix \ref{ProofXlemma}.
\end{proof}
\begin{proof}[Proof of Theorem 1.]
We first show
\begin{align}
    Q_{XX}\stackrel{P_{\theta_{m}\ } }{\longrightarrow}\Sigma_{m}(\theta_{m}).\label{Qxxprob}
\end{align}
Recall that $Q_{X_2X_1}=Q_{X_1X_2}^{\top}$.
In order to show (\ref{Qxxprob}),
it is sufficient to show that
\begin{align}
	Q_{X_1X_1}&\stackrel{P_{\theta_{m}\ } }{\longrightarrow}\Lambda_{x_1,m}\Sigma_{\xi\xi,m}\Lambda_{x_1,m}^{\top}+\Sigma_{\delta\delta,m}\ \bigl(=\Sigma_{X_1X_1,m}(\theta_{m})\bigr),\label{Qx1x1cons}\\
	Q_{X_1X_2}&\stackrel{P_{\theta_{m}\ } }{\longrightarrow}
	\Lambda_{x_1,m}\Sigma_{\xi\xi,m}\Gamma_{m}^{\top}\Psi_{m}^{-1\top}\Lambda_{x_2,m}^{\top}\ \bigl(=\Sigma_{X_1X_2,m}(\theta_{m})\bigr),\label{Qx1x2cons}\\
	Q_{X_2X_2}&\stackrel{P_{\theta_{m}\ } }{\longrightarrow}\Lambda_{x_2,m}\Psi^{-1}_{m}(\Gamma_{m}\Sigma_{\xi\xi,m}\Gamma_{m}^{\top}+\Sigma_{\zeta\zeta,m})\Psi^{-1\top}_{m}\Lambda_{x_2,m}^{\top}+\Sigma_{\varepsilon\varepsilon,m}\ \bigl(=\Sigma_{X_2X_2,m}(\theta_{m})\bigr). \label{Qx2x2cons}
\end{align}
Using Lemma \ref{Qlemma} and Slutsky's theorem, one gets
\begin{align*}
	Q_{X_1X_1}
	&=\frac{1}{T}\sum_{i=1}^n\bigl\{\Lambda_{x_1,m}(\xi_{m,t_{i}^n}-\xi_{m,t_{i-1}^n})+\delta_{m,t_{i}^n}-\delta_{m,t_{i-1}^n}\bigr\}\bigl\{\Lambda_{x_1,m}(\xi_{m,t_{i}^n}-\xi_{m,t_{i-1}^n})+\delta_{m,t_{i}^n}-\delta_{m,t_{i-1}^n}\bigr\}^{\top}\\
	&=\Lambda_{x_1,m}Q_{\xi\xi,m}\Lambda_{x_1,m}^{\top}+\Lambda_{x_1,m}Q_{\xi\delta,m}+Q_{\xi\delta,m}^{\top}\Lambda_{x_1,m}^{\top}+Q_{\delta\delta,m}
	\stackrel{P_{\theta_{m}\ } }{\longrightarrow}\Lambda_{x_1,m}\Sigma_{\xi\xi,m}\Lambda_{x_1,m}^{\top}+\Sigma_{\delta\delta,m},\qquad
\end{align*}
which yields (\ref{Qx1x1cons}). In the same way, since
\begin{align*}
	Q_{X_1X_2}&=\Lambda_{x_1,m}Q_{\xi\xi,m}\Gamma_{m}^{\top}\Psi_{m}^{-1\top}\Lambda_{x_2,m}^{\top}+\Lambda_{x_1,m}Q_{\xi\zeta,m}\Psi_{m}^{-1\top}\Lambda_{x_2,m}^{\top}+\Lambda_{x_1,m}Q_{\xi\varepsilon,m}\\
	&\quad+Q_{\xi\delta,m}^{\top}\Gamma_{m}^{\top}\Psi_{m}^{-1\top}\Lambda_{x_2,m}^{\top}+Q_{\delta\zeta,m}\Psi_{m}^{-1\top}\Lambda_{x_2,m}^{\top}+Q_{\delta\varepsilon,m}\stackrel{P_{\theta_{m}\ } }{\longrightarrow}
	\Lambda_{x_1,m}\Sigma_{\xi\xi,m}\Gamma_{m}^{\top}\Psi_{m}^{-1\top}\Lambda_{x_2,m}^{\top},\qquad
\end{align*}
and
\begin{align*}
	Q_{X_2X_2}&=\Lambda_{x_2,m}\Psi_{m}^{-1}\Gamma_{m}Q_{\xi\xi,m}\Gamma_{m}^{\top}\Psi_{m}^{-1\top}\Lambda_{x_2,m}^{\top}+\Lambda_{x_2,m}\Psi_{m}^{-1}\Gamma_{m}Q_{\xi\zeta,m}\Psi_{m}^{-1\top}\Lambda_{x_2,m}^{\top}\\
	&\quad+\Lambda_{x_2,m}\Psi_{m}^{-1}\Gamma_{m}Q_{\xi\varepsilon,m}+\Lambda_{x_2,m}\Psi_{m}^{-1}Q_{\xi\zeta,m}^{\top}\Gamma_{m}^{\top}\Psi_{m}^{-1\top}\Lambda_{x_2,m}^{\top}\\
	&\quad+\Lambda_{x_2,m}\Psi_{m}^{-1}Q_{\zeta\zeta,m}\Psi_{m}^{-1\top}\Lambda_{x_2,m}^{\top}+\Lambda_{x_2,m}\Psi_{m}^{-1}Q_{\varepsilon\zeta,m}^{\top}+Q_{\varepsilon\xi,m}\Gamma_{m}^{\top}\Psi_{m}^{-1\top}\Lambda_{x_2,m}^{\top}\\
	&\quad+Q_{\varepsilon\zeta,m}\Psi_{m}^{-1\top}\Lambda_{x_2,m}^{\top}+Q_{\varepsilon\varepsilon,m}\stackrel{P_{\theta_{m}\ } }{\longrightarrow}\Lambda_{x_2,m}\Psi^{-1}_{m}(\Gamma_{m}\Sigma_{\xi\xi,m}\Gamma_{m}^{\top}+\Sigma_{\zeta\zeta,m})\Psi^{-1\top}_{m}\Lambda_{x_2,m}^{\top}+\Sigma_{\varepsilon\varepsilon,m},
\end{align*}
we obtain (\ref{Qx1x2cons}) and (\ref{Qx2x2cons}).
		
Next, we prove
\begin{align}
	\sqrt{n}(\vech{Q_{XX}}-\vech{\Sigma_{m}(\theta_{m})})\stackrel{d}{\longrightarrow} N_{\bar{p}}(0,W_{m}(\theta_{m})).\label{vechasym}
\end{align}
Consider the following convergence:
\begin{align}
	\sqrt{n}(\vec{Q_{XX}}-\vec{\Sigma_{m}(\theta_{m})})\stackrel{d}{\longrightarrow} N_{p^2}(0,\Gamma_{m}(\theta_{m})),\label{vecasym}
\end{align}
where 
\begin{align*}
	\Gamma_{m}(\theta_{m})_{p(j_1-1)+j_2,\ p(j_3-1)+j_4}=(\Sigma_{m}(\theta_{m}))_{j_1j_3}(\Sigma_{m}(\theta_{m}))_{j_2j_4}+(\Sigma_{m}(\theta_{m}))_{j_1j_4}(\Sigma_{m}(\theta_{m}))_{j_2j_3}
\end{align*}
for $j_1,j_2,j_3,j_4=1,\cdots,p$.
If (\ref{vecasym}) holds, then it follows from the continuous mapping theorem that
\begin{align}
	\begin{split}
	\sqrt{n}(\vech{Q_{XX}}-\vech{\Sigma_{m}(\theta_{m})})&=f(\sqrt{n}(\vec{Q_{XX}}-\vec{\Sigma_{m}(\theta_{m})}))\\
	&\stackrel{d}{\longrightarrow} f(N_{p^2}(0,\Gamma_{m}(\theta_{m})))\sim N_{\bar{p}}(0,\mathbb{D}_{p}^{+}\Gamma_{m}(\theta_{m})\mathbb{D}_{p}^{+\top}), \label{veccon}
	\end{split}
\end{align}
where $f(x)=\mathbb{D}_{p}^{+}x$ for $x\in\mathbb{R}^{\bar{p}}$. In an analogous manner to Lemma 6 in Kusano and Uchida \cite{Kusano(2022)},
\begin{align*}
	\mathbb{D}_{p}^{+}\Gamma_{m}(\theta_{m})\mathbb{D}_{p}^{+\top}=W_{m}(\theta_{m}),
\end{align*}
so that we obtain (\ref{vechasym}) from (\ref{veccon}). Consequently, it is sufficient to prove (\ref{vecasym}) in order to prove (\ref{vechasym}). Let
\begin{align*}
	L_{i,m,n}&=\frac{1}{\sqrt{n}h_n}\vec{(X_{t_{i}^n}-X_{t_{i-1}^n})(X_{t_{i}^n}-X_{t_{i-1}^n})^{\top}}-\frac{1}{\sqrt{n}}\vec{\Sigma_{m}(\theta_{m})}.
\end{align*}
The left side of (\ref{vecasym}) is expressed as
\begin{align*}
	\sqrt{n}(\vec Q_{XX}-\vec\Sigma_{m}(\theta_{m}))=\sum_{i=1}^n L_{i,m,n}.
\end{align*}
In a similar way to Lemma 5 in Kessler \cite{kessler(1997)}, if it holds that
\begin{align}
	\label{L}
	&\qquad\qquad\qquad\qquad\qquad\qquad\sum_{i=1}^n\E_{\theta_{m}}\left[L_{i,m,n}| \mathscr{F}^{n}_{i-1}\right]\stackrel{P_{\theta_{m}\ } }{\longrightarrow}0,\\
	\label{LL}
	\begin{split}
	&\sum_{i=1}^n\E_{\theta_{m}}\left[L_{i,m,n}L_{i,m,n}^\top| \mathscr{F}^{n}_{i-1}\right]-\sum_{i=1}^n\E_{\theta_{m}}\left[L_{i,m,n}| \mathscr{F}^{n}_{i-1}\right]
	\E_{\theta_{m}}\left[L_{i,m,n}| \mathscr{F}^{n}_{i-1}\right]^\top
	\stackrel{P_{\theta_{m}\ } }{\longrightarrow}\Gamma_{m}(\theta_{m}),
	\end{split}\\
	\label{L4}
	&\qquad\qquad\qquad\qquad\qquad\quad\ \ \sum_{i=1}^n\E_{\theta_{m}}\left[|L_{i,m,n}|^4|\mathscr{F}^{n}_{i-1}\right]\stackrel{P_{\theta_{m}\ } }{\longrightarrow}0,
\end{align}
then we can obtain (\ref{vecasym}) from Theorems 3.2 and 3.4 in Hall and Heyde \cite{Hall(1981)}. 
(\ref{EXX}) yields (\ref{L}). We see from 
(\ref{EXXXX}) that
\begin{align*}
	\sum_{i=1}^n\E_{\theta_{m}}\left[L_{i,m,n}L_{i,m,n}^\top| \mathscr{F}^{n}_{i-1}\right]\stackrel{P_{\theta_{m}\ }}{\longrightarrow}\Gamma_{m}(\theta_{m}),
\end{align*}
and it follows from 
(\ref{EXXEXX}) that
\begin{align*}
	\sum_{i=1}^n\E_{\theta_{m}}\left[L_{i,m,n}| \mathscr{F}^{n}_{i-1}\right]\E_{\theta_{m}}\left[L_{i,m,n}| \mathscr{F}^{n}_{i-1}\right]^{\top}\stackrel{P_{\theta_{m}\ } }{\longrightarrow} 0.
\end{align*}
Thus, Slutsky's theorem implies (\ref{LL}). Finally, we prove (\ref{L4}). 
Note that one has
\begin{align}
	\begin{split}
	0&\leq \sum_{i=1}^n\E_{\theta_{m}}\left[|L_{i,m,n}|^4| \mathscr{F}^{n}_{i-1}\right]\\
	&\qquad\qquad\leq\sum_{i=1}^n\E_{\theta_{m}}\left[\Bigl|\sum_{u=1}^{p^2}L_{i,m,n}^{(u)2}\Bigr|^2| \mathscr{F}^{n}_{i-1}\right]\leq C_{p}\sum_{u=1}^{p^2}\sum_{i=1}^n\E_{\theta_{m}}\left[|L_{i,m,n}^{(u)}|^4|\mathscr{F}^{n}_{i-1}\right]. 
	\label{L4ine}
	\end{split}
\end{align}
From 
(\ref{EXX4}), we have
\begin{align*}
	\sum_{i=1}^n\E_{\theta_{m}}\left[|L_{i,m,n}^{(u)}|^4|\mathscr{F}^{n}_{i-1}\right]\stackrel{P_{\theta_{m}\ } }{\longrightarrow} 0
\end{align*}
for $u=1,\cdots,p^2$, so that 
(\ref{L4}) holds from (\ref{L4ine}). 
Therefore, we obtain (\ref{vecasym}).
\end{proof}
\begin{lemma}\label{Sigmaposlemma}
Under 
$\bf{[B2]}$, $\bf{[C2]}$, $\bf{[E]}$ and $\bf{[F]}$, $\Sigma_{m}(\theta_{m})$ is a positive definite matrix.
\end{lemma}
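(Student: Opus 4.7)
The plan is to write $\Sigma_m(\theta_m)$ as the sum of a positive semi-definite matrix and a strictly positive definite one, so that positive definiteness follows immediately. Assumption $\bf{[E]}$ guarantees that $\Psi_m^{-1}$ exists, so the displayed expression for $\Sigma_m(\theta_m)$ is well-defined to begin with.

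First, I would introduce the block matrices
\begin{align*}
M_m = \begin{pmatrix} \Lambda_{x_1,m} & O_{p_1 \times k_{2,m}} \\ \Lambda_{x_2,m}\Psi_m^{-1}\Gamma_m & \Lambda_{x_2,m}\Psi_m^{-1} \end{pmatrix}, \quad \Phi_m = \begin{pmatrix} \Sigma_{\xi\xi,m} & O \\ O & \Sigma_{\zeta\zeta,m} \end{pmatrix}, \quad \Omega_m = \begin{pmatrix} \Sigma_{\delta\delta,m} & O \\ O & \Sigma_{\varepsilon\varepsilon,m} \end{pmatrix},
\end{align*}
and verify by direct block multiplication, comparing each block against the formulas for $\Sigma_{X_1X_1,m}(\theta_m)$, $\Sigma_{X_1X_2,m}(\theta_m)$ and $\Sigma_{X_2X_2,m}(\theta_m)$ in (\ref{sigmam}), that $\Sigma_m(\theta_m) = M_m \Phi_m M_m^\top + \Omega_m$.

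Next, since $\Sigma_{\xi\xi,m} = S_{1,m}S_{1,m}^\top$ and $\Sigma_{\zeta\zeta,m} = S_{4,m}S_{4,m}^\top$ are Gram matrices, $\Phi_m$ is positive semi-definite, whence $M_m \Phi_m M_m^\top$ is positive semi-definite. By $\bf{[B2]}$ and $\bf{[C2]}$, both diagonal blocks of $\Omega_m$ are strictly positive definite, so $\Omega_m > 0$. Consequently, for any nonzero $v \in \mathbb{R}^p$,
\begin{align*}
v^\top \Sigma_m(\theta_m) v = v^\top M_m \Phi_m M_m^\top v + v^\top \Omega_m v \geq v^\top \Omega_m v > 0,
\end{align*}
which yields $\Sigma_m(\theta_m) > 0$.

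There is essentially no obstacle here: the only nontrivial step is the block identity, which is a routine (if slightly tedious) calculation. Assumption $\bf{[F]}$ plays no visible role in this particular statement; it is presumably listed among the hypotheses for cohesion with the companion non-singularity assertion for $\Delta_m^\top W_m(\theta_{m,0})^{-1}\Delta_m$ flagged in the remark after $\bf{[H]}$, which is likely treated inside the same lemma in the full proof.
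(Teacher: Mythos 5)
Your proof is correct, and it takes a genuinely different route from the paper's. The paper keeps the first summand in the Schur-complement form
\begin{align*}
\begin{pmatrix} T_m & U_m \\ U_m^{\top} & W_m \end{pmatrix},\qquad
T_m=\Lambda_{x_1,m}\Sigma_{\xi\xi,m}\Lambda_{x_1,m}^{\top},\quad
U_m=\Lambda_{x_1,m}\Sigma_{\xi\xi,m}\Gamma_{m}^{\top}\Psi_{m}^{-1\top}\Lambda_{x_2,m}^{\top},
\end{align*}
and establishes its positive semi-definiteness via Harville's block criterion (Theorem 14.8.5): it checks $T_m\geq 0$, $W_m-U_m^{\top}T_m^{-}U_m\geq 0$ and $\mathbb{C}(U_m)\subset\mathbb{C}(T_m)$, where the computation of $U_m^{\top}T_m^{-}U_m$ uses generalized inverses of $\Lambda_{x_1,m}$ and hence genuinely invokes $\bf{[F]}$ (full column rank is what makes $\Lambda_{x_1,m}^{-}\Lambda_{x_1,m}=\mathbb{I}_{k_{1,m}}$). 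You instead factor the same summand globally as $M_m\Phi_m M_m^{\top}$ with $\Phi_m=\mathrm{diag}(\Sigma_{\xi\xi,m},\Sigma_{\zeta\zeta,m})$ a Gram matrix, which exhibits it as positive semi-definite with no rank condition at all; I checked the block multiplication and it does reproduce $\Sigma_{X_1X_1,m}-\Sigma_{\delta\delta,m}$, $\Sigma_{X_1X_2,m}$ and $\Sigma_{X_2X_2,m}-\Sigma_{\varepsilon\varepsilon,m}$. Your observation that $\bf{[F]}$ is then superfluous for this particular statement is accurate: it is an artifact of the paper's choice of decomposition (the paper also applies the block criterion a second time to conclude $\Omega_m>0$ from $\bf{[B2]}$ and $\bf{[C2]}$, which in your version is immediate). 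What your approach buys is a shorter argument with weaker hypotheses; what the paper's buys is nothing essential here, though its machinery (generalized inverses, column-space inclusions) reappears elsewhere in the appendix.
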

\begin{proof}
See Appendix \ref{Sigmaposproof}.
\end{proof}
\begin{lemma}\label{Vposlemma}
Under 
$\bf{[B2]}$, $\bf{[C2]}$, $\bf{[E]}$ and $\bf{[F]}$, for positive definite matrices $X\in\mathbb{R}^{p\times p}$ and $Y\in\mathbb{R}^{p\times p}$, $V(X,Y)$ is a positive definite matrix.
\end{lemma}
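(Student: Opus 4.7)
The plan is to peel the definition of $V(X,Y)$ from the outside in, showing positive definiteness at each nested layer: first for the integrand at each $(\lambda_1,\lambda_2)$, then for the double integral, and finally for the $\mathbb{D}_p^+$-sandwich.

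First, I would observe that for any $\lambda_1,\lambda_2\in[0,1]$ the matrix
\begin{align*}
M(\lambda_1,\lambda_2):=Y+\lambda_1\lambda_2(X-Y)=(1-\lambda_1\lambda_2)Y+\lambda_1\lambda_2 X
\end{align*}
is a convex combination of two positive definite matrices, hence positive definite. Therefore $M(\lambda_1,\lambda_2)^{-1}$ is positive definite, and then a standard fact about the Kronecker product (the spectrum of $A\otimes A$ consists of products of pairs of eigenvalues of $A$) yields that $M(\lambda_1,\lambda_2)^{-1}\otimes M(\lambda_1,\lambda_2)^{-1}$ is positive definite. In particular, for every fixed $(\lambda_1,\lambda_2)\in(0,1]\times(0,1]$ and every $w\in\mathbb{R}^{p^2}\setminus\{0\}$,
\begin{align*}
\lambda_2\, w^{\top}\bigl(M(\lambda_1,\lambda_2)^{-1}\otimes M(\lambda_1,\lambda_2)^{-1}\bigr)w>0.
\end{align*}

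Second, I would lift this pointwise positivity to the integral. Fix any $w\in\mathbb{R}^{p^2}\setminus\{0\}$; the map $(\lambda_1,\lambda_2)\mapsto \lambda_2\, w^{\top}(M(\lambda_1,\lambda_2)^{-1}\otimes M(\lambda_1,\lambda_2)^{-1})w$ is continuous on $[0,1]^2$ (continuity of matrix inversion on the positive definite cone) and strictly positive on the interior, so its double integral over $[0,1]^2$ is strictly positive. Consequently the matrix
\begin{align*}
I(X,Y):=\int_{0}^{1}\int_{0}^{1}\lambda_{2}\,M(\lambda_1,\lambda_2)^{-1}\otimes M(\lambda_1,\lambda_2)^{-1}\,d\lambda_{1}d\lambda_{2}
\end{align*}
is positive definite in $\mathbb{R}^{p^{2}\times p^{2}}$.

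Third, I would handle the outer $\mathbb{D}_{p}$ factors, which is where the assumptions $\bf{[B2]},\bf{[C2]},\bf{[E]},\bf{[F]}$ are not actually needed beyond guaranteeing (via Lemma \ref{Sigmaposlemma}) the positive definite setting of interest; the lemma itself is stated for \emph{arbitrary} positive definite $X,Y$. The key algebraic fact I would invoke is that the duplication matrix $\mathbb{D}_{p}$ has full column rank $\bar{p}$, so $\mathbb{D}_{p}^{+}\mathbb{D}_{p}=\mathbb{I}_{\bar p}$ and the Moore--Penrose inverse $\mathbb{D}_{p}^{+}$ has full row rank. Equivalently, the linear map induced by $\mathbb{D}_{p}^{+\top}$ is injective on $\mathbb{R}^{\bar p}$: for any $v\in\mathbb{R}^{\bar p}\setminus\{0\}$ we have $\mathbb{D}_{p}^{+\top}v\neq 0$. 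Combining this with the positive definiteness of $I(X,Y)$ gives
\begin{align*}
v^{\top}V(X,Y)v=\bigl(\mathbb{D}_{p}^{+\top}v\bigr)^{\top}I(X,Y)\bigl(\mathbb{D}_{p}^{+\top}v\bigr)>0
\end{align*}
for all $v\neq 0$, which completes the proof.

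The only step that needs a little care rather than being a one-liner is the second one, namely passing from the pointwise positive definiteness of the integrand to the positive definiteness of its integral; everything else reduces to standard facts about convex combinations of positive definite matrices, Kronecker products, and the rank of the duplication matrix. I expect no substantive obstacle beyond bookkeeping the dimensions of the $\mathbb{D}_{p}^{+}$ factors.
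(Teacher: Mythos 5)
Your proof is correct and follows essentially the same route as the paper's: positive definiteness of the convex combination $Y+\lambda_1\lambda_2(X-Y)$, hence of the Kronecker product of its inverses, combined with injectivity of the duplication-matrix map on $\mathbb{R}^{\bar p}$, and positivity of the integral of a nonnegative integrand that is strictly positive for $\lambda_2>0$. You are in fact slightly more careful than the paper on the step passing pointwise positivity through the double integral, and your observation that the assumptions $\bf{[B2]}$, $\bf{[C2]}$, $\bf{[E]}$, $\bf{[F]}$ play no role for arbitrary positive definite $X,Y$ is accurate.
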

\begin{proof}
See Appendix \ref{Vposproof}.
\end{proof}
\begin{lemma}\label{Fproblemma}
Under 
$\bf{[A1]}$-$\bf{[A2]}$, $\bf{[B1]}$-$\bf{[B3]}$, $\bf{[C1]}$-$\bf{[C3]}$, $\bf{[D1]}$-$\bf{[D2]}$, $\bf{[E]}$ and $\bf{[F]}$, 
as $h_n\longrightarrow 0$ and $nh_n\longrightarrow\infty$,
\begin{align} 
	\tilde{F}(Q_{XX},\Sigma_{m}(\theta_{m}))&\stackrel{P_{\theta_{m,0}\ } }{\longrightarrow}F(\Sigma_{m}(\theta_{m,0}),\Sigma_{m}(\theta_{m}))\quad\mbox{uniformly in $\theta_m$},\label{Fprob}\\
	\begin{split}
	\partial_{\theta_{m}}^2\tilde{F}(Q_{XX},\Sigma_{m}(\theta_{m}))&\stackrel{P_{\theta_{m,0}\ } }{\longrightarrow}\partial_{\theta_{m}}^2 F(\Sigma_{m}(\theta_{m,0}),\Sigma_{m}(\theta_{m}))\quad\mbox{uniformly in $\theta_{m}$}.\label{F2prob}
	\end{split}
\end{align}
\end{lemma}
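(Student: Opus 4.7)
The plan is to first remove the truncation in $\tilde{F}$ by using that $Q_{XX}>0$ with probability tending to one, and then to reduce both uniform convergences to continuous-mapping arguments that use compactness of $\Theta_m$ together with the convergence $Q_{XX}\stackrel{P_{\theta_{m,0}}}{\longrightarrow}\Sigma_{m}(\theta_{m,0})$ from Theorem \ref{Qztheorem} and the positive-definiteness provided by Lemma \ref{Sigmaposlemma}.

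First, since $\Sigma_{m}(\theta_{m,0})>0$ by Lemma \ref{Sigmaposlemma} and $Q_{XX}\stackrel{P_{\theta_{m,0}}}{\longrightarrow}\Sigma_{m}(\theta_{m,0})$, the event $\Omega_{n}=\{Q_{XX}>0\}$ satisfies $P_{\theta_{m,0}}(\Omega_{n})\to 1$. On $\Omega_{n}$ we have $\tilde{F}(Q_{XX},\Sigma_{m}(\theta_{m}))=F(Q_{XX},\Sigma_{m}(\theta_{m}))$, so it suffices to work with $F$. Writing
\begin{align*}
F(Q_{XX},\Sigma_{m}(\theta_{m}))-F(\Sigma_{m}(\theta_{m,0}),\Sigma_{m}(\theta_{m}))
=\log\det\Sigma_{m}(\theta_{m,0})-\log\det Q_{XX}+\tr\bigl\{\Sigma_{m}(\theta_{m})^{-1}(Q_{XX}-\Sigma_{m}(\theta_{m,0}))\bigr\},
\end{align*}
I would bound the last term uniformly in $\theta_{m}$ by $\sup_{\theta_{m}\in\Theta_{m}}\|\Sigma_{m}(\theta_{m})^{-1}\|\cdot\|Q_{XX}-\Sigma_{m}(\theta_{m,0})\|$. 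Lemma \ref{Sigmaposlemma} guarantees $\Sigma_{m}(\theta_{m})>0$ throughout $\Theta_{m}$, so $\theta_{m}\mapsto\Sigma_{m}(\theta_{m})^{-1}$ is continuous on the compact set $\Theta_{m}$ and the supremum is finite. The first two terms do not depend on $\theta_{m}$ and tend to $0$ in probability by the continuous-mapping theorem applied to $\det$. Combined, this proves (\ref{Fprob}).

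For (\ref{F2prob}), the same reduction on $\Omega_n$ gives $\partial_{\theta_m}^2\tilde F(Q_{XX},\Sigma_m(\theta_m))=\partial_{\theta_m}^2F(Q_{XX},\Sigma_m(\theta_m))$ for each $\theta_m$. The explicit formula
\begin{align*}
\partial_{\theta_m^{(i)}}F(X,\Sigma_m(\theta_m))=\tr\bigl\{\Sigma_m(\theta_m)^{-1}\partial_{\theta_m^{(i)}}\Sigma_m(\theta_m)\bigr\}-\tr\bigl\{\Sigma_m(\theta_m)^{-1}\bigl(\partial_{\theta_m^{(i)}}\Sigma_m(\theta_m)\bigr)\Sigma_m(\theta_m)^{-1}X\bigr\}
\end{align*}
and its $\theta_m^{(j)}$-derivative show that $\partial_{\theta_m}^2F(X,\Sigma_m(\theta_m))$ is of the form $A(\theta_m)+B(\theta_m)X$ for matrix-valued maps $A,B$ that are continuous on $\Theta_m$ (they involve $\Sigma_m^{-1}$, $\partial\Sigma_m$ and $\partial^2\Sigma_m$, all of which are continuous and hence bounded on the compact $\Theta_m$). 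Thus
\begin{align*}
\partial_{\theta_m}^2F(Q_{XX},\Sigma_m(\theta_m))-\partial_{\theta_m}^2F(\Sigma_m(\theta_{m,0}),\Sigma_m(\theta_m))=B(\theta_m)\bigl(Q_{XX}-\Sigma_m(\theta_{m,0})\bigr),
\end{align*}
and the uniform bound $\sup_{\theta_m}\|B(\theta_m)\|<\infty$ together with $Q_{XX}\stackrel{P_{\theta_{m,0}}}{\longrightarrow}\Sigma_m(\theta_{m,0})$ yields (\ref{F2prob}). The main obstacle I anticipate is purely bookkeeping: handling the piecewise definition of $\tilde F$ rigorously (i.e.\ arguing that the complement $\Omega_n^{c}$ contributes negligibly in probability) and verifying continuity of the relevant matrix-valued maps on all of $\Theta_m$, which is where compactness of $\Theta_m$ and Lemma \ref{Sigmaposlemma} do the essential work.
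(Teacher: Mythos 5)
Your proposal is correct, and it reaches the conclusion by a somewhat different mechanism than the paper. The paper first proves a generic lemma (Lemma \ref{suplemma}): for a jointly continuous $f(Y,\alpha)$ and compact $A$, $\sup_{\alpha\in A}|f(Y,\alpha)-f(Y_0,\alpha)|\to 0$ as $Y\to Y_0$; it then applies this to $(Y,\theta_m)\mapsto F(Y,\Sigma_m(\theta_m))$ and to its Hessian, combined with $\PP_{\theta_{m,0}}(Q_{XX}>0)\to 1$ and $Q_{XX}\stackrel{P}{\to}\Sigma_m(\theta_{m,0})$ exactly as you do. You instead exploit the specific structure of $F$: for fixed $\Sigma_m(\theta_m)$ the map $X\mapsto F(X,\Sigma_m(\theta_m))$ is, up to the $\log\det X$ term (which is $\theta_m$-free), affine in $X$, and the same is true of every entry of the Hessian, so the uniform-in-$\theta_m$ error is controlled by $\sup_{\theta_m}\|\Sigma_m(\theta_m)^{-1}\|$ (resp.\ the sup of the continuous coefficient maps) times $\|Q_{XX}-\Sigma_m(\theta_{m,0})\|$. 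Your route gives explicit, quantitative bounds and avoids having to verify joint continuity of an abstract composite map, at the price of computing the derivatives of $F$; the paper's route is shorter to state and reusable (it is invoked again for Lemma \ref{Vproblemma}), but rests on the unproved-in-detail continuity of $\theta_m\mapsto\partial_{\theta_m}^2F$. Two small points to tighten: write the Hessian difference as $\tr\{C_{ij}(\theta_m)(Q_{XX}-\Sigma_m(\theta_{m,0}))\}$ entrywise rather than "$B(\theta_m)X$" (it is a linear functional of $X$, not a matrix product), and note that the continuity and boundedness of $\Sigma_m(\theta_m)^{-1}$, $\partial\Sigma_m$, $\partial^2\Sigma_m$ on $\Theta_m$ use $\bf{[E]}$ (so that $\Psi_m^{-1}$ is well defined and smooth) together with Lemma \ref{Sigmaposlemma} and compactness; with these remarks your argument is complete.
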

\begin{proof}
See Appendix \ref{Fprobproof}.
\end{proof}
\begin{lemma}\label{Vproblemma}
Under 
$\bf{[A1]}$-$\bf{[A2]}$, $\bf{[B1]}$-$\bf{[B3]}$, $\bf{[C1]}$-$\bf{[C3]}$, $\bf{[D1]}$-$\bf{[D2]}$, $\bf{[E]}$ and $\bf{[F]}$,
as $h_n\longrightarrow 0$ and $nh_n\longrightarrow\infty$, 
\begin{align}
	\tilde{V}_{m,n}(\theta_{m,0})&\stackrel{P_{\theta_{m,0}\ } }{\longrightarrow}W_{m}(\theta_{m,0})^{-1},\label{Vprob}\\
	\partial_{\theta_{m}^{(i)}}\tilde{V}_{m,n}(\theta_{m,0})&\stackrel{P_{\theta_{m,0}\ } }{\longrightarrow}\partial_{\theta_{m}^{(i)}} W_{m}(\theta_{m,0})^{-1}\label{V1prob}
\end{align}
for $i=1,\cdots,q_m$.
\end{lemma}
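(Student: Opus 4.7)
The plan is to reduce both assertions to an application of the continuous mapping theorem, after first verifying that with probability tending to one $Q_{XX}$ is positive definite so that $\tilde V_{m,n}(\theta_{m,0})$ coincides with $V(Q_{XX},\Sigma_m(\theta_{m,0}))$. By Lemma~\ref{Sigmaposlemma}, $\Sigma_m(\theta_{m,0})$ is positive definite, and by Theorem~\ref{Qztheorem}, $Q_{XX}\stackrel{P_{\theta_{m,0}}}{\longrightarrow}\Sigma_m(\theta_{m,0})$. Since the smallest eigenvalue is a continuous function of the matrix entries, $\PP_{\theta_{m,0}}(Q_{XX}>0)\to1$, and on this event $\tilde V_{m,n}(\theta_{m,0})=V(Q_{XX},\Sigma_m(\theta_{m,0}))$.

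Next I would apply the continuous mapping theorem to the map $\Phi:X\mapsto V(X,\Sigma_m(\theta_{m,0}))$. Continuity of $\Phi$ at $X=\Sigma_m(\theta_{m,0})$ follows from dominated convergence: for $X$ in a sufficiently small neighborhood of $\Sigma_m(\theta_{m,0})$ the interpolated matrix $M(\lambda_1,\lambda_2;X):=\Sigma_m(\theta_{m,0})+\lambda_1\lambda_2(X-\Sigma_m(\theta_{m,0}))$ has minimum eigenvalue uniformly bounded away from zero on $(\lambda_1,\lambda_2)\in[0,1]^2$, so $M^{-1}\otimes M^{-1}$ is uniformly bounded and continuous in $X$. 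Hence
\[
V(Q_{XX},\Sigma_m(\theta_{m,0}))\stackrel{P_{\theta_{m,0}}}{\longrightarrow}V(\Sigma_m(\theta_{m,0}),\Sigma_m(\theta_{m,0})).
\]
At $X=\Sigma_m(\theta_{m,0})$ the integrand collapses to the constant $\lambda_2\,\Sigma_m(\theta_{m,0})^{-1}\otimes\Sigma_m(\theta_{m,0})^{-1}$, and $\int_0^1\!\int_0^1\lambda_2\,d\lambda_1 d\lambda_2=1/2$ yields
$V(\Sigma_m(\theta_{m,0}),\Sigma_m(\theta_{m,0}))=\tfrac{1}{2}\mathbb{D}_p^{+\top}(\Sigma_m(\theta_{m,0})^{-1}\otimes\Sigma_m(\theta_{m,0})^{-1})\mathbb{D}_p^+$. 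The standard duplication-matrix identity (Magnus and Neudecker) then identifies this with the inverse of $2\mathbb{D}_p^{+\top}(\Sigma_m(\theta_{m,0})\otimes\Sigma_m(\theta_{m,0}))\mathbb{D}_p^+=W_m(\theta_{m,0})$, proving (\ref{Vprob}).

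For (\ref{V1prob}) I would differentiate under the integral sign. On $\{Q_{XX}>0\}$ the mapping $\theta_m\mapsto V(Q_{XX},\Sigma_m(\theta_m))$ is smooth in a neighborhood of $\theta_{m,0}$, and the standard matrix-derivative formula gives $\partial_{\theta_m^{(i)}}M^{-1}=-(1-\lambda_1\lambda_2)M^{-1}\bigl(\partial_{\theta_m^{(i)}}\Sigma_m(\theta_m)\bigr)M^{-1}$. Applying the product rule to $M^{-1}\otimes M^{-1}$ produces an integrand that is jointly continuous in $(X,\theta_m)$ at $(\Sigma_m(\theta_{m,0}),\theta_{m,0})$ and uniformly dominated on a neighborhood, so a second CMT argument yields
$\partial_{\theta_m^{(i)}}\tilde V_{m,n}(\theta_{m,0})\stackrel{P_{\theta_{m,0}}}{\longrightarrow}\partial_{\theta_m^{(i)}}V(\Sigma_m(\theta_{m,0}),\Sigma_m(\theta_m))\bigl|_{\theta_m=\theta_{m,0}}$.
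Finally, combining the same duplication-matrix identity with $\partial_{\theta_m^{(i)}}W_m^{-1}=-W_m^{-1}(\partial_{\theta_m^{(i)}}W_m)W_m^{-1}$ identifies this limit with $\partial_{\theta_m^{(i)}}W_m(\theta_{m,0})^{-1}$.

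The main obstacle will be the algebraic bookkeeping of the duplication-matrix identities that equate the explicit evaluation $V(\Sigma,\Sigma)$ with $W_m^{-1}$ (and its partial-derivative analogue); once this is established, the probabilistic content is a routine continuous-mapping argument carried out on the high-probability event $\{Q_{XX}>0\}$.
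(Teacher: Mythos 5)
Your argument is correct and is precisely the route the paper intends: the paper dispatches this lemma with the single remark that it follows ``in an analogous manner to Lemma \ref{Fproblemma}'', i.e., by the same combination of $\PP_{\theta_{m,0}}(Q_{XX}>0)\to 1$, convergence $Q_{XX}\to\Sigma_m(\theta_{m,0})$, and continuity of $V$ (and its $\theta_m$-derivative) in its first argument that you spell out, capped by the evaluation $V(\Sigma,\Sigma)=W_m(\theta_{m,0})^{-1}$. The one step to state carefully is the duplication-matrix identity: what Magnus--Neudecker actually gives is $\bigl(\mathbb{D}_p^{+}(\Sigma\otimes\Sigma)\mathbb{D}_p^{+\top}\bigr)^{-1}=\mathbb{D}_p^{\top}(\Sigma^{-1}\otimes\Sigma^{-1})\mathbb{D}_p$ — the two sandwiching pairs are dual, not identical (for $p=2$, $\Sigma=\mathbb{I}_2$ one checks $\tfrac12\mathbb{D}_p^{+}\mathbb{D}_p^{+\top}\neq(2\mathbb{D}_p^{+}\mathbb{D}_p^{+\top})^{-1}$) — so your identification of $V(\Sigma,\Sigma)$ with $W_m^{-1}$ requires reading $V$ as the $\mathbb{D}_p^{\top}(\cdot)\mathbb{D}_p$ form coming from Shapiro's representation via $\vec A=\mathbb{D}_p\vech A$, which is the reading forced by the asymptotic variance in Theorem \ref{Qztheorem}.
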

\begin{proof}
The results can be shown in an analogous manner to Lemma \ref{Fproblemma}.
\end{proof}
\begin{lemma}\label{det}
Under 
$\bf{[B2]}$, $\bf{[C2]}$, $\bf{[E]}$, $\bf{[F]}$ and $\bf{[H]}$, 
\begin{align*}
	\det\bigl\{\Delta_{m}^{\top}W_m(\theta_{m,0})^{-1}\Delta_{m}\bigr\} \neq 0.
\end{align*}
\end{lemma}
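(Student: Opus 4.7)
The plan is to show that $\Delta_{m}^{\top}W_{m}(\theta_{m,0})^{-1}\Delta_{m}$ is in fact positive definite, which immediately yields a non-zero determinant. The argument decomposes naturally into (i) positive definiteness of $W_{m}(\theta_{m,0})$, and (ii) a full-rank sandwich step using $\Delta_m$.

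First I would invoke Lemma \ref{Sigmaposlemma} to conclude that under $\bf{[B2]}$, $\bf{[C2]}$, $\bf{[E]}$, $\bf{[F]}$ the covariance structure $\Sigma_{m}(\theta_{m,0})$ is positive definite. Since the Kronecker product of two positive definite matrices is positive definite, $\Sigma_{m}(\theta_{m,0})\otimes\Sigma_{m}(\theta_{m,0})$ is positive definite as well. The duplication matrix $\mathbb{D}_{p}$ has full column rank $\bar{p}$, so its Moore-Penrose inverse $\mathbb{D}_{p}^{+}$ has full row rank $\bar{p}$; equivalently $\mathbb{D}_{p}^{+\top}$ has full column rank $\bar{p}$. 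Consequently $W_{m}(\theta_{m,0})$ is a congruence transform of a positive definite matrix by a matrix of full column rank, and hence is itself positive definite. Its inverse $W_{m}(\theta_{m,0})^{-1}$ therefore exists and is also positive definite.

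Next I would use $\bf{[H]}$, which asserts that $\rank\Delta_{m}=q_{m}$, so the $\bar{p}\times q_{m}$ matrix $\Delta_{m}$ has full column rank. For any non-zero $v\in\mathbb{R}^{q_{m}}$, the vector $\Delta_{m}v$ is non-zero, and therefore
\begin{align*}
v^{\top}\Delta_{m}^{\top}W_{m}(\theta_{m,0})^{-1}\Delta_{m}v=(\Delta_{m}v)^{\top}W_{m}(\theta_{m,0})^{-1}(\Delta_{m}v)>0,
\end{align*}
by the positive definiteness of $W_{m}(\theta_{m,0})^{-1}$. This shows $\Delta_{m}^{\top}W_{m}(\theta_{m,0})^{-1}\Delta_{m}$ is positive definite, and hence $\det\bigl\{\Delta_{m}^{\top}W_{m}(\theta_{m,0})^{-1}\Delta_{m}\bigr\}\neq 0$, as required.

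No step here is delicate: the substantive structural work has been absorbed into Lemma \ref{Sigmaposlemma}, and the remainder is elementary linear algebra (positive definiteness of Kronecker products, congruence with full-rank matrices, and the standard full-column-rank sandwich argument). The only mild care needed is in tracking dimensions of $\mathbb{D}_{p}$, $\mathbb{D}_{p}^{+}$ and $\Delta_{m}$ so that the congruence interpretation of $W_{m}(\theta_{m,0})$ is applied correctly.
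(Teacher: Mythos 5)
Your proof is correct and is essentially the standard argument: the paper itself only cites Lemma 6 of Kusano and Uchida (2022) here, and that lemma proceeds exactly as you do, deducing positive definiteness of $W_{m}(\theta_{m,0})$ from Lemma \ref{Sigmaposlemma} together with the full row rank of $\mathbb{D}_{p}^{+}$, and then using $\bf{[H]}$ for the full-column-rank sandwich. Your handling of the orientation of $\mathbb{D}_{p}^{+}$ is consistent with the identity $W_{m}(\theta_{m})=\mathbb{D}_{p}^{+}\Gamma_{m}(\theta_{m})\mathbb{D}_{p}^{+\top}$ used in the proof of Theorem \ref{Qztheorem}, so no gap remains.
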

\begin{proof}
The result can be shown in the same way as Lemma 6 in Kusano and Uchida \cite{Kusano(2022)}.
\end{proof}
\begin{proof}[Proof of Theorem 2.]
We first prove 
\begin{align}
	\hat{\theta}_{m,n}\stackrel{P_{\theta_{m,0}\ }}{\longrightarrow}\theta_{m,0}.\label{thetacons}
\end{align}
$\bf{[G]}$ and Lemmas \ref{Sigmaposlemma} and \ref{Vposlemma} yield
\begin{align*}
	\begin{split}
	F(\Sigma_{m}(\theta_{m,0}),\Sigma_{m}(\theta_{m}))=0&\Longleftrightarrow\vech\Sigma_{m}(\theta_{m,0})-\vech\Sigma_{m}(\theta_{m})=0\Longleftrightarrow \theta_{m,0}=\theta_{m}.
	\end{split}
\end{align*}
For any $\varepsilon>0$, there exists $\delta>0$ such that
\begin{align*}
	|\hat{\theta}_{m,n}-\theta_{m,0}|>\varepsilon\Longrightarrow F(\Sigma_{m}(\theta_{m,0}),\Sigma_{m}(\hat{\theta}_{m,n}))-F(\Sigma_{m}(\theta_{m,0}),\Sigma_{m}(\theta_{m,0}))>\delta.
\end{align*} 
From the definition of $\hat{\theta}_{m,n}$, 
\begin{align*}
	\tilde{F}(Q_{XX},\Sigma_{m}(\hat{\theta}_{m,n}))=\mathbb{F}_{m,n}(\hat{\theta}_{m,n})\leq\mathbb{F}_{m,n}(\theta_{m})= F(Q_{XX},\Sigma_{m}(\theta_{m})).
\end{align*}
It follows from
(\ref{Fprob}) that  
\begin{align*}
	0&\leq \PP_{\theta_{m,0}}\left(|\hat{\theta}_{m,n}-\theta_{m,0}|>\varepsilon\right)\\
	&\leq\PP_{\theta_{m,0}}\Bigl(F(\Sigma_{m}(\theta_{m,0}),\Sigma_{m}(\hat{\theta}_{m,n}))-F(\Sigma_{m}(\theta_{m,0}),\Sigma_{m}(\theta_{m,0}))>\delta\Bigr)\\
	&\leq\PP_{\theta_{m,0}}\left(F(\Sigma_{m}(\theta_{m,0}),\Sigma_{m}(\hat{\theta}_{m,n}))-\tilde{F}(Q_{XX},\Sigma_{m}(\hat{\theta}_{m,n}))>\frac{\delta}{3}\right)\\
	&\quad+\PP_{\theta_{m,0}}\left(\tilde{F}(Q_{XX},\Sigma_{m}(\hat{\theta}_{m,n}))-\tilde{F}(Q_{XX},\Sigma_{m}(\theta_{m,0}))>\frac{\delta}{3}\right)\\
	&\quad+\PP_{\theta_{m,0}}\left(\tilde{F}(Q_{XX},\Sigma_{m}(\theta_{m,0}))-F(\Sigma_{m}(\theta_{m,0}),\Sigma_{m}(\theta_{m,0}))>\frac{\delta}{3}\right)\\
	&\leq 2\PP_{\theta_{m,0}}\left(\sup_{\theta_{m}\in\Theta_{m}}\left|\tilde{F}(Q_{XX},\Sigma_{m}(\theta_{m}))-F(\Sigma_{m}(\theta_{m,0}),\Sigma_{m}(\theta_{m}))\right|>\frac{\delta}{3}\right)+0\stackrel{}{\longrightarrow}0
\end{align*}
as $n\longrightarrow\infty$, 
which yields (\ref{thetacons}).

Next, we prove
\begin{align*}
	\sqrt{n}(\hat{\theta}_{m,n}-\theta_{m,0})\stackrel{d}{\longrightarrow}N_{q_m}\bigl(0,(\Delta_{m}^{\top} W_{m}(\theta_{m,0})^{-1}\Delta_{m})^{-1}\bigr).
\end{align*}
The Taylor expansion of $\partial_{\theta_{m}}\mathbb{F}_{m,n}(\hat{\theta}_{m,n})$ around $\hat{\theta}_{m,n}=\theta_{m,0}$ is given by
\begin{align*}
	\begin{split}
	\partial_{\theta_{m}}\mathbb{F}_{m,n}(\hat{\theta}_{m,n})&=\partial_{\theta_{m}}\mathbb{F}_{m,n}(\theta_{m,0})+\int_{0}^{1}\partial^2_{\theta_{m}}\mathbb{F}_{m,n}(\tilde{\theta}_{m,n})d\lambda(\hat{\theta}_{m,n}-\theta_{m,0}),
	\end{split}
\end{align*}
where $\tilde{\theta}_{m,n}=\theta_{m,0}+\lambda(\hat{\theta}_{m,n}-\theta_{m,0})$. Since $\partial_{\theta_{m}}\mathbb{F}_{m,n}(\hat{\theta}_{m,n})=0$ from the definition of $\hat{\theta}_{m,n}$, one gets
\begin{align}
	-\sqrt{n}\partial_{\theta_{m}}\mathbb{F}_{m,n}(\theta_{m,0})=\int_{0}^{1}\partial^2_{\theta_{m}}\mathbb{F}_{m,n}(\tilde{\theta}_{m,n})d\lambda\sqrt{n}(\hat{\theta}_{m,n}-\theta_{m,0}).\label{thetataylor}
\end{align}
Theorem 1 and 
(\ref{V1prob}) imply that 
the left-hand side of (\ref{thetataylor}) is given by
\begin{align*}
	\begin{split}
	-\sqrt{n}\partial_{\theta_{m}^{(i)}}\mathbb{F}_{m,n}(\theta_{m,0})
	&=2\bigl\{\partial_{\theta_{m}^{(i)}}\vech\Sigma_{m}(\theta_{m,0})\bigr\}^{\top}\tilde{V}_{m,n}(\theta_{m,0})\sqrt{n}(\vech Q_{XX}-\vech\Sigma_{m}(\theta_{m,0}))\\
	&\quad-(\vech Q_{XX}-\vech\Sigma_{m}(\theta_{m,0}))^{\top}\partial_{\theta_{m}^{(i)}}\tilde{V}_{m,n}(\theta_{m,0}))\sqrt{n}(\vech Q_{XX}-\vech\Sigma_{m}(\theta_{m,0}))\\
	&=2\bigl\{\partial_{\theta_{m}^{(i)}}\vech\Sigma_{m}(\theta_{m,0})\bigr\}^{\top}\tilde{V}_{m,n}(\theta_{m,0})\sqrt{n}(\vech Q_{XX}-\vech\Sigma_{m}(\theta_{m,0}))+o_{p}(1)
	\end{split}
\end{align*}
for $i=1,\cdots,q_{m}$.
Thus, it follows from Theorem 1 and (\ref{Vprob}) that
\begin{align}
	\begin{split}
	-\sqrt{n}\partial_{\theta_{m}}\mathbb{F}_{m,n}(\theta_{m,0})&=2\Delta_{m}^{\top}\tilde{V}_{m,n}(\theta_{m,0})\sqrt{n}(\vech Q_{XX}-\vech\Sigma_{m}(\theta_{m,0}))+o_{p}(1)\\
	&\stackrel{d}{\longrightarrow}2\Delta_{m}^{\top}W_m(\theta_{m,0})^{-1}N_{\bar{p}}\bigl(0,W_m(\theta_{m,0})\bigr)\sim N_{q_m}\bigl(0,4\Delta_{m}^{\top}W_m(\theta_{m,0})^{-1}\Delta_{m}\bigr).\label{partialFprob}
	\end{split}
\end{align}
Set
\begin{align*}
	A_{m,n}=\Bigl\{|\hat{\theta}_{m,n}-\theta_{m,0}|\leq\rho_n\Bigr\},
\end{align*}
where $\{\rho_n\}_{n\in\mathbb{N}}$ is a positive sequence such that  $\rho_n\longrightarrow0$ as $n\longrightarrow\infty$. Note that $\partial_{\theta_{m}}^2F$ is uniform continuous in $\theta_{m}$ on $\Theta_{m}$ since $\partial_{\theta_{m}}^2F$ is continuous in $\theta_{m}$ and $\Theta_{m}$ is a compact set. As it holds that
\begin{align*}
	\partial_{\theta_{m}}^2F(\Sigma_{m}(\theta_{m,0}),\Sigma_{m}(\theta_{m,0}))=2\Delta_{m}^{\top}W_{m}(\theta_{m,0})^{-1}\Delta_{m},
\end{align*}
we see
\begin{align}
	\sup_{|\theta_{m}-\theta_{m,0}|\leq\rho_n}\Bigl\|\partial_{\theta_{m}}^2F(\Sigma_{m}(\theta_{m,0}),\Sigma_{m}(\theta_{m}))-2\Delta_{m}^{\top}W_{m}(\theta_{m,0})^{-1}\Delta_{m}\Bigr\|\longrightarrow 0\label{uniprob}
\end{align}
as $n\longrightarrow\infty$. Hence, 
we see from (\ref{F2prob}), (\ref{thetacons}) and (\ref{uniprob})
that for any $\varepsilon>0$,
\begin{align*}
	0&\leq \PP_{\theta_{m,0}}\left(\left\|\int_{0}^{1}\partial_{\theta_{m}}^2\mathbb{F}_{m,n}(\tilde{\theta}_{m,n})d\lambda-2\Delta_{m}^{\top}W_{m}(\theta_{m,0})^{-1}\Delta_{m}\right\|>\varepsilon\right)\\
	&\leq \PP_{\theta_{m,0}}\left(\left\{\left\|\int_{0}^{1}\Bigr\{\partial_{\theta_{m}}^2\tilde{F}(Q_{XX},\Sigma_{m}(\tilde{\theta}_{m,n}))-2\Delta_{m}^{\top}W_{m}(\theta_{m,0})^{-1}\Delta_{m}\Bigl\}d\lambda\right\|>\varepsilon\right\}\cap A_{m,n}\right)\\
	&\quad +\PP_{\theta_{m,0}}\left(\left\{\left\|\int_{0}^{1}\Bigr\{\partial_{\theta_{m}}^2\tilde{F}(Q_{XX},\Sigma_{m}(\tilde{\theta}_{m,n}))-2\Delta_{m}^{\top}W_{m}(\theta_{m,0})^{-1}\Delta_{m}\Bigl\}d\lambda\right\|>\varepsilon\right\}\cap A_{m,n}^{c}\right)\\
	&\leq \PP_{\theta_{m,0}}\left(\sup_{|\theta_{m}-\theta_{m,0}|\leq\rho_n}\Bigl\|\partial_{\theta_{m}}^2\tilde{F}(Q_{XX},\Sigma_{m}(\theta_{m}))-2\Delta_{m}^{\top}W_{m}(\theta_{m,0})^{-1}\Delta_{m}\Bigr\|>\varepsilon\right)+\PP_{\theta_{m,0}}\bigl(A_{m,n}^{c}\bigr)\\
	&\leq\PP_{\theta_{m,0}}\left(\sup_{|\theta_{m}-\theta_{m,0}|\leq\rho_n}\Bigl\||\partial_{\theta_{m}}^2\tilde{F}(Q_{XX},\Sigma_{m}(\theta_{m}))
	-\partial_{\theta_{m}}^2F(\Sigma_{m}(\theta_{m,0}),\Sigma_{m}(\theta_{m}))\Bigr\|>\frac{\varepsilon}{2}\right)\\
	&\quad +\PP_{\theta_{m,0}}\left(\sup_{|\theta_{m}-\theta_{m,0}|\leq\rho_n}\Bigl\|\partial_{\theta_{m}}^2F(\Sigma_{m}(\theta_{m,0}),\Sigma_{m}(\theta_{m}))-2\Delta_{m}^{\top}W_{m}(\theta_{m,0})^{-1}\Delta_{m}\Bigr\|>\frac{\varepsilon}{2}\right)+\PP_{\theta_{m,0}}\bigl(A_{m,n}^{c}\bigr)\\
	&\leq\PP_{\theta_{m,0}}\left(\sup_{\theta_{m}\in\Theta_{m}}\Bigl\|\partial_{\theta_{m}}^2\tilde{F}(Q_{XX},\Sigma_{m}(\theta_{m}))
	-\partial_{\theta_{m}}^2F(\Sigma_{m}(\theta_{m,0}),\Sigma_{m}(\theta_{m}))\Bigr\|>\frac{\varepsilon}{2}\right)\\
	&\quad +\PP_{\theta_{m,0}}\left(\sup_{|\theta_{m}-\theta_{m,0}|\leq\rho_n}\Bigl\|\partial_{\theta_{m}}^2F(\Sigma_{m}(\theta_{m,0}),\Sigma_{m}(\theta_{m}))-2\Delta_{m}^{\top}W_{m}(\theta_{m,0})^{-1}\Delta_{m}\Bigr\|>\frac{\varepsilon}{2}\right)+\PP_{\theta_{m,0}}\bigl(A_{m,n}^{c}\bigr)\\
	&\longrightarrow 0
\end{align*}
as $n\longrightarrow\infty$, 
which yields
\begin{align}
	\int_{0}^{1}\partial_{\theta_{m}}^2\mathbb{F}_{m,n}(\tilde{\theta}_{m,n})d\lambda\stackrel{P_{\theta_{m,0}}\ }{\longrightarrow}2\Delta_{m}^{\top}W_{m}(\theta_{m,0})^{-1}\Delta_{m}.\label{intprob}
\end{align}
Therefore, from (\ref{thetataylor}), (\ref{partialFprob}), (\ref{intprob}) and Lemma \ref{det},  we obtain
\begin{align*}
	\sqrt{n}(\hat{\theta}_{m,n}-\theta_{m,0})&\stackrel{d}{\longrightarrow}(2\Delta_{m}^{\top}W_{m}(\theta_{m,0})^{-1}\Delta_{m})^{-1}N_{\bar{p}}\bigl(0,4\Delta_{m}^{\top}W_{m}(\theta_{m,0})^{-1}\Delta_{m}\bigr)\\
	&\sim N_{q_m}\bigl(0,(\Delta_{m}^{\top} W_{m}(\theta_{m,0})^{-1}\Delta_{m})^{-1}\bigr).
\end{align*}
\end{proof}
\begin{proof}[Proof of Theorem 3.]
The Taylor expansion of $\mathbb{T}_{m^*,n}=n\mathbb{F}_{m^*,n}(\hat{\theta}_{m^*,n})$ around $\hat{\theta}_{m^*,n}=\theta_{m^*,0}$ is given by
\begin{align}
	\begin{split}
	\mathbb{T}_{m^*,n}
	&=n\mathbb{F}_{m^*,n}(\theta_{m^*,0})+n\partial_{\theta_{m^*}}\mathbb{F}_{m^*,n}(\theta_{m^*,0})^{\top}(\hat{\theta}_{m^*,n}-\theta_{m^*,0})\\
	&\qquad+n(\hat{\theta}_{m^*,n}-\theta_{m^*,0})^{\top}\left\{\int_{0}^{1}(1-\lambda)\partial^2_{\theta_{m^*}}\mathbb{F}_{m^*,n}(\tilde{\theta}_{m^*,n})d\lambda\right\}(\hat{\theta}_{m^*,n}-\theta_{m^*,0}),\label{testtaylor}
	\end{split}
\end{align}
where $\tilde{\theta}_{m^*,n}=\theta_{m^*,0}+\lambda(\hat{\theta}_{m^*,n}-\theta_{m^*,0})$. 
In a similar way to Theorem 2, we obtain
\begin{align}
    \sqrt{n}\partial_{\theta_{m^*}}\mathbb{F}_{m^*,n}(\theta_{m^*,0})=-2\Delta_{m^*}^{\top}\tilde{V}_{m^*,n}(\theta_{m^*,0})\sqrt{n}(\vech Q_{XX}-\vech\Sigma_{m^*}(\theta_{m^*,0}))+o_{p}(1)\label{nFstar}
\end{align}
under $H_0$ and
\begin{align}
	\begin{split}
	&\sqrt{n}(\hat{\theta}_{m^*,n}-\theta_{m^*,0})\\
	&\qquad=(\Delta_{m^*}^{\top}W_{m^*}(\theta_{m^*,0})^{-1}\Delta_{m^*})^{-1}\Delta_{m^*}^{\top}\tilde{V}_{m^*,n}(\theta_{m^*,0})\sqrt{n}(\vech Q_{XX}-\vech\Sigma_{m^*}(\theta_{m^*,0}))+o_{p}(1)\label{thetamstar} 
	\end{split}
\end{align}
under $H_0$. Let
\begin{align*}
	\tilde{H}_{m^*,n}(\theta_{m^*,0})=\tilde{V}_{m^*,n}(\theta_{m^*,0})^{\top}\Delta_{m^*}(\Delta_{m^*}^{\top}W_{m^*}(\theta_{m^*,0})^{-1}\Delta_{m^*})^{-1}\Delta_{m^*}^{\top}\tilde{V}_{m^*,n}(\theta_{m^*,0}).
\end{align*}
Theorem 1, (\ref{Vprob}), (\ref{nFstar}) and (\ref{thetamstar}) imply that 
the second term on the right-hand side of (\ref{testtaylor}) is expressed as
\begin{align}
	\begin{split}
	&\quad\ n\partial_{\theta_{m^*}}\mathbb{F}_{m^*,n}(\theta_{m^*,0})^{\top}(\hat{\theta}_{m^*,n}-\theta_{m^*,0})\\
	&=-2\sqrt{n}(\vech Q_{XX}-\vech\Sigma_{m^*}(\theta_{m^*,0}))^{\top}\tilde{H}_{m^*,n}(\theta_{m^*,0})\sqrt{n}(\vech Q_{XX}-\vech\Sigma_{m^*}(\theta_{m^*,0}))+o_p(1)\label{nFpar}
	\end{split}
\end{align}
under $H_0$. 
Recall that
\begin{align*}
	A_{m^*,n}= \Bigl\{|\hat{\theta}_{m^*,n}-\theta_{m^*,0}|\leq\rho_n\Bigr\},
\end{align*}
where a positive sequence $\{\rho_n\}_{n\in\mathbb{N}}$ satisfies $\rho_n\longrightarrow0$ as $n\longrightarrow\infty$. By an analogous manner to Theorem 2, it follows that for all $\varepsilon>0$,
\begin{align*}
	0&\leq\PP\left(\left\|\int_{0}^{1}(1-\lambda)\partial^2_{\theta_{m^*}}\mathbb{F}_{m^*,n}(\tilde{\theta}_{m^*,n})d\lambda-\Delta_{m^*}^{\top}W_{m^*}(\theta_{m^*,0})^{-1}\Delta_{m^*}\right\|>\varepsilon\right)\\
	&\leq\PP\left(\left\{\left\|\int_{0}^{1}(1-\lambda)\left\{\partial^2_{\theta_{m^*}}\tilde{F}(Q_{XX},\Sigma_{m^*}(\tilde{\theta}_{m^*,n}))-2\Delta_{m^*}^{\top}W_{m^*}(\theta_{m^*,0})^{-1}\Delta_{m^*}\right\}d\lambda\right\|>\varepsilon\right\}\cap A_{m^*,n}\right)\\
	&\quad+\PP\left(\left\{\left\|\int_{0}^{1}(1-\lambda)\left\{\partial^2_{\theta_{m^*}}\tilde{F}(Q_{XX},\Sigma_{m^*}(\tilde{\theta}_{m^*,n}))-2\Delta_{m^*}^{\top}W_{m^*}(\theta_{m^*,0})^{-1}\Delta_{m^*}\right\}d\lambda\right\|>\varepsilon\right\}\cap A_{m^*,n}^{c}\right)\\ 
	&\leq\PP\left(\sup_{|\theta_{m^*}-\theta_{m^*,0}|\leq\rho_{n}}\left\|\partial^2_{\theta_{m^*}}\tilde{F}(Q_{XX},\Sigma_{m^*}(\theta_{m^*}))-2\Delta_{m^*}^{\top}W_{m^*}(\theta_{m^*})^{-1}\Delta_{m^*}\right\|>\varepsilon\right)+\PP\bigl(A_{m^*,n}^{c}\bigr)\\
	&\leq\PP\left(\sup_{\theta_{m^*}\in\Theta_{m^*}}\left\|\partial^2_{\theta_{m^*}}\tilde{F}(Q_{XX},\Sigma_{m^*}(\theta_{m^*}))-\partial^2_{\theta_{m^*}}\tilde{F}(\Sigma_{m^*}(\theta_{m^*,0}),\Sigma_{m^*}(\theta_{m^*}))\right\|>\varepsilon\right)\\
	&+\PP\left(\sup_{|\theta_{m^*}-\theta_{m^*,0}|\leq\rho_{n}}\left\|\partial^2_{\theta_{m^*}}\tilde{F}(\Sigma_{m^*}(\theta_{m^*,0}),\Sigma_{m^*}(\theta_{m^*}))-2\Delta_{m^*}^{\top}W_{m^*}(\theta_{m^*})^{-1}\Delta_{m^*}\right\|>\varepsilon\right)+\PP\bigl(A_{m^*,n}^{c}\bigr)\\
	&\longrightarrow 0
\end{align*} 
as $n\longrightarrow\infty$ under $H_0$, so that
\begin{align}
	\int_{0}^{1}(1-\lambda)\partial_{\theta_{m^*}}^2\mathbb{F}_{m^*,n}(\tilde{\theta}_{m^*,n})d\lambda\stackrel{P}{\longrightarrow}\Delta_{m^*}^{\top}W_{m^*}(\theta_{m^*,0})^{-1}\Delta_{m^*}\label{intprob2}
\end{align}
under $H_{0}$. 
Thus, Theorem 1, (\ref{Vprob}) and (\ref{thetamstar}) imply that 
the third term on the right-hand side of (\ref{testtaylor}) is
\begin{align}
	\begin{split}
	&\quad\ n(\hat{\theta}_{m^*,n}-\theta_{m^*,0})^\top\left\{\int_{0}^{1}(1-\lambda)\partial^2_{\theta_{m^*}}\tilde{F}(Q_{XX},\Sigma_{m^*}(\tilde{\theta}_{m^*,n}))d\lambda\right\}(\hat{\theta}_{m^*,n}-\theta_{m^*,0})\\
	&=\sqrt{n}(\vech Q_{XX}-\vech\Sigma_{m^*}(\theta_{m^*,0}))^{\top}\tilde{H}_{m^*,n}(\theta_{m^*,0})\sqrt{n}(\vech Q_{XX}-\vech\Sigma_{m^*}(\theta_{m^*,0}))+o_{p}(1)\label{nFpar2}
	\end{split}
\end{align}
under $H_0$.
Therefore, it follows from (\ref{nFpar}) and (\ref{nFpar2}) that (\ref{testtaylor}) is given by
\begin{align}
	\begin{split}
	\mathbb{T}_{m^*,n}
	&=\sqrt{n}(\vech Q_{XX}-\vech\Sigma_{m^*}(\theta_{m^*,0}))^{\top}\bigl\{\tilde{V}_{m^*,n}(\theta_{m^*,0})-\tilde{H}_{m^*,n}(\theta_{m^*,0})\bigr\}\\
	&\qquad\qquad\qquad\qquad\qquad\qquad\qquad\qquad\times\sqrt{n}(\vech Q_{XX}-\vech\Sigma_{m^*}(\theta_{m^*,0}))+o_{p}(1)\label{testre1}
	\end{split}
\end{align}
under $H_0$.
Set
\begin{align*}
	\gamma_{m^*,n}=\tilde{V}_{m^*,n}(\theta_{m^*,0})
	^{\frac{1}{2}}\sqrt{n}(\vech{Q_{XX}}-\vech{\Sigma_{m^*}(\theta_{m^*,0})})
\end{align*}
and
\begin{align*}
    \tilde{P}_{m^*,n}=\tilde{V}_{m^*,n}(\theta_{m^*,0})^{-\frac{1}{2}}\bigl\{\tilde{V}_{m^*,n}(\theta_{m^*,0})-\tilde{H}_{m^*,n}\bigr\}\tilde{V}_{m^*,n}(\theta_{m^*,0})^{-\frac{1}{2}}.
\end{align*}
We can rewrite (\ref{testre1}) as
\begin{align}
	\mathbb{T}_{m^*,n}=\gamma_{m^*,n}^{\top}\tilde{P}_{m^*,n}(\theta_{m^*,0})\gamma_{m^*,n}.\label{Tgamma}
\end{align}
It follows from (\ref{Vprob}) and the continuous mapping theorem  that
under $H_0$,
\begin{align*}
	\begin{split}
	\tilde{V}_{m^*,n}(\theta_{m^*,0})^{\frac{1}{2}}&=f(\tilde{V}_{m^*,n}(\theta_{m^*,0}))\stackrel{P}{\longrightarrow}f(W_{m^*}(\theta_{m^*,0})^{-1})=W_{m^*}(\theta_{m^*,0})^{-\frac{1}{2}},
	\end{split}
\end{align*}
where $f(X)=X^{\frac{1}{2}}$ for $X\in\mathbb{R}^{\bar{p}\times\bar{p}}$. Theorem 1 and Slutsky's theorem show that
under $H_0$, 
\begin{align}
	\gamma_{m^*,n}\stackrel{d}{\longrightarrow}\gamma, \label{gammad}
\end{align}
where $\gamma\sim N_{\bar{p}}(0,\mathbb{I}_{\bar{p}})$.
Set
\begin{align*}
	H_{m^*}(\theta_{m,0})=W_{m^*}(\theta_{m^*,0})^{-1\top}\Delta_{m^*}(\Delta_{m^*}^{\top}W_{m^*}(\theta_{m^*,0})^{-1}\Delta_{m^*})^{-1}\Delta_{m^*}^{\top}W_{m^*}(\theta_{m^*,0})^{-1}.
\end{align*}
It follows from (\ref{Vprob}) and the continuous mapping theorem that under $H_0$,  
\begin{align}
	\tilde{H}_{m^*,n}(\theta_{m^*,0})=f(\tilde{V}_{m^*,n}(\theta_{m^*,0}))\stackrel{P}{\longrightarrow}f(W_{m^*}(\theta_{m^*,0})^{-1})=H_{m^*}(\theta_{m^*,0}), \label{Hprob}
\end{align}
where
\begin{align*}
	f(X)=X^{\top}\Delta_{m^*}(\Delta_{m^*}^{\top}W_{m^*}(\theta_{m^*,0})^{-1}\Delta_{m^*})^{-1}\Delta_{m^*}^{\top}X
\end{align*}
for $X\in\mathbb{R}^{\bar{p}\times\bar{p}}$.
Since (\ref{Vprob}) and the continuous mapping theorem imply that under $H_0$,
\begin{align*}
	\begin{split}
	\tilde{V}_{m^*,n}(\theta_{m^*,0})^{-\frac{1}{2}}=f(\tilde{V}_{m^*,n}(\theta_{m^*,0}))\stackrel{P}{\longrightarrow}f(W_{m^*}(\theta_{m^*,0})^{-1})=W_{m^*}(\theta_{m^*,0})^{-\frac{1}{2}},
	\end{split}
\end{align*}
where $f(X)=X^{-\frac{1}{2}}$ for $X\in\mathbb{R}^{\bar{p}\times\bar{p}}$, 
we obtain from (\ref{Vprob}), (\ref{Hprob}) and Slutsky's theorem that under $H_0$,
\begin{align}
	\tilde{P}_{m^*,n}(\theta_{m^*,0})\stackrel{P}{\longrightarrow}P_{m^*}(\theta_{m^*,0}), \label{Pprob}
\end{align}
where
\begin{align*}
	P_{m^*}(\theta_{m^*,0})=W_{m^*}(\theta_{m^*,0})^{\frac{1}{2}}\bigl\{W_{m^*}(\theta_{m^*,0})^{-1}-H_{m^*}(\theta_{m^*,0})\bigr\}W_{m^*}(\theta_{m^*,0})^{\frac{1}{2}}.
\end{align*}
Furthermore, it follows  from the continuous mapping theorem and (\ref{gammad}) that under $H_0$,
\begin{align}
	\gamma_{m^*,n}^{\top}P_{m^*}(\theta_{m^*,0})\gamma_{m^*,n}=f(\gamma_{m^*,n})\stackrel{d}{\longrightarrow}f(\gamma)=\gamma^{\top}P_{m^*}(\theta_{m^*,0})\gamma,   \label{gammaPd}
\end{align}
where 
\begin{align*}
    f(x)=x^{\top}P_{m^*}(\theta_{m^*,0})x
\end{align*}
for $x\in\mathbb{R}^{\bar{p}}$. 
We see from (\ref{gammad}) that $\gamma_{m^*,n}=O_{p}(1)$ under $H_0$, and it holds from (\ref{Pprob}) that
\begin{align}
	\gamma_{m^*,n}^{\top}\tilde{P}_{m^*,n}(\theta_{m^*,0})\gamma_{m^*,n}-\gamma_{m^*,n}^{\top}P_{m^*}(\theta_{m^*,0})\gamma_{m^*,n}\stackrel{P}{\longrightarrow} 0 \label{Pslu}
\end{align}
under $H_0$. Therefore, (\ref{Tgamma}), (\ref{gammaPd}), (\ref{Pslu}) and Slutsky's theorem yield 
\begin{align}
	\mathbb{T}_{m^*,n}\stackrel{d}{\longrightarrow}\gamma^{\top}P_{m^*}(\theta_{m^*,0})\gamma\label{Tprob}
\end{align}
under $H_0$.
Since one gets
\begin{align*}
    \gamma^{\top}P_{m^*}(\theta_{m^*,0})\gamma\sim\chi^2_{\bar{p}-q_{m^*}}
\end{align*}
in the same manner as Theorem 3 in Kusano and Uchida \cite{Kusano(2022)}, 
we obtain from (\ref{Tprob}) that
\begin{align*}
	\mathbb{T}_{m^*,n}\stackrel{d}{\longrightarrow}\chi^2_{\bar{p}-q_{m^*}}
\end{align*}
under $H_0$. 
\end{proof}
\begin{lemma}\label{starconslemma}
Under 
$\bf{[A1]}$-$\bf{[A2]}$, $\bf{[B1]}$-$\bf{[B3]}$, $\bf{[C1]}$-$\bf{[C3]}$, $\bf{[D1]}$-$\bf{[D2]}$, 
$\bf{[E]}$, $\bf{[F]}$ and $\bf{[I]}$,
as $h_n\longrightarrow0$ and $nh_n\longrightarrow\infty$,
\begin{align*}
	\hat{\theta}_{m^*,n}\stackrel{P}{\longrightarrow}\bar{\theta}_{m^*}
\end{align*}
under $H_1$.
\end{lemma}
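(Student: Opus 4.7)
The plan is to carry out the standard argmin--consistency argument for the minimum contrast estimator $\hat{\theta}_{m^*,n}$, with the population criterion being $\mathbb{U}_{m^*}(\theta_{m^*})=F(\Sigma_{m}(\theta_{m,0}),\Sigma_{m^*}(\theta_{m^*}))$ and the target the (unique) minimizer $\bar{\theta}_{m^*}$. Note that the result of Theorem \ref{Qztheorem}, namely $Q_{XX}\stackrel{P}{\to}\Sigma_{m}(\theta_{m,0})$, does not depend on whether $H_0$ or $H_1$ holds, so that the data-driven contrast $\tilde{F}(Q_{XX},\Sigma_{m^*}(\theta_{m^*}))$ will still concentrate around the correct limit; the only change from the correctly specified case is that the limit is no longer minimized at zero and no longer lies in the image of $\theta_{m^*}\mapsto \Sigma_{m^*}(\theta_{m^*})$.

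First I would establish the analog of (\ref{Fprob}) in Lemma \ref{Fproblemma}, namely the uniform convergence
\begin{align*}
    \sup_{\theta_{m^*}\in\Theta_{m^*}}\bigl|\tilde{F}(Q_{XX},\Sigma_{m^*}(\theta_{m^*}))-\mathbb{U}_{m^*}(\theta_{m^*})\bigr|\stackrel{P}{\longrightarrow}0.
\end{align*}
The proof mimics Lemma \ref{Fproblemma}: by Theorem \ref{Qztheorem} we have $Q_{XX}\stackrel{P}{\to}\Sigma_{m}(\theta_{m,0})$, by Lemma \ref{Sigmaposlemma} the limit is positive definite, so with probability tending to one $Q_{XX}>0$ and $\tilde{V}$ equals $V$; compactness of $\Theta_{m^*}$ together with joint continuity of $F$ and $V$ in their two matrix arguments (in a neighborhood of $\Sigma_{m}(\theta_{m,0})$, which is uniform over the compact image of $\theta_{m^*}\mapsto\Sigma_{m^*}(\theta_{m^*})$) then upgrades pointwise convergence to uniform convergence.

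Next I would turn [I] into a well-separation condition. Since $\Theta_{m^*}$ is compact and $\mathbb{U}_{m^*}$ is continuous in $\theta_{m^*}$, [I] ensures that $\bar{\theta}_{m^*}$ is the unique minimizer, and a standard compactness argument produces, for every $\varepsilon>0$, some $\delta>0$ such that
\begin{align*}
    |\theta_{m^*}-\bar{\theta}_{m^*}|>\varepsilon \ \Longrightarrow\ \mathbb{U}_{m^*}(\theta_{m^*})-\mathbb{U}_{m^*}(\bar{\theta}_{m^*})>\delta.
\end{align*}
Combining the defining inequality $\mathbb{F}_{m^*,n}(\hat{\theta}_{m^*,n})\le\mathbb{F}_{m^*,n}(\bar{\theta}_{m^*})$ with the uniform convergence via the three-term sandwich
\begin{align*}
    \mathbb{U}_{m^*}(\hat{\theta}_{m^*,n})-\mathbb{U}_{m^*}(\bar{\theta}_{m^*}) &= \bigl\{\mathbb{U}_{m^*}(\hat{\theta}_{m^*,n})-\mathbb{F}_{m^*,n}(\hat{\theta}_{m^*,n})\bigr\} \\
    &\quad+\bigl\{\mathbb{F}_{m^*,n}(\hat{\theta}_{m^*,n})-\mathbb{F}_{m^*,n}(\bar{\theta}_{m^*})\bigr\} \\
    &\quad+\bigl\{\mathbb{F}_{m^*,n}(\bar{\theta}_{m^*})-\mathbb{U}_{m^*}(\bar{\theta}_{m^*})\bigr\}
\end{align*}
gives $\mathbb{U}_{m^*}(\hat{\theta}_{m^*,n})-\mathbb{U}_{m^*}(\bar{\theta}_{m^*})=o_p(1)$, and the well-separation inequality then yields the conclusion $\hat{\theta}_{m^*,n}\stackrel{P}{\to}\bar{\theta}_{m^*}$.

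I do not expect a deep obstacle; the only delicate point is making the uniform convergence genuinely uniform over $\theta_{m^*}\in\Theta_{m^*}$ despite $\tilde{F}$ being defined piecewise in $Q_{XX}$, which is dealt with by restricting to the probability-one event $\{Q_{XX}>0\}$ as in the proof of Lemma \ref{Fproblemma}.
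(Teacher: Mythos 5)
Your proposal is correct and follows essentially the same route as the paper: uniform convergence of $\tilde{F}(Q_{XX},\Sigma_{m^*}(\cdot))$ to $\mathbb{U}_{m^*}(\cdot)$ (the misspecified-model analogue of Lemma \ref{Fproblemma}), the well-separation consequence of $\bf{[I]}$, the minimizing property of $\hat{\theta}_{m^*,n}$, and a three-term decomposition of $\mathbb{U}_{m^*}(\hat{\theta}_{m^*,n})-\mathbb{U}_{m^*}(\bar{\theta}_{m^*})$. If anything, you are slightly more careful than the paper in noting that the uniform convergence must be re-derived for $\Sigma_{m^*}$ under data generated from model $m$, rather than quoting Lemma \ref{Fproblemma} verbatim.
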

\begin{proof}
See Appendix \ref{Proofstarcons}.
\end{proof}
\begin{lemma}\label{kitagawa}
Let $X_n\stackrel{P}{\longrightarrow} c>0$. For any $\epsilon>0$, 
\begin{align*}
	\PP\left(X_n\leq\frac{\epsilon}{n}\right)\stackrel{}{\longrightarrow} 0
\end{align*}
as $n\longrightarrow\infty$.
\end{lemma}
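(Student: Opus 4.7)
The plan is to exploit the fact that convergence in probability to a strictly positive limit forces $X_n$ to stay bounded away from $0$ with high probability, while the threshold $\epsilon/n$ shrinks to $0$. So for $n$ large the event $\{X_n\leq\epsilon/n\}$ is contained in a small-probability deviation event.

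Concretely, I would first choose $N_0$ so large that $\epsilon/n < c/2$ for every $n\geq N_0$; this is possible because $c>0$ is fixed. Then for every such $n$ the inclusion
\begin{align*}
    \Bigl\{X_n\leq\tfrac{\epsilon}{n}\Bigr\}\subset\Bigl\{X_n\leq\tfrac{c}{2}\Bigr\}\subset\Bigl\{|X_n-c|\geq\tfrac{c}{2}\Bigr\}
\end{align*}
holds, so that
\begin{align*}
    \PP\!\left(X_n\leq\tfrac{\epsilon}{n}\right)\leq\PP\!\left(|X_n-c|\geq\tfrac{c}{2}\right).
\end{align*}
Finally, by the assumed convergence $X_n\stackrel{P}{\longrightarrow}c$ applied with fixed tolerance $c/2>0$, the right-hand side tends to $0$ as $n\to\infty$, which gives the claim.

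There is essentially no obstacle here: the argument is a routine application of the definition of convergence in probability, and the only mild subtlety is the choice of $N_0$ so that the deterministic threshold $\epsilon/n$ drops below the fixed level $c/2$. The positivity $c>0$ is used in exactly one place, namely to guarantee that such an $N_0$ exists and that $c/2$ is a legitimate (positive) tolerance in the probability bound.
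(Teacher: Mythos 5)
Your proof is correct. The paper does not actually spell out an argument for this lemma; it simply refers the reader to Lemma 3 in Kitagawa and Uchida (2014). Your self-contained argument --- picking $N_0$ so that $\epsilon/n< c/2$, noting the inclusion $\{X_n\leq \epsilon/n\}\subset\{|X_n-c|\geq c/2\}$, and invoking convergence in probability with the fixed tolerance $c/2$ --- is exactly the routine reasoning such a reference encapsulates, and every step checks out.
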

\begin{proof}
See Lemma 3 in Kitagawa and Uchida \cite{kitagawa(2014)}.
\end{proof}
\begin{proof}[Proof of Theorem 4.]
Since $\mathbb{U}_{m^*}(\theta_{m^*})$ is continuous in $\theta_{m^*}$, it holds 
from the continuous mapping theorem and Lemma \ref{starconslemma} that
\begin{align}
	\mathbb{U}_{m^*}(\hat{\theta}_{m^*,n})\stackrel{P}{\longrightarrow}\mathbb{U}_{m^*}(\bar{\theta}_{m^*})\label{Fu1prob}
\end{align}
under $H_1$.

It follows  from (\ref{Fprob}) and (\ref{Fu1prob}) that 
for all $\varepsilon>0$,
\begin{align*}
	0&\leq \PP\left(\Bigl|\frac{1}{n}\mathbb{T}_{m^*,n}-\mathbb{U}_{m^*}(\bar{\theta}_{m^*})\Bigr|>\epsilon\right)\\
	&\leq\PP\left(\bigl|\tilde{F}(Q_{XX},\Sigma_{m^*}(\hat{\theta}_{m^*,n}))-\mathbb{U}_{m^*}(\hat{\theta}_{m^*,n})
	\bigr|>\frac{\epsilon}{2}\right)+\PP\left(\bigl|\mathbb{U}_{m^*}(\hat{\theta}_{m^*,n})-\mathbb{U}_{m^*}(\bar{\theta}_{m^*})\bigr|>\frac{\epsilon}{2}\right)\\
	&\leq\PP\left(\sup_{\theta_{m^*}\in\Theta_{m^*}}\bigl|\tilde{F}(Q_{XX},\Sigma_{m^*}(\theta_{m^*}))-F(\Sigma_{m}(\theta_{m,0}),\Sigma_{m^*}(\theta_{m^*}))\bigr|>\frac{\epsilon}{2}\right)\\
	&\qquad+\PP\left(\bigl|\mathbb{U}_{m^*}(\hat{\theta}_{m^*,n})-\mathbb{U}_{m^*}(\bar{\theta}_{m^*})\bigr|>\frac{\epsilon}{2}\right)\stackrel{}{\longrightarrow}0
\end{align*}
under $H_1$ as $n\longrightarrow\infty$, which implies that
\begin{align}
	\frac{1}{n}\mathbb{T}_{m^*,n}\stackrel{P}{\longrightarrow} \mathbb{U}_{m^*}(\bar{\theta}_{m^*})\label{Tcons}
\end{align}
under $H_1$. Note that
\begin{align*}
    \vech{\Sigma_{m}(\theta_{m,0})}-\vech{\Sigma_{m^*}(\bar{\theta}_{m^*})}\neq 0
\end{align*}
under $H_1$. It follows  from 
Lemmas \ref{Sigmaposlemma}-\ref{Vposlemma} 
that $\mathbb{U}_{m^*}(\bar{\theta}_{m^*})>0$ under $H_1$.
Therefore, Lemma \ref{kitagawa} and (\ref{Tcons}) imply that under $H_1$
\begin{align*}
	\PP\Bigl(\mathbb{T}_{m^*,n}>\chi^2_{\bar{p}-q_{m^*}}(\alpha)\Bigr)
	&=1-\PP\left(\frac{1}{n}\mathbb{T}_{m^*,n}\leq \frac{1}{n}\chi^2_{\bar{p}-q_{m^*}}(\alpha)\right)\stackrel{}{\longrightarrow}1
\end{align*}
as $n\longrightarrow\infty$.
\end{proof}
\begin{proof}[Proof of Theorem 5.]
See, for example, Lemma 9 in Genon-Catalot and Jacod  \cite{Genon(1993)} for consistency and Theorem 3.2 in Jacod  \cite{Jacod(1997)} for asymptotic normality.
\end{proof}
\begin{proof}[Proofs of Theorems 6-8]
Since $T$ is fix and $nh_n^2=h_nT\longrightarrow 0$, the proofs of Theorems 6-8 are the same as those of Theorems 2-4, respectively.    
\end{proof}

\newpage
\section{Appendix}
\subsection{Details of simulation results}\label{simulation}
\ \\
\begin{figure}[h]
    \ \\
	\includegraphics[width=0.32\columnwidth]{./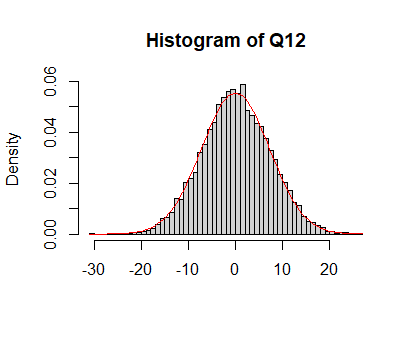}
	\includegraphics[width=0.32\columnwidth]{./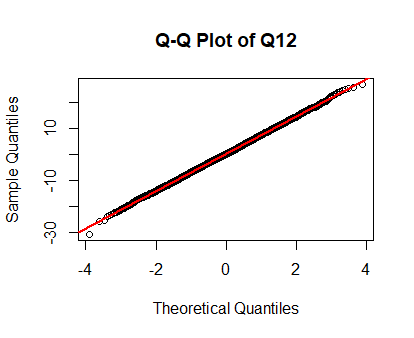}
	\includegraphics[width=0.32\columnwidth]{./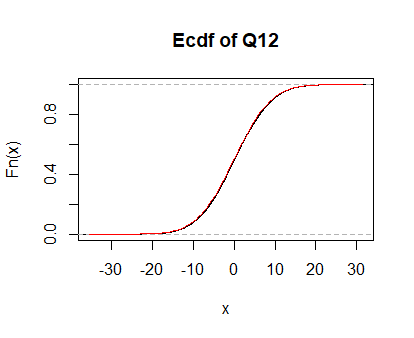}\\ \ \\
	\includegraphics[width=0.32\columnwidth]{./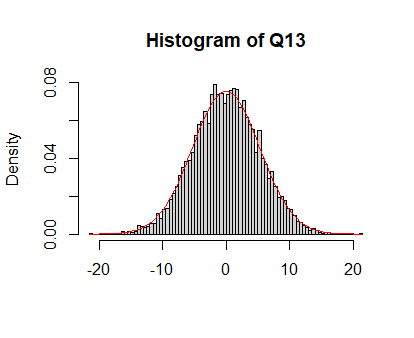}
	\includegraphics[width=0.32\columnwidth]{./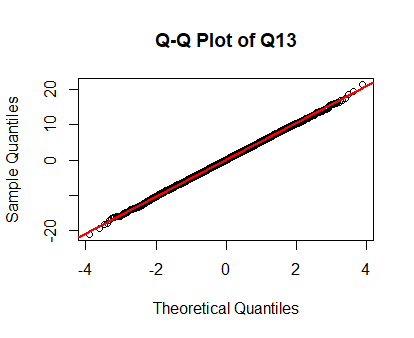}
	\includegraphics[width=0.32\columnwidth]{./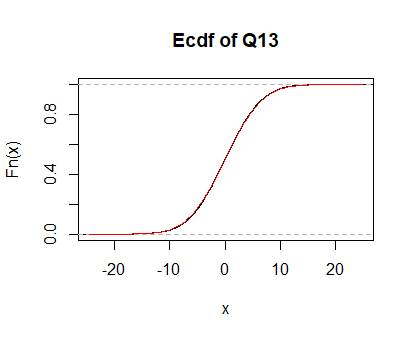}\\	\ \\	
	\includegraphics[width=0.32\columnwidth]{./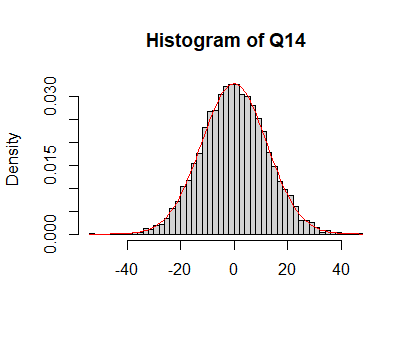}
	\includegraphics[width=0.32\columnwidth]{./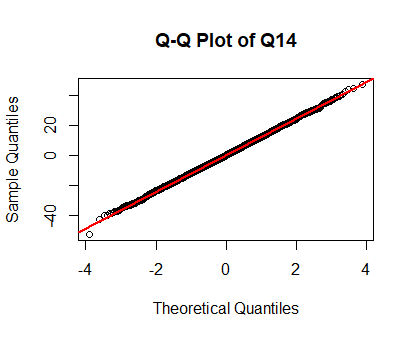}
	\includegraphics[width=0.32\columnwidth]{./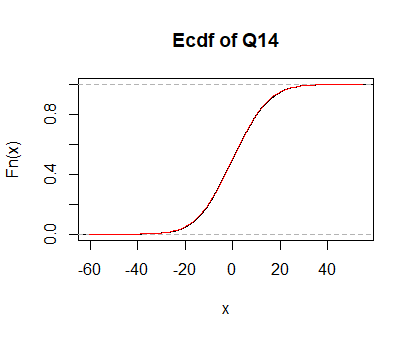}\\ \ \\
	\includegraphics[width=0.32\columnwidth]{./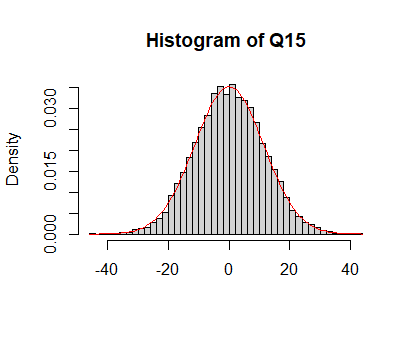}
	\includegraphics[width=0.32\columnwidth]{./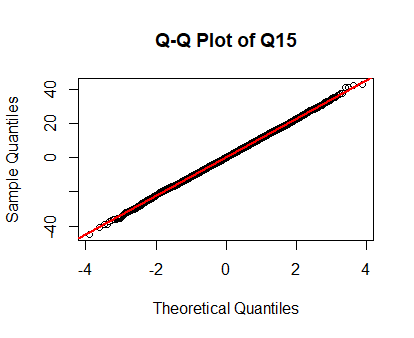}
	\includegraphics[width=0.32\columnwidth]{./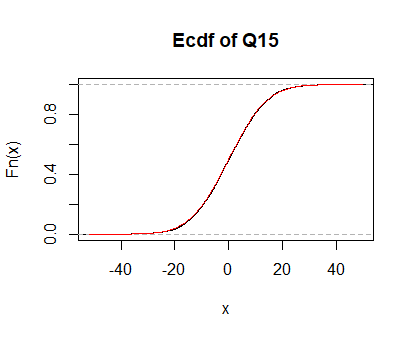}\\
\end{figure}
\newpage
\begin{figure}
    \ \\
	\includegraphics[width=0.32\columnwidth]{./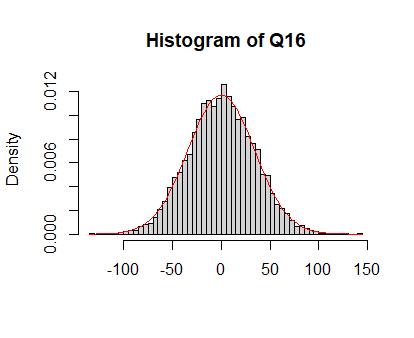}
	\includegraphics[width=0.32\columnwidth]{./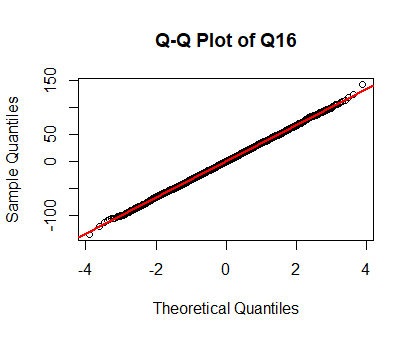}
	\includegraphics[width=0.32\columnwidth]{./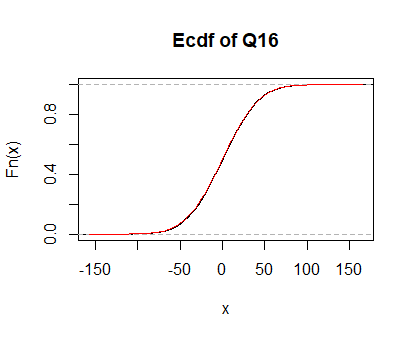}\\
	\includegraphics[width=0.32\columnwidth]{./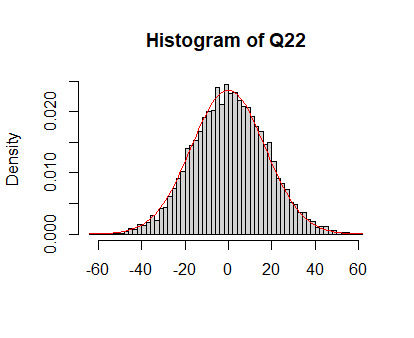}
	\includegraphics[width=0.32\columnwidth]{./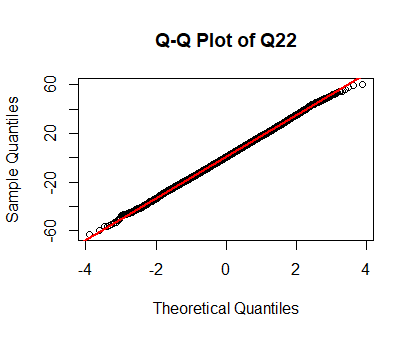}
	\includegraphics[width=0.32\columnwidth]{./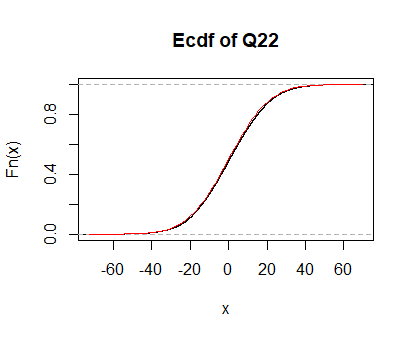}\\		
	\includegraphics[width=0.32\columnwidth]{./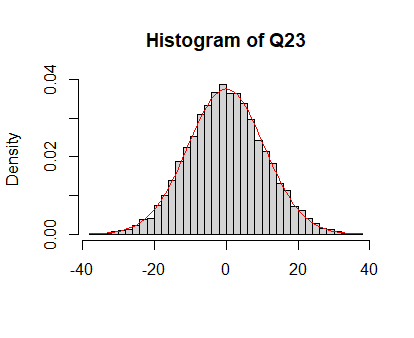}
	\includegraphics[width=0.32\columnwidth]{./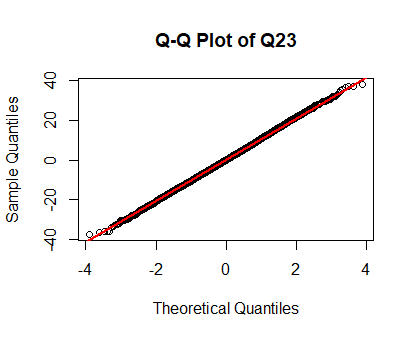}
	\includegraphics[width=0.32\columnwidth]{./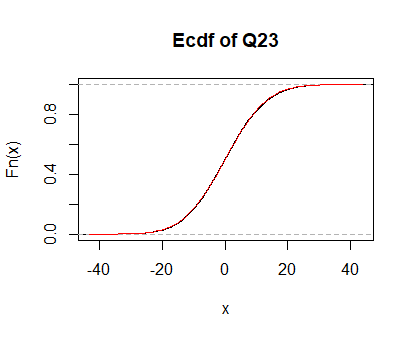}\\
	\includegraphics[width=0.32\columnwidth]{./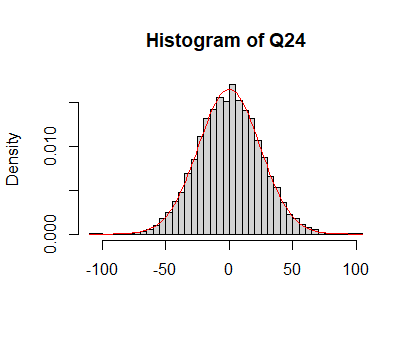}
	\includegraphics[width=0.32\columnwidth]{./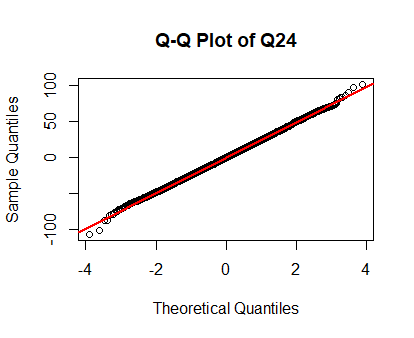}
	\includegraphics[width=0.32\columnwidth]{./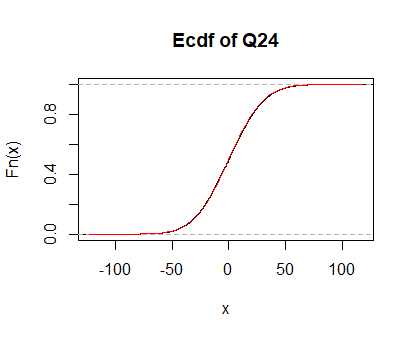}\\
	\includegraphics[width=0.32\columnwidth]{./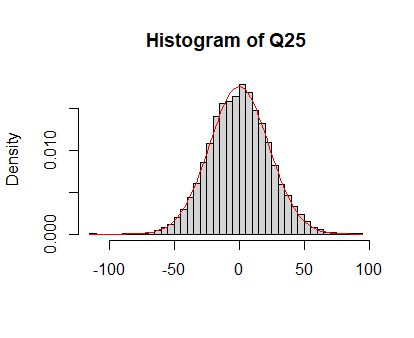}
	\includegraphics[width=0.32\columnwidth]{./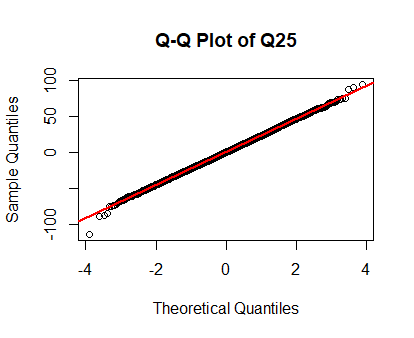}
	\includegraphics[width=0.32\columnwidth]{./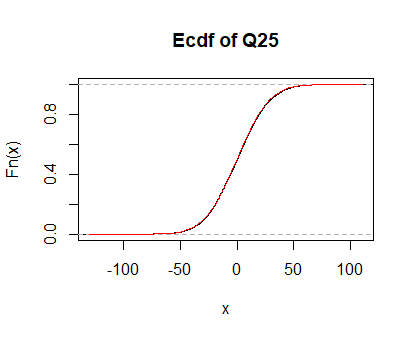}	
\end{figure}
\begin{figure}
    \ \\
	\includegraphics[width=0.32\columnwidth]{./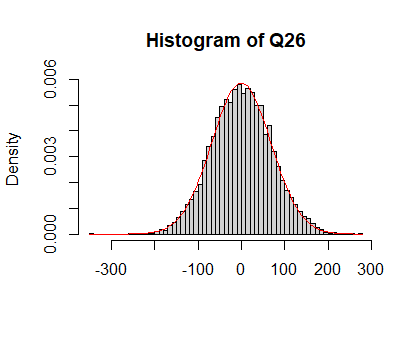}
	\includegraphics[width=0.32\columnwidth]{./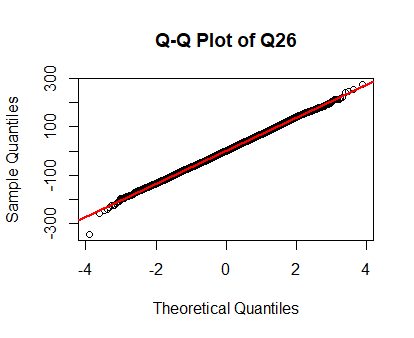}
	\includegraphics[width=0.32\columnwidth]{./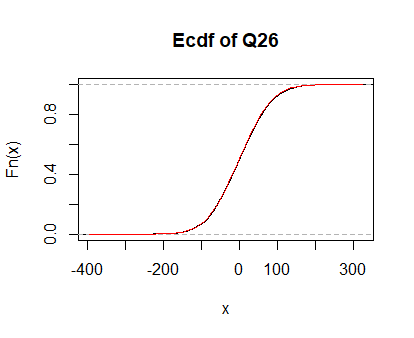}\\	
	\includegraphics[width=0.32\columnwidth]{./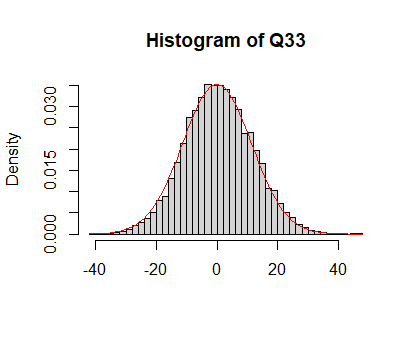}
	\includegraphics[width=0.32\columnwidth]{./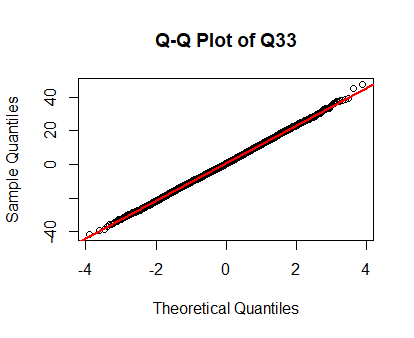}
	\includegraphics[width=0.32\columnwidth]{./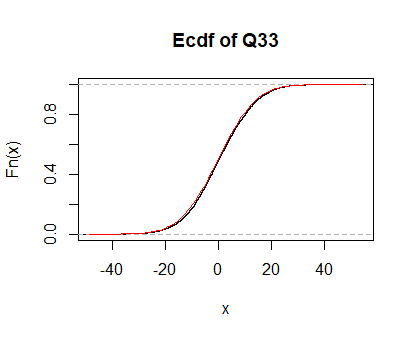}\\
	\includegraphics[width=0.32\columnwidth]{./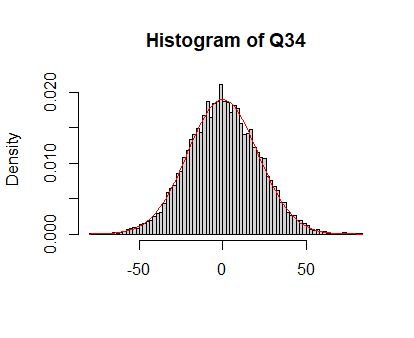}
	\includegraphics[width=0.32\columnwidth]{./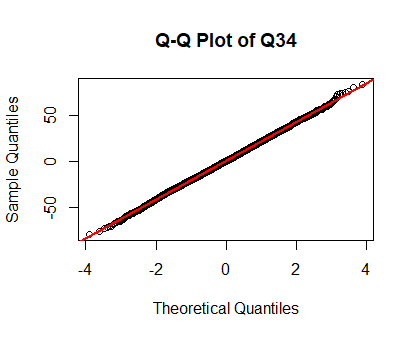}
	\includegraphics[width=0.32\columnwidth]{./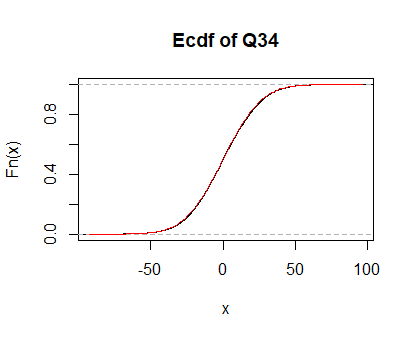}\\
	\includegraphics[width=0.32\columnwidth]{./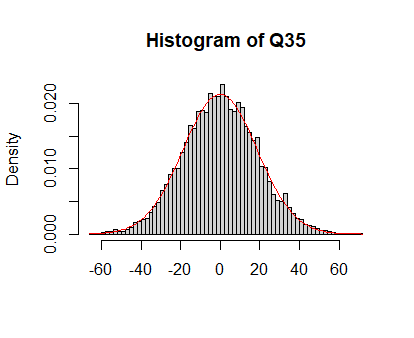}
	\includegraphics[width=0.32\columnwidth]{./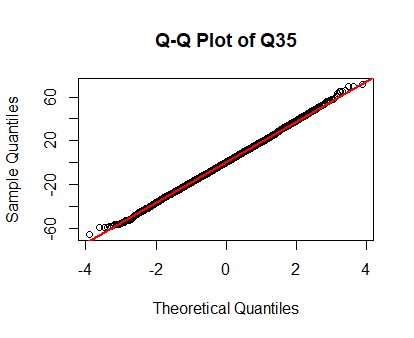}
	\includegraphics[width=0.32\columnwidth]{./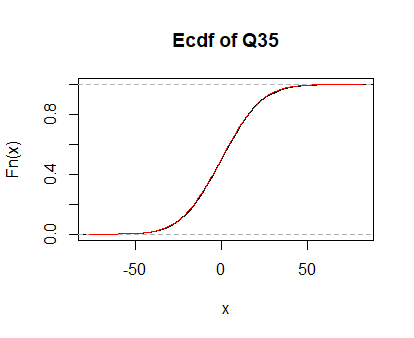}\\
	\includegraphics[width=0.32\columnwidth]{./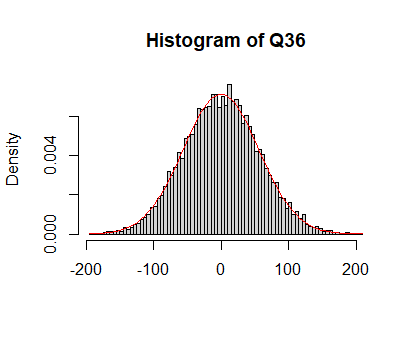}
	\includegraphics[width=0.32\columnwidth]{./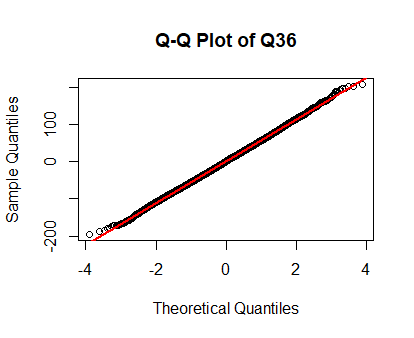}
	\includegraphics[width=0.32\columnwidth]{./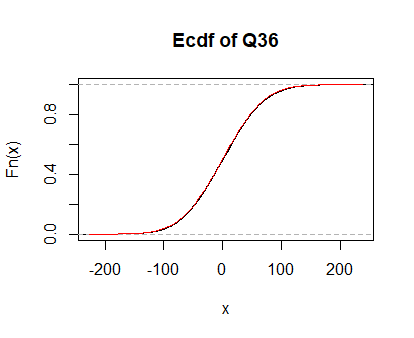}
\end{figure}
\begin{figure}
    \ \\
    \includegraphics[width=0.32\columnwidth]{./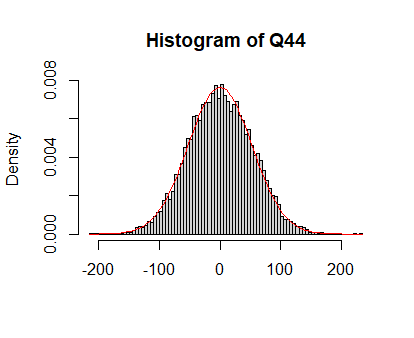}
	\includegraphics[width=0.32\columnwidth]{./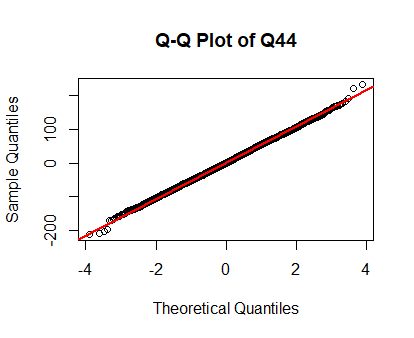}
	\includegraphics[width=0.32\columnwidth]{./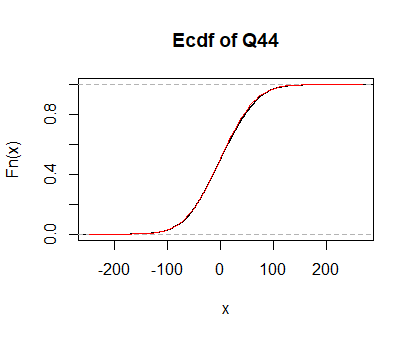}\\
	\includegraphics[width=0.32\columnwidth]{./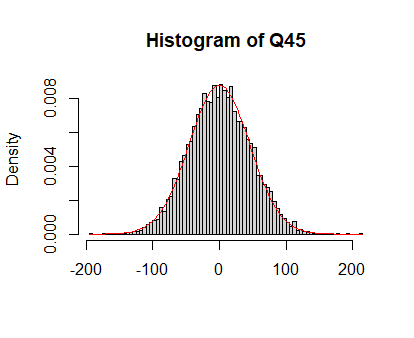}
	\includegraphics[width=0.32\columnwidth]{./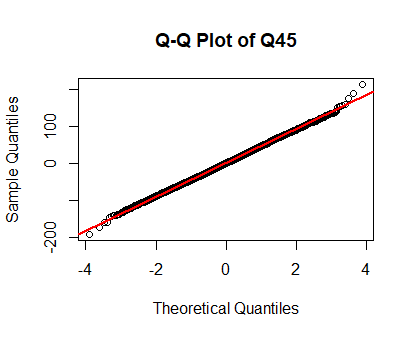}
	\includegraphics[width=0.32\columnwidth]{./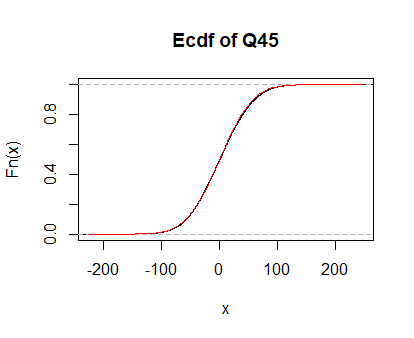}\\
	\includegraphics[width=0.32\columnwidth]{./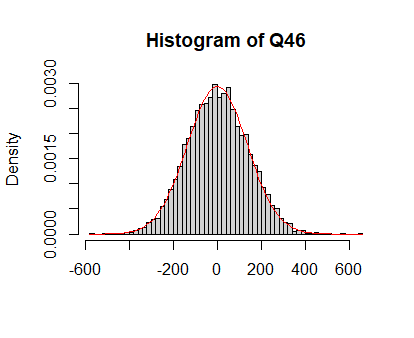}
	\includegraphics[width=0.32\columnwidth]{./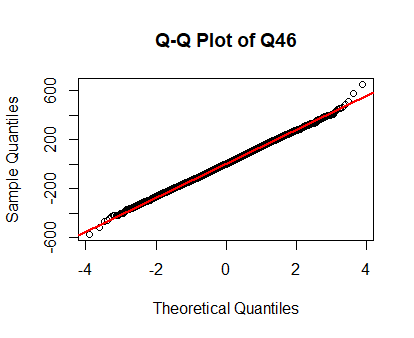}
	\includegraphics[width=0.32\columnwidth]{./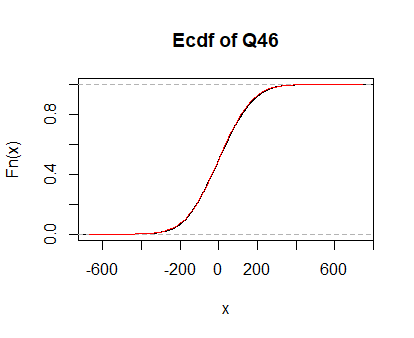}\\
	\includegraphics[width=0.32\columnwidth]{./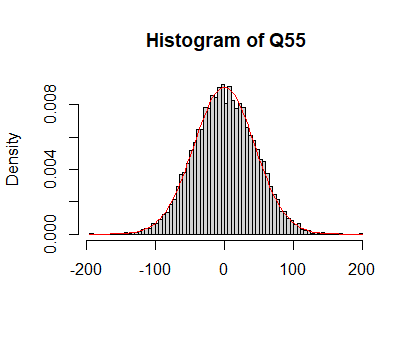}
	\includegraphics[width=0.32\columnwidth]{./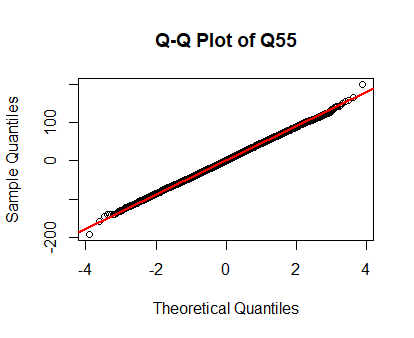}
	\includegraphics[width=0.32\columnwidth]{./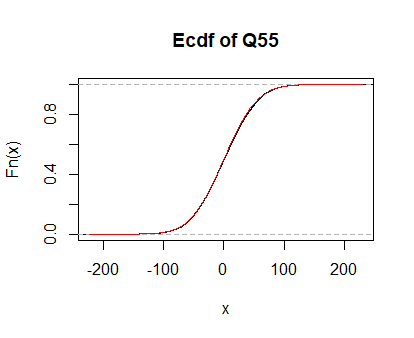}\\
	\includegraphics[width=0.32\columnwidth]{./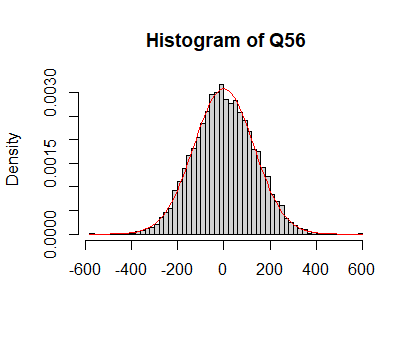}
	\includegraphics[width=0.32\columnwidth]{./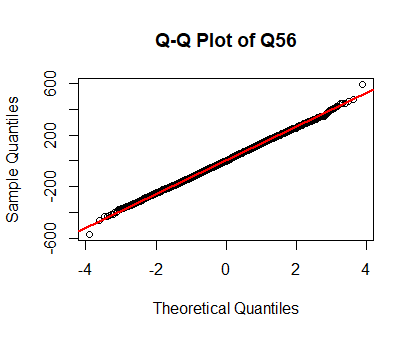}
	\includegraphics[width=0.32\columnwidth]{./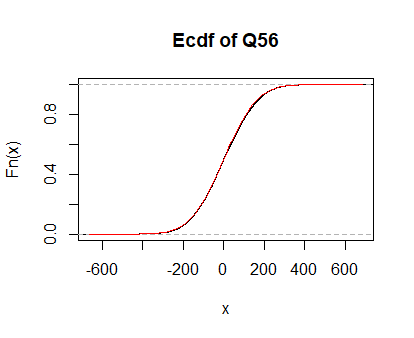}
\end{figure}
\begin{figure}
    \ \\
	\includegraphics[width=0.32\columnwidth]{./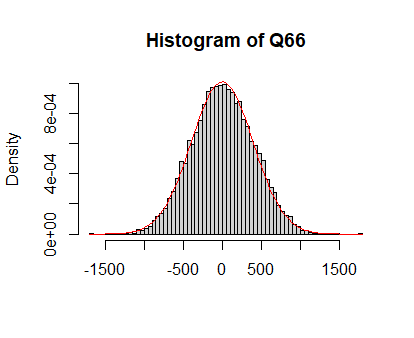}
	\includegraphics[width=0.32\columnwidth]{./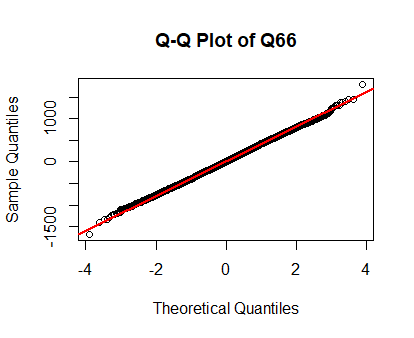}
	\includegraphics[width=0.32\columnwidth]{./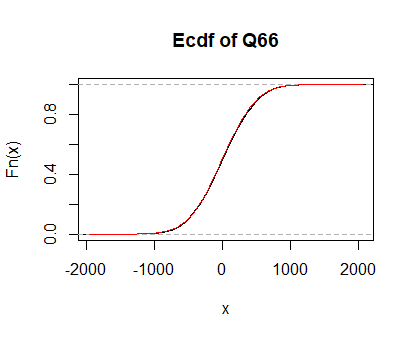}\\
	\caption{Histogram (left), Q-Q plot (middle) and empirical distribution (right) of $\sqrt{n}((Q_{XX})_{ij}$ - $(\Sigma_{m}(\theta_{m}))_{ij})\quad (i\leq j,\  i,j=1,\cdots,6)$.}
	\label{Qzzfigure2}
\end{figure}
\begin{figure}
    \ \\
    \ \\
	\includegraphics[width=0.32\columnwidth]{./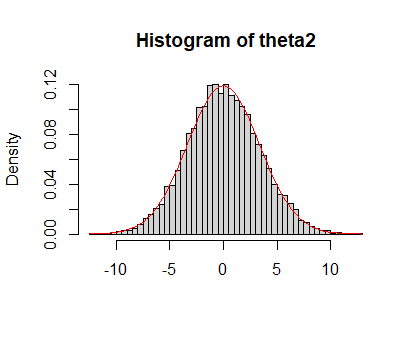}
	\includegraphics[width=0.32\columnwidth]{./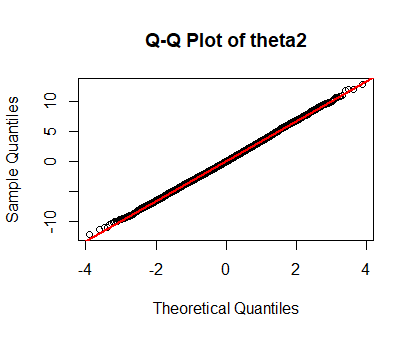}
	\includegraphics[width=0.32\columnwidth]{./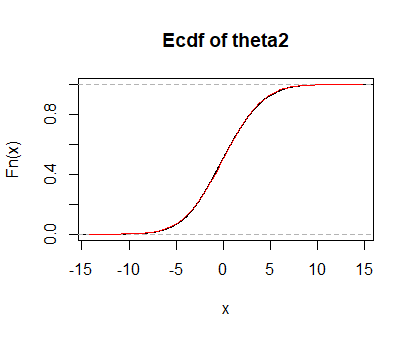}\\ \ \\
	\includegraphics[width=0.32\columnwidth]{./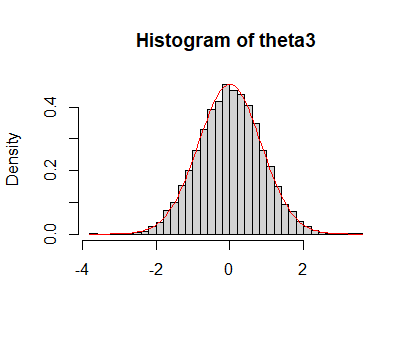}
	\includegraphics[width=0.32\columnwidth]{./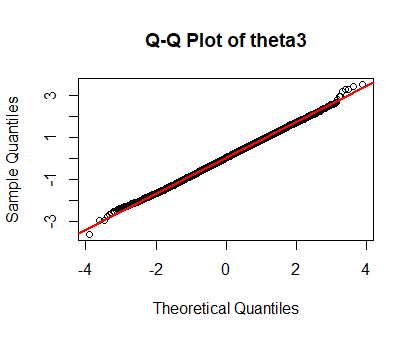}
	\includegraphics[width=0.32\columnwidth]{./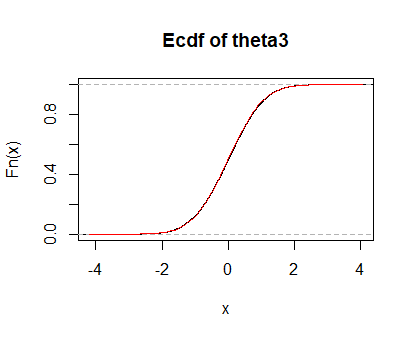}\\ \ \\
	\includegraphics[width=0.32\columnwidth]{./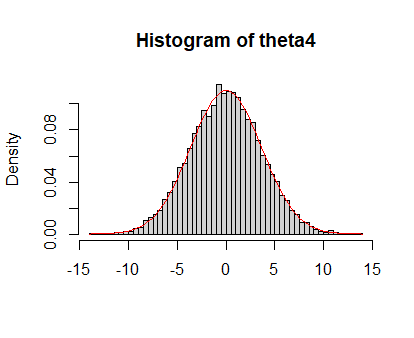}
	\includegraphics[width=0.32\columnwidth]{./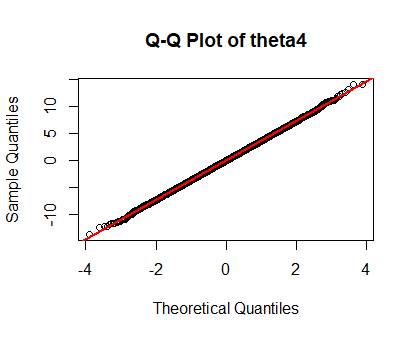}
	\includegraphics[width=0.32\columnwidth]{./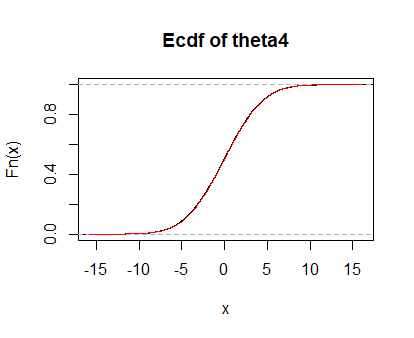}
\end{figure}
\begin{figure}
    \ \\
	\includegraphics[width=0.32\columnwidth]{./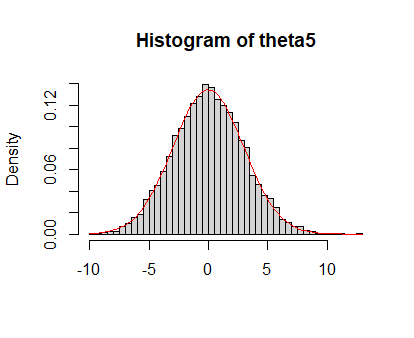}
	\includegraphics[width=0.32\columnwidth]{./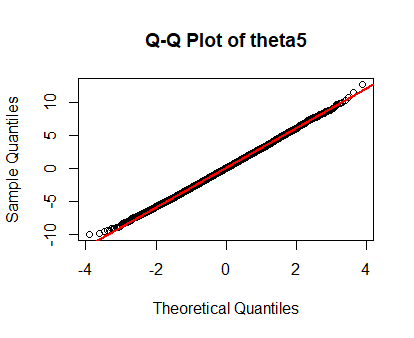}
	\includegraphics[width=0.32\columnwidth]{./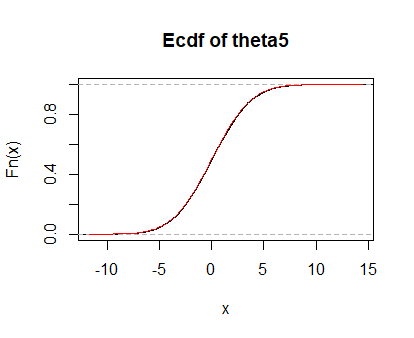}\\ 
	\includegraphics[width=0.32\columnwidth]{./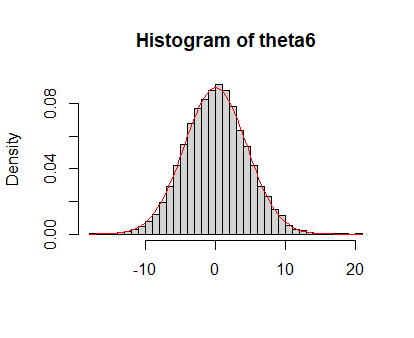}
	\includegraphics[width=0.32\columnwidth]{./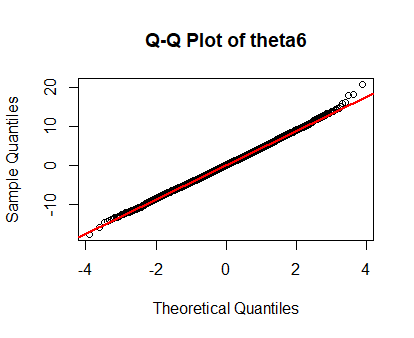}
	\includegraphics[width=0.32\columnwidth]{./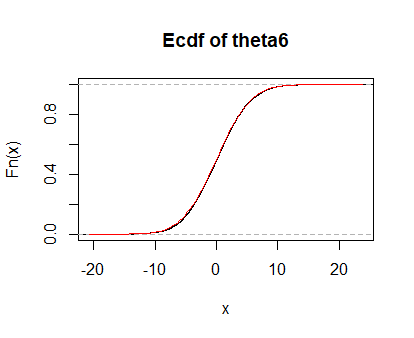}\\
	\includegraphics[width=0.32\columnwidth]{./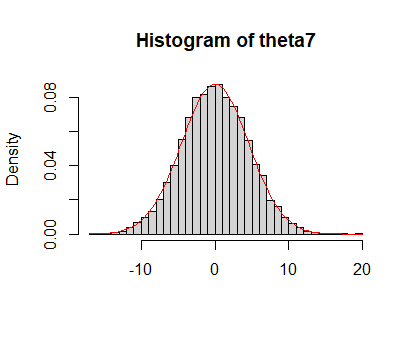}
	\includegraphics[width=0.32\columnwidth]{./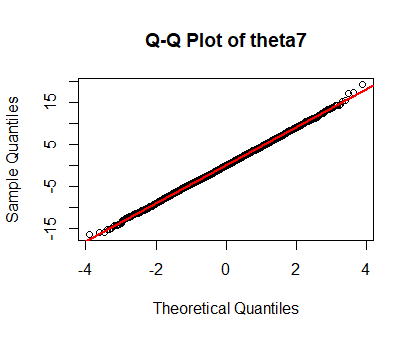}
	\includegraphics[width=0.32\columnwidth]{./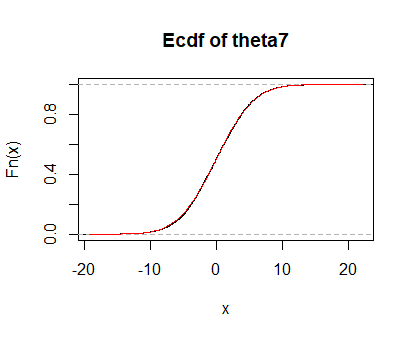}\\   		
	\includegraphics[width=0.32\columnwidth]{./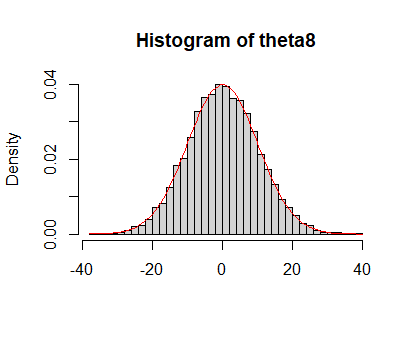}
	\includegraphics[width=0.32\columnwidth]{./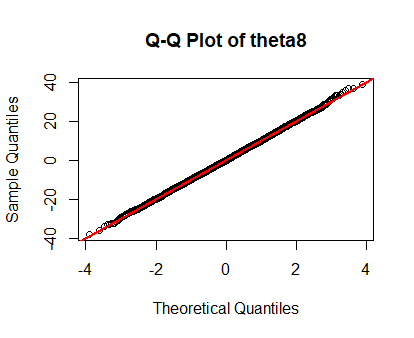}
	\includegraphics[width=0.32\columnwidth]{./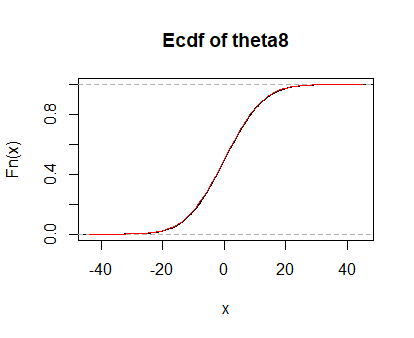}\\
	\includegraphics[width=0.32\columnwidth]{./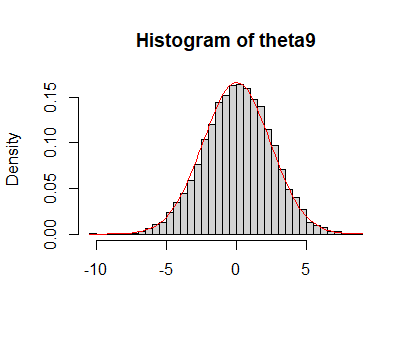}
	\includegraphics[width=0.32\columnwidth]{./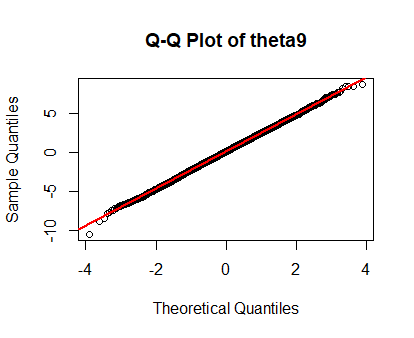}
	\includegraphics[width=0.32\columnwidth]{./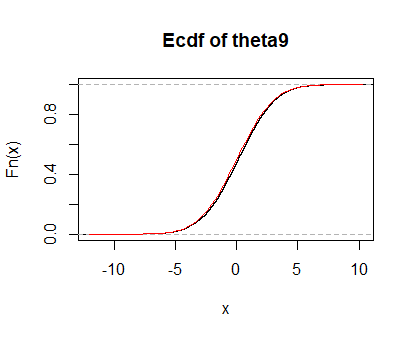}
\end{figure}
\begin{figure}
    \ \\
	\includegraphics[width=0.32\columnwidth]{./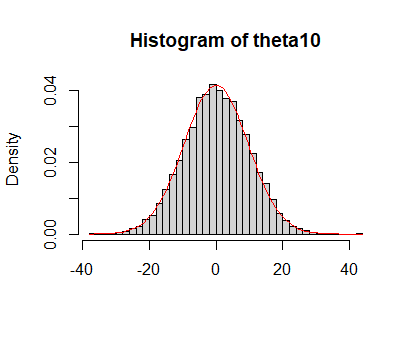}
	\includegraphics[width=0.32\columnwidth]{./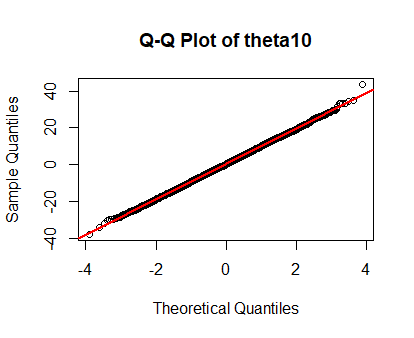}
	\includegraphics[width=0.32\columnwidth]{./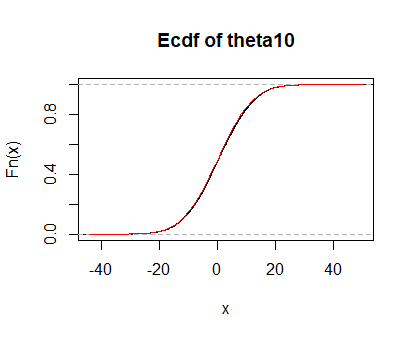}\\
	\includegraphics[width=0.32\columnwidth]{./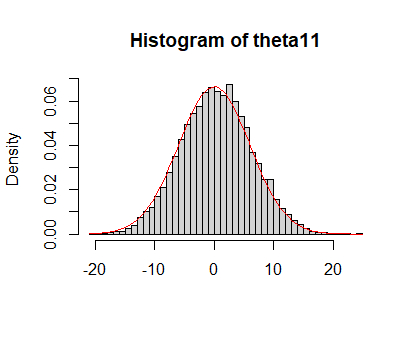}
	\includegraphics[width=0.32\columnwidth]{./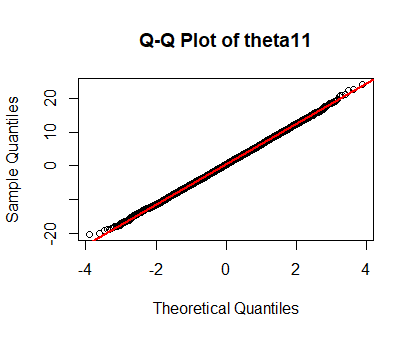}
	\includegraphics[width=0.32\columnwidth]{./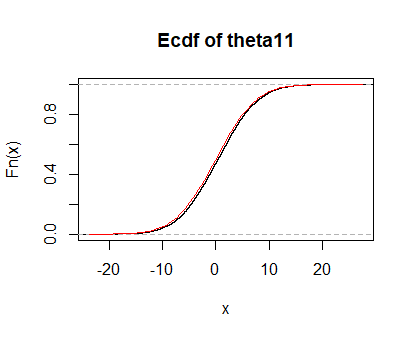}\\
	\includegraphics[width=0.32\columnwidth]{./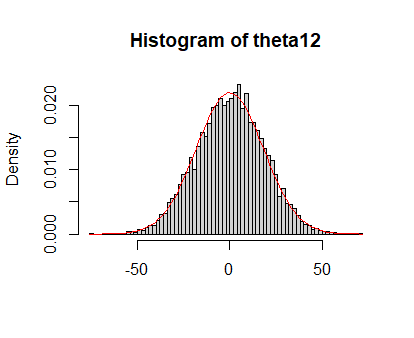}
	\includegraphics[width=0.32\columnwidth]{./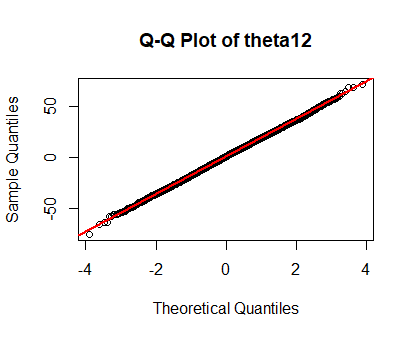}
	\includegraphics[width=0.32\columnwidth]{./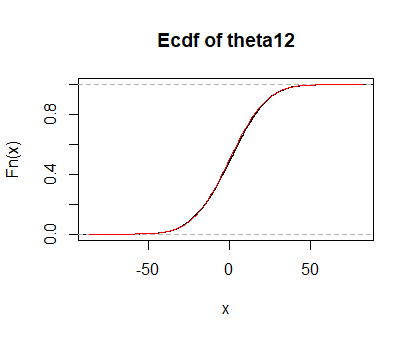}\\
	\includegraphics[width=0.32\columnwidth]{./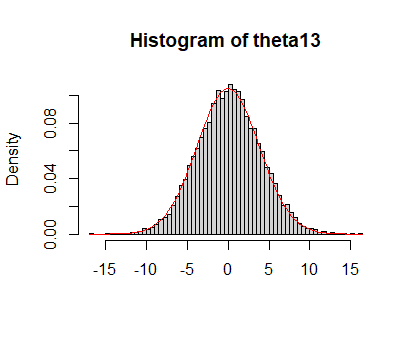}
	\includegraphics[width=0.32\columnwidth]{./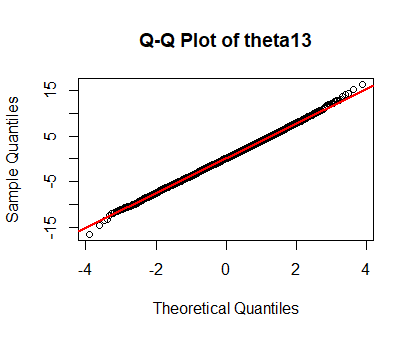}
	\includegraphics[width=0.32\columnwidth]{./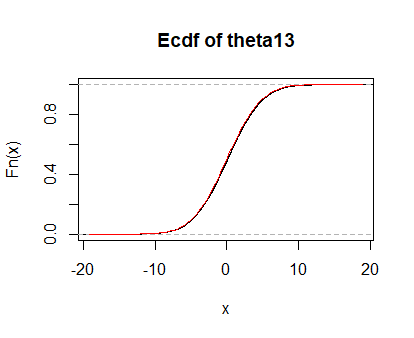}\\ 
	\includegraphics[width=0.32\columnwidth]{./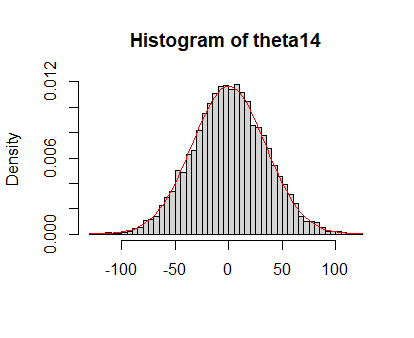}
	\includegraphics[width=0.32\columnwidth]{./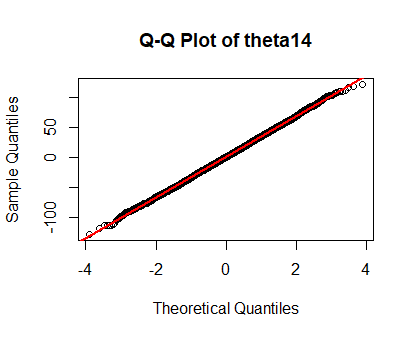}
	\includegraphics[width=0.32\columnwidth]{./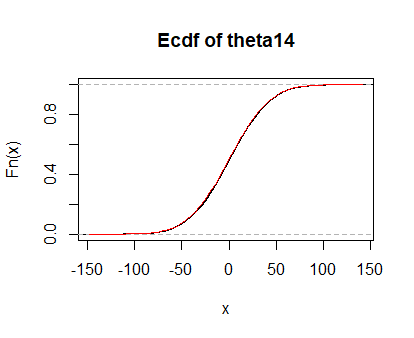}
\end{figure}
\begin{figure}
	\includegraphics[width=0.32\columnwidth]{./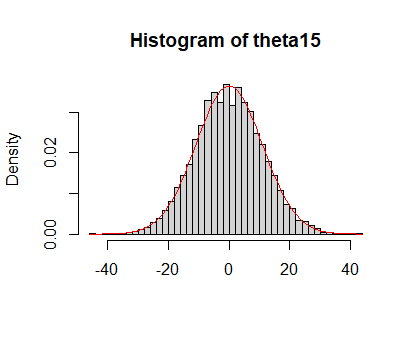}
	\includegraphics[width=0.32\columnwidth]{./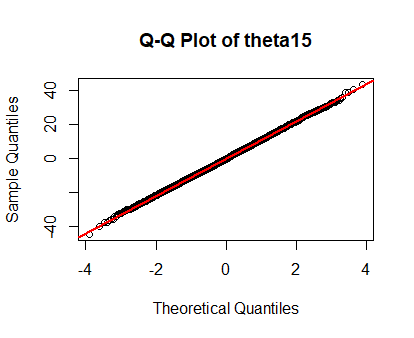}
	\includegraphics[width=0.32\columnwidth]{./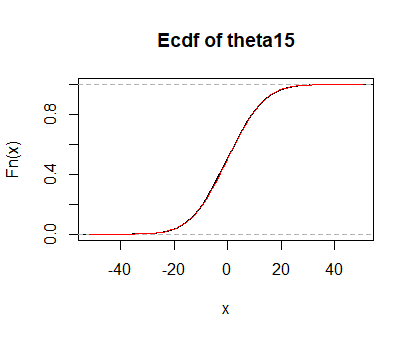}
	\caption{Histogram (left), Q-Q plot (middle) and empirical distribution (right) of $\sqrt{n}(\hat{\theta}_{m,n}^{(i)}-\theta_{m,n}^{(i)})\quad (i=1,\cdots,15)$.}   
	\label{thetafigure2}
\end{figure}
\clearpage
\subsection{Proof of Lemma \ref{Xlemma}}\label{ProofXlemma}
\begin{lemma}\label{Alemma}
Under $\bf{[A1]}$, 
\begin{align*}
	&\quad\E_{\theta_{m}}\left[A_{i,m,n}^{(j_1)}|\mathscr{F}^{n}_{i-1}\right]=R(h_n,\xi_{m,t_{i-1}^n}),\\
	&\quad\E_{\theta_{m}}\left[A_{i,m,n}^{(j_1)}A_{i,m,n}^{(j_2)}|\mathscr{F}^{n}_{i-1}\right]=h_n(\Lambda_{x_1,m}\Sigma_{\xi\xi,m}\Lambda_{x_1,m}^{\top})_{j_1j_2}+R(h_n^2,\xi_{m,t_{i-1}^n}),\\
	&\quad\E_{\theta_{m}}\left[A_{i,m,n}^{(j_1)}A_{i,m,n}^{(j_2)}A_{i,m,n}^{(j_3)}|\mathscr{F}^{n}_{i-1}\right]=R(h_n^2,\xi_{m,t_{i-1}^n}),\\
	&\quad\E_{\theta_{m}}\left[A_{i,m,n}^{(j_1)}A_{i,m,n}^{(j_2)}A_{i,m,n}^{(j_3)}A_{i,m,n}^{(j_4)}|\mathscr{F}^{n}_{i-1}\right]\\
	&=h_n^2\bigl\{(\Lambda_{x_1,m}\Sigma_{\xi\xi,m}\Lambda_{x_1,m}^{\top})_{j_1j_2}(\Lambda_{x_1,m}\Sigma_{\xi\xi,m}\Lambda_{x_1,m}^{\top})_{j_3j_4}+(\Lambda_{x_1,m}\Sigma_{\xi\xi,m}\Lambda_{x_1,m}^{\top})_{j_1j_3}(\Lambda_{x_1,m}\Sigma_{\xi\xi,m}\Lambda_{x_1,m}^{\top})_{j_2j_4}\qquad\\
	&\quad+(\Lambda_{x_1,m}\Sigma_{\xi\xi,m}\Lambda_{x_1,m}^{\top})_{j_1j_4}(\Lambda_{x_1,m}\Sigma_{\xi\xi,m}\Lambda_{x_1,m}^{\top})_{j_2j_3}\bigr\}+R(h_n^3,\xi_{m,t_{i-1}^n})
\end{align*}
for $j_1,j_2,j_3,j_4=1,\cdots,p_1$.
\end{lemma}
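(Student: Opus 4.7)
The plan is to expand $A_{i,m,n}=\Lambda_{x_1,m}\Delta\xi_{m,i}$ by separating the Wiener increment from the drift integral, identify the principal Gaussian part, and then reduce the calculation to conditional Gaussian moment calculus (Isserlis/Wick formula), with all Itô-Taylor remainders absorbed into terms of the claimed $R(h_n^k,\xi_{m,t_{i-1}^n})$ type.

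First I would write
$$\Delta\xi_{m,i}=S_{1,m}\Delta W_{1,i}+\int_{t_{i-1}^n}^{t_i^n}B_{1,m}(\xi_{m,s})\,ds,\qquad \Delta W_{1,i}:=W_{1,t_i^n}-W_{1,t_{i-1}^n},$$
and apply Itô's formula to each coordinate of $B_{1,m}$ (legitimate by $\bf{[A1]}$(c)) to obtain $\int_{t_{i-1}^n}^{t_i^n}B_{1,m}(\xi_{m,s})\,ds=h_n B_{1,m}(\xi_{m,t_{i-1}^n})+r_{n,i}$, where the remainder satisfies $\E_{\theta_m}\!\bigl[|r_{n,i}|^{2k}\mid\mathscr{F}^n_{i-1}\bigr]^{1/(2k)}=R(h_n^{3/2},\xi_{m,t_{i-1}^n})$ for every $k\ge 1$, by the standard SDE estimates based on the polynomial growth of $B_{1,m}$ and its derivatives together with the uniform moment bound $\bf{[A1]}$(b). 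This gives the decomposition
$$A_{i,m,n}=G_{i,n}+h_n D_{i,n}+\Lambda_{x_1,m}r_{n,i},$$
in which $G_{i,n}:=\Lambda_{x_1,m}S_{1,m}\Delta W_{1,i}$ is, conditionally on $\mathscr{F}^n_{i-1}$, distributed as $N_{p_1}(0,h_n\Lambda_{x_1,m}\Sigma_{\xi\xi,m}\Lambda_{x_1,m}^\top)$, while $D_{i,n}:=\Lambda_{x_1,m}B_{1,m}(\xi_{m,t_{i-1}^n})$ is $\mathscr{F}^n_{i-1}$-measurable of polynomial size in $\xi_{m,t_{i-1}^n}$.

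Expanding the products and taking conditional expectations then yields the four assertions one by one. The first moment reduces to $h_nD_{i,n}^{(j_1)}$ plus a remainder, giving $R(h_n,\xi_{m,t_{i-1}^n})$. For the second, the leading term is $\E_{\theta_m}\!\bigl[G_{i,n}^{(j_1)}G_{i,n}^{(j_2)}\mid\mathscr{F}^n_{i-1}\bigr]=h_n(\Lambda_{x_1,m}\Sigma_{\xi\xi,m}\Lambda_{x_1,m}^\top)_{j_1j_2}$; the cross terms between $G_{i,n}$ and $h_nD_{i,n}$ vanish conditionally, the $h_n^2 D_{i,n}^{(j_1)}D_{i,n}^{(j_2)}$ piece is $R(h_n^2,\xi_{m,t_{i-1}^n})$, and remainder contributions are controlled via Cauchy-Schwarz against the bound on $r_{n,i}$. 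For the third, all odd Gaussian moments vanish, and every surviving summand carries at least one factor $h_nD_{i,n}$ or $r_{n,i}$ paired with a lower-order Gaussian moment, so the result is $R(h_n^2,\xi_{m,t_{i-1}^n})$. For the fourth, Isserlis' theorem applied to the centred Gaussian vector $G_{i,n}\mid\mathscr{F}^n_{i-1}$ delivers exactly the three pairings displayed in the statement, each contributing $h_n^2$ times a product of entries of $\Lambda_{x_1,m}\Sigma_{\xi\xi,m}\Lambda_{x_1,m}^\top$; the remaining mixed products all carry at least one $h_nD_{i,n}$ or $r_{n,i}$ factor and collect into $R(h_n^3,\xi_{m,t_{i-1}^n})$.

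The main technical step is the Itô-Taylor remainder estimate for $r_{n,i}$: one expresses $B_{1,m}(\xi_{m,s})-B_{1,m}(\xi_{m,t_{i-1}^n})$ as a time integral involving the generator of $\xi_{m,\cdot}$ applied to $B_{1,m}$ plus a stochastic integral involving $\nabla B_{1,m}\cdot S_{1,m}$, and bounds both uniformly in $s\in[t_{i-1}^n,t_i^n]$ using $B_{1,m}\in C^4_\uparrow(\mathbb{R}^{k_{1,m}})$ and $\sup_t\E_{\theta_m}[|\xi_{m,t}|^\ell]<\infty$. Once this is in place, the rest is direct Gaussian moment arithmetic of the standard Kessler type and no further delicate point arises.
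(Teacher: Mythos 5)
Your proposal is correct and follows essentially the same route as the paper, which simply defers to Lemmas 2--3 of Kusano and Uchida (2022) — i.e.\ the standard Kessler-type conditional moment expansion that you have written out: split $\Delta\xi_{m,i}$ into the Gaussian increment $S_{1,m}\Delta W_{1,i}$ plus the frozen drift $h_nB_{1,m}(\xi_{m,t_{i-1}^n})$ and an It\^o--Taylor remainder of conditional $L^{2k}$-order $h_n^{3/2}$, then apply Isserlis' formula to the Gaussian part and absorb all mixed terms via H\"older's inequality. No gap; the remainder estimate and the order bookkeeping are exactly as required by $\bf{[A1]}$(b)--(c).
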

\begin{lemma}\label{Blemma}
Under $\bf{[B1]}$,
\begin{align*}
	&\quad\E_{\theta_{m}}\left[B_{i,m,n}^{(j_1)}|\mathscr{F}^{n}_{i-1}\right]=R(h_n,\delta_{m,t_{i-1}^n}),\\
	&\quad\E_{\theta_{m}}\left[B_{i,m,n}^{(j_1)}B_{i,m,n}^{(j_2)}|\mathscr{F}^{n}_{i-1}\right]=h_n(\Sigma_{\delta\delta,m})_{j_1j_2}+R(h_n^2,\delta_{m,t_{i-1}^n}),\\
	&\quad\E_{\theta_{m}}\left[B_{i,m,n}^{(j_1)}B_{i,m,n}^{(j_2)}B_{i,m,n}^{(j_3)}|\mathscr{F}^{n}_{i-1}\right]=R(h_n^2,\delta_{m,t_{i-1}^n}),\\
	&\quad\E_{\theta_{m}}\left[B_{i,m,n}^{(j_1)}B_{i,m,n}^{(j_2)}B_{i,m,n}^{(j_3)}B_{i,m,n}^{(j_4)}|\mathscr{F}^{n}_{i-1}\right]\\
	&=h_n^2\bigl\{(\Sigma_{\delta\delta,m})_{j_1j_2}(\Sigma_{\delta\delta,m})_{j_3j_4}+(\Sigma_{\delta\delta,m})_{j_1j_3}(\Sigma_{\delta\delta,m})_{j_2j_4}+(\Sigma_{\delta\delta,m})_{j_1j_4}(\Sigma_{\delta\delta,m})_{j_2j_3}\bigr\}+R(h_n^3,\delta_{m,t_{i-1}^n})\qquad\quad
\end{align*}
for $j_1,j_2,j_3,j_4=1,\cdots,p_1$.
\end{lemma}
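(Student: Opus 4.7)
The plan is to derive these moment expansions from the SDE representation
\begin{align*}
B_{i,m,n} = \int_{t_{i-1}^n}^{t_i^n} B_{2,m}(\delta_{m,s})\,ds + S_{2,m}\bigl(W_{2,t_i^n}-W_{2,t_{i-1}^n}\bigr),
\end{align*}
combined with iterated applications of It\^o's formula in the spirit of Kessler \cite{kessler(1997)}. The Brownian increment $S_{2,m}(W_{2,t_i^n}-W_{2,t_{i-1}^n})$ is independent of $\mathscr{F}^n_{i-1}$ and is $N_{p_1}(0,h_n\Sigma_{\delta\delta,m})$-distributed, so by Isserlis its conditional tensor moments of orders $k=1,2,3,4$ supply exactly $0$, $h_n(\Sigma_{\delta\delta,m})_{j_1 j_2}$, $0$, and the Wick sum $h_n^2\bigl\{(\Sigma_{\delta\delta,m})_{j_1 j_2}(\Sigma_{\delta\delta,m})_{j_3 j_4}+(\Sigma_{\delta\delta,m})_{j_1 j_3}(\Sigma_{\delta\delta,m})_{j_2 j_4}+(\Sigma_{\delta\delta,m})_{j_1 j_4}(\Sigma_{\delta\delta,m})_{j_2 j_3}\bigr\}$ that appear in the statement. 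Everything else must then be absorbed into the remainder class $R(h_n^{\bullet},\delta_{m,t_{i-1}^n})$.

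For the first moment, only $\E_{\theta_m}\bigl[\int_{t_{i-1}^n}^{t_i^n} B_{2,m}(\delta_{m,s})\,ds\,\big|\,\mathscr{F}^n_{i-1}\bigr]$ remains, and I bound it using $\textbf{[B1]}$(c) together with the standard a priori estimate
\begin{align*}
\E_{\theta_m}\Bigl[\sup_{s\in[t_{i-1}^n,t_i^n]}|\delta_{m,s}|^{\ell}\,\Big|\,\mathscr{F}^n_{i-1}\Bigr]\le C\bigl(1+|\delta_{m,t_{i-1}^n}|\bigr)^{C},
\end{align*}
which follows from Gronwall and BDG. That produces $h_n\,C(1+|\delta_{m,t_{i-1}^n}|)^C$, i.e.\ an $R(h_n,\delta_{m,t_{i-1}^n})$ term. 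For $k=2,3,4$ I expand the tensor product of $B_{i,m,n}$ into the $2^k$ pieces coming from the ``drift vs.\ Brownian'' binary choice in each slot, treat the pure-Brownian piece by the Gaussian computation above, observe that the pure-drift piece is immediately $O(h_n^k)$, and control each mixed piece by one further It\^o expansion: writing
\begin{align*}
B_{2,m}(\delta_{m,s})=B_{2,m}(\delta_{m,t_{i-1}^n})+\int_{t_{i-1}^n}^{s}\mathcal{L}B_{2,m}(\delta_{m,r})\,dr+\int_{t_{i-1}^n}^{s}\partial B_{2,m}(\delta_{m,r})S_{2,m}\,dW_{2,r},
\end{align*}
(legitimate thanks to $B_{2,m}\in C^{4}_{\uparrow}$) shows $\int_{t_{i-1}^n}^{t_i^n} B_{2,m}(\delta_{m,s})\,ds=h_n B_{2,m}(\delta_{m,t_{i-1}^n})+$ higher-order stochastic residuals. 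The $\mathscr{F}^n_{i-1}$-measurable leading term then pairs against odd Brownian moments that vanish conditionally, and the residuals supply the needed extra fractional powers of $h_n$ after applying It\^o isometry.

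The main obstacle is precisely the cubic case $k=3$, since the naive Cauchy--Schwarz estimate on a drift$\times$Brownian$\times$Brownian cross term yields only $O(h_n^{3/2})$, while the statement demands $O(h_n^2)$. The It\^o refinement above is what closes this half-power gap: after isolating the frozen $h_n B_{2,m}(\delta_{m,t_{i-1}^n})$ contribution (which vanishes against the odd Brownian moment) one is left with a genuine stochastic integral residual, which by It\^o isometry contributes an additional factor of $h_n^{1/2}$ to the cross expectation. The $k=4$ mixed pieces are handled identically, all remainders being collected into $R(h_n^{\bullet},\delta_{m,t_{i-1}^n})$ via the polynomial growth of $B_{2,m}$, $\partial B_{2,m}$ and $\mathcal{L}B_{2,m}$ combined with $\textbf{[B1]}$(b)--(c). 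The final assembly then matches the leading Wick terms with their $(\Sigma_{\delta\delta,m})_{j_\bullet j_\bullet}=(S_{2,m}S_{2,m}^{\top})_{j_\bullet j_\bullet}$ coefficients, completing the expansion.
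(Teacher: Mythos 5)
Your approach is the standard Kessler-type moment expansion, and it is in substance what the paper intends: the paper's own ``proof'' of this lemma is a single sentence deferring to Lemmas 2--3 of Kusano and Uchida \cite{Kusano(2022)}, so the drift/Gaussian decomposition, the Isserlis computation for the pure Gaussian blocks, and the one-step It\^o refinement of the drift (frozen term annihilated by odd conditional Gaussian moments, residual controlled by the isometry) reconstruct the intended argument, and this machinery does suffice. One correction, however: you misidentify where the It\^o refinement is actually indispensable. In the cubic case the drift$\times$Brownian$\times$Brownian piece is already $O(h_n^2)$ by plain H\"older, because the drift integral is $O(h_n)$ in $L^p$ (not $O(h_n^{1/2})$) while each Gaussian factor is $O(h_n^{1/2})$; since the required order for $k=3$ is exactly $O(h_n^{2})$, no refinement is needed there, and your claim that the naive bound yields only $O(h_n^{3/2})$ is false. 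The pieces that genuinely fall half a power short under the naive estimate are those with exactly one drift factor and an odd number of Gaussian factors: the drift$\times$Brownian piece of the second moment ($O(h_n^{3/2})$ naively versus the required $R(h_n^{2},\cdot)$ remainder) and the drift$\times$Brownian$^{3}$ piece of the fourth moment ($O(h_n^{5/2})$ versus $R(h_n^{3},\cdot)$). Because you apply the refinement to every mixed piece, these cases are covered and the proof goes through; the misdiagnosis is an expository slip rather than a gap, but it should be fixed, since as written it points the reader at the wrong term.
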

\begin{lemma}\label{Clemma}
Under $\bf{[A1]}$,
\begin{align*}
	&\quad\E_{\theta_{m}}\left[C_{i,m,n}^{(j_1)}|\mathscr{F}^{n}_{i-1}\right]=R(h_n,\xi_{m,t_{i-1}^n}),\\
	&\quad\E_{\theta_{m}}\left[C_{i,m,n}^{(j_1)}C_{i,m,n}^{(j_2)}|\mathscr{F}^{n}_{i-1}\right]=h_n(\Lambda_{x_2,m}\Psi_{m}^{-1}\Gamma_{m}\Sigma_{\xi\xi,m}\Gamma_{m}^{\top}\Psi_{m}^{-1\top}\Lambda_{x_2,m}^{\top})_{j_1j_2}+R(h_n^2,\xi_{m,t_{i-1}^n}),\\
	&\quad\E_{\theta_{m}}\left[C_{i,m,n}^{(j_1)}C_{i,m,n}^{(j_2)}C_{i,m,n}^{(j_3)}|\mathscr{F}^{n}_{i-1}\right]=R(h_n^2,\xi_{m,t_{i-1}^n}),\\
	&\quad\E_{\theta_{m}}\left[C_{i,m,n}^{(j_1)}C_{i,m,n}^{(j_2)}C_{i,m,n}^{(j_3)}C_{i,m,n}^{(j_4)}|\mathscr{F}^{n}_{i-1}\right]\\
	&=h_n^2\bigl\{(\Lambda_{x_2,m}\Psi_{m}^{-1}\Gamma_{m}\Sigma_{\xi\xi,m}\Gamma_{m}^{\top}\Psi_{m}^{-1\top}\Lambda_{x_2,m}^{\top})_{j_1j_2}(\Lambda_{x_2,m}\Psi_{m}^{-1}\Gamma_{m}\Sigma_{\xi\xi,m}\Gamma_{m}^{\top}\Psi_{m}^{-1\top}\Lambda_{x_2,m}^{\top})_{j_3j_4}\\
	&\quad+(\Lambda_{x_2,m}\Psi_{m}^{-1}\Gamma_{m}\Sigma_{\xi\xi,m}\Gamma_{m}^{\top}\Psi_{m}^{-1\top}\Lambda_{x_2,m}^{\top})_{j_1j_3}(\Lambda_{x_2,m}\Psi_{m}^{-1}\Gamma_{m}\Sigma_{\xi\xi,m}\Gamma_{m}^{\top}\Psi_{m}^{-1\top}\Lambda_{x_2,m}^{\top})_{j_2j_4}\\
	&\quad+(\Lambda_{x_2,m}\Psi_{m}^{-1}\Gamma_{m}\Sigma_{\xi\xi,m}\Gamma_{m}^{\top}\Psi_{m}^{-1\top}\Lambda_{x_2,m}^{\top})_{j_1j_4}(\Lambda_{x_2,m}\Psi_{m}^{-1}\Gamma_{m}\Sigma_{\xi\xi,m}\Gamma_{m}^{\top}\Psi_{m}^{-1\top}\Lambda_{x_2,m}^{\top})_{j_2j_3}\bigr\}+R(h_n^3,\xi_{m,t_{i-1}^n})
\end{align*}
for $j_1,j_2,j_3,j_4=1,\cdots,p_2$.
\end{lemma}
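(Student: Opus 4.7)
The plan is to recognize that $C_{i,m,n}=M\,\Delta\xi_{m,i}$ with $M:=\Lambda_{x_2,m}\Psi_m^{-1}\Gamma_m\in\mathbb{R}^{p_2\times k_{1,m}}$ a deterministic matrix. The statement then follows by transferring the corresponding conditional moment estimates for $\Delta\xi_{m,i}$ through the linear map $M$ in exactly the same way that Lemma \ref{Alemma} transfers them through $\Lambda_{x_1,m}$. So there is nothing genuinely new: the proof is a structural copy of the argument for $A_{i,m,n}$, with the constant matrix $\Lambda_{x_1,m}$ replaced by $M$ and $\Sigma_{\xi\xi,m}$ replaced by $M\Sigma_{\xi\xi,m}M^{\top}$.

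First, I would derive the four building-block conditional moment estimates for $\Delta\xi_{m,i}$. Under $\bf{[A1]}$, $B_{1,m}$ is Lipschitz and belongs to $C^4_{\uparrow}(\mathbb{R}^{k_{1,m}})$, and $\sup_t\E_{\theta_m}[|\xi_{m,t}|^\ell]<\infty$ for all $\ell$, so an It\^o--Taylor expansion on $[t_{i-1}^n,t_i^n]$ (as in Kessler \cite{kessler(1997)}) gives
\begin{align*}
\E_{\theta_m}\bigl[\Delta\xi_{m,i}^{(a)}\mid\mathscr{F}^n_{i-1}\bigr]&=h_n B_{1,m}^{(a)}(\xi_{m,t_{i-1}^n})+R(h_n^2,\xi_{m,t_{i-1}^n})=R(h_n,\xi_{m,t_{i-1}^n}),\\
\E_{\theta_m}\bigl[\Delta\xi_{m,i}^{(a)}\Delta\xi_{m,i}^{(b)}\mid\mathscr{F}^n_{i-1}\bigr]&=h_n(\Sigma_{\xi\xi,m})_{ab}+R(h_n^2,\xi_{m,t_{i-1}^n}),\\
\E_{\theta_m}\bigl[\Delta\xi_{m,i}^{(a)}\Delta\xi_{m,i}^{(b)}\Delta\xi_{m,i}^{(c)}\mid\mathscr{F}^n_{i-1}\bigr]&=R(h_n^2,\xi_{m,t_{i-1}^n}),
\end{align*}
and the fourth-order moment equals
\begin{align*}
h_n^2\bigl\{(\Sigma_{\xi\xi,m})_{ab}(\Sigma_{\xi\xi,m})_{cd}+(\Sigma_{\xi\xi,m})_{ac}(\Sigma_{\xi\xi,m})_{bd}+(\Sigma_{\xi\xi,m})_{ad}(\Sigma_{\xi\xi,m})_{bc}\bigr\}+R(h_n^3,\xi_{m,t_{i-1}^n}),
\end{align*}
which is the usual Isserlis/Wick-type leading term for the increment, with a polynomial-growth remainder coming from the truncation of the It\^o--Taylor series at order $2$.

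Second, I would expand $C_{i,m,n}^{(j_s)}=\sum_{a_s}M_{j_s a_s}\Delta\xi_{m,i}^{(a_s)}$, multiply out the finite products, take the conditional expectation inside the sums, and substitute the moment estimates above. Because $R(u_n,x)$ is closed under multiplication by constants depending on $M$, the remainder orders are preserved, while linearity of the expectation and a direct index manipulation collapse the leading terms into entries of $M\Sigma_{\xi\xi,m}M^{\top}=\Lambda_{x_2,m}\Psi_m^{-1}\Gamma_m\Sigma_{\xi\xi,m}\Gamma_m^{\top}\Psi_m^{-1\top}\Lambda_{x_2,m}^{\top}$; in particular, the three pair-contractions in the fourth-order building block translate exactly to the three products of entries of $M\Sigma_{\xi\xi,m}M^{\top}$ appearing in the statement.

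The main obstacle, such as it is, lies entirely in the fourth-moment estimate for $\Delta\xi_{m,i}$: one must carry the It\^o--Taylor expansion through order two while keeping track of the remainder so that it is $R(h_n^3,\xi_{m,t_{i-1}^n})$, which is where $B_{1,m}\in C^4_{\uparrow}$ and the uniform-in-$t$ moment bounds on $\xi_{m,t}$ from $\bf{[A1]}$ are used. This is standard but somewhat tedious bookkeeping; once it is in hand, the passage from $\Delta\xi_{m,i}$ to $C_{i,m,n}$ is an entirely algebraic matter.
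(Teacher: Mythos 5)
Your proposal is correct and matches the paper's intended argument: the paper proves Lemma \ref{Clemma} by pointing to Lemmas 2--3 of Kusano and Uchida \cite{Kusano(2022)}, which derive exactly these Kessler-type It\^o--Taylor conditional moment expansions for the diffusion increment and then push them through the constant loading matrix by linearity. Writing $C_{i,m,n}=M\Delta\xi_{m,i}$ with $M=\Lambda_{x_2,m}\Psi_m^{-1}\Gamma_m$ and contracting the Wick-type leading terms into entries of $M\Sigma_{\xi\xi,m}M^{\top}$ is precisely that argument.
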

\begin{lemma}\label{Dlemma}
Under $\bf{[D1]}$, 
\begin{align*}
	&\quad\E_{\theta_{m}}\left[D_{i,m,n}^{(j_1)}|\mathscr{F}^{n}_{i-1}\right]=R(h_n,\zeta_{m,t_{i-1}^n}),\\
	&\quad\E_{\theta_{m}}\left[D_{i,m,n}^{(j_1)}D_{i,m,n}^{(j_2)}|\mathscr{F}^{n}_{i-1}\right]
	=h_n(\Lambda_{x_2,m}\Psi_{m}^{-1}\Sigma_{\zeta\zeta,m}\Psi_{m}^{-1\top}\Lambda_{x_2,m}^{\top})_{j_1j_2}+R(h_n^2,\zeta_{m,t_{i-1}^n}),\\
	&\quad\E_{\theta_{m}}\left[D_{i,m,n}^{(j_1)}D_{i,m,n}^{(j_2)}D_{i,m,n}^{(j_3)}|\mathscr{F}^{n}_{i-1}\right]=R(h_n^2,\zeta_{m,t_{i-1}^n}),\\
	&\quad\E_{\theta_{m}}\left[D_{i,m,n}^{(j_1)}D_{i,m,n}^{(j_2)}D_{i,m,n}^{(j_3)}D_{i,m,n}^{(j_4)}|\mathscr{F}^{n}_{i-1}\right]\\
	&=h_n^2\bigl\{(\Lambda_{x_2,m}\Psi_{m}^{-1}\Sigma_{\zeta\zeta,m}\Psi_{m}^{-1\top}\Lambda_{x_2,m}^{\top})_{j_1j_2}(\Lambda_{x_2,m}\Psi_{m}^{-1}\Sigma_{\zeta\zeta,m}\Psi_{m}^{-1\top}\Lambda_{x_2,m}^{\top})_{j_3j_4}\\
	&\quad+(\Lambda_{x_2,m}\Psi_{m}^{-1}\Sigma_{\zeta\zeta,m}\Psi_{m}^{-1\top}\Lambda_{x_2,m}^{\top})_{j_1j_3}(\Lambda_{x_2,m}\Psi_{m}^{-1}\Sigma_{\zeta\zeta,m}\Psi_{m}^{-1\top}\Lambda_{x_2,m}^{\top})_{j_2j_4}\\
	&\quad+(\Lambda_{x_2,m}\Psi_{m}^{-1}\Sigma_{\zeta\zeta,m}\Psi_{m}^{-1\top}\Lambda_{x_2,m}^{\top})_{j_1j_4}(\Lambda_{x_2,m}\Psi_{m}^{-1}\Sigma_{\zeta\zeta,m}\Psi_{m}^{-1\top}\Lambda_{x_2,m}^{\top})_{j_2j_3}\bigr\}+R(h_n^3,\zeta_{m,t_{i-1}^n})\qquad\qquad\quad
\end{align*}
for $j_1,j_2,j_3,j_4=1,\cdots,p_2$.
\end{lemma}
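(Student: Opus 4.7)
\textbf{Proof proposal for Lemma \ref{Dlemma}.}
The plan is to perform an It\^o–Taylor expansion of $\Delta\zeta_{m,i}=\zeta_{m,t_i^n}-\zeta_{m,t_{i-1}^n}$ around $t_{i-1}^n$ and then reduce the moments of $D_{i,m,n}=M\Delta\zeta_{m,i}$, where $M:=\Lambda_{x_2,m}\Psi_m^{-1}$, to (i) Gaussian moments of $\sigma_i:=S_{4,m}(W_{4,t_i^n}-W_{4,t_{i-1}^n})$ and (ii) remainder estimates for $b_i:=\int_{t_{i-1}^n}^{t_i^n}B_{4,m}(\zeta_{m,s})\,ds$. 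The whole argument is structurally identical to Lemmas \ref{Alemma}–\ref{Clemma}, with $\zeta_{m,t}$ replacing $\xi_{m,t}/\delta_{m,t}/\varepsilon_{m,t}$ and $M$ replacing the corresponding linear prefactor; assumption $\bf{[D1]}$ supplies exactly the Lipschitz property, polynomial-growth derivatives, and $L^{\ell}$-moment bounds required by the template.

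For the first moment I would invoke $\E[\sigma_i\mid\mathscr{F}^n_{i-1}]=0$ and apply Fubini to $b_i$, using $\bf{[D1]}$(a)(c) together with $\bf{[D1]}$(b) to write
\begin{align*}
\E[\Delta\zeta_{m,i}\mid\mathscr{F}^n_{i-1}]=h_n B_{4,m}(\zeta_{m,t_{i-1}^n})+R(h_n^2,\zeta_{m,t_{i-1}^n})=R(h_n,\zeta_{m,t_{i-1}^n}),
\end{align*}
and then multiply by $M$. For the second moment I would expand $\Delta\zeta_{m,i}\Delta\zeta_{m,i}^\top=\sigma_i\sigma_i^\top+\sigma_ib_i^\top+b_i\sigma_i^\top+b_ib_i^\top$. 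The Gaussian piece gives $\E[\sigma_i\sigma_i^\top\mid\mathscr{F}^n_{i-1}]=h_n\Sigma_{\zeta\zeta,m}$ exactly; the $b_ib_i^\top$ piece is $R(h_n^2,\zeta_{m,t_{i-1}^n})$ by Cauchy–Schwarz and polynomial growth; and the cross terms are handled by a further It\^o expansion of $B_{4,m}(\zeta_{m,s})$ between $t_{i-1}^n$ and $s$, whose martingale part yields, through the It\^o isometry against $W_{4,t_i^n}-W_{4,t_{i-1}^n}$, a contribution of order $h_n^2$. Conjugating by $M$ produces the announced $h_n(\Lambda_{x_2,m}\Psi_m^{-1}\Sigma_{\zeta\zeta,m}\Psi_m^{-1\top}\Lambda_{x_2,m}^\top)_{j_1j_2}+R(h_n^2,\zeta_{m,t_{i-1}^n})$.

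For the third and fourth moments I would use the same decomposition and apply $\bf{[D1]}$(c) to justify repeated differentiation. The leading contribution to the fourth moment is the purely Gaussian term $\E[\sigma_i^{\otimes 4}\mid\mathscr{F}^n_{i-1}]$, which via the Isserlis/Wick formula yields the three-way sum of products of $(\Lambda_{x_2,m}\Psi_m^{-1}\Sigma_{\zeta\zeta,m}\Psi_m^{-1\top}\Lambda_{x_2,m}^\top)$ entries with the prefactor $h_n^2$; the odd-Gaussian term $\E[\sigma_i^{\otimes 3}\mid\mathscr{F}^n_{i-1}]$ vanishes, giving the bound $R(h_n^2,\zeta_{m,t_{i-1}^n})$ for the third moment after including the small drift corrections. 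Every mixed term involving at least one factor of $b_i$ is then controlled by a combination of Burkholder–Davis–Gundy on the Brownian factors, Cauchy–Schwarz on $b_i$, and the uniform moment bound $\bf{[D1]}$(b), which collectively show such terms are $R(h_n^2,\cdot)$ in the third-moment case and $R(h_n^3,\cdot)$ in the fourth-moment case.

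The main obstacle, though not conceptually deep, is the clean bookkeeping of all mixed drift/diffusion cross terms in the fourth moment and tracking the exact $h_n$-orders of the higher-order terms in the It\^o–Taylor remainder; verifying these orders under the polynomial-growth/Lipschitz assumptions $\bf{[D1]}$ is the only delicate point. This is precisely the computation performed in Lemma~1 of Kusano and Uchida \cite{Kusano(2022)} and in Kessler \cite{kessler(1997)}, so I would follow those templates verbatim, merely inserting the constant prefactor $M$ and the covariance $\Sigma_{\zeta\zeta,m}=S_{4,m}S_{4,m}^\top$ where appropriate.
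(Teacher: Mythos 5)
Your proposal follows essentially the same route as the paper, which itself gives no computation and simply notes that Lemmas \ref{Alemma}--\ref{AClemma} ``can be shown in a similar way to Lemmas 2--3 in Kusano and Uchida \cite{Kusano(2022)}'' --- i.e., exactly the It\^o--Taylor/Kessler-type expansion of $\Delta\zeta_{m,i}$ into drift integral plus Wiener integral, Gaussian/Wick moments for the leading terms, and polynomial-growth remainder estimates that you describe. One small caution: for the fourth-moment remainder, raw Cauchy--Schwarz on a term such as $\E_{\theta_m}\bigl[\sigma_i^{(j_1)}\sigma_i^{(j_2)}\sigma_i^{(j_3)}b_i^{(j_4)}\mid\mathscr{F}^{n}_{i-1}\bigr]$ only yields $O(h_n^{5/2})$, so to reach $R(h_n^3,\cdot)$ you must apply the same refinement you invoked for the second moment --- expand $b_i$ once more and let its leading $\mathscr{F}^{n}_{i-1}$-measurable part cancel against the vanishing odd Gaussian moment --- exactly as in the cited templates.
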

\begin{lemma}\label{Elemma}
Under $\bf{[C1]}$, 
\begin{align*}
	&\quad\E_{\theta_{m}}\left[E_{i,m,n}^{(j_1)}|\mathscr{F}^{n}_{i-1}\right]=R(h_n,\varepsilon_{m,t_{i-1}^n}),\\
	&\quad\E_{\theta_{m}}\left[E_{i,m,n}^{(j_1)}E_{i,m,n}^{(j_2)}|\mathscr{F}^{n}_{i-1}\right]=h_n(\Sigma_{\varepsilon\varepsilon,m})_{j_1j_2}+R(h_n^2,\varepsilon_{m,t_{i-1}^n}),\\
	&\quad\E_{\theta_{m}}\left[E_{i,m,n}^{(j_1)}E_{i,m,n}^{(j_2)}E_{i,m,n}^{(j_3)}|\mathscr{F}^{n}_{i-1}\right]=R(h_n^2,\varepsilon_{m,t_{i-1}^n}),\\
	&\quad\E_{\theta_{m}}\left[E_{i,m,n}^{(j_1)}E_{i,m,n}^{(j_2)}E_{i,m,n}^{(j_3)}E_{i,m,n}^{(j_4)}|\mathscr{F}^{n}_{i-1}\right]\\
	&=h_n^2\bigl\{(\Sigma_{\varepsilon\varepsilon,m})_{j_1j_2}(\Sigma_{\varepsilon\varepsilon,m})_{j_3j_4}+(\Sigma_{\varepsilon\varepsilon,m})_{j_1j_3}(\Sigma_{\varepsilon\varepsilon,m})_{j_2j_4}+(\Sigma_{\varepsilon\varepsilon,m})_{j_1j_4}(\Sigma_{\varepsilon\varepsilon,m})_{j_2j_3}\bigr\}+R(h_n^3,\varepsilon_{m,t_{i-1}^n})\qquad\quad
\end{align*}
for $j_1,j_2,j_3,j_4=1,\cdots,p_2$.
\end{lemma}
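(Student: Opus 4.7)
The plan is to imitate the pattern used for Lemmas \ref{Alemma}--\ref{Dlemma}, since $\varepsilon_{m,t}$ plays exactly the structural role for $E_{i,m,n}=\Delta\varepsilon_{m,i}$ that $\xi_{m,t}$ played for $A_{i,m,n}$ and $B_{i,m,n}$ (with loading matrix equal to $\mathbb{I}_{p_2}$ and diffusion coefficient $S_{3,m}$). First I would write the increment in integral form
\begin{align*}
E_{i,m,n}=\int_{t_{i-1}^n}^{t_i^n} B_{3,m}(\varepsilon_{m,s})\,\dd s + S_{3,m}\bigl(W_{3,t_i^n}-W_{3,t_{i-1}^n}\bigr)
\end{align*}
and apply a one- or two-step It\^o--Taylor expansion around $\varepsilon_{m,t_{i-1}^n}$, using \textbf{[C1]}(c) to expand $B_{3,m}(\varepsilon_{m,s})$ and the polynomial growth of its derivatives together with \textbf{[C1]}(b) to certify that all remainder integrals can be written in the form $R(h_n^{k},\varepsilon_{m,t_{i-1}^n})$.

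Next I would take conditional expectations $\E_{\theta_m}[\,\cdot\,|\mathscr{F}^n_{i-1}]$ on each moment separately. For the first moment, only the drift contributes and an elementary bound using \textbf{[C1]} gives $R(h_n,\varepsilon_{m,t_{i-1}^n})$. For the second moment, the leading term comes from
\begin{align*}
\E_{\theta_m}\!\left[\bigl(S_{3,m}\Delta W_{3,i}\bigr)^{(j_1)}\bigl(S_{3,m}\Delta W_{3,i}\bigr)^{(j_2)}\Big|\mathscr{F}^n_{i-1}\right]=h_n\bigl(S_{3,m}S_{3,m}^\top\bigr)_{j_1 j_2}=h_n(\Sigma_{\varepsilon\varepsilon,m})_{j_1 j_2},
\end{align*}
while the drift$\times$drift, drift$\times$diffusion, and remainder$\times$anything cross terms are $R(h_n^2,\varepsilon_{m,t_{i-1}^n})$ by Cauchy--Schwarz and the moment bound of $B_{3,m}$. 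The third moment vanishes at leading order because odd Gaussian moments of $\Delta W_{3,i}$ are zero, leaving only $O(h_n^2)$ mixed terms. For the fourth moment, the leading contribution is
\begin{align*}
\E_{\theta_m}\!\left[\prod_{\ell=1}^{4}\bigl(S_{3,m}\Delta W_{3,i}\bigr)^{(j_\ell)}\Big|\mathscr{F}^n_{i-1}\right],
\end{align*}
which by Isserlis' (Wick's) theorem applied to the centered Gaussian increment $S_{3,m}\Delta W_{3,i}$ equals the three-term sum of pairwise products of $h_n(\Sigma_{\varepsilon\varepsilon,m})_{\cdot\cdot}$, and all remaining terms are absorbed into $R(h_n^3,\varepsilon_{m,t_{i-1}^n})$.

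The only genuine bookkeeping step is verifying that every remainder term I produce really is of the claimed order with polynomial growth in $\varepsilon_{m,t_{i-1}^n}$; this is where \textbf{[C1]}(c) (so that $B_{3,m}$ and its derivatives up to order $4$ are of polynomial growth) is used in combination with \textbf{[C1]}(b) (finite moments of all orders) and standard $L^p$ bounds for It\^o integrals. The main obstacle is mechanical rather than conceptual: one has to track cross terms carefully, especially in the fourth-moment calculation where the diffusion part must be separated from all mixed drift--diffusion contributions. Since the argument is line-for-line parallel to Lemmas \ref{Alemma}--\ref{Dlemma} (with the role of the loading--volatility product replaced by $S_{3,m}$ alone), I would simply indicate that the proof proceeds in the same way as those lemmas, referring for the technical details to the standard estimates in Kessler \cite{kessler(1997)}.
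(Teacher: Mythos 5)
Your proposal is correct and matches the paper's intent: the paper disposes of Lemmas \ref{Alemma}--\ref{AClemma} in one line by citing Lemmas 2--3 of Kusano and Uchida \cite{Kusano(2022)}, and the argument you spell out (It\^o--Taylor expansion of $\Delta\varepsilon_{m,i}$ around $\varepsilon_{m,t_{i-1}^n}$, conditional Gaussian moment computations with Isserlis' theorem for the leading terms, and \textbf{[C1]}(b)--(c) to control the remainders as $R(h_n^k,\varepsilon_{m,t_{i-1}^n})$) is exactly the standard Kessler-type estimate underlying that citation. No gap.
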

\begin{lemma}\label{AClemma}
Under $\bf{[A1]}$, 
\begin{align*}
	\E_{\theta_{m}}\left[A_{i,m,n}^{(j_1)}C_{i,m,n}^{(j_2)}|\mathscr{F}^{n}_{i-1}\right]
	=h_n(\Lambda_{x_1,m}\Sigma_{\xi\xi,m}\Gamma_{m}^{\top}\Psi^{-1\top}_{m}\Lambda_{x_2,m}^{\top})_{j_1j_2}+R(h_n^2,\xi_{m,t_{i-1}^n}),\qquad\qquad\qquad\qquad\ 
\end{align*}
for $j_1=1,\cdots,p_1$, $j_2=1,\cdots,p_2$,
\begin{align*}
	\E_{\theta_{m}}\left[A_{i,m,n}^{(j_1)}A_{i,m,n}^{(j_2)}C_{i,m,n}^{(j_3)}|\mathscr{F}^{n}_{i-1}\right]=R(h_n^2,\xi_{m,t_{i-1}^n})\qquad\qquad\qquad\qquad\qquad\qquad\qquad\qquad\qquad\qquad\qquad
\end{align*}
for $j_1,j_2=1,\cdots,p_1$, $j_3=1,\cdots,p_2$,
\begin{align*}
	\E_{\theta_{m}}\left[A_{i,m,n}^{(j_1)}C_{i,m,n}^{(j_2)}C_{i,m,n}^{(j_3)}|\mathscr{F}^{n}_{i-1}\right]=R(h_n^2,\xi_{m,t_{i-1}^n})\qquad\qquad\qquad\qquad\qquad\qquad\qquad\qquad\qquad\qquad\qquad
\end{align*}
for $j_1=1,\cdots,p_1$, $j_2,j_3=1,\cdots,p_2$,
\begin{align*}
	&\quad\E_{\theta_{m}}\left[A_{i,m,n}^{(j_1)}A_{i,m,n}^{(j_2)}A_{i,m,n}^{(j_3)}C_{i,m,n}^{(j_4)}|\mathscr{F}^{n}_{i-1}\right]\\
	&=h_n^2\bigl\{(\Lambda_{x_1,m}\Sigma_{\xi\xi,m}\Lambda_{x_1,m}^{\top})_{j_1j_2}(\Lambda_{x_1,m}\Sigma_{\xi\xi,m}\Gamma_{m}^{\top}\Psi^{-1\top}_{m}\Lambda_{x_2,m}^{\top})_{j_3j_4}\\
	&\quad+(\Lambda_{x_1,m}\Sigma_{\xi\xi,m}\Lambda_{x_1,m}^{\top})_{j_1j_3}(\Lambda_{x_1,m}\Sigma_{\xi\xi,m}\Gamma_{m}^{\top}\Psi^{-1\top}_{m}\Lambda_{x_2,m}^{\top})_{j_2j_4}\\
	&\quad+(\Lambda_{x_1,m}\Sigma_{\xi\xi,m}\Gamma_{m}^{\top}\Psi^{-1\top}_{m}\Lambda_{x_2,m}^{\top})_{j_1j_4}(\Lambda_{x_1,m}\Sigma_{\xi\xi,m}\Lambda_{x_1,m}^{\top})_{j_2j_3}\bigr\}+R(h_n^3,\xi_{m,t_{i-1}^n})\qquad\qquad\qquad\qquad\qquad\quad\ 
\end{align*}
for $j_1,j_2,j_3=1,\cdots,p_1$, $j_4=1,\cdots,p_2$,
\begin{align*}
	&\quad\E_{\theta_{m}}\left[A_{i,m,n}^{(j_1)}A_{i,m,n}^{(j_2)}C_{i,m,n}^{(j_3)}C_{i,m,n}^{(j_4)}|\mathscr{F}^{n}_{i-1}\right]\\
	&=h_n^2\bigl\{(\Lambda_{x_1,m}\Sigma_{\xi\xi,m}\Lambda_{x_1,m}^{\top})_{j_1j_2}(\Lambda_{x_2,m}\Psi^{-1}_{m}\Gamma_{m}\Sigma_{\xi\xi,m}\Gamma_{m}^{\top}\Psi^{-1\top}_{m}\Lambda_{x_2,m}^{\top})_{j_3j_4}\\
	&\quad+(\Lambda_{x_1,m}\Sigma_{\xi\xi,m}\Gamma_{m}^{\top}\Psi^{-1\top}_{m}\Lambda_{x_2,m}^{\top})_{j_1j_3}(\Lambda_{x_1,m}\Sigma_{\xi\xi,m}\Gamma_{m}^{\top}\Psi^{-1\top}_{m}\Lambda_{x_2,m}^{\top})_{j_2j_4}\\
	&\quad+(\Lambda_{x_1,m}\Sigma_{\xi\xi,m}\Gamma_{m}^{\top}\Psi^{-1\top}_{m}\Lambda_{x_2,m}^{\top})_{j_1j_4}(\Lambda_{x_1,m}\Sigma_{\xi\xi,m}\Gamma_{m}^{\top}\Psi^{-1\top}_{m}\Lambda_{x_2,m}^{\top})_{j_2j_3}\bigr\}+R(h_n^3,\xi_{m,t_{i-1}^n})\qquad\qquad\qquad\qquad
\end{align*}
for $j_1,j_2=1,\cdots,p_1$, $j_3,j_4=1,\cdots,p_2$, and
\begin{align*}
	&\quad\E_{\theta_{m}}\left[A_{i,m,n}^{(j_1)}C_{i,m,n}^{(j_2)}C_{i,m,n}^{(j_3)}C_{i,m,n}^{(j_4)}|\mathscr{F}^{n}_{i-1}\right]\\
	&=h_n^2\bigl\{(\Lambda_{x_1,m}\Sigma_{\xi\xi,m}\Gamma_{m}^{\top}\Psi^{-1\top}_{m}\Lambda_{x_2,m}^{\top})_{j_1j_2}(\Lambda_{x_2,m}\Psi_{m}^{-1}\Gamma_m\Sigma_{\xi\xi,m}\Gamma_{m}^{\top}\Psi^{-1\top}_{m}\Lambda_{x_2,m}^{\top})_{j_3j_4}\\
	&\quad+(\Lambda_{x_1,m}\Sigma_{\xi\xi,m}\Gamma_{m}^{\top}\Psi^{-1\top}_{m}\Lambda_{x_2,m}^{\top})_{j_1j_3}(\Lambda_{x_2,m}\Psi_{m}^{-1}\Gamma_m\Sigma_{\xi\xi,m}\Gamma_{m}^{\top}\Psi^{-1\top}_{m}\Lambda_{x_2,m}^{\top})_{j_2j_4}\\
	&\quad+(\Lambda_{x_1,m}\Sigma_{\xi\xi,m}\Gamma_{m}^{\top}\Psi^{-1\top}_{m}\Lambda_{x_2,m}^{\top})_{j_1j_4}(\Lambda_{x_2,m}\Psi_{m}^{-1}\Gamma_m\Sigma_{\xi\xi,m}\Gamma_{m}^{\top}\Psi^{-1\top}_{m}\Lambda_{x_2,m}^{\top})_{j_2j_3}\bigr\}+R(h_n^3,\xi_{m,t_{i-1}^n})\qquad\qquad
\end{align*}
for $j_1,j_2,j_3,j_4=1,\cdots,p_2$.
\end{lemma}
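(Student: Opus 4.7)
The plan is to exploit the fact that both $A_{i,m,n}$ and $C_{i,m,n}$ are linear functionals of the \emph{same} increment $\Delta\xi_{m,i}$. Writing $M_1=\Lambda_{x_1,m}$ and $M_2=\Lambda_{x_2,m}\Psi_m^{-1}\Gamma_m$, one has $A_{i,m,n}=M_1\Delta\xi_{m,i}$ and $C_{i,m,n}=M_2\Delta\xi_{m,i}$, so every mixed conditional moment reduces to a multilinear combination of conditional moments of $\Delta\xi_{m,i}$:
\begin{align*}
\E_{\theta_m}\bigl[A_{i,m,n}^{(j_1)}\cdots C_{i,m,n}^{(l_s)}\mid \mathscr{F}^n_{i-1}\bigr]
=\sum_{a_1,\dots,b_s}(M_1)_{j_1a_1}\cdots (M_2)_{l_sb_s}\,
\E_{\theta_m}\bigl[\Delta\xi_{m,i}^{(a_1)}\cdots \Delta\xi_{m,i}^{(b_s)}\mid \mathscr{F}^n_{i-1}\bigr].
\end{align*}

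The next step is to establish the moment expansions for $\Delta\xi_{m,i}$ itself. Since $\xi_{m,t}$ satisfies the SDE with drift $B_{1,m}$ and diffusion $S_{1,m}$, a standard Itô--Taylor expansion under $\bf{[A1]}$ (exactly as in Kessler \cite{kessler(1997)}) yields the identities
\begin{align*}
\E_{\theta_m}\bigl[\Delta\xi_{m,i}^{(a)}\mid \mathscr{F}^n_{i-1}\bigr]&=R(h_n,\xi_{m,t_{i-1}^n}),\\
\E_{\theta_m}\bigl[\Delta\xi_{m,i}^{(a)}\Delta\xi_{m,i}^{(b)}\mid \mathscr{F}^n_{i-1}\bigr]&=h_n(\Sigma_{\xi\xi,m})_{ab}+R(h_n^2,\xi_{m,t_{i-1}^n}),\\
\E_{\theta_m}\bigl[\Delta\xi_{m,i}^{(a)}\Delta\xi_{m,i}^{(b)}\Delta\xi_{m,i}^{(c)}\mid \mathscr{F}^n_{i-1}\bigr]&=R(h_n^2,\xi_{m,t_{i-1}^n}),\\
\E_{\theta_m}\bigl[\Delta\xi_{m,i}^{(a)}\Delta\xi_{m,i}^{(b)}\Delta\xi_{m,i}^{(c)}\Delta\xi_{m,i}^{(d)}\mid \mathscr{F}^n_{i-1}\bigr]&=h_n^2\bigl\{(\Sigma_{\xi\xi,m})_{ab}(\Sigma_{\xi\xi,m})_{cd}+(\Sigma_{\xi\xi,m})_{ac}(\Sigma_{\xi\xi,m})_{bd}\\
&\qquad+(\Sigma_{\xi\xi,m})_{ad}(\Sigma_{\xi\xi,m})_{bc}\bigr\}+R(h_n^3,\xi_{m,t_{i-1}^n}),
\end{align*}
where the leading-order fourth moment has the three-pairing Wick structure because the Gaussian principal part of $\Delta\xi_{m,i}$ is $S_{1,m}(W_{1,t_i^n}-W_{1,t_{i-1}^n})$. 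These are in fact the same identities that underlie Lemma \ref{Alemma}, restricted to $M_1=\mathbb{I}_{k_{1,m}}$.

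Substituting back and performing the matrix contractions gives all six claims. For instance, the two-factor case yields $(M_1)_{j_1a}(M_2)_{j_2b}h_n(\Sigma_{\xi\xi,m})_{ab}=h_n(M_1\Sigma_{\xi\xi,m}M_2^{\top})_{j_1j_2}$, which by the definitions of $M_1,M_2$ equals $h_n(\Lambda_{x_1,m}\Sigma_{\xi\xi,m}\Gamma_m^{\top}\Psi_m^{-1\top}\Lambda_{x_2,m}^{\top})_{j_1j_2}$. The three-factor assertions follow immediately from the $R(h_n^2)$ control on the third moment of $\Delta\xi_{m,i}$, together with $\bf{[A1]}$(b) which ensures the polynomial growth function in $R$ integrates properly. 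For the four-factor identities, each of the three Wick pairings of $\{a,b,c,d\}$ contributes a term of the form $(M_iM_j^{\top})_{\cdot\cdot}(M_kM_\ell^{\top})_{\cdot\cdot}$, which, upon bookkeeping of which indices belong to $A$ and which to $C$, reproduces exactly the three displayed terms in each of the three cases.

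The only genuine difficulty is verifying the $R(h_n^3,\xi_{m,t_{i-1}^n})$ remainder in the fourth-order expansion: one must track the Itô--Taylor expansion to sufficient depth, use $B_{1,m}\in C^4_{\uparrow}(\mathbb{R}^{k_{1,m}})$ to control the iterated generators, and absorb the polynomial growth of the drift into the $R$-notation. Since this is precisely the argument already carried out for Lemmas \ref{Alemma}--\ref{Elemma}, no new techniques are needed, and the present lemma follows by the same computation applied to the stacked vector $(A_{i,m,n}^{\top},C_{i,m,n}^{\top})^{\top}=\bigl(\begin{smallmatrix}M_1\\M_2\end{smallmatrix}\bigr)\Delta\xi_{m,i}$.
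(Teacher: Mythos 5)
Your proposal is correct and follows essentially the same route as the paper, which proves Lemma \ref{AClemma} by reducing everything to Kessler-type conditional moment expansions of the increment $\Delta\xi_{m,i}$ (the paper simply cites Lemmas 2--3 of Kusano and Uchida \cite{Kusano(2022)} for this). Your explicit observation that $A_{i,m,n}$ and $C_{i,m,n}$ are both linear images of the same $\Delta\xi_{m,i}$, so that all six identities follow from the second-, third-, and fourth-moment expansions of $\Delta\xi_{m,i}$ contracted against $\Lambda_{x_1,m}$ and $\Lambda_{x_2,m}\Psi_m^{-1}\Gamma_m$ with the three Wick pairings, is exactly the computation behind the cited lemmas, and your index bookkeeping reproduces the stated formulas.
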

\begin{proof}[Proofs of Lemmas \ref{Alemma}-\ref{AClemma}.]
The results can be shown in a similar way to Lemmas 2-3 in Kusano and Uchida \cite{Kusano(2022)}.
\end{proof}
\begin{lemma}\label{EX1X1lemma}
Under $\bf{[A1]}$ and $\bf{[B1]}$, 
\begin{align}
	\begin{split}
	&\quad\E_{\theta_{m}}\left[(X_{1,t_{i}^n}^{(j_1)}-X_{1,t_{i-1}^n}^{(j_1)})(X_{1,t_{i}^n}^{(j_2)}-X_{1,t_{i-1}^n}^{(j_2)})|\mathscr{F}^{n}_{i-1}\right]\\
	&=h_n(\Sigma_{X_1X_1,m}(\theta_{m}))_{j_1j_2}+h_n^2\bigl\{R(1,\xi_{m,t_{i-1}^n})+R(1,\delta_{m,t_{i-1}^n})+R(1,\xi_{m,t_{i-1}^n})R(1,\delta_{m,t_{i-1}^n})\bigr\},\qquad\label{EX1X1}
	\end{split}\\
	\begin{split}
	&\quad\E_{\theta_{m}}\left[(X_{1,t_{i}^n}^{(j_1)}-X_{1,t_{i-1}^n}^{(j_1)})(X_{1,t_{i}^n}^{(j_2)}-X_{1,t_{i-1}^n}^{(j_2)})(X_{1,t_{i}^n}^{(j_3)}-X_{1,t_{i-1}^n}^{(j_3)})(X_{1,t_{i}^n}^{(j_4)}-X_{1,t_{i-1}^n}^{(j_4)})|\mathscr{F}^{n}_{i-1}\right]\\
	&=h_n^2\bigl\{(\Sigma_{X_1X_1,m}(\theta_{m}))_{j_1j_2}(\Sigma_{X_1X_1,m}(\theta_{m}))_{j_3j_4}+(\Sigma_{X_1X_1,m}(\theta_{m}))_{j_1j_3}(\Sigma_{X_1X_1,m}(\theta_{m}))_{j_2j_4}\\
	&\quad+(\Sigma_{X_1X_1,m}(\theta_{m}))_{j_1j_4}(\Sigma_{X_1X_1,m}(\theta_{m}))_{j_2j_3}\bigr\}+h_n^3\bigl\{R(1,\xi_{m,t_{i-1}^n})+R(1,\delta_{m,t_{i-1}^n})\bigr\}\qquad\qquad\qquad\qquad\\
	&\quad+h_n^4R(1,\xi_{m,t_{i-1}^n})R(1,\delta_{m,t_{i-1}^n})\label{EX1X1X1X1}
	\end{split}
\end{align}
for $j_1,j_2,j_3,j_4=1,\cdots,p_1$.
\end{lemma}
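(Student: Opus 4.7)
The plan is to exploit the decomposition $X_{1,t_{i}^n}-X_{1,t_{i-1}^n}=A_{i,m,n}+B_{i,m,n}$ together with the conditional independence of $A_{i,m,n}$ and $B_{i,m,n}$ given $\mathscr{F}^{n}_{i-1}$ that follows from the assumed independence of $W_{1,t}$ and $W_{2,t}$, and then to read off every factor from Lemmas \ref{Alemma} and \ref{Blemma}. Throughout I will use the basic bookkeeping $(1+|x|)^{C}(1+|y|)^{C}\le \tfrac12\{(1+|x|)^{2C}+(1+|y|)^{2C}\}$, which allows any product of the form $R(h_n^{a},x)R(h_n^{b},y)$ to be written as $h_n^{a+b}\{R(1,x)+R(1,y)\}$ (or kept as $h_n^{a+b}R(1,x)R(1,y)$ when that form is desired).

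For (\ref{EX1X1}), I would expand the conditional expectation of $(A_{i,m,n}^{(j_1)}+B_{i,m,n}^{(j_1)})(A_{i,m,n}^{(j_2)}+B_{i,m,n}^{(j_2)})$ into four terms. The two diagonal terms yield, by Lemmas \ref{Alemma} and \ref{Blemma}, $h_n(\Lambda_{x_1,m}\Sigma_{\xi\xi,m}\Lambda_{x_1,m}^{\top})_{j_1j_2}+R(h_n^{2},\xi_{m,t_{i-1}^n})$ and $h_n(\Sigma_{\delta\delta,m})_{j_1j_2}+R(h_n^{2},\delta_{m,t_{i-1}^n})$, whose sum equals $h_n(\Sigma_{X_1X_1,m}(\theta_{m}))_{j_1j_2}$ up to the stated residuals since $\Sigma_{X_1X_1,m}(\theta_m)=\Lambda_{x_1,m}\Sigma_{\xi\xi,m}\Lambda_{x_1,m}^{\top}+\Sigma_{\delta\delta,m}$. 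The two cross terms factor by independence into $\E_{\theta_m}[A^{(\cdot)}|\mathscr{F}^{n}_{i-1}]\E_{\theta_m}[B^{(\cdot)}|\mathscr{F}^{n}_{i-1}]$, and both first moments are of the form $R(h_n,\cdot)$, producing the $h_n^{2}R(1,\xi_{m,t_{i-1}^n})R(1,\delta_{m,t_{i-1}^n})$ contribution in (\ref{EX1X1}).

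For (\ref{EX1X1X1X1}), the fourth-order product expands into $16$ terms, each of which factorizes by the same independence argument into $\E_{\theta_m}[\prod A^{(\cdot)}|\mathscr{F}^{n}_{i-1}]\E_{\theta_m}[\prod B^{(\cdot)}|\mathscr{F}^{n}_{i-1}]$. I will group them by the number of $A$'s: the pure $4A$ and pure $4B$ terms give, via Lemmas \ref{Alemma} and \ref{Blemma}, the three Isserlis-type pairings in $\Lambda_{x_1,m}\Sigma_{\xi\xi,m}\Lambda_{x_1,m}^{\top}$ and in $\Sigma_{\delta\delta,m}$ respectively; the $\binom{4}{2}=6$ mixed $2A{+}2B$ terms, each a product $[h_n(\Lambda_{x_1,m}\Sigma_{\xi\xi,m}\Lambda_{x_1,m}^{\top})_{\cdot\cdot}+R(h_n^{2},\xi)]\cdot[h_n(\Sigma_{\delta\delta,m})_{\cdot\cdot}+R(h_n^{2},\delta)]$, deliver the six cross pairings $(\Lambda_{x_1,m}\Sigma_{\xi\xi,m}\Lambda_{x_1,m}^{\top})_{j_aj_b}(\Sigma_{\delta\delta,m})_{j_cj_d}$ at order $h_n^{2}$ together with residuals of the form $h_n^{3}\{R(1,\xi)+R(1,\delta)\}+h_n^{4}R(1,\xi)R(1,\delta)$. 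The bilinearity identity $\Sigma_{X_1X_1,m}=\Lambda_{x_1,m}\Sigma_{\xi\xi,m}\Lambda_{x_1,m}^{\top}+\Sigma_{\delta\delta,m}$ then combines the three groups of pairings into the three summands $(\Sigma_{X_1X_1,m}(\theta_m))_{j_1j_2}(\Sigma_{X_1X_1,m}(\theta_m))_{j_3j_4}+\mbox{(two other pairings)}$. The remaining $3A{+}1B$ and $1A{+}3B$ terms factor into $R(h_n^{2},\cdot)\cdot R(h_n,\cdot)=h_n^{3}R(1,\xi)R(1,\delta)$, which is absorbed into the stated error.

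The routine part is the moment computation, already packaged in Lemmas \ref{Alemma} and \ref{Blemma}; the only real obstacle is bookkeeping. Concretely, one must (i) verify that the six mixed $2A{+}2B$ pairings combine \emph{exactly} with the pure $4A$ and $4B$ pairings to yield $\sum_{\text{pairings}}(\Sigma_{X_1X_1,m})_{..}(\Sigma_{X_1X_1,m})_{..}$, and (ii) collect every $R(\cdot,\xi)$-times-$R(\cdot,\delta)$ residual into the three allowed shapes $h_n^{3}R(1,\xi)$, $h_n^{3}R(1,\delta)$, and $h_n^{4}R(1,\xi)R(1,\delta)$ using the AM--GM-type inequality mentioned above. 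Once that combinatorial matching is carried out, both (\ref{EX1X1}) and (\ref{EX1X1X1X1}) follow.
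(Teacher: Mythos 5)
Your proposal is correct and follows essentially the same route as the paper: the paper's own (one-line) proof invokes Lemmas \ref{Alemma} and \ref{Blemma} via the decomposition $X_{1,t_i^n}-X_{1,t_{i-1}^n}=A_{i,m,n}+B_{i,m,n}$ and the independence-based factorization of cross moments, exactly as carried out in detail for the analogous Lemma \ref{EX2X2lemma}. Your combinatorial check that the three $P$-pairings, three $D$-pairings, and six mixed $P_{\cdot\cdot}D_{\cdot\cdot}$ terms reassemble into the three pairings of $\Sigma_{X_1X_1,m}=P+D$, together with the polynomial-growth bound used to fold $h_n^3R(1,\xi)R(1,\delta)$ into $h_n^3\{R(1,\xi)+R(1,\delta)\}$, is exactly the bookkeeping the paper leaves implicit.
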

\begin{proof}
From Lemma \ref{Alemma} and Lemma \ref{Blemma}, 
the results can be shown in a similar way to Lemma 4 in Kusano and Uchida \cite{Kusano(2022)}.
\end{proof} 
\begin{lemma}\label{EX2X2lemma}
Under $\bf{[A1]}$, $\bf{[C1]}$ and $\bf{[D1]}$,
\begin{align}
	\begin{split}
	&\quad\ \E_{\theta_{m}}\left[(X_{2,t_{i}^n}^{(j_1)}-X_{2,t_{i-1}^n}^{(j_1)})(X_{2,t_{i}^n}^{(j_2)}-X_{2,t_{i-1}^n}^{(j_2)})|\mathscr{F}^{n}_{i-1}\right]\\
	&=h_n(\Sigma_{X_2X_2,m}(\theta_{m}))_{j_1j_2}+h_n^2\bigl\{R(1,\xi_{m,t_{i-1}^n})+R(1,\varepsilon_{m,t_{i-1}^n})+R(1,\zeta_{m,t_{i-1}^n})+R(1,\xi_{m,t_{i-1}^n})\qquad\quad \\
	&\quad\times R(1,\varepsilon_{m,t_{i-1}^n})+R(1,\xi_{m,t_{i-1}^n})R(1,\zeta_{m,t_{i-1}^n})+R(1,\varepsilon_{m,t_{i-1}^n})R(1,\zeta_{m,t_{i-1}^n})\bigr\},\label{EX2X2}
	\end{split}\\
	\begin{split}
	&\quad\ \E_{\theta_{m}}\left[(X_{2,t_{i}^n}^{(j_1)}-X_{2,t_{i-1}^n}^{(j_1)})(X_{2,t_{i}^n}^{(j_2)}-X_{2,t_{i-1}^n}^{(j_2)})(X_{2,t_{i}^n}^{(j_3)}-X_{2,t_{i-1}^n}^{(j_3)})(X_{2,t_{i}^n}^{(j_4)}-X_{2,t_{i-1}^n}^{(j_4)})|\mathscr{F}^{n}_{i-1}\right]\\
	&=h_n^2\bigl\{(\Sigma_{X_2X_2,m}(\theta_{m}))_{j_1j_2}(\Sigma_{X_2X_2,m}(\theta_{m}))_{j_3j_4}+(\Sigma_{X_2X_2,m}(\theta_{m}))_{j_1j_3}(\Sigma_{X_2X_2,m}(\theta_{m}))_{j_2j_4}\\
	&\quad+(\Sigma_{X_2X_2,m}(\theta_{m}))_{j_1j_4}(\Sigma_{X_2X_2,m}(\theta_{m}))_{j_2j_3}\bigr\}+h_n^3\bigl\{R(1,\xi_{m,t_{i-1}^n})+R(1,\varepsilon_{m,t_{i-1}^n})+R(1,\zeta_{m,t_{i-1}^n})\qquad\\
	&\quad+R(1,\xi_{m,t_{i-1}^n})R(1,\varepsilon_{m,t_{i-1}^n})+R(1,\xi_{m,t_{i-1}^n})R(1,\zeta_{m,t_{i-1}^n})+R(1,\varepsilon_{m,t_{i-1}^n})R(1,\zeta_{m,t_{i-1}^n})\\
	&\quad+R(1,\xi_{m,t_{i-1}^n})R(1,\varepsilon_{m,t_{i-1}^n})R(1,\zeta_{m,t_{i-1}^n})\bigr\}\label{EX2X2X2X2}
    \end{split}
\end{align}
for $j_1,j_2,j_3,j_4=1,\cdots,p_2$.
\end{lemma}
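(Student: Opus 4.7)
My plan is to mimic the proof of Lemma \ref{EX1X1lemma}, exploiting the decomposition
\begin{align*}
X_{2,t_{i}^n}-X_{2,t_{i-1}^n}=C_{i,m,n}+D_{i,m,n}+E_{i,m,n}
\end{align*}
already introduced before Lemma \ref{Qlemma}. Because $W_{1,t}$, $W_{3,t}$ and $W_{4,t}$ are independent, and $\xi_{m,t}$, $\varepsilon_{m,t}$, $\zeta_{m,t}$ are driven respectively by $W_{1,t}$, $W_{3,t}$, $W_{4,t}$, the three increments $C_{i,m,n}$, $D_{i,m,n}$, $E_{i,m,n}$ are conditionally independent given $\mathscr{F}^{n}_{i-1}$. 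This independence is the engine that makes all mixed-moment computations collapse into products of already-computed single-factor moments from Lemmas \ref{Clemma}, \ref{Dlemma} and \ref{Elemma}.

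For the second-moment identity (\ref{EX2X2}), I would expand
\begin{align*}
\E_{\theta_{m}}\!\left[(X_{2,t_{i}^n}^{(j_1)}-X_{2,t_{i-1}^n}^{(j_1)})(X_{2,t_{i}^n}^{(j_2)}-X_{2,t_{i-1}^n}^{(j_2)})\mid \mathscr{F}^{n}_{i-1}\right]
\end{align*}
as the sum of the nine terms $\E_{\theta_m}[Y_{i,m,n}^{(j_1)}Z_{i,m,n}^{(j_2)}\mid\mathscr{F}^{n}_{i-1}]$ with $Y,Z\in\{C,D,E\}$. The three diagonal terms $(Y=Z)$ produce, by Lemmas \ref{Clemma}, \ref{Dlemma}, \ref{Elemma}, the three matrix blocks whose sum is exactly $h_n(\Sigma_{X_2X_2,m}(\theta_m))_{j_1j_2}$ up to $R(h_n^2,\cdot)$ remainders; the six off-diagonal terms factor by conditional independence into products of two $R(h_n,\cdot)$ first moments, giving the stated $h_n^2$ remainder involving only the factors $R(1,\xi_{m,t_{i-1}^n})$, $R(1,\varepsilon_{m,t_{i-1}^n})$, $R(1,\zeta_{m,t_{i-1}^n})$ and their pairwise products.

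For the fourth-moment identity (\ref{EX2X2X2X2}) I would expand the product of four sums $C+D+E$ into $3^4=81$ terms and group them by the multiset of factor labels. Again by conditional independence, each term factors as a product of moments (up to fourth order) of $C$, $D$ and $E$ separately. The Lemmas \ref{Clemma}, \ref{Dlemma}, \ref{Elemma} show that single-factor second moments are $O(h_n)$, first and third moments are $O(h_n)$ and $O(h_n^2)$ respectively, and fourth moments have the Isserlis-type leading $h_n^2$ term. Consequently, the only multi-indices that contribute at order $h_n^2$ are those whose four indices decompose into two pairs each lying in a common block ($C$, $D$ or $E$), and the three Wick-type pairings (12)(34), (13)(24), (14)(23) accumulate into the three quadratic forms on the right-hand side of (\ref{EX2X2X2X2}), via the identity
\begin{align*}
\Sigma_{X_2X_2,m}(\theta_m)=\Lambda_{x_2,m}\Psi_m^{-1}\Gamma_m\Sigma_{\xi\xi,m}\Gamma_m^\top\Psi_m^{-1\top}\Lambda_{x_2,m}^\top+\Lambda_{x_2,m}\Psi_m^{-1}\Sigma_{\zeta\zeta,m}\Psi_m^{-1\top}\Lambda_{x_2,m}^\top+\Sigma_{\varepsilon\varepsilon,m}.
\end{align*}
All remaining terms are at least of order $h_n^3$ and, after collecting by which subset of $\{\xi,\varepsilon,\zeta\}$ is involved, produce the $R$-remainder structure displayed in (\ref{EX2X2X2X2}).

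The main bookkeeping obstacle is the fourth-moment case: one must verify that every one of the $81$ terms whose label pattern is \emph{not} one of the three admissible pair-pair patterns is absorbed into the declared $h_n^3$ (or $h_n^4$) remainder, and that the three admissible patterns reassemble exactly into the Gaussian Isserlis expression with $\Sigma_{X_2X_2,m}(\theta_m)$ in place of a genuine covariance. This is a lengthy but completely mechanical verification, entirely parallel to the proof of Lemma \ref{EX1X1lemma} (and hence to Lemma 4 in Kusano and Uchida \cite{Kusano(2022)}), the only new feature being a three-way decomposition instead of a two-way one; no genuinely new analytical difficulty appears.
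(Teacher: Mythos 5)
Your proposal is correct and follows essentially the same route as the paper: the paper likewise expands $(C_{i,m,n}+D_{i,m,n}+E_{i,m,n})^{(j_1)}(C_{i,m,n}+D_{i,m,n}+E_{i,m,n})^{(j_2)}$, obtains the leading term $h_n(\Sigma_{X_2X_2,m}(\theta_m))_{j_1j_2}$ from the three diagonal blocks via Lemmas \ref{Clemma}--\ref{Elemma}, factors the cross terms using the (conditional) independence of $\xi_{m,t}$, $\varepsilon_{m,t}$ and $\zeta_{m,t}$, and then disposes of the fourth-moment identity by the same bookkeeping. No substantive difference from the paper's argument.
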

\begin{proof}
First, consider (\ref{EX2X2}). We see from 
Lemmas \ref{Clemma}-\ref{Elemma} 
that
\begin{align*}
	&\quad\E_{\theta_{m}}\left[C_{i,m,n}^{(j_1)}C_{i,m,n}^{(j_2)}+D_{i,m,n}^{(j_1)}D_{i,m,n}^{(j_2)}+E_{i,m,n}^{(j_1)}E_{i,m,n}^{(j_2)}|\mathscr{F}^{n}_{i-1}\right]\\
	&=h_n\bigl\{(\Lambda_{x_2,m}\Psi_{m}^{-1}\Gamma_{m}\Sigma_{\xi\xi,m}\Gamma_{m}^{\top}\Psi_{m}^{-1\top}\Lambda_{x_2,m}^{\top})_{j_1j_2}+(\Lambda_{x_2,m}\Psi_{m}^{-1}\Sigma_{\zeta\zeta,m}\Psi_{m}^{-1\top}\Lambda_{x_2,m}^{\top})_{j_1j_2}+(\Sigma_{\varepsilon\varepsilon,m})_{j_1j_2}\bigr\}\\
	&\quad+R(h_n^2,\xi_{m,t_{i-1}^n})+R(h_n^2,\varepsilon_{m,t_{i-1}^n})+R(h_n^2,\zeta_{m,t_{i-1}^n})\\
	&=h_n(\Sigma_{X_2X_2,m}(\theta_{m}))_{j_1j_2}+h_n^2\bigl\{R(1,\xi_{m,t_{i-1}^n})+R(1,\varepsilon_{m,t_{i-1}^n})+R(1,\zeta_{m,t_{i-1}^n})\bigr\}
\end{align*}
for $j_1,j_2=1,\cdots,p_2$. In addition, 
it follows from 
Lemmas \ref{Clemma}-\ref{Elemma}
and the independence of $\xi_{m,t}$, $\varepsilon_{m,t}$, and $\zeta_{m,t}$ that
\begin{align*}
	\E_{\theta_{m}}\left[C_{i,m,n}^{(j_1)}D_{i,m,n}^{(j_2)}|\mathscr{F}^{n}_{i-1}\right]&=\E_{\theta_{m}}\left[C_{i,m,n}^{(j_1)}|\mathscr{F}^{n}_{i-1}\right]\E_{\theta_{m}}\left[D_{i,m,n}^{(j_2)}|\mathscr{F}^{n}_{i-1}\right]=R(h_n,\xi_{m,t_{i-1}^n})R(h_n,\zeta_{m,t_{i-1}^n}),\\
	\E_{\theta_{m}}\left[C_{i,m,n}^{(j_1)}E_{i,m,n}^{(j_2)}|\mathscr{F}^{n}_{i-1}\right]&=\E_{\theta_{m}}\left[C_{i,m,n}^{(j_1)}|\mathscr{F}^{n}_{i-1}\right]\E_{\theta_{m}}\left[E_{i,m,n}^{(j_2)}|\mathscr{F}^{n}_{i-1}\right]=R(h_n,\xi_{m,t_{i-1}^n})R(h_n,\varepsilon_{m,t_{i-1}^n}),\\
	\E_{\theta_{m}}\left[D_{i,m,n}^{(j_1)}E_{i,m,n}^{(j_2)}|\mathscr{F}^{n}_{i-1}\right]&=\E_{\theta_{m}}\left[D_{i,m,n}^{(j_1)}|\mathscr{F}^{n}_{i-1}\right]\E_{\theta_{m}}\left[E_{i,m,n}^{(j_2)}|\mathscr{F}^{n}_{i-1}\right]=R(h_n,\zeta_{m,t_{i-1}^n})R(h_n,\varepsilon_{m,t_{i-1}^n})
\end{align*}
for $j_1,j_2=1,\cdots,p_2$.  
Therefore, we obtain
\begin{align*}
	&\quad\ \E_{\theta_{m}}\left[ (X^{(j_1)}_{2,t_{i}^n}-X^{(j_1)}_{2,t_{i-1}^n})(X^{(j_2)}_{2,t_{i}^n}-X^{(j_2)}_{2,t_{i-1}^n})|\mathscr{F}^{n}_{i-1}\right]\\
	&=\E_{\theta_{m}}\left[(C_{i,m,n}^{(j_1)}+D_{i,m,n}^{(j_1)}+E_{i,m,n}^{(j_1)})(C_{i,m,n}^{(j_2)}+D_{i,m,n}^{(j_2)}+E_{i,m,n}^{(j_2)})|\mathscr{F}^{n}_{i-1}\right]\\
	&=h_n(\Sigma_{X_2X_2,m}(\theta_{m}))_{j_1j_2}+h_n^2\bigl\{R(1,\xi_{m,t_{i-1}^n})+R(1,\varepsilon_{m,t_{i-1}^n})+R(1,\zeta_{m,t_{i-1}^n})+R(1,\xi_{m,t_{i-1}^n})R(1,\varepsilon_{m,t_{i-1}^n})\\
	&\quad+R(1,\xi_{m,t_{i-1}^n})R(1,\zeta_{m,t_{i-1}^n})+R(1,\varepsilon_{m,t_{i-1}^n})R(1,\zeta_{m,t_{i-1}^n})\bigr\}
\end{align*}
for $j_1,j_2=1,\cdots,p_2$. Furthermore, from 
Lemmas \ref{Clemma}-\ref{Elemma},
(\ref{EX2X2X2X2}) 
can be shown in the same way.
\end{proof}
\begin{lemma}\label{EX1X2lemma}
Under $\bf{[A1]}$, $\bf{[B1]}$, $\bf{[C1]}$ and $\bf{[D1]}$, 
\begin{align*}
	&\quad\ \E_{\theta_{m}}\left[ (X^{(j_1)}_{1,t_{i}^n}-X^{(j_1)}_{1,t_{i-1}^n})(X^{(j_2)}_{2,t_{i}^n}-X^{(j_2)}_{2,t_{i-1}^n})| \mathscr{F}^{n}_{i-1}\right]\\
	&=h_n(\Sigma_{X_1X_2,m}(\theta_{m}))_{j_1j_2}+h_n^2\bigl\{R(1,\xi_{m,t_{i-1}^n})+R(1,\xi_{m,t_{i-1}^n})R(1,\delta_{m,t_{i-1}^n})+R(1,\xi_{m,t_{i-1}^n})R(1,\varepsilon_{m,t_{i-1}^n})\qquad\quad\\
	&\quad+R(1,\xi_{m,t_{i-1}^n})R(1,\zeta_{m,t_{i-1}^n})+R(1,\delta_{m,t_{i-1}^n})R(1,\zeta_{m,t_{i-1}^n})+R(1,\delta_{m,t_{i-1}^n})R(1,\varepsilon_{m,t_{i-1}^n})\bigr\}
\end{align*}
for $j_1=1,\cdots,p_1,\ j_2=1,\cdots,p_2$,
\begin{align*}
	&\quad\ \E_{\theta_{m}}\left[(X_{1,t_{i}^n}^{(j_1)}-X_{1,t_{i-1}^n}^{(j_1)})(X_{1,t_{i}^n}^{(j_2)}-X_{1,t_{i-1}^n}^{(j_2)})(X_{1,t_{i}^n}^{(j_3)}-X_{1,t_{i-1}^n}^{(j_3)})(X_{2,t_{i}^n}^{(j_4)}-X_{2,t_{i-1}^n}^{(j_4)})|\mathscr{F}^{n}_{i-1}\right]\\
	&=h_n^2\bigl\{(\Sigma_{X_1X_1,m}(\theta_{m}))_{j_1j_2}(\Sigma_{X_1X_2,m}(\theta_{m}))_{j_3j_4}+(\Sigma_{X_1X_1,m}(\theta_{m}))_{j_1j_3}(\Sigma_{X_1X_2,m}(\theta_{m}))_{j_2j_4}\\
	&\quad+(\Sigma_{X_1X_2,m}(\theta_{m}))_{j_1j_4}(\Sigma_{X_1X_1,m}(\theta_{m}))_{j_2j_3}\bigr\}+h_n^3\bigl\{R(1,\xi_{m,t_{i-1}^n})+R(1,\delta_{m,t_{i-1}^n})+R(1,\xi_{m,t_{i-1}^n})R(1,\delta_{m,t_{i-1}^n})\\
	&\quad+R(1,\xi_{m,t_{i-1}^n})R(1,\varepsilon_{m,t_{i-1}^n})+R(1,\xi_{m,t_{i-1}^n})R(1,\zeta_{m,t_{i-1}^n})+R(1,\delta_{m,t_{i-1}^n})R(1,\varepsilon_{m,t_{i-1}^n})+R(1,\delta_{m,t_{i-1}^n})\\
	&\quad\times R(1,\zeta_{m,t_{i-1}^n})+R(1,\xi_{m,t_{i-1}^n})R(1,\delta_{m,t_{i-1}^n})R(1,\varepsilon_{m,t_{i-1}^n})+R(1,\xi_{m,t_{i-1}^n})R(1,\delta_{m,t_{i-1}^n})R(1,\zeta_{m,t_{i-1}^n})\bigr\}.
\end{align*}
for $j_1,j_2,j_3=1,\cdots,p_1,\ j_4=1,\cdots,p_2$,
\begin{align*}
	&\quad\ \E_{\theta_{m}}\left[(X_{1,t_{i}^n}^{(j_1)}-X_{1,t_{i-1}^n}^{(j_1)})(X_{1,t_{i}^n}^{(j_2)}-X_{1,t_{i-1}^n}^{(j_2)})(X_{2,t_{i}^n}^{(j_3)}-X_{2,t_{i-1}^n}^{(j_3)})(X_{2,t_{i}^n}^{(j_4)}-X_{2,t_{i-1}^n}^{(j_4)})|\mathscr{F}^{n}_{i-1}\right]\\
	&=h_n^2\bigl\{(\Sigma_{X_1X_1,m}(\theta_{m}))_{j_1j_2}(\Sigma_{X_2X_2,m}(\theta_{m}))_{j_3j_4}+(\Sigma_{X_1X_2,m}(\theta_{m}))_{j_1j_3}(\Sigma_{X_1X_2,m}(\theta_{m}))_{j_2j_4}\\
	&\quad+(\Sigma_{X_1X_2,m}(\theta_{m}))_{j_1j_4}(\Sigma_{X_1X_2,m}(\theta_{m}))_{j_2j_3}\bigr\}+h_n^3\bigl\{R(1,\xi_{m,t_{i-1}^n})+R(1,\delta_{m,t_{i-1}^n})+R(1,\varepsilon_{m,t_{i-1}^n})\\
	&\quad+R(1,\zeta_{m,t_{i-1}^n})+R(1,\xi_{m,t_{i-1}^n})R(1,\delta_{m,t_{i-1}^n})+R(1,\xi_{m,t_{i-1}^n})R(1,\varepsilon_{m,t_{i-1}^n})+R(1,\xi_{m,t_{i-1}^n})R(1,\zeta_{m,t_{i-1}^n})\\
	&\quad+R(1,\delta_{m,t_{i-1}^n})R(1,\varepsilon_{m,t_{i-1}^n}) +R(1,\delta_{m,t_{i-1}^n})R(1,\zeta_{m,t_{i-1}^n})+R(1,\xi_{m,t_{i-1}^n})R(1,\delta_{m,t_{i-1}^n})R(1,\varepsilon_{m,t_{i-1}^n})\\
	&\quad+R(1,\xi_{m,t_{i-1}^n})R(1,\delta_{m,t_{i-1}^n})R(1,\zeta_{m,t_{i-1}^n})+R(1,\xi_{m,t_{i-1}^n})R(1,\varepsilon_{m,t_{i-1}^n})R(1,\zeta_{m,t_{i-1}^n})+R(1,\delta_{m,t_{i-1}^n})\qquad\quad\\
	&\quad\times R(1,\varepsilon_{m,t_{i-1}^n})R(1,\zeta_{m,t_{i-1}^n})\bigr\}+h_n^4R(1,\xi_{m,t_{i-1}^n})R(1,\delta_{m,t_{i-1}^n})R(1,\zeta_{m,t_{i-1}^n})R(1,\varepsilon_{m,t_{i-1}^n})
\end{align*}
for $j_1,j_2=1,\cdots,p_1, \ j_3,j_4=1,\cdots,p_2$, and
\begin{align*}
	&\quad\ \E_{\theta_{m}}\left[(X_{1,t_{i}^n}^{(j_1)}-X_{1,t_{i-1}^n}^{(j_1)})(X_{2,t_{i}^n}^{(j_2)}-X_{2,t_{i-1}^n}^{(j_2)})(X_{2,t_{i}^n}^{(j_3)}-X_{2,t_{i-1}^n}^{(j_3)})(X_{2,t_{i}^n}^{(j_4)}-X_{2,t_{i-1}^n}^{(j_4)})|\mathscr{F}^{n}_{i-1}\right]\\
	&=h_n^2\bigl\{(\Sigma_{X_1X_2,m}(\theta_{m}))_{j_1j_2}(\Sigma_{X_2X_2,m}(\theta_{m}))_{j_3j_4}+(\Sigma_{X_1X_2,m}(\theta_{m}))_{j_1j_3}(\Sigma_{X_2X_2,m}(\theta_{m}))_{j_2j_4}\\
	&\quad+(\Sigma_{X_1X_2,m}(\theta_{m}))_{j_1j_4}(\Sigma_{X_2X_2,m}(\theta_{m}))_{j_2j_3}\bigr\}+h_n^3\bigl\{R(1,\xi_{m,t_{i-1}^n})+R(1,\varepsilon_{m,t_{i-1}^n})+R(1,\zeta_{m,t_{i-1}^n})\\
	&\quad+R(1,\xi_{m,t_{i-1}^n})R(1,\delta_{m,t_{i-1}^n})+R(1,\xi_{m,t_{i-1}^n})R(1,\varepsilon_{m,t_{i-1}^n})+R(1,\xi_{m,t_{i-1}^n})R(1,\zeta_{m,t_{i-1}^n})\\
	&\quad+R(1,\delta_{m,t_{i-1}^n})R(1,\varepsilon_{m,t_{i-1}^n})+R(1,\delta_{m,t_{i-1}^n})R(1,\zeta_{m,t_{i-1}^n})+R(1,\xi_{m,t_{i-1}^n})R(1,\delta_{m,t_{i-1}^n})R(1,\varepsilon_{m,t_{i-1}^n})\\
	&\quad+R(1,\xi_{m,t_{i-1}^n})R(1,\delta_{m,t_{i-1}^n})R(1,\zeta_{m,t_{i-1}^n})+R(1,\xi_{m,t_{i-1}^n})R(1,\varepsilon_{m,t_{i-1}^n})R(1,\zeta_{m,t_{i-1}^n})+R(1,\delta_{m,t_{i-1}^n})\qquad\\
	&\quad\times R(1,\varepsilon_{m,t_{i-1}^n})R(1,\zeta_{m,t_{i-1}^n})\bigr\}+h_n^4 R(1,\xi_{m,t_{i-1}^n})R(1,\delta_{m,t_{i-1}^n})R(1,\varepsilon_{m,t_{i-1}^n})R(1,\zeta_{m,t_{i-1}^n})
\end{align*}
for $j_1=1,\cdots,p_1, \ j_2,j_3,j_4=1,\cdots,p_2$.
\end{lemma}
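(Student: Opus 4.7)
The plan is to deduce Lemma \ref{EX1X2lemma} from the decompositions $X_{1,t_{i}^n} - X_{1,t_{i-1}^n} = A_{i,m,n} + B_{i,m,n}$ and $X_{2,t_{i}^n} - X_{2,t_{i-1}^n} = C_{i,m,n} + D_{i,m,n} + E_{i,m,n}$ by multilinear expansion, combined with the per-block conditional-expectation identities in Lemmas \ref{Alemma}--\ref{Elemma}, the cross-identity for $A$ and $C$ in Lemma \ref{AClemma}, and the independence of the four driving Wiener processes $W_{1,t},W_{2,t},W_{3,t},W_{4,t}$. This mirrors the proofs of Lemmas \ref{EX1X1lemma} and \ref{EX2X2lemma}, so I would simply follow their template.

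For the two-factor identity, I would expand the product into the six monomials $A^{(j_1)}C^{(j_2)}$, $A^{(j_1)}D^{(j_2)}$, $A^{(j_1)}E^{(j_2)}$, $B^{(j_1)}C^{(j_2)}$, $B^{(j_1)}D^{(j_2)}$, $B^{(j_1)}E^{(j_2)}$. Only $A^{(j_1)}C^{(j_2)}$ is driven by the same Brownian motion $W_{1,t}$, and Lemma \ref{AClemma} supplies the leading contribution $h_n(\Lambda_{x_1,m}\Sigma_{\xi\xi,m}\Gamma_{m}^{\top}\Psi_{m}^{-1\top}\Lambda_{x_2,m}^{\top})_{j_1 j_2} = h_n(\Sigma_{X_1X_2,m}(\theta_m))_{j_1j_2}$ plus $R(h_n^2,\xi_{m,t_{i-1}^n})$. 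Each of the remaining five monomials involves factors driven by independent Brownian motions, so the conditional expectation factorizes as a product of two separate conditional means, each of order $R(h_n,\cdot)$ by Lemmas \ref{Alemma}--\ref{Elemma}. Multiplying these pairs yields precisely the six $h_n^2 R(1,\cdot)R(1,\cdot)$ remainder terms listed in the conclusion.

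For the three- and four-factor identities the strategy is identical but the number of monomials is larger. Each monomial is classified by how its factors distribute across the four independent driving noises (noting that $A$ and $C$ share $W_{1,t}$). A monomial whose factors split into independent groups has conditional expectation equal to the product of the group-wise conditional expectations, each controlled by Lemmas \ref{Alemma}--\ref{Elemma}. The leading $h_n^2$ contributions arise from the Wick-type pairings into two matched pairs of indices: for the four-factor expansion these pairings generate exactly the three products of entries of the block covariance matrix appearing in the stated right-hand sides. For instance, $(\Sigma_{X_1X_2,m})_{j_1 j_3}(\Sigma_{X_1X_2,m})_{j_2 j_4}$ arises from the pairing of $A^{(j_1)}$ with $C^{(j_3)}$ and $A^{(j_2)}$ with $C^{(j_4)}$ via Lemma \ref{AClemma}, and similar two-two splits across the remaining blocks produce the other leading combinations.

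The main obstacle is combinatorial bookkeeping rather than any mathematical subtlety. For the four-factor identities one must enumerate on the order of several dozen monomials, identify which of them produce the $h_n^2$ Wick-paired leading terms, and verify that every unmatched monomial collapses to an $h_n^3$ or $h_n^4$ remainder of the polynomial-growth form stated. Since this is entirely parallel to the organization of the proofs of Lemmas \ref{EX1X1lemma} and \ref{EX2X2lemma}, I would mirror their layout (grouping monomials by which subset of $\{A,B,C,D,E\}$ they involve) to minimize the risk of algebraic slips.
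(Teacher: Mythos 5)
Your proposal is correct and is exactly the paper's route: the paper proves this lemma by expanding the increments as $A+B$ and $C+D+E$, applying Lemmas \ref{Alemma}--\ref{Elemma} and \ref{AClemma} to the monomials, and factorizing the cross-terms driven by independent Wiener processes, "in an analogous manner to Lemma \ref{EX2X2lemma}." Your identification of the $A$--$C$ pair as the only non-factorizing one (handled by Lemma \ref{AClemma}) and of the Wick-type pairings as the source of the leading $h_n^2$ terms matches the paper's bookkeeping.
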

\begin{proof}
From Lemmas \ref{Alemma}-\ref{AClemma},
the results can be shown in an analogous manner to Lemma \ref{EX2X2lemma}.
\end{proof}
\begin{lemma}\label{klemma}
Under $\bf{[A1]}$, $\bf{[B1]}$, $\bf{[C1]}$, $\bf{[D1]}$ and $\bf{[E1]}$,  
\begin{align}
	\E_{\theta_{m}}\left[\bigl|A_{i,m,n}^{(j)}\bigr|^k|\mathscr{F}^{n}_{i-1}\right]&=R(h_n^\frac{k}{2},\xi_{m,t_{i-1}^n})\quad (j=1,\cdots p_1),\label{Ak}\\
	\E_{\theta_{m}}\left[\bigl|B_{i,m,n}^{(j)}\bigr|^k|\mathscr{F}^{n}_{i-1}\right]&=R(h_n^\frac{k}{2},\delta_{m,t_{i-1}^n})\quad (j=1,\cdots p_1),\label{Bk}\\
	\E_{\theta_{m}}\left[\bigl|C_{i,m,n}^{(j)}\bigr|^k|\mathscr{F}^{n}_{i-1}\right]&=R(h_n^\frac{k}{2},\xi_{m,t_{i-1}^n})\quad (j=1,\cdots p_2),\label{Ck}\\   
	\E_{\theta_{m}}\left[\bigl|D_{i,m,n}^{(j)}\bigr|^k|\mathscr{F}^{n}_{i-1}\right]&=R(h_n^\frac{k}{2}\zeta_{m,t_{i-1}^n}),\quad (j=1,\cdots p_2),\label{Dk}\\    
	\E_{\theta_{m}}\left[\bigl|E_{i,m,n}^{(j)}\bigr|^k|\mathscr{F}^{n}_{i-1}\right]&=R(h_n^\frac{k}{2},\varepsilon_{m,t_{i-1}^n})\quad (j=1,\cdots p_2)\label{Ek}
\end{align}
for $k\geq 2$.
\end{lemma}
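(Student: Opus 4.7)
Since $A_{i,m,n}$, $B_{i,m,n}$, $C_{i,m,n}$, $D_{i,m,n}$, $E_{i,m,n}$ are constant linear transformations of $\Delta\xi_{m,i}$, $\Delta\delta_{m,i}$, $\Delta\varepsilon_{m,i}$, $\Delta\zeta_{m,i}$, each entry $A_{i,m,n}^{(j)}$, etc., is a finite linear combination of the entries of the corresponding increment. Hence by the elementary inequality $|\sum_\ell a_\ell x_\ell|^k \leq C_k \sum_\ell |x_\ell|^k$, it suffices to prove the five bounds with $A^{(j)}$, $B^{(j)}$, $C^{(j)}$, $D^{(j)}$, $E^{(j)}$ replaced by the coordinate-wise increments $(\Delta\xi_{m,i})^{(\ell)}$, $(\Delta\delta_{m,i})^{(\ell)}$, $(\Delta\varepsilon_{m,i})^{(\ell)}$, $(\Delta\zeta_{m,i})^{(\ell)}$, respectively. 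All five estimates are structurally identical; I illustrate the plan for $\xi_{m,t}$ under \textbf{[A1]}, the other four being verbatim under \textbf{[B1]}, \textbf{[C1]}, \textbf{[D1]}.

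From the SDE for $\xi_{m,t}$, I would write
\begin{align*}
\Delta\xi_{m,i}=\int_{t_{i-1}^n}^{t_{i}^n} B_{1,m}(\xi_{m,s})\,ds+S_{1,m}(W_{1,t_{i}^n}-W_{1,t_{i-1}^n}),
\end{align*}
then split via $|x+y|^k\leq 2^{k-1}(|x|^k+|y|^k)$. For the diffusion term, the conditional distribution of $S_{1,m}(W_{1,t_{i}^n}-W_{1,t_{i-1}^n})$ given $\mathscr{F}^{n}_{i-1}$ is centred Gaussian with covariance $h_n\Sigma_{\xi\xi,m}$, so all conditional moments are deterministic constants times $h_n^{k/2}$, which is of the form $R(h_n^{k/2},\xi_{m,t_{i-1}^n})$. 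For the drift term, Jensen's inequality gives
\begin{align*}
\E_{\theta_{m}}\!\left[\Bigl|\int_{t_{i-1}^n}^{t_{i}^n} B_{1,m}(\xi_{m,s})\,ds\Bigr|^k\Big|\mathscr{F}^{n}_{i-1}\right]\leq h_n^{k-1}\int_{t_{i-1}^n}^{t_{i}^n}\E_{\theta_{m}}\!\left[|B_{1,m}(\xi_{m,s})|^k\big|\mathscr{F}^{n}_{i-1}\right]ds,
\end{align*}
and \textbf{[A1]}(c) yields the polynomial growth bound $|B_{1,m}(x)|^k\leq C(1+|x|)^{C}$, so it reduces to a polynomial-in-$|\xi_{m,s}|$ bound of the conditional moments.

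The key supporting fact is the standard a~priori estimate for SDEs with Lipschitz drift and constant diffusion: by Gronwall's inequality combined with the Burkholder-Davis-Gundy inequality, for every $\ell\geq 1$ there exists $C_\ell>0$ such that
\begin{align*}
\E_{\theta_{m}}\!\left[\sup_{s\in[t_{i-1}^n,t_{i}^n]}|\xi_{m,s}|^\ell\Big|\mathscr{F}^{n}_{i-1}\right]\leq C_\ell(1+|\xi_{m,t_{i-1}^n}|)^{C_\ell},
\end{align*}
uniformly in $i$ and $n$ (for $h_n$ bounded). Substituting this into the drift bound yields $R(h_n^{k},\xi_{m,t_{i-1}^n})=R(h_n^{k/2},\xi_{m,t_{i-1}^n})$ for $k\geq 2$, and combining with the Gaussian contribution gives the required estimate for $(\Delta\xi_{m,i})^{(\ell)}$. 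The proof for $\delta_{m,t}$, $\varepsilon_{m,t}$, $\zeta_{m,t}$ is identical, using \textbf{[B1]}, \textbf{[C1]}, \textbf{[D1]} in place of \textbf{[A1]}. The main obstacle is really just the bookkeeping of the $R(\cdot,\cdot)$ notation; no genuinely new idea is needed beyond the moment-estimate lemma, which is classical (see Kessler \cite{kessler(1997)}, Lemma 2).
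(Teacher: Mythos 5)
Your proof is correct and takes essentially the same route as the paper, which simply invokes Lemma 6 of Kessler (1997): your decomposition of each increment into the drift integral plus the Gaussian diffusion term, with Jensen's inequality and the standard Gronwall/Burkholder--Davis--Gundy a priori moment estimate controlling the drift, is precisely the content of that classical lemma specialized to constant diffusion coefficients. The reduction from $A^{(j)}_{i,m,n}$ etc.\ to coordinate increments via linearity and the passage from $R(h_n^{k},\cdot)$ to $R(h_n^{k/2},\cdot)$ for $k\geq 2$ are both handled correctly.
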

\begin{proof}
In an analogous manner to Lemma 6 in Kessler \cite{kessler(1997)}, 
the results can be shown.
\end{proof}
\begin{proof}[Proof of Lemma \ref{Xlemma}.]
First, we prove (\ref{L}). 
Note that $\Sigma_{X_2X_1,m}(\theta_{m})=\Sigma_{X_1X_2,m}(\theta_{m})^{\top}$. 
It is sufficient to show that
\begin{align}
	\begin{split}
	&\ \sum_{i=1}^n\E_{\theta_{m}}\left[\left\{\frac{1}{\sqrt{n}h_n}(X_{1,t_{i}^n}^{(j_1)}-X_{1,t_{i-1}^n}^{(j_1)})(X_{1,t_{i}^n}^{(j_2)}-X_{1,t_{i-1}^n}^{(j_2)})-\frac{1}{\sqrt{n}}(\Sigma_{X_1X_1,m}(\theta_{m}))_{j_1j_2}\right\}|\mathscr{F}^{n}_{i-1}\right]\stackrel{P_{\theta_{m}}\ }{\longrightarrow}0\label{EX1X1prob}
	\end{split}
\end{align}
for all $j_1,j_2=1,\cdots p_1$,
\begin{align}
	\begin{split}
	&\sum_{i=1}^n\E_{\theta_{m}}\left[\left\{\frac{1}{\sqrt{n}h_n}(X_{1,t_{i}^n}^{(j_1)}-X_{1,t_{i-1}^n}^{(j_1)})(X_{2,t_{i}^n}^{(j_2)}-X_{2,t_{i-1}^n}^{(j_2)})-\frac{1}{\sqrt{n}}(\Sigma_{X_1X_2,m}(\theta_{m}))_{j_1j_2}\right\}|\mathscr{F}^{n}_{i-1}\right]\stackrel{P_{\theta_{m}}\ }{\longrightarrow}0\label{EX1X2prob}
	\end{split}
\end{align}
for all $j_1=1,\cdots,p_1,\ j_2=1,\cdots p_2$, and
\begin{align}
	\begin{split}
	&\sum_{i=1}^n\E_{\theta_{m}}\left[\left\{\frac{1}{\sqrt{n}h_n}(X_{2,t_{i}^n}^{(j_1)}-X_{2,t_{i-1}^n}^{(j_1)})(X_{2,t_{i}^n}^{(j_2)}-X_{2,t_{i-1}^n}^{(j_2)})-\frac{1}{\sqrt{n}}(\Sigma_{X_2X_2,m}(\theta_{m}))_{j_1j_2}\right\}|\mathscr{F}^{n}_{i-1}\right]\stackrel{P_{\theta_{m}}\ }{\longrightarrow} 0\label{EX2X2prob}
	\end{split}
\end{align}
for all $j_1,j_2=1,\cdots p_2$. 
From (\ref{EX1X1}), we have
\begin{align*}
	&\quad\sum_{i=1}^n\E_{\theta_{m}}\left[\left\{\frac{1}{\sqrt{n}h_n}(X_{1,t_{i}^n}^{(j_1)}-X_{1,t_{i-1}^n}^{(j_1)})(X_{1,t_{i}^n}^{(j_2)}-X_{1,t_{i-1}^n}^{(j_2)})-\frac{1}{\sqrt{n}}(\Sigma_{X_1X_1,m}(\theta_{m}))_{j_1j_2}\right\}|\mathscr{F}^{n}_{i-1}\right]\\
	&=\frac{1}{\sqrt{n}}\sum_{i=1}^n\left\{\frac{1}{h_n}\E_{\theta_{m}}\left[(X_{1,t_{i}^n}^{(j_1)}-X_{1,t_{i-1}^n}^{(j_1)})(X_{1,t_{i}^n}^{(j_2)}-X_{1,t_{i-1}^n}^{(j_2)})|\mathscr{F}^{n}_{i-1}\right]-(\Sigma_{X_1X_1,m}(\theta_{m}))_{j_1j_2}\right\}\\
	&=\frac{h_n}{\sqrt{n}}\sum_{i=1}^n\bigl\{R(1,\xi_{m,t_{i-1}^n})+R(1,\delta_{m,t_{i-1}^n})+R(1,\xi_{m,t_{i-1}^n})R(1,\delta_{m,t_{i-1}^n})\bigr\}\\
	&=\sqrt{nh_n^2}\ \frac{1}{n}\sum_{i=1}^n\bigl\{R(1,\xi_{m,t_{i-1}^n})+R(1,\delta_{m,t_{i-1}^n})+R(1,\xi_{m,t_{i-1}^n})R(1,\delta_{m,t_{i-1}^n})\bigr\}\stackrel{P_{\theta_{m}}\ }{\longrightarrow} 0\qquad\qquad\qquad\quad
\end{align*}
for $j_1,j_2=1,\cdots,p_1$,  which deduces (\ref{EX1X1prob}). In the same way, we obtain (\ref{EX1X2prob}) and (\ref{EX2X2prob}) from Lemma \ref{EX2X2lemma} and Lemma \ref{EX1X2lemma}, respectively.

Next, we show (\ref{LL}). It is sufficient to prove that
\begin{align}
	\begin{split}
	&\sum_{i=1}^n\E_{\theta_{m}}\left[\left\{\frac{1}{\sqrt{n}h_n}(X_{1,t_{i}^n}^{(j_1)}-X_{1,t_{i-1}^n}^{(j_1)})(X_{1,t_{i}^n}^{(j_2)}-X_{1,t_{i-1}^n}^{(j_2)})-\frac{1}{\sqrt{n}}(\Sigma_{X_1X_1,m}(\theta_{m}))_{j_1j_2}\right\}\right.\\
	&\qquad\qquad\qquad\times\left.\left\{\frac{1}{\sqrt{n}h_n}(X_{1,t_{i}^n}^{(j_3)}-X_{1,t_{i-1}^n}^{(j_3)})(X_{1,t_{i}^n}^{(j_4)}-X_{1,t_{i-1}^n}^{(j_4)})-\frac{1}{\sqrt{n}}(\Sigma_{X_1X_1,m}(\theta_{m}))_{j_3j_4}\right\}|\mathscr{F}^{n}_{i-1}\right]\\
	&\stackrel{P_{\theta_{m}}\ }{\longrightarrow} (\Sigma_{X_1X_1,m}(\theta_{m}))_{j_{1}j_{3}}(\Sigma_{X_1X_1,m}(\theta_{m}))_{j_{2}j_{4}}+(\Sigma_{X_1X_1,m}(\theta_{m}))_{j_{1}j_{4}}(\Sigma_{X_1X_1,m}(\theta_{m}))_{j_{2}j_{3}}\label{X1X1X1X1}
	\end{split}
\end{align}
for $j_1,j_2,j_3,j_4=1,\cdots,p_1$,
\begin{align}
	\begin{split}
	&\sum_{i=1}^n\E_{\theta_{m}}\left[\left\{\frac{1}{\sqrt{n}h_n}(X_{1,t_{i}^n}^{(j_1)}-X_{1,t_{i-1}^n}^{(j_1)})(X_{1,t_{i}^n}^{(j_2)}-X_{1,t_{i-1}^n}^{(j_2)})-\frac{1}{\sqrt{n}}(\Sigma_{X_1X_1,m}(\theta_{m}))_{j_1j_2}\right\}\right.\\
	&\qquad\qquad\qquad\times\left.\left\{\frac{1}{\sqrt{n}h_n}(X_{1,t_{i}^n}^{(j_3)}-X_{1,t_{i-1}^n}^{(j_3)})(X_{2,t_{i}^n}^{(j_4)}-X_{2,t_{i-1}^n}^{(j_4)})-\frac{1}{\sqrt{n}}(\Sigma_{X_1X_2,m}(\theta_{m}))_{j_3j_4}\right\}|\mathscr{F}^{n}_{i-1}\right]\\
	&\stackrel{P_{\theta_{m}}\ }{\longrightarrow} (\Sigma_{X_1X_1,m}(\theta_{m}))_{j_{1}j_{3}}(\Sigma_{X_1X_2,m}(\theta_{m}))_{j_{2}j_{4}}+(\Sigma_{X_1X_2,m}(\theta_{m}))_{j_{1}j_{4}}(\Sigma_{X_1X_2,m}(\theta_{m}))_{j_{2}j_{3}}\label{X1X1X1X2}
	\end{split}
\end{align}
for $j_1,j_2,j_3=1,\cdots,p_1,\ j_4=1,\cdots,p_2$,
\begin{align}
	\begin{split}
	&\sum_{i=1}^n\E_{\theta_{m}}\left[\left\{\frac{1}{\sqrt{n}h_n}(X_{1,t_{i}^n}^{(j_1)}-X_{1,t_{i-1}^n}^{(j_1)})(X_{1,t_{i}^n}^{(j_2)}-X_{1,t_{i-1}^n}^{(j_2)})-\frac{1}{\sqrt{n}}(\Sigma_{X_1X_1,m}(\theta_{m}))_{j_1j_2}\right\}\right.\\
	&\qquad\qquad\qquad\times\left.\left\{\frac{1}{\sqrt{n}h_n}(X_{2,t_{i}^n}^{(j_3)}-X_{2,t_{i-1}^n}^{(j_3)})(X_{2,t_{i}^n}^{(j_4)}-X_{2,t_{i-1}^n}^{(j_4)})-\frac{1}{\sqrt{n}}(\Sigma_{X_2X_2,m}(\theta_{m}))_{j_3j_4}\right\}|\mathscr{F}^{n}_{i-1}\right]\\
	&\stackrel{P_{\theta_{m}}\ }{\longrightarrow} (\Sigma_{X_1X_2,m}(\theta_{m}))_{j_{1}j_{3}}(\Sigma_{X_1X_2,m}(\theta_{m}))_{j_{2}j_{4}}+(\Sigma_{X_1X_2,m}(\theta_{m}))_{j_{1}j_{4}}(\Sigma_{X_1X_2,m}(\theta_{m}))_{j_{2}j_{3}}\label{X1X1X2X2}
	\end{split}
\end{align}
for $j_1,j_2=1,\cdots,p_1,\ j_3,j_4=1,\cdots,p_2$,
\begin{align}
	\begin{split}
	&\sum_{i=1}^n\E_{\theta_{m}}\left[\left\{\frac{1}{\sqrt{n}h_n}(X_{1,t_{i}^n}^{(j_1)}-X_{1,t_{i-1}^n}^{(j_1)})(X_{2,t_{i}^n}^{(j_2)}-X_{2,t_{i-1}^n}^{(j_2)})-\frac{1}{\sqrt{n}}(\Sigma_{X_1X_2,m}(\theta_{m}))_{j_1j_2}\right\}\right.\\
	&\qquad\qquad\qquad\times\left.\left\{\frac{1}{\sqrt{n}h_n}(X_{1,t_{i}^n}^{(j_3)}-X_{1,t_{i-1}^n}^{(j_3)})(X_{2,t_{i}^n}^{(j_4)}-X_{2,t_{i-1}^n}^{(j_4)})-\frac{1}{\sqrt{n}}(\Sigma_{X_1X_2,m}(\theta_{m}))_{j_3j_4}\right\}|\mathscr{F}^{n}_{i-1}\right]\\
	&\stackrel{P_{\theta_{m}}\ }{\longrightarrow} (\Sigma_{X_1X_1,m}(\theta_{m}))_{j_{1}j_{3}}(\Sigma_{X_2X_2,m}(\theta_{m}))_{j_{2}j_{4}}+(\Sigma_{X_1X_2,m}(\theta_{m}))_{j_{1}j_{4}}(\Sigma_{X_2X_1,m}(\theta_{m}))_{j_{2}j_{3}}\label{X1X2X1X2}
	\end{split}
\end{align}
for $j_1,j_3=1,\cdots,p_1,\ j_2,j_4=1,\cdots,p_2$,
\begin{align}
	\begin{split}
	&\sum_{i=1}^n\E_{\theta_{m}}\left[\left\{\frac{1}{\sqrt{n}h_n}(X_{1,t_{i}^n}^{(j_1)}-X_{1,t_{i-1}^n}^{(j_1)})(X_{2,t_{i}^n}^{(j_2)}-X_{2,t_{i-1}^n}^{(j_2)})-\frac{1}{\sqrt{n}}(\Sigma_{X_1X_2,m}(\theta_{m}))_{j_1j_2}\right\}\right.\\
	&\qquad\qquad\qquad\times\left.\left\{\frac{1}{\sqrt{n}h_n}(X_{2,t_{i}^n}^{(j_3)}-X_{2,t_{i-1}^n}^{(j_3)})(X_{2,t_{i}^n}^{(j_4)}-X_{2,t_{i-1}^n}^{(j_4)})-\frac{1}{\sqrt{n}}(\Sigma_{X_2X_2,m}(\theta_{m}))_{j_3j_4}\right\}|\mathscr{F}^{n}_{i-1}\right]\\
	&\stackrel{P_{\theta_{m}}\ }{\longrightarrow} (\Sigma_{X_1X_2,m}(\theta_{m}))_{j_{1}j_{3}}(\Sigma_{X_2X_2,m}(\theta_{m}))_{j_{2}j_{4}}+(\Sigma_{X_1X_2,m}(\theta_{m}))_{j_{1}j_{4}}(\Sigma_{X_2X_2,m}(\theta_{m}))_{j_{2}j_{3}}\label{X1X2X2X2}
	\end{split}
\end{align}
for $j_1=1,\cdots,p_1,\ j_2,j_3,j_4=1,\cdots,p_2$, and
\begin{align}
	\begin{split}
	&\sum_{i=1}^n\E_{\theta_{m}}\left[\left\{\frac{1}{\sqrt{n}h_n}(X_{2,t_{i}^n}^{(j_1)}-X_{2,t_{i-1}^n}^{(j_1)})(X_{2,t_{i}^n}^{(j_2)}-X_{2,t_{i-1}^n}^{(j_2)})-\frac{1}{\sqrt{n}}(\Sigma_{X_2X_2,m}(\theta_{m}))_{j_1j_2}\right\}\right.\\
	&\qquad\qquad\qquad\times\left.\left\{\frac{1}{\sqrt{n}h_n}(X_{2,t_{i}^n}^{(j_3)}-X_{2,t_{i-1}^n}^{(j_3)})(X_{2,t_{i}^n}^{(j_4)}-X_{2,t_{i-1}^n}^{(j_4)})-\frac{1}{\sqrt{n}}(\Sigma_{X_2X_2,m}(\theta_{m}))_{j_3j_4}\right\}|\mathscr{F}^{n}_{i-1}\right]\\
	&\stackrel{P_{\theta_{m}}\ }{\longrightarrow} (\Sigma_{X_2X_2,m}(\theta_{m}))_{j_{1}j_{3}}(\Sigma_{X_2X_2,m}(\theta_{m}))_{j_{2}j_{4}}+(\Sigma_{X_2X_2,m}(\theta_{m}))_{j_{1}j_{4}}(\Sigma_{X_2X_2,m}(\theta_{m}))_{j_{2}j_{3}}\label{X2X2X2X2}
	\end{split}
\end{align}
for $j_1,j_2,j_3,j_4=1,\cdots,p_2$. It holds from Lemma \ref{EX1X1lemma} that
\begin{align*}
	&\quad\ \sum_{i=1}^n\E_{\theta_{m}}\left[\left\{\frac{1}{\sqrt{n}h_n}(X_{1,t_{i}^n}^{(j_1)}-X_{1,t_{i-1}^n}^{(j_1)})(X_{1,t_{i}^n}^{(j_2)}-X_{1,t_{i-1}^n}^{(j_2)})-\frac{1}{\sqrt{n}}(\Sigma_{X_1X_1,m}(\theta_{m}))_{j_1j_2}\right\}\right.\\
	&\left.\qquad\qquad\qquad\qquad\quad\times\left\{\frac{1}{\sqrt{n}h_n}(X_{1,t_{i}^n}^{(j_3)}-X_{1,t_{i-1}^n}^{(j_3)})(X_{1,t_{i}^n}^{(j_4)}-X_{1,t_{i-1}^n}^{(j_4)})-\frac{1}{\sqrt{n}}(\Sigma_{X_1X_1,m}(\theta_{m}))_{j_3j_4}\right\}|\mathscr{F}^{n}_{i-1}\right]\qquad\\
	&=\frac{1}{nh_n^2}\sum_{i=1}^n\E_{\theta_{m}}\left[(X_{1,t_{i}^n}^{(j_1)}-X_{1,t_{i-1}^n}^{(j_1)})(X_{1,t_{i}^n}^{(j_2)}-X_{1,t_{i-1}^n}^{(j_2)})(X_{1,t_{i}^n}^{(j_3)}-X_{1,t_{i-1}^n}^{(j_3)})(X_{1,t_{i}^n}^{(j_4)}-X_{1,t_{i-1}^n}^{(j_4)})|\mathscr{F}^{n}_{i-1}\right]\\
	&\quad -\frac{1}{nh_n}\sum_{i=1}^n\E_{\theta_{m}}\left[(X_{1,t_{i}^n}^{(j_1)}-X_{1,t_{i-1}^n}^{(j_1)})(X_{1,t_{i}^n}^{(j_2)}-X_{1,t_{i-1}^n}^{(j_2)})|\mathscr{F}^{n}_{i-1}\right](\Sigma_{X_1X_1,m}(\theta_{m}))_{j_3j_4}\\
	&\quad -\frac{1}{nh_n}\sum_{i=1}^n\E_{\theta_{m}}\left[(X_{1,t_{i}^n}^{(j_3)}-X_{1,t_{i-1}^n}^{(j_3)})(X_{1,t_{i}^n}^{(j_4)}-X_{1,t_{i-1}^n}^{(j_4)})|\mathscr{F}^{n}_{i-1}\right](\Sigma_{X_1X_1,m}(\theta_{m}))_{j_1j_2}\\
	&\quad+\frac{1}{n}\sum_{i=1}^n(\Sigma_{X_1X_1,m}(\theta_{m}))_{j_1j_2}(\Sigma_{X_1X_1,m}(\theta_{m}))_{j_3j_4}\\
	&=(\Sigma_{X_1X_1,m}(\theta_{m}))_{j_{1}j_{3}}(\Sigma_{X_1X_1,m}(\theta_{m}))_{j_{2}j_{4}}+(\Sigma_{X_1X_1,m}(\theta_{m}))_{j_{1}j_{4}}(\Sigma_{X_1X_1,m}(\theta_{m}))_{j_{2}j_{3}}\\
	&\quad+h_n\times\frac{1}{n}\sum_{i=1}^n\bigl\{R(1,\xi_{m,t_{i-1}^n})+R(1,\delta_{m,t_{i-1}^n})+R(1,\xi_{m,t_{i-1}^n})R(1,\delta_{m,t_{i-1}^n})\bigr\}\\
	&\stackrel{P_{\theta_{m}}\ }{\longrightarrow} (\Sigma_{X_1X_1,m}(\theta_{m}))_{j_{1}j_{3}}(\Sigma_{X_1X_1,m}(\theta_{m}))_{j_{2}j_{4}}+(\Sigma_{X_1X_1,m}(\theta_{m}))_{j_{1}j_{4}}(\Sigma_{X_1X_1,m}(\theta_{m}))_{j_{2}j_{3}}
\end{align*}
for $j_1,j_2,j_3,j_4=1,\cdots,p_1$, which yields (\ref{X1X1X1X1}). 
In an analogous manner, 
Lemmas \ref{EX1X1lemma}-\ref{EX1X2lemma} imply  (\ref{X1X1X1X2})-(\ref{X2X2X2X2}).

Finally, we prove (\ref{L4}). It is sufficient to show that
\begin{align}
	\begin{split}
	&\sum_{i=1}^n\E_{\theta_{m}}\left[\left|\frac{1}{\sqrt{n}h_n}(X^{(j_1)}_{1,t_{i}^n}-X^{(j_1)}_{1,t_{i-1}^n})(X^{(j_2)}_{1,t_{i}^n}-X^{(j_2)}_{1,t_{i-1}^n})-\frac{1}{\sqrt{n}}(\Sigma_{X_1X_1,m}(\theta_{m}))_{j_1j_2}
	\right|^4|\mathscr{F}^{n}_{i-1}\right]\stackrel{P_{\theta_{m}}\ }{\longrightarrow}0\label{EX1X14}
	\end{split}
\end{align}
for $j_1,j_2=1,\cdots p_1$,
\begin{align}
	\begin{split}
	&\sum_{i=1}^n\E_{\theta_{m}}\left[\left|\frac{1}{\sqrt{n}h_n}(X^{(j_1)}_{1,t_{i}^n}-X^{(j_1)}_{1,t_{i-1}^n})(X^{(j_2)}_{2,t_{i}^n}-X^{(j_2)}_{2,t_{i-1}^n})-\frac{1}{\sqrt{n}}(\Sigma_{X_1X_2,m}(\theta_{m}))_{j_1j_2}\right|^4|\mathscr{F}^{n}_{i-1}\right]\stackrel{P_{\theta_{m}}\ }{\longrightarrow}0 \label{EX1X24}
	\end{split}
\end{align}
for $j_1=1,\cdots p_1,\ j_2=1,\cdots p_2$, and
\begin{align}
	\begin{split}
	&\sum_{i=1}^n\E_{\theta_{m}}\left[\left|\frac{1}{\sqrt{n}h_n}(X^{(j_1)}_{2,t_{i}^n}-X^{(j_1)}_{2,t_{i-1}^n})(X^{(j_2)}_{2,t_{i}^n}-X^{(j_2)}_{2,t_{i-1}^n})-\frac{1}{\sqrt{n}}(\Sigma_{X_2X_2,m}(\theta_{m}))_{j_1j_2}
	\right|^4|\mathscr{F}^{n}_{i-1}\right]\stackrel{P_{\theta_{m}}\ }{\longrightarrow}0\label{EX2X24}
	\end{split}
\end{align}
for $j_1,j_2=1,\cdots p_2$. We can evaluate as follows:
\begin{align}
	\begin{split}
	0&\leq\sum_{i=1}^n\E_{\theta_{m}}\left[\left|\frac{1}{\sqrt{n}h_n}(X^{(j_1)}_{1,t_{i}^n}-X^{(j_1)}_{1,t_{i-1}^n})(X^{(j_2)}_{1,t_{i}^n}-X^{(j_2)}_{1,t_{i-1}^n})-\frac{1}{\sqrt{n}}(\Sigma_{X_1X_1,m}(\theta_{m}))_{j_1j_2}
	\right|^4|\mathscr{F}^{n}_{i-1}\right]\\
	&\qquad\qquad\leq\frac{C_{1}}{n^2h_n^4}\sum_{i=1}^n\E_{\theta_{m}}\left[\left|(X^{(j_1)}_{1,t_{i}^n}-X^{(j_1)}_{1,t_{i-1}^n})(X^{(j_2)}_{1,t_{i}^n}-X^{(j_2)}_{1,t_{i-1}^n})\right|^4|\mathscr{F}^{n}_{i-1}\right]+\frac{C_{1}}{n}(\Sigma_{X_1,X_1,m}(\theta_{m}))_{j_1j_2}^4\label{X1X14ine}
	\end{split}
\end{align}
for $j_1,j_2=1,\cdots,p_1$. 
Using Cauchy-Schwartz's inequality and  (\ref{Ak}), 
we have
\begin{align*}
	\E_{\theta_{m}}\left[\left|A^{(j_1)}_{i,n,m}A^{(j_2)}_{i,n,m}\right|^4|\mathscr{F}^{n}_{i-1}\right]&\leq\E_{\theta_{m}}\left[\left|A^{(j_1)}_{i,n,m}\right|^8|\mathscr{F}^{n}_{i-1}\right]^{\frac{1}{2}}\E_{\theta_{m}}\left[\left|A^{(j_2)}_{i,n,m})\right|^8|\mathscr{F}^{n}_{i-1}\right]^{\frac{1}{2}}\\
	&\qquad\qquad\leq R(h_n^4,\xi_{m,t_{i-1}^n})^{\frac{1}{2}}R(h_n^4,\xi_{m,t_{i-1}^n})^{\frac{1}{2}}\leq R(h_n^4,\xi_{m,t_{i-1}^n})
\end{align*}
for $j_1,j_2=1,\cdots,p_1$.
In the same way, from (\ref{Bk}), we obtain  
\begin{align*}
	\E_{\theta_{m}}\left[\left|B^{(j_1)}_{i,n,m}B^{(j_2)}_{i,n,m}\right|^4|\mathscr{F}^{n}_{i-1}\right]\leq R(h_n^4,\delta_{m,t_{i-1}^n})
\end{align*}
for $j_1,j_2=1,\cdots,p_1$.
Furthermore, 
it follows from the independence of $\xi_{m,t}$ and $\delta_{m,t}$,
 (\ref{Ak}) and (\ref{Bk}) that
\begin{align*}
	\E_{\theta_{m}}\left[\left|A^{(j_1)}_{i,n,m}B^{(j_2)}_{i,n,m}\right|^4|\mathscr{F}^{n}_{i-1}\right]&=\E_{\theta_{m}}\left[\left|A^{(j_1)}_{i,n,m}\right|^4|\mathscr{F}^{n}_{i-1}\right]\E_{\theta_{m}}\left[\left|B^{(j_2)}_{i,n,m}\right|^4|\mathscr{F}^{n}_{i-1}\right]\\
	&\leq R(h_n^2,\xi_{m,t_{i-1}^n})R(h_n^2,\delta_{m,t_{i-1}^n})
\end{align*}
for $j_1,j_2=1,\cdots,p_1$.
Thus, 
for $j_1,j_2=1,\cdots,p_1$, one has
\begin{align*}
	0&\leq\frac{C_{1}}{n^2h_n^4}\sum_{i=1}^n\E_{\theta_{m}}\left[\left|(X^{(j_1)}_{1,t_{i}^n}-X^{(j_1)}_{1,t_{i-1}^n})(X^{(j_2)}_{1,t_{i}^n}-X^{(j_2)}_{1,t_{i-1}^n})\right|^4|\mathscr{F}^{n}_{i-1}\right]\\
	&\leq\frac{C_{1}}{n^2h_n^4}\sum_{i=1}^n\E_{\theta_{m}}\left[\left|(A^{(j_1)}_{i,n,m}+B^{(j_1)}_{i,n,m})(A^{(j_2)}_{i,n,m}+B^{(j_2)}_{i,n,m})
	\right|^4|\mathscr{F}^{n}_{i-1}\right]\\
	&\leq\frac{C_{2}}{n^2h_n^4}\sum_{i=1}^n\E_{\theta_{m}}\left[\left|A^{(j_1)}_{i,n,m}A^{(j_2)}_{i,n,m})\right|^4|\mathscr{F}^{n}_{i-1}\right]+\frac{C_{2}}{n^2h_n^4}\sum_{i=1}^n\E_{\theta_{m}}\left[\left|A^{(j_1)}_{i,n,m}B^{(j_2)}_{i,n,m})\right|^4|\mathscr{F}^{n}_{i-1}\right]\\
	&\quad +\frac{C_{2}}{n^2h_n^4}\sum_{i=1}^n\E_{\theta_{m}}\left[\left|B^{(j_1)}_{i,n,m}A^{(j_2)}_{i,n,m})\right|^4|\mathscr{F}^{n}_{i-1}\right]+\frac{C_{2}}{n^2h_n^4}\sum_{i=1}^n\E_{\theta_{m}}\left[\left|B^{(j_1)}_{i,n,m}B^{(j_2)}_{i,n,m})\right|^4|\mathscr{F}^{n}_{i-1}\right]\\
	&\leq\frac{C_{2}}{n}\frac{1}{n}\sum_{i=1}^n R(1,\xi_{m,t_{i-1}^n})+\frac{C_{2}}{n}\frac{1}{n}\sum_{i=1}^n R(1,\xi_{m,t_{i-1}^n})R(1,\delta_{m,t_{i-1}^n})+\frac{C_{2}}{n}\frac{1}{n}\sum_{i=1}^n R(1,\delta_{m,t_{i-1}^n})\stackrel{P_{\theta_{m}}\ }{\longrightarrow}0,
\end{align*}
which yields
\begin{align}
	\frac{C_{1}}{n^2h_n^4}\sum_{i=1}^n\E_{\theta_{m}}\left[\left|(X^{(j_1)}_{1,t_{i}^n}-X^{(j_1)}_{1,t_{i-1}^n})(X^{(j_2)}_{1,t_{i}^n}-X^{(j_2)}_{1,t_{i-1}^n})\right|^4|\mathscr{F}^{n}_{i-1}\right]\stackrel{P_{\theta_{m}}\ }{\longrightarrow}0\label{CX1X14}
\end{align}
for $j_1,j_2=1,\cdots,p_1$. Hence, we obtain (\ref{EX1X14}) from (\ref{X1X14ine}) and (\ref{CX1X14}). In the same way, we can show (\ref{EX1X24}) and (\ref{EX2X24}) from Lemma \ref{klemma}.
\end{proof}
\subsection{Proof of Lemma \ref{Sigmaposlemma}}\label{Sigmaposproof}
\ \\
For any matrix $A$, $\mathbb{C}(A)$ denotes the column space of $A$.
\begin{lemma}\label{positive}
Set $A\in\mathbb{R}^{(p_1+p_2)\times(p_1+p_2)}$ as
\begin{align*}
    A=\begin{pmatrix}
	T & U\\
	U^{\top} & W
\end{pmatrix},
\end{align*}
where $T\in\mathbb{R}^{p_1\times p_1}$, $U\in\mathbb{R}^{p_1\times p_2}$ and $W\in\mathbb{R}^{p_2\times p_2}$. 
\begin{enumerate}[$(i)$]
	\item If $T$ is a positive definite matrix, and $W-U^{\top}T^{-1}U$ is a positive definite matrix, then $A$ is a positive definite matrix.
	\item If $T$ is a semi-positive definite matrix, $W-U^{\top}T^{-}U$ is a semi-positive definite matrix, and $\mathbb{C}(U)\subset \mathbb{C}(T)$, then $A$ is a semi-positive definite matrix.
\end{enumerate}
\end{lemma}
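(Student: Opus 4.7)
The plan is to verify both parts by expanding the quadratic form $v^{\top} A v$ for an arbitrary vector $v = (v_1^{\top}, v_2^{\top})^{\top}$ with $v_1 \in \mathbb{R}^{p_1}$, $v_2 \in \mathbb{R}^{p_2}$, and completing the square via the Schur complement. The central identity I would establish in both cases has the shape
\begin{align*}
v^{\top} A v = y^{\top} T y + v_2^{\top} \bigl(W - U^{\top} T^{-1} U\bigr) v_2,
\end{align*}
with $y = v_1 + T^{-1} U v_2$ in part (i) and the analogous $y = v_1 + T^{-} U v_2$ in part (ii), and $T^{-}$ replacing $T^{-1}$ in the Schur complement. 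This reduces the positivity question for $A$ to the stated hypotheses on $T$ and on the Schur complement.

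For part (i), since $T$ is positive definite the inverse $T^{-1}$ exists and the identity follows by a direct expansion of $(v_1 + T^{-1} U v_2)^{\top} T (v_1 + T^{-1} U v_2)$ and cancellation against $2 v_1^{\top} U v_2$. Given a nonzero $v$, if $v_2 \neq 0$ then the second summand is strictly positive by hypothesis; if $v_2 = 0$ and $v_1 \neq 0$, then $y = v_1 \neq 0$ gives $y^{\top} T y > 0$ since $T$ is positive definite. In either case $v^{\top} A v > 0$, so $A$ is positive definite.

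Part (ii) is where the main subtlety lies, since $T$ need not be invertible and the identity must be re-derived for a generalized inverse. The obstacle is that expanding $y^{\top} T y$ with $y = v_1 + T^{-} U v_2$ produces a cross term $2 v_1^{\top} T T^{-} U v_2$ rather than $2 v_1^{\top} U v_2$, and a quadratic term $v_2^{\top} U^{\top} T^{-\top} T T^{-} U v_2$ rather than $v_2^{\top} U^{\top} T^{-} U v_2$. Here the hypothesis $\mathbb{C}(U) \subset \mathbb{C}(T)$ is essential: since $T T^{-}$ acts as the identity on $\mathbb{C}(T)$, it yields $T T^{-} U = U$, and combined with the defining identity $T^{-} T T^{-} = T^{-}$ this recovers the same completion-of-squares identity as in part (i), but with $T^{-}$ in place of $T^{-1}$. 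Both summands are then non-negative by hypothesis, so $v^{\top} A v \geq 0$ for every $v$ and $A$ is semi-positive definite.

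The argument is a standard Schur complement computation; no external machinery is needed beyond the basic algebra of generalized inverses and the column-space condition. The only non-routine step is the careful handling of $T^{-}$ in part (ii), which is precisely where the column-space hypothesis does its work.
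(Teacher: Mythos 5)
Your argument is correct, but it is worth noting that the paper does not actually prove this lemma: it simply cites Theorem 14.8.5 in Harville, of which this is a standard restatement. Your Schur-complement completion of squares is a legitimate self-contained proof of the same fact, and the identity
\begin{align*}
v^{\top}Av=\bigl(v_{1}+T^{-}Uv_{2}\bigr)^{\top}T\bigl(v_{1}+T^{-}Uv_{2}\bigr)+v_{2}^{\top}\bigl(W-U^{\top}T^{-}U\bigr)v_{2}
\end{align*}
together with the observation that $\mathbb{C}(U)\subset\mathbb{C}(T)$ forces $TT^{-}U=U$ is exactly the content of Harville's proof as well, so you lose nothing by writing it out. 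One small imprecision: $T^{-}TT^{-}=T^{-}$ is \emph{not} a defining identity of a generalized inverse (only $TT^{-}T=T$ is; the reflexivity property you invoke holds for reflexive generalized inverses such as the Moore--Penrose inverse, but not for an arbitrary $T^{-}$). You do not actually need it: writing the offending quadratic term as $U^{\top}T^{-\top}TT^{-}U=(TT^{-}U)^{\top}(T^{-}U)=U^{\top}T^{-}U$ uses only $TT^{-}U=U$ and the symmetry of $T$, and the same identities show that $U^{\top}T^{-}U$ is invariant under the choice of generalized inverse, so the hypothesis on the Schur complement is unambiguous. With that repair the proof is complete.
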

\begin{proof}
See Theorem 14.8.5 in Harville \cite{Harville(1998)}.
\end{proof}
\begin{proof}[Proof of Lemma \ref{Sigmaposlemma}]
We decompose $\Sigma_{m}(\theta_{m})$ as
\begin{align*}
    \Sigma_m(\theta_m)&=\begin{pmatrix}
	T_m & U_{m}\\
	U_{m}^{\top} & W_m
	\end{pmatrix}
	+\begin{pmatrix}
	\Sigma_{\delta\delta,m} & O_{p_1\times p_2}\\
	O_{p_2\times p_1} & \Sigma_{\varepsilon\varepsilon,m}
	\end{pmatrix},
\end{align*}
where 
\begin{align*}
    T_m&=\Lambda_{x_1,m}\Sigma_{\xi\xi,m}\Lambda_{x_1,m}^{\top},\\
    U_m&=\Lambda_{x_1,m}\Sigma_{\xi\xi,m}\Gamma_{m}^{\top}\Psi^{-1\top}_{m}\Lambda_{x_2,m}^{\top},\\
	W_m&=\Lambda_{x_2,m}\Psi^{-1}_{m}\left(\Gamma_m\Sigma_{\xi\xi,m}\Gamma^{\top}_m+\Sigma_{\zeta\zeta,m}\right)\Psi^{-1\top}_{m}\Lambda_{x_2,m}^{\top}.
\end{align*}
Recalling that $\Sigma_{\xi\xi,m}=S_{1,m}S_{1,m}^{\top}$ is a semi-positive definite matrix, one has
\begin{align}
	T_{m}=\bigl(\Lambda_{x_1,m}\Sigma_{\xi\xi,m}^{\frac{1}{2}}\bigr)\bigl(\Lambda_{x_1,m}\Sigma_{\xi\xi,m}^{\frac{1}{2}}\bigr)^{\top}\geq 0.\label{l1-1}
\end{align}
Since it holds from $\bf{[F]}$ that
\begin{align*}
    \Lambda_{x_1,m}^{-}\Lambda_{x_1,m}=\mathbb{I}_{k_{1,m}},\ \ 
    \Lambda_{x_1,m}^{\top}(\Lambda_{x_1,m}^{\top})^{-}=\mathbb{I}_{k_{1,m}},
\end{align*}
we obtain
\begin{align*}
	U_m^{\top} T_m^{-} U_m
	&=\Lambda_{x_2,m}\Psi^{-1}_{m}\Gamma_{m}\Sigma_{\xi\xi,m}\Lambda_{x_1,m}^{\top}(\Lambda_{x_1,m}\Sigma_{\xi\xi,m}\Lambda_{x_1,m}^{\top})^{-}\Lambda_{x_1,m}\Sigma_{\xi\xi,m}\Gamma_{m}^{\top}\Psi^{-1\top}_{m}\Lambda_{x_2,m}^{\top}\\
	&=\Lambda_{x_2,m}\Psi^{-1}_{m}\Gamma_{m}\Lambda_{x_1,m}^{-}\Lambda_{x_1,m}\Sigma_{\xi\xi,m}\Lambda_{x_1,m}^{\top}(\Lambda_{x_1,m}\Sigma_{\xi\xi,m}\Lambda_{x_1,m}^{\top})^{-}\\
	&\qquad\qquad\qquad\qquad\qquad\qquad\qquad\qquad\times\Lambda_{x_1,m}\Sigma_{\xi\xi,m}\Lambda_{x_1,m}^{\top}(\Lambda_{x_1,m}^{\top})^{-}\Gamma_{m}^{\top}\Psi^{-1\top}_{m}\Lambda_{x_2,m}^{\top}\\
	&=\Lambda_{x_2,m}\Psi^{-1}_{m}\Gamma_{m}\Lambda_{x_1,m}^{-}\Lambda_{x,m}\Sigma_{\xi\xi,m}\Lambda_{x_1,m}^{\top}(\Lambda_{x_1,m}^{\top})^{-}\Gamma_{m}^{\top}\Psi^{-1\top}_{m}\Lambda_{x_2,m}^{\top}\\
	&=\Lambda_{x_2,m}\Psi^{-1}_{m}\Gamma_{m}\Sigma_{\xi\xi,m}\Gamma_{m}^{\top}\Psi^{-1\top}_{m}\Lambda_{x_2,m}^{\top}.
\end{align*}
Noting that $\Sigma_{\zeta\zeta,m}=S_{4,m}S_{4,m}^{\top}$ is a semi-positive definite matrix, we have
\begin{align}
	\begin{split}
	W_{m}-U_m^{\top}T_m^{-}U_m
	&=\Lambda_{x_2,m}\Psi^{-1}_{m}\Sigma_{\zeta\zeta,m}\Psi^{-1\top}_{m}\Lambda_{x_2,m}^{\top}\\
	&=\bigl(\Lambda_{x_2,m}\Psi^{-1}_{m}\Sigma_{\zeta\zeta,m}^{\frac{1}{2}})(\Lambda_{x_2,m}\Psi^{-1}_{m}\Sigma_{\zeta\zeta,m}^{\frac{1}{2}}\bigr)^{\top}\geq 0.
	\end{split}\label{l1-2}
\end{align}
Furthermore, we set 
\begin{align*}
    F_{m}=(\Lambda_{x_1,m}^{\top})^{-}\Gamma_{m}^{\top}\Psi^{-1\top}_{m}\Lambda_{x_2,m}^{\top},
\end{align*}
which yields
\begin{align*}
    U_m=T_{m}F_{m}.
\end{align*}
Thus, it follows from Lemma 4.2.2 in Harville \cite{Harville(1998)} that
\begin{align}
	\mathbb{C}(U_m)\subset \mathbb{C}(T_m).\label{l1-3}
\end{align}
Hence, 
Lemma \ref{positive} (ii), (\ref{l1-1}), (\ref{l1-2}) and (\ref{l1-3}) imply that
\begin{align}
	\begin{pmatrix}
	T_m & U_{m}\\
	U_{m}^{\top} & W_m
	\end{pmatrix}\geq 0 .\label{l1-4}
\end{align}
Since it follows from $\bf{[C2]}$ that
\begin{align*}
	\Sigma_{\varepsilon\varepsilon,m}-O_{p_1\times p_2}^{\top}\Sigma_{\delta\delta,m}^{-1}O_{p_1\times p_2}=\Sigma_{\varepsilon\varepsilon,m}>0,
\end{align*}
we see  from Lemma \ref{positive} (i) and $\bf{[B2]}$ that
\begin{align}
	\begin{pmatrix}
	\Sigma_{\delta\delta,m} & O_{p_1\times p_2}\\
	O_{p_2\times p_1} & \Sigma_{\varepsilon\varepsilon,m}
	\end{pmatrix}>0. \label{l1-5}
\end{align}
Therefore, from (\ref{l1-4}) and (\ref{l1-5}), we obtain 
\begin{align*}
	\Sigma_{m}(\theta_{m})=\begin{pmatrix}
	T_m & U_{m}\\
    U_{m}^{\top} & W_m
	\end{pmatrix}+     
	\begin{pmatrix}
	\Sigma_{\delta\delta,m} & O_{p_1\times p_2}\\
	O_{p_2\times p_1} & \Sigma_{\varepsilon\varepsilon,m}
	\end{pmatrix}>0.
\end{align*}
\end{proof}
\subsection{Proof of Lemma \ref{Vposlemma}}\label{Vposproof}
\begin{proof}[Proof of Lemma \ref{Vposlemma}]
Since $X$ and $Y$ are positive definite matrices and one has 
\begin{align*}
    Y+\lambda_{1}\lambda_{2}(X-Y)= \left\{
    \begin{array}{ll}
    Y & (\lambda_{1}\lambda_{2}=0), \\
    \lambda_{1}\lambda_{2}X+(1-\lambda_{1}\lambda_{2})Y & (0< \lambda_{1}\lambda_{2}< 1), \\
    X & (\lambda_{1}\lambda_{2}=1),
    \end{array}
\right.
\end{align*}
it holds that
\begin{align*}
	Y+\lambda_{1}\lambda_{2}(X-Y)>0
\end{align*}
for $\lambda_{1},\lambda_{2}\in[0,1]$.
Noting that
\begin{align*}
	\bigl\{Y+\lambda_{1}\lambda_{2}(X-Y)\bigr\}^{-1}\otimes\bigl\{Y+\lambda_{1}\lambda_{2}(X-Y)\bigr\}^{-1}>0
\end{align*}
for $\lambda_{1},\lambda_{2}\in[0,1]$ and
\begin{align*}
	\mathbb{D}_{p}^{+}x=0\Longleftrightarrow x=0
\end{align*}
for $x\in\mathbb{R}^{\bar{p}}(\neq 0)$, 
one has
\begin{align*}
	x^{\top}\mathbb{D}_{p}^{+\top}\bigl\{Y+\lambda_{1}\lambda_{2}(X-Y)\bigr\}^{-1}\otimes\bigl\{Y+\lambda_{1}\lambda_{2}(X-Y)\bigr\}^{-1}\mathbb{D}_{p}^{+}x>0
\end{align*}
for $\lambda_{1},\lambda_{2}\in[0,1]$ and $x\in\mathbb{R}^{\bar{p}}(\neq 0)$. If $\lambda_{2}$ is not zero, we see
\begin{align*}
	\lambda_{2}x^{\top}\mathbb{D}_{p}^{+\top}\bigl\{Y+\lambda_{1}\lambda_{2}(X-Y)\bigr\}^{-1}\otimes\bigl\{Y+\lambda_{1}\lambda_{2}(X-Y)\bigr\}^{-1}\mathbb{D}_{p}^{+}x>0
\end{align*}
for $\lambda_{1},\lambda_{2}\in[0,1]$ and $x\in\mathbb{R}^{\bar{p}}(\neq 0)$. 
Therefore, we obtain
\begin{align*}
x^{\top}V(X,Y)x=\int_{0}^{1}\int_{0}^{1}\lambda_{2}x^{\top}\mathbb{D}_{p}^{+\top}\bigl\{Y+\lambda_{1}\lambda_{2}(X-Y)\bigr\}^{-1}\otimes\bigl\{Y+\lambda_{1}\lambda_{2}(X-Y)\bigr\}^{-1}\mathbb{D}_{p}^{+}x d\lambda_{1}d\lambda_{2}>0
\end{align*}
for $x\in\mathbb{R}^{\bar{p}}(\neq 0)$. \end{proof}
\subsection{Proof of Lemma \ref{Fproblemma}}\label{Fprobproof}
\begin{lemma}\label{suplemma}
Let $f:\mathbb{R}^{p\times q}\times \mathbb{R}^{r}\rightarrow \mathbb{R}$ denote a continuous function
and $A$ be a compact subset set of $\mathbb{R}^{r}$. Then,
\begin{align*}
	\sup_{\alpha\in A}\left|f(Y,\alpha)-f(Y_0,\alpha)\right|\longrightarrow 0
\end{align*}
as $Y\longrightarrow Y_0$.
\end{lemma}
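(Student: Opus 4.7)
The plan is to reduce the uniform claim to the Heine--Cantor theorem on a compact product set. First, I would choose a compact neighborhood $K$ of $Y_0$ in $\mathbb{R}^{p\times q}$, for instance the closed unit ball $K=\{Y\in\mathbb{R}^{p\times q}:\|Y-Y_0\|\leq 1\}$. Since $K$ is compact in $\mathbb{R}^{p\times q}$ and $A$ is compact in $\mathbb{R}^r$ by hypothesis, the product $K\times A$ is a compact subset of $\mathbb{R}^{p\times q}\times\mathbb{R}^r$.

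Next, because $f$ is continuous on $\mathbb{R}^{p\times q}\times\mathbb{R}^r$, its restriction to $K\times A$ is uniformly continuous by the Heine--Cantor theorem. Thus, for any $\varepsilon>0$, there exists $\delta>0$ such that for all $(Y_1,\alpha_1),(Y_2,\alpha_2)\in K\times A$ with $\|(Y_1,\alpha_1)-(Y_2,\alpha_2)\|<\delta$, one has $|f(Y_1,\alpha_1)-f(Y_2,\alpha_2)|<\varepsilon$.

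Finally, I would take $Y$ with $\|Y-Y_0\|<\min(1,\delta)$, which ensures $Y\in K$. For every $\alpha\in A$, the pair $(Y,\alpha)\in K\times A$ and $\|(Y,\alpha)-(Y_0,\alpha)\|=\|Y-Y_0\|<\delta$, so $|f(Y,\alpha)-f(Y_0,\alpha)|<\varepsilon$. Taking the supremum over $\alpha\in A$ yields $\sup_{\alpha\in A}|f(Y,\alpha)-f(Y_0,\alpha)|\leq\varepsilon$, which gives the claimed convergence as $Y\longrightarrow Y_0$.

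This argument is completely standard and has no real obstacle; the only point requiring attention is restricting attention to a compact neighborhood $K$ of $Y_0$ so that $f$ is uniformly continuous on $K\times A$, since $f$ need not be uniformly continuous on the whole (non-compact) domain $\mathbb{R}^{p\times q}\times\mathbb{R}^r$.
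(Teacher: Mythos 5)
Your proof is correct and complete. It does, however, take a genuinely different route from the paper. You restrict $f$ to the compact product $K\times A$, where $K$ is a closed ball around $Y_0$, invoke Heine--Cantor to get uniform continuity there, and then the bound $|f(Y,\alpha)-f(Y_0,\alpha)|<\varepsilon$ holds simultaneously for all $\alpha\in A$ with a single $\delta$, so passing to the supremum is immediate. The paper instead picks, for each $Y$, a near-maximizer $\alpha_0$ of $\alpha\mapsto|f(Y,\alpha)-f(Y_0,\alpha)|$ and applies pointwise continuity of $f$ at $(Y_0,\alpha_0)$ to get a $\delta$. As written, that argument is delicate: the point $\alpha_0$ depends on $Y$, so the $\delta$ obtained from continuity at $(Y_0,\alpha_0)$ implicitly depends on $Y$ as well, and making the argument airtight requires either exactly the uniform-continuity step you used or a compactness/subsequence argument (extract $\alpha_{n_k}\to\alpha^*$ in $A$ along a putative bad sequence $Y_n\to Y_0$ and contradict joint continuity at $(Y_0,\alpha^*)$). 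Your version sidesteps this entirely; the only point needing care, which you correctly flag, is that one must first localize to a compact neighborhood of $Y_0$ since $f$ need not be uniformly continuous on the whole domain. In short, your argument buys rigor and brevity; the paper's buys nothing over yours except avoiding the explicit mention of Heine--Cantor.
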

\begin{proof}
For all $\varepsilon>0$, there exists $\alpha_0\in\mathbb{R}^{r}$ such that
\begin{align}
	\sup_{\alpha\in A}\left|f(Y,\alpha)-f(Y_0,\alpha)\right|-\varepsilon<   \left|f(Y,\alpha_0)-f(Y_0,\alpha_0)\right|\leq\sup_{\alpha\in A}\left|f(Y,\alpha)-f(Y_0,\alpha)\right|.\label{supproof1}
\end{align}
By the continuity of $f$,
there exists $\delta>0$ such that
\begin{align}
	\|Y-Y_0\|<\delta\Longrightarrow \left|f(Y,\alpha_0)-f(Y_0,\alpha_0)\right|<\varepsilon. \label{supproof2}
\end{align}
Therefore, we see 
from (\ref{supproof1}) and (\ref{supproof2})
that
\begin{align*}
	\|Y-Y_0\|<\delta\Longrightarrow \sup_{\alpha\in A}\left|f(Y,\alpha)-f(Y_0,\alpha)\right|<2\varepsilon,
\end{align*}
which implies
\begin{align*}
	\sup_{\alpha\in A}\left|f(Y,\alpha)-f(Y_0,\alpha)\right|\longrightarrow 0 
\end{align*}
as $Y\longrightarrow Y_0$.
\end{proof}
\begin{proof}[Proof of Lemma \ref{Fproblemma}]
Set 
\begin{align*}
    J_n=\Bigl\{Q_{ZZ}\ \mbox{is non-singular}\Bigr\}.
\end{align*}
Since $F$ is continuous in $\theta_{m}$, 
from Lemma \ref{suplemma}, 
for any $\varepsilon>0$, there exists $\delta>0$ such that
\begin{align*}
	&
	\bigl\|Q_{XX}-\Sigma_{m}(\theta_{m,0})\bigr\|<\delta  
	\Longrightarrow \sup_{\theta_{m}\in\Theta_{m}}\bigl|F(Q_{XX},\Sigma_{m}(\theta_{m}))-F(\Sigma_{m}(\theta_{m,0}),\Sigma_{m}(\theta_{m}))\bigr|<\varepsilon
\end{align*}
on $J_{n}$.
Therefore, one has
\begin{align}
	\begin{split}
	0&\leq\PP_{\theta_{m,0}}\left(\Bigl\{\bigl\|Q_{XX}-\Sigma_{m}(\theta_{m,0})\bigr\|<\delta\Bigr\}\cap J_n\right)\\
	&\leq\PP_{\theta_{m,0}}\left(\left\{\sup_{\theta_{m}\in\Theta_{m}}\bigl|F(Q_{XX},\Sigma_{m}(\theta_{m}))-F(\Sigma_{m}(\theta_{m,0}),\Sigma_{m}(\theta_{m}))\bigr|<\varepsilon\right\}\cap J_n\right)\\
	&\leq\PP_{\theta_{m,0}}\left(\left\{\sup_{\theta_{m}\in\Theta_{m}}\bigl|\tilde{F}(Q_{XX},\Sigma_{m}(\theta_{m}))-F(\Sigma_{m}(\theta_{m,0}),\Sigma_{m}(\theta_{m}))\bigr|<\varepsilon\right\}\cap J_n\right)\\
	&\leq\PP_{\theta_{m,0}}\left(\sup_{\theta_{m}\in\Theta_{m}}\bigl|\tilde{F}(Q_{XX},\Sigma_{m}(\theta_{m}))-F(\Sigma_{m}(\theta_{m,0}),\Sigma_{m}(\theta_{m}))\bigr|<\varepsilon\right).\label{Qine}
	\end{split}
\end{align}
Since we see from  Lemma \ref{Sigmaposlemma} 
that $\Sigma_{m}(\theta_{m,0})$ is non-singular,
it holds from Theorem \ref{Qztheorem} that 
\begin{align*}
	\PP_{\theta_{m,0}}\bigl(J_{n}\bigr)\stackrel{}{\longrightarrow}1
\end{align*}
as $n\longrightarrow\infty$. Thus, 
from Theorem \ref{Qztheorem}, we obtain 
\begin{align*}
	0&\leq\PP_{\theta_{m,0}}\left(\left\{\Bigl\{
	\bigl\|Q_{XX}-\Sigma_{m}(\theta_{m,0})\bigr\|<\delta\Bigr\}\cap J_n\right\}^{c}\right)\\
	&=\PP_{\theta_{m,0}}\left(\Bigl\{
	\bigl\|Q_{XX}-\Sigma_{m}(\theta_{m,0})\bigr\|\geq\delta\Bigr\}\cup J_n^{c}\right)\\
	&\leq\PP_{\theta_{m,0}}\Bigl(
	\bigl\|Q_{XX}-\Sigma_{m}(\theta_{m,0})\bigr\|\geq\delta\Bigr)+\PP_{\theta_{m,0}}\bigl(J_n^c\bigr)\stackrel{}{\longrightarrow}0
\end{align*}
as $n\longrightarrow\infty$, which yields
\begin{align*}
	\PP_{\theta_{m,0}}\left(\Bigl\{\bigl\|Q_{XX}-\Sigma_{m}(\theta_{m,0})\bigr\|<\delta\Bigr\}\cap J_n\right)\stackrel{}{\longrightarrow}1. 
\end{align*}
Hence, it follows from (\ref{Qine}) that for all $\varepsilon>0$,
\begin{align*}
	\PP_{\theta_{m,0}}\left(\sup_{\theta_{m}\in\Theta_{m}}\bigl|\tilde{F}(Q_{XX},\Sigma_{m}(\theta_{m}))-F(\Sigma_{m}(\theta_{m,0}),\Sigma_{m}(\theta_{m}))\bigr|<\varepsilon\right)\stackrel{}{\longrightarrow}1,
\end{align*}
which implies 
\begin{align*}
	\sup_{\theta_{m}\in\Theta_{m}}\bigl|\tilde{F}(Q_{XX},\Sigma_{m}(\theta_{m}))-F(\Sigma_{m}(\theta_{m,0}),\Sigma_{m}(\theta_{m}))\bigr|\stackrel{P_{\theta_{m,0}}\ }{\longrightarrow}0.
\end{align*}
Therefore, we obtain (\ref{Fprob}). Furthermore, we can show (\ref{F2prob}) in the same way.
\end{proof}
\subsection{Proof of Lemma \ref{starconslemma}}\label{Proofstarcons}
\begin{proof}[Proof of Lemma \ref{starconslemma}]
From $\bf{[I]}$,
for any $\varepsilon>0$, there exists $\delta>0$ such that
\begin{align}
	\bigl|\hat{\theta}_{m^*,n}-\bar{\theta}_{m^*}\bigr|>\varepsilon\Longrightarrow \mathbb{U}_{m^*}(\hat{\theta}_{m^*,n})-\mathbb{U}_{m^*}(\bar{\theta}_{m^*})>\delta.\label{assumptionI}
\end{align}
It holds from the definition of $\hat{\theta}_{m^*,n}$ that
\begin{align*}
    \tilde{F}(Q_{XX},\Sigma_{m^*}(\hat{\theta}_{m^*,n}))\leq\tilde{F}(Q_{XX},\Sigma_{m^*}(\bar{\theta}_{m^*})).
\end{align*}
We obtain  from Lemma \ref{Fproblemma} and (\ref{assumptionI}) that  
for all $\varepsilon>0$,
\begin{align*}
	0&\leq \PP\left(\bigl|\hat{\theta}_{m^*,n}-\bar{\theta}_{m^*}\bigr|>\varepsilon\right)\\
	&\leq\PP\Bigl(\mathbb{U}_{m^*}(\hat{\theta}_{m^*,n})-\mathbb{U}_{m^*}(\bar{\theta}_{m^*})>\delta\Bigr)\\
	&\leq\PP\left(F(\Sigma_{m}(\theta_{m,0}),\Sigma_{m^*}(\hat{\theta}_{m^*,n}))-\tilde{F}(Q_{XX},\Sigma_{m^*}(\hat{\theta}_{m^*,n}))>\frac{\delta}{3}\right)\\
	&\quad+\PP\left(\tilde{F}(Q_{XX},\Sigma_{m^*}(\hat{\theta}_{m^*,n}))-\tilde{F}(Q_{XX},\Sigma_{m^*}(\bar{\theta}_{m^*}))>\frac{\delta}{3}\right)\\
	&\quad+\PP\left(\tilde{F}(Q_{XX},\Sigma_{m^*}(\bar{\theta}_{m^*}))-F(\Sigma_{m}(\theta_{m,0}),\Sigma_{m^*}(\bar{\theta}_{m^*}))>\frac{\delta}{3}\right)\\
	&\leq 2\PP\left(\sup_{\theta_{m^*}\in\Theta_{m^*}}\left|\tilde{F}(Q_{XX},\Sigma_{m^*}(\theta_{m^*}))-F(\Sigma_{m}(\theta_{m,0}),\Sigma_{m^*}(\theta_{m^*}))\right|>\frac{\delta}{3}\right)+0\stackrel{}{\longrightarrow}0
\end{align*}
under $H_1$ as $n\longrightarrow\infty$,
which implies 
\begin{align*}
	\hat{\theta}_{m^*,n}\stackrel{P}{\longrightarrow}\bar{\theta}_{m^*}
\end{align*}
under $H_1$.
\end{proof}
\end{document}